\long\def\comment#1\endcomment{}
\theoremstyle{plain}
\newtheorem{theorem}{\sc Theorem}[section]
\newtheorem{lemma}[theorem]{\sc Lemma}
\newtheorem{prop}[theorem]{\sc Proposition}
\newtheorem{coroll}[theorem]{\sc Corollary}
\newtheorem{claim}[theorem]{\sc Claim}
\renewcommand{\thesection}{\sc \arabic {section}}
\renewcommand{\thetheorem}{\sc \arabic{theorem}}
\renewcommand{\thetheorem}{\thesection.{\sc \arabic {theorem}}}
\renewcommand{\thelemma}{\thesection.{\sc \arabic {lemma}}}
\renewcommand{\theconjecture}{\thesection.{\sc \arabic {conjecture}}}
\renewcommand{\theprop}{\thesection.{\sc \arabic {prop}}}
\renewcommand{\thecoroll}{\thesection.{\sc \arabic {coroll}}}
\renewcommand{\theklemma}{\thesection.{\sc \arabic {klemma}}}
\renewcommand{\thesublemma}{\thesection.{\sc \arabic {sublemma}}}
\renewcommand{\themtheorem}{\thesection.{\sc \arabic {mtheorem}}}
\theoremstyle{plain}
\newtheorem{defn}[theorem]{\sc Definition}
\theoremstyle{exercise}
\newtheorem{remark}[theorem]{\sc Remark}
\newtheorem{remarks}[theorem]{\sc Remarks}
\newtheorem{example}[theorem]{\sc Example}
\makeatletter \@addtoreset{equation}{section} \makeatother
\def\eqref#1{\thetag{\ref{#1}}}
\let\latexref=\ref
\def\ref#1{{\normalfont{\latexref{#1}}}}
\newcommand{\ldot}{{\:\raisebox{2,3pt}{\text{\circle*{1.5}}}}}
\newcommand{\udot}{{\:\raisebox{3pt}{\text{\circle*{1.5}}}}}
\newcommand{\mb}{{\:\raisebox{3pt}{\text{\circle*{1.5}}}}}
\def\dlim_#1{{\displaystyle\lim_{#1}}^\hdot}
\newcommand{\cchar}{\operatorname{\sf char}}
\newcommand{\gr}{\operatorname{\sf gr}}
\newcommand{\Ext}{\operatorname{Ext}}
\newcommand{\id}{\operatorname{\rm id}}
\newcommand{\eqto}{\mathrel{\stackrel{\sim}{\to}}}
\newcommand{\Ob}{\mathrm{Ob}}
\newcommand{\Mod}{\mathrm{Mod}}
\newcommand{\opp}{\mathrm{opp}}
\renewcommand{\Bar}{\mathrm{Bar}}
\newcommand{\Cobar}{\mathrm{Cobar}}
\newcommand{\Hom}{\mathrm{Hom}}
\newcommand{\Bimod}{\mathrm{Bimod}}
\newcommand{\RHom}{\mathrm{RHom}}
\newcommand{\Hoch}{\mathrm{Hoch}}
\newcommand{\Lie}{\mathrm{Lie}}
\newcommand{\dg}{\mathrm{dg}}
\newcommand{\Cone}{\mathrm{Cone}}
\newcommand{\Vect}{\mathscr{V}ect}
\newcommand{\Coalg}{{\mathscr{C}oalg}}
\newcommand{\Alg}{{\mathscr{A}lg}}
\newcommand{\Cat}{{\mathscr{C}at}}
\renewcommand{\top}{\mathrm{top}}
\newcommand{\Fun}{{\mathscr{F}un}}
\newcommand{\nilp}{\mathrm{nilp}}
\newcommand{\simpl}{{\Delta^\opp}}
\renewcommand{\k}{\Bbbk}
\newcommand{\hoe}{\mathsf{hoe}}
\newcommand{\alg}{\mathrm{alg}}
\newcommand{\coalg}{\mathrm{coalg}}
\newcommand{\Assoc}{\mathrm{Assoc}}
\renewcommand{\cchar}{\mathrm{char}\ }
\newcommand{\Dr}{{\mathscr{D}r}}
\newcommand{\Res}{\mathrm{Res}}
\renewcommand{\gr}{\mathrm{gr}}
\newcommand{\lhs}{\mathrm{l.h.s.}}
\newcommand{\rhs}{\mathrm{r.h.s.}}
\newcommand{\pprime}{{\prime\prime}}
\newcommand{\EM}{\mathrm{EM}}
\newcommand{\tot}{\mathrm{tot}}
\title{\sc{Graded Leinster monoids and generalized Deligne conjecture for 1-monoidal abelian categories}}
\author{\sc{Boris Shoikhet}}
\date{}
\begin{document}\maketitle
{\footnotesize
\begin{center}{\parbox{4,5in}{{\sc Abstract.}
In our recent paper [Sh1] a version of the ``generalized Deligne conjecture'' for abelian $n$-fold monoidal categories is proven. For $n=1$ this result says that, given an abelian monoidal $\k$-linear category $\mathscr{A}$ with unit $e$, $\cchar\k=0$, $\RHom_{\mathscr{A}}(e,e)$ is the first component of a Leinster 1-monoid in $\Alg(\k)$ (provided a rather mild condition on the monoidal and the abelian structures in $\mathscr{A}$, called homotopy compatibility, is fulfilled).

In the present paper, we introduce a new concept of a {\it graded} Leinster monoid. We show that the Leinster monoid in $\Alg(\k)$, constructed 
by a monoidal $\k$-linear abelian category in [Sh1], is graded.
We construct a functor, assigning an algebra over the chain operad $C_\ldot(E_2,\k)$, to a graded Leinster 1-monoid in $\Alg(\k)$, which respects the weak equivalences.
Consequently, this paper together with [Sh1] provides a complete proof of the ``generalized Deligne conjecture'' for 1-monoidal abelian categories, in the form most accessible for applications to deformation theory (such as Tamarkin's proof of the Kontsevich formality).
}}
\end{center}
}

\section*{\sc Introduction}
In Subsections \ref{subsection01} and \ref{subsection02}, we provide an introduction to (generalized) Deligne conjecture. The reader is referred to the Introduction in [KT], for a historical overview. 

Starting with Subsection \ref{subsection03}, we focus on an overview of the results and methods of this paper.

\subsection{}\label{subsection01}
The ``classical'' Deligne conjecture for Hochschild cochains has now several affirmative solutions, e.g. [T1,2], [MS1], [KoS]. It says that the Hochschild cochain complex $\Hoch^\udot(A)$ of any (dg) associative algebra $A$ over a field $\k$ of characteristic 0 admits an action of the chain operad $C_\ldot(E_2,\mathbb{Q})$ of the topological little discs operad $E_2$. 
\footnote{In its original form, the conjecture was formulated for associative rings (over $\k=\mathbb{Z}$), and was proven in [MS1]. In this paper, we discuss only the case when $\k$ is a field of characteristic 0.
}

The homology of the operad $E_2$ is the operad $\mathsf{e}_2$ of Gerstenhaber algebras (or 2-algebras). It is a Koszul quadratic operad, and it is generated by two binary operations in $\mathsf{e}_2(2)$, of degree 0 and of degree -1. (One can describe easily all binary  operations in the operad $H_\ldot(E_2,\mathbb{Q})$: the topological space $E_2(2)$ is homotopically equivalent to a circle $S^1$, and the two mentioned operations are generators in $H_0(S^1)$ and $H_1(S^1)$, correspondingly).

On the side of algebras over the homology operad $H_\ldot(E_2,\mathbb{Q})$, the generator in $H_0(S^1,\mathbb{Q})$ defines a commutative product $m\colon T\otimes T\to T$ of degree 0, and the generator in $H_1(S^1,\mathbb{Q})$ defines a Lie bracket $[-,-]\colon \Lambda^2(T[1])\to T[1]$ (which is an operation on $T$ of degree -1). The commutativity and the associativity of $m$ and the skew-commutativity and the Jacobi identity for $[-,-]$ are derived as identities in $H_\ldot(E_2(3),\mathbb{Q})$.
In fact, one more identity holds in $H_\ldot(E_2(3),\mathbb{Q})$, which is translated to the odd Leibniz rule: for any $H_\ldot(E_2,\mathbb{Q})$-algebra $T$
\begin{equation}\label{lr}
[a,b\wedge c]=[a,b]\wedge c+(-1)^{(|a|-1)\cdot |b|}b\wedge[a,c]
\end{equation}
where we use the notation $a\wedge b=m(a,b)$, and $a,b,c$ are homogeneous elements in $T$ of degrees $|a|,|b|,|c|$ correspondingly.

\begin{defn}{\rm
A Gerstenhaber algebra, or a 2-algebra $T$ is a graded vector space endowed with a commutative product $m\colon S^2(T)\to T$ and a Lie algebra structure $[-,-]$ on $T[1]$ such that the compatibility \eqref{lr} holds.
}
\end{defn}
This is the original Gerstenhaber's definition [G].

It was proven by F.Cohen in 1976 [C] that this definition is equivalent to the operadic definition of a Gerstenhaber algebra as a $H_\ldot(E_2,\mathbb{Q})$-algebra.

Gerstenhaber proved [G] in 1964 that the Hochschild cohomology (=the cohomology of the Hochschild cochain complex) of any algebra $A$ is a Gerstenhaber algebra.

The operations $m$ and $[-,-]$ can be lifted to the Hochschild cochain complex, as an associative homotopy commutative product on $\Hoch^\udot(A)$, called the cup-product, and as a Lie bracket on $\Hoch^\udot(A)[1]$, called the Gerstenhaber bracket. The game starts with an observation that the identity \eqref{lr} {\it fails} on the level of cochains. The question was how to formulate and to prove the property that \eqref{lr} holds on the Hochschild cochains ``up to homotopy''.

P.Deligne conjectured (unpublished, in a 1993 letter to several mathematicians) that the chain operad $C_\ldot(E_2,\k)$ acts on $\Hoch^\udot(A)$ for any $A$.  An evidence for this conjecture was that Gerstenhaber's [G] and Cohen's [C] results which taken together state that the homology operad of $C_\ldot(E_2,\k)$ acts on the homology complex of $\Hoch^\udot(A)$.

Nowadays it is known [T2], [KoS] that the operad $C_\ldot(E_2,\k)$ is quasi-isomorphic to the operad $\hoe_2(\k)$ of homotopy Gerstenhaber algebras (when $\cchar \k=0$), and a more common reformulation of the classical Deligne conjecture is that $\Hoch^\udot(A)$ is an algebra over the operad $\hoe_2(\k)$, compatible with Gerstenhaber's action of $\mathsf{e}_2(\k)$ on the Hochschild cohomology $HH^\udot(A)$.

There are several known proofs of the classical Deligne conjecture for Hochschild cochains: [T2], [KoS], [MS1,2], [BB], [BF].

\subsection{\sc }\label{subsection02}
The paper is devoted to is the ``generalized Deligne conjecture'' which (in its simplest form) is a statement for monoidal abelian $\k$-linear categories, see Theorem \ref{theorem2intro} below for precise formulation.

The link with the classical Deligne conjecture for Hochschild cochains is as follows.

Let $A$ be an associative algebra over $\k$. Consider the category $\mathscr{A}=\Bimod(A)$ of $A$-bimodules. It is an abelian $\k$-linear category. It is well-known that the Hochschild cohomology of $A$ can be intrinsically defined as
\begin{equation}\label{intro1}
\mathrm{HH}^\udot(A)=\Ext^\udot_{\Bimod(A)}(A,A)
\end{equation}
where $A$ is considered as the ``tautological'' $A$-bimodule. The Hochschild cochain complex itself can be defined as
\begin{equation}\label{intro2}
\Hoch^\udot(A)=\RHom^\udot_{D(\Bimod(A))}(A,A)
\end{equation}
as the derived Hom functor in the derived category of $A$-bimodules.

On the other hand, the category $\mathscr{A}=\Bimod(A)$ is monoidal: for any two $A$-bimodules $M,N$ their monoidal product is given as $M\otimes_A N$. The tautological $A$-bimodule $A$ is the unit for this monoidal product.

That is, what we have in the right-hand side of \eqref{intro2}, can be interpreted as follows. There is an abelian $\k$-linear category $\mathscr{M}$, which is also a monoidal category, with a unit $e$. The monoidal product is not assumed to be an exact bi-functor, but some ``homotopy exactness'' condition should be fulfilled. In particular, in our example with $\mathscr{A}=\Bimod(A)$, the monoidal product is right exact bi-functor. Then the right-hand side of \eqref{intro2} looks like
\begin{equation}\label{intro3}
\RHom^\udot_{\mathscr{M}}(e,e)
\end{equation}
It had been a folklore statement in mathematical circles that \eqref{intro3} is a homotopy 2-algebra over $\k$. We call this statement ``the generalized Deligne conjecture for 1-monoidal categories''.
This paper provides (along  with our previous paper [Sh1]) a precise formulation and a proof of this statement.

\subsection{\sc}\label{subsection03}
Throughout the paper, $\k$ is any field of characteristic 0.

In the recent years, several papers providing generalizations of the Deligne conjecture for Hochschild cochains have appeared, see e.g. [KT], [L2], [Fr], [GTZ], [BM], [BF], [FV], among the others. 

Roughly, there are known two different approaches to the generalized Deligne conjecture.  The first one is based on the theory of $\infty$-categories and $\infty$-operads of A.Joyal and J.Lurie [L1], [L2], [Fr], [GTZ], [GH]. The second one is based on the theory of higher operads of M.Batanin [Ba1,2], [BM], [BB]. We also mention the paper [BFSV] where the concept of a higher-monoidal category was introduced, and the recent paper [FV] providing an account on the additivity theorem for $E_n$-algebras.

Our approach to generalized Deligne conjecture, started in [Sh1], is closely related to the one proposed in [KT], for the cartesian monoidal case. Our initial goal was to make loc.cit. work for the $\k$-linear case as well. We made use of Leinster monoids as a proper replacement of Segal monoids in loc.cit. for the non-cartesian case.  As well, we introduced in [Sh1] a new refinement of the Drinfeld dg quotient [Dr] (having nicer monoidal properties), which substitutes the Dwyer-Kan simplicial localization made use in [KT].

In this paper, we develop our approach in [Sh1] further, by introducing a new concept of a {\it graded} Leinster monoid. 

An advantage of our approach, compared with the $\infty$-categorical one, is that it allows to
construct a {\it honest algebra} structure over a {\it honest operad}, quasi-isomorphic to $C_\ldot(E_2,\k)$, on an explicit dg vector space, quasi-isomorphic to $\RHom^\udot_{\mathscr{A}}(e,e)$, for an abelian $\k$-linear monoidal category $\mathscr{A}$ with unit $e$, in a functorial way. It makes possible, in particular, to mimic Tamarkin's proof [T1] of the Kontsevich formality for Hochschild cochains [Ko1], in possibly more general context of monoidal abelian categories over $\k$ (and, in prospect, in the context of $n$-fold monoidal [BFSV] $\k$-linear abelian categories, for $n>1$). 

The difference between the objects of the real world and the objects of the $\infty$-categorical world  can be illustrated here by the difference between the {\it graded} Leinster monoids in $\Vect(\k)$ and the general Leinster monoids in $\Vect(\k)$, see Section \ref{introlmg} below.

\subsection{\sc }
Recall  the main result of [Sh1].
\footnote{
In [Sh1], a more general statement for $n$-fold monoidal categories, $n\ge 1$, is proven. As we deal here only with $n=1$ case, the result from [Sh1] is quoted below only for this, technically simpler, case.
}

Let $\mathscr{A}$ be an abelian $\k$-linear category, $\k$ a field of characteristic 0. Suppose a monoidal structure $(\otimes,e)$ on the underlying $\k$-linear category is given. Denote by $\mathscr{A}^\dg$ the dg category of bounded from above complexes in $\mathscr{A}$, and let $\mathscr{I}\subset \mathscr{A}^\dg$ be the full dg subcategory of acyclic objects. Recall the definition of the {\it weak compatibility} between the exact and the monoidal structure in $\mathscr{A}$, see [Sh1, Def. 5.1]. The exact and the monoidal structure in $\mathscr{A}$ are said to be {\it weakly compatible} if there is a full additive $\k$-linear category $\mathscr{A}_0\subset \mathscr{A}$ such that the following conditions are fulfilled:
\begin{itemize}
\item[(i)] $\mathscr{A}_0$ is monoidal subcategory of $\mathscr{A}$, $e\in\mathscr{A}$ belongs to $\mathscr{A}_0$ and is a monoidal unit in $\mathscr{A}_0$,
\item[(ii)] denote $\mathscr{J}=\mathscr{A}_0^\dg\cap\mathscr{I}$, then $\mathscr{J}$ is a 2-sided ideal in $\mathscr{A}_0^\dg$ with respect to the monoidal product; that is, for an acyclic $I\in\mathscr{J}$ and any $X\in\mathscr{A}_0^\dg$ both complexes $X\otimes I$ and $I\otimes X$ are acyclic (here $\mathscr{A}_0^\dg$ is the full dg subcategory of $\mathscr{A}^\dg$, whose objects are bounded from above complexes in $\mathscr{A}_0$),
\item[(iii)] the natural embedding of pairs of dg categories
$$
(\mathscr{A}_0^\dg,\mathscr{J})\hookrightarrow (\mathscr{A}^\dg,\mathscr{I})
$$
induces a quasi-equivalence of the dg quotients 
$$
\mathscr{A}_0^\dg/\mathscr{J}\tilde{\rightarrow} \mathscr{A}^\dg/\mathscr{I}
$$
\end{itemize}

The $n=1$ case of our main result in [Sh1] reads:\footnote{The assumption for $\mathscr{A}$ to be essentially small is not necessary for this result, but it is technically simpler. In fact, Theorem \ref{theorem1intro} holds for any Grothendieck category $\mathscr{A}$, with a minor modification of the proof.}
\begin{theorem}\label{theorem1intro}
Let $\k$ be a field of characteristic 0, and let $\mathscr{A}$ be essentially small $\k$-linear monoidal abelian category, $e$ its unit object. Suppose the abelian and the monoidal structures are weakly compatible. Then there is a Leinster monoid $X_L$ in the monoidal category $\Alg(\k)$ of dg algebras over $\k$, whose first component $X_1$ is $\RHom^\udot_{\mathscr{A}}(e,e)$ as dg algebra, with the Yoneda dg algebra structure on the latter complex.
\end{theorem}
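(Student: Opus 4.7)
The plan is to build a simplicial dg algebra $X_\bullet\colon\Delta^{\op}\to\Alg(\k)$ whose $n$-th component $X_n$ models the derived endomorphism algebra of $e^{\otimes n}$ in a suitable dg enhancement of $D(\mathscr{A})$, with $X_0\simeq\k$ and $X_1$ quasi-isomorphic as a dg algebra to $\RHom^\udot_{\mathscr{A}}(e,e)$ carrying its Yoneda product. To qualify as a Leinster monoid in $\Alg(\k)$, the collection $X_\bullet$ must satisfy a Segal-type condition in which the natural maps $X_n\to X_1^{\otimes n}$ are quasi-isomorphisms; the face and degeneracy operators are to be furnished by the monoidal product (collapsing neighboring tensor factors) and by insertion of the unit.

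To construct the $X_n$ honestly, rather than only up to coherent homotopy, I would not work inside $\mathscr{A}^\dg$ directly but inside the monoidal dg subcategory $\mathscr{A}_0^\dg$ supplied by the weak-compatibility assumption, and then pass to the refined monoidal Drinfeld quotient $\mathscr{A}_0^\dg/\mathscr{J}$ introduced in [Sh1]. Condition (ii) on $\mathscr{J}$ being a two-sided monoidal ideal ensures that $\otimes$ descends to the quotient as a genuine, strictly associative dg monoidal product, while condition (iii) guarantees that the quotient is a faithful dg model for the derived category of $\mathscr{A}$. After replacing $e$ by an h-projective (or dually h-injective) resolution $P\to e$ inside $\mathscr{A}_0^\dg$, so that $\End^\udot(P)$ computes the Yoneda algebra, I would set $X_n\defeq\End^\udot(P^{\otimes n})$ inside $\mathscr{A}_0^\dg/\mathscr{J}$, with simplicial operators and dg algebra structures inherited from the strict monoidal structure of the quotient.

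The simplicial identities and the dg algebra axioms are then largely formal, reducing to the strict monoidality of the quotient (with a MacLane-type strictification absorbing residual coherences). The substantive step is the Segal condition, i.e. showing that the canonical comparison map $\End^\udot(P^{\otimes n})\to\End^\udot(P)^{\otimes n}$ is a quasi-isomorphism. Via the unit coherence $e^{\otimes n}\simeq e$ in $D(\mathscr{A})$ this reduces to the statement that derived endomorphisms of a tensor product factor through the tensor product of derived endomorphisms, which is precisely what the weak-compatibility hypothesis is engineered to enforce, since it permits one to tensor arbitrary complexes with resolutions without leaving the correct quasi-isomorphism class.

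The main obstacle, and the conceptual heart of the theorem, is the tension between the abelian structure, which is what makes $\RHom$ and the acyclic ideal available, and the monoidal structure, which is in general only right exact; one needs $e^{\otimes n}$ to be simultaneously the honest $n$-fold monoidal power of a resolution of $e$ and a valid object of a dg enhancement of $D(\mathscr{A})$. The combination of weak compatibility and the refined monoidal Drinfeld quotient is designed precisely to reconcile these two demands on the nose. Without this device one could at best assemble a Leinster monoid in an $\infty$-categorical sense, which would be too weak to support the strict-operad, strict-algebra payoff pursued in the present paper and in Tamarkin-style applications to deformation theory.
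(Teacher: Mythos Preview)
Your overall strategy --- use weak compatibility to pass to $\mathscr{A}_0^\dg$, take a refined Drinfeld quotient, and read off endomorphism dg algebras of tensor powers of the unit --- is along the right lines, but the execution has a structural gap that would make the construction collapse.

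The main issue is that you work inside a \emph{single} quotient $\mathscr{A}_0^\dg/\mathscr{J}$ and set $X_n=\End^\udot(P^{\otimes n})$ there, with $P^{\otimes n}$ the \emph{internal} monoidal power of a resolution of $e$. But $e$ is the monoidal unit, so $P^{\otimes n}\simeq e$ in the derived category, and hence your $X_n$ are all quasi-isomorphic to $X_1=\RHom_{\mathscr{A}}(e,e)$. The Segal maps $X_n\to X_1^{\otimes n}$ then cannot be quasi-isomorphisms unless $\RHom(e,e)\simeq\RHom(e,e)^{\otimes n}$ for all $n$, which fails in any interesting example. Your appeal to ``$e^{\otimes n}\simeq e$'' is precisely what destroys the Segal condition rather than proving it. Relatedly, the Drinfeld quotient does \emph{not} carry a strict monoidal structure in any useful sense; the refined quotient of [Sh1] is only \emph{colax}-monoidal, so one cannot simply ``tensor inside the quotient'' as you suggest.

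The paper's construction (recalled here in Section~\ref{sectionapp5}) avoids this by working with the \emph{external} tensor powers: one first forms the tuple $\bigl((\mathscr{A}_0^\dg)^{\otimes n};\mathscr{L}_1,\dots,\mathscr{L}_n\bigr)$ with $\mathscr{L}_i=(\mathscr{A}_0^\dg)^{\otimes(i-1)}\otimes\mathscr{J}\otimes(\mathscr{A}_0^\dg)^{\otimes(n-i)}$, then applies the refined quotient to get $Y_n=\Dr\bigl((\mathscr{A}_0^\dg)^{\otimes n};\mathscr{L}_1,\dots,\mathscr{L}_n\bigr)$, and finally sets $Z_n=\Hom_{Y_n}(e^{\boxtimes n},e^{\boxtimes n})$. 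The colax maps $\beta_{a,b}\colon Y_{a+b}\to Y_a\otimes Y_b$ are supplied by the colax-monoidality of the refined quotient, and the Segal condition then follows from a K\"unneth-type comparison in the \emph{external} product, not from any unit coherence. No resolution $P$ of $e$ is needed: the dg quotient already computes derived $\Hom$. Finally, note that a Leinster monoid is a colax-monoidal functor out of $\Delta_0^\opp$, not a simplicial object $\Delta^\opp\to\Alg(\k)$; the extreme face maps are exactly what one cannot define in the $\k$-linear setting, as explained in Section~\ref{lmintro}.
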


We recall the definition of a Leinster monoid in a symmetric monoidal category in Section \ref{lmintro}.

\subsubsection{\sc Examples}
Here we give two examples of monoidal abelian categories and discuss the fulfillment of the weak compatibility condition.
\begin{itemize}
\item[(1)] $A$ an associative dg algebra over $\k$, $\mathscr{A}=\Bimod(A)$, with the monoidal structure defined above. 
The weak compatibility holds if we take for $\mathscr{A}_0$ the full subcategory of $\mathscr{A}$ formed by the bimodules which are left flat and right flat as $A$-modules. Note that we can not take for $\mathscr{A}_0$ the category of projective (or flat) bimodules, as in the latter case the monoidal unit (the tautological bimodule $A$) does not belong to $\mathscr{A}_0$;
\item[(2)] $B$ an associative bialgebra, and $\mathscr{A}=\Mod(B)$ be the category of left modules over the underlying algebra $B$.
For $M,N\in\Mod(B)$, define a left $B$-module structure on the vector space $M\otimes_kN$ (which is in general a $B\otimes_kB$-module) via the coproduct
$$
\Delta\colon B\to B\otimes B
$$
The compatibility axiom in bialgebra is expressed by saying that $\Delta$ is a map of associative algebras, which makes possible to restrict the $B\otimes_kB$-module structure on $M\otimes_kN$ to a $B$-module structure. This monoidal product is exact, the unit is $k$ regarded as a $B$-module via the counit map $\varepsilon \colon B\to k$, which is a map of algebras. Thus, the weak compatibility holds.
.\end{itemize}

\comment
\subsubsection{\sc A more general setting}
In fact the proof of Theorem \ref{theorem1intro} in [Sh1] goes in a more general than abelian $\k$-linear categories setting of exact $\k$-linear categories.

In [Sh1], we pass from an abelian $\k$-linear category $\mathscr{A}$ to the corresponding dg category $\mathscr{A}^\dg$ and consider the pair
$(\mathscr{A}^\dg, \mathscr{I}^\dg)$ where $\mathscr{I}^\dg$ is the full subcategory in $\mathscr{A}^\dg$ of acyclic objects.

Then our assumption of weak compatibility with the monoidal structure is formulated as follows:

There exist a full pre-triangulated dg subcategory $i\colon \mathscr{A}_0^\dg\hookrightarrow\mathscr{A}^\dg$ such that:
\begin{itemize}
\item[(1)] $i$ is a quasi-equivalence of dg categories,
\item[(2)] $\mathscr{A}_0^\dg$ is closed under the monoidal product and contains the unit $e$,
\item[(3)] $\mathscr{I}_0^\dg:=\mathscr{I}^\dg\cap \mathscr{A}_0^\dg$ is the monoidal two-sided ideal in $\mathscr{A}_0^\dg$.
\end{itemize}
In this setting, our proof in [Sh1] gives immediately that $\Hom^\udot_{\mathscr{A}^\dg/\mathscr{I}^\dg}(e,e)$ is a Leinster 1-monoid in $\Alg(\k)$,
where $\mathscr{A}^\dg/\mathscr{I}^\dg$ stands for the dg quotient, see [K1,2], [Dr].

We can start with an exact category $\mathscr{A}$, then pass to $\mathscr{A}^\dg$, and take for $\mathscr{I}^\dg$ the corresponding dg category of the subcategory of acyclic objects in the exact category $\mathscr{A}$.
\begin{claim}{\rm
For an essentially small exact $\k$-linear category $\mathscr{A}$ suppose that the conditions (1)-(3) above are fulfilled for some pre-triangulated full sub-category $\mathscr{A}_0^\dg\subset \mathscr{A}^\dg$. Then $\Hom^\udot_{\mathscr{A}^\dg/\mathscr{I}^\dg}(e,e)$ is a Leinster 1-monoid in $\Alg(\k)$.
}
\end{claim}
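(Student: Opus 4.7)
The plan is to reduce the exact-categorical statement to the abelian-categorical version already proven as Theorem \ref{theorem1intro} in [Sh1], by observing that the proof in loc.cit. never uses the full abelian structure on $\mathscr{A}$ but only the package of data encoded in conditions (1)--(3). The starting observation is that condition (1), the quasi-equivalence $i\colon \mathscr{A}_0^\dg\hookrightarrow \mathscr{A}^\dg$, automatically sends $\mathscr{I}_0^\dg$ into $\mathscr{I}^\dg$ and induces a quasi-equivalence of the dg quotients $\mathscr{A}_0^\dg/\mathscr{I}_0^\dg \xrightarrow{\sim} \mathscr{A}^\dg/\mathscr{I}^\dg$, which is the analogue of assumption (iii) in the abelian setup. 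Consequently the dg algebras $\Hom^\udot_{\mathscr{A}_0^\dg/\mathscr{I}_0^\dg}(e,e)$ and $\Hom^\udot_{\mathscr{A}^\dg/\mathscr{I}^\dg}(e,e)$ are quasi-isomorphic, so it is enough to construct the desired Leinster monoid with first component the former.

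Next I would verify that the monoidal structure descends to the dg quotient on the small side. Conditions (2) and (3) --- closure of $\mathscr{A}_0^\dg$ under $\otimes$ with unit $e$, and $\mathscr{I}_0^\dg$ being a two-sided monoidal ideal --- are exactly what is required so that the universal property of the dg quotient produces an induced monoidal bifunctor on $\mathscr{A}_0^\dg/\mathscr{I}_0^\dg$ preserving the strict unit $e$. This places us in the abstract input of [Sh1]: a monoidal dg category whose unit has a distinguished endomorphism dg algebra, together with the higher endomorphism complexes of its tensor powers $e^{\otimes n}$, with the monoidal product behaving sufficiently well that the [Sh1] refinement of the Drinfeld dg quotient applies.

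With this setup in hand, the construction of the Leinster 1-monoid proceeds as in the proof of Theorem \ref{theorem1intro}. Its $n$-th component is built from $\Hom^\udot_{\mathscr{A}_0^\dg/\mathscr{I}_0^\dg}(e^{\otimes n}, e^{\otimes n})$ (via the refined dg quotient of [Sh1] ensuring the correct monoidal behaviour), with face and degeneracy maps coming from the unit isomorphisms $e\otimes e\simeq e$ and from the tensor product, and the Leinster monoid axioms are checked verbatim as in loc.cit. Using the quasi-isomorphism of the first step, the resulting first component is identified with $\Hom^\udot_{\mathscr{A}^\dg/\mathscr{I}^\dg}(e,e)$, giving the claim.

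The main obstacle is of a bookkeeping nature: one must check that every step of the proof of Theorem \ref{theorem1intro} factors through the abstract package (1)--(3) and does not secretly invoke kernels, cokernels, or short exact sequences in the abelian category $\mathscr{A}$. I expect this to be routine, since the proof in [Sh1] is organised entirely around the pair $(\mathscr{A}^\dg,\mathscr{I}^\dg)$ and the only role played by $\mathscr{I}^\dg$ there is as a full dg subcategory of acyclic objects stable under translations, cones, and the monoidal product from either side --- properties retained in the exact-categorical setting. The place to inspect most carefully is wherever the abelian proof implicitly uses a functorial replacement by objects of $\mathscr{A}_0$ (e.g. by two-sided flat bimodules in example (1)); but any such replacement is absorbed into the quasi-equivalence assumption (1), which already permits one to work entirely inside $\mathscr{A}_0^\dg$.
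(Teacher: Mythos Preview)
Your proposal is correct and matches the paper's own approach: the paper simply states that the proof is a straightforward rewriting of the proof in [Sh1], and your sketch spells out precisely what that rewriting consists of --- namely, that every step of the [Sh1] argument uses only the data of the pair $(\mathscr{A}_0^\dg,\mathscr{I}_0^\dg)$ subject to conditions (1)--(3), never the abelian structure of $\mathscr{A}$ itself.
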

The proof is a straightforward rewriting of the proof in [Sh1].
\endcomment

\subsection{\sc Conversion of Leinster monoids in $\Alg(\k)$ to $C(E_2,\k)$-algebras}\label{introlmg}
The main topic of this paper is the question {\it how to convert a Leinster monoid $X_L$ in $\Alg(\k)$ into a $C_\ldot(E_2,\k)$-algebra}, whose underlying complex is quasi-isomorphic to the first component $X_1$ of the Leinster monoid $X_L$.

It can be always done in the context of $\infty$-categories, with the Lurie's definition of $E_n$-algebras [L2].
However, for applications like Tamarkin's proof of the Kontsevich formality, to have a honest (vs. $\infty$-categorical) Deligne conjecture may have its advantages.

For a general Leinster monoid in $\Alg(\k)$, we do not know any explicit construction of an algebra over an operad quasi-isomorphic to $C_\ldot(E_2,\k)$ on a space quasi-isomorphic to $X_1$.
Morally, we think on this situation as on a structure of an $E_2$-algebra, existing in the world of $\infty$-categories, and not having any ``classical'' counter-part.

On the other hand, we introduce here a new concept of a {\it graded} Leinster monoid (in $\Vect(\k)$, $\Alg(\k)$, $\Cat^\dg(\k)$, \dots).
The {\it grading} on a Leinster monoid $X_L$ in $\Vect(\k)$ (see Definition \ref{grl}) is an integral structure, which makes it closer to the standard bar-construction of an associative dg algebra $A$ over $\k$.  It makes possible, in particular, to define a ``bar-complex'' $\Bar(X_L)$, for a graded Leinster monoid $X_L$ in $\Vect(\k)$, which is always a dg coalgebra over $\k$. In the case of a graded Leinster monoid in $\Alg(\k)$, we make use of this bar-complex to define a $B_\infty$ algebra structure on the space $\Cobar(\Bar(X_L))$, quasi-isomorphic to $X_1$. The two main results of this paper (A) show how to do that, and (B) show that the Leinster monoids of Theorem \ref{theorem1intro}, constructed in [Sh1], are in fact graded Leinster monoids.

More precisely, we prove the following results.
\begin{theorem}[proven as Theorem \ref{mtheorem}, is referred to as Main Theorem]\label{theorem2intro}
Let $X_L$ be a graded Leinster monoid in the category $\Alg(\k)$ of dg algebras over $\k$, see Definitions \ref{grl}, \ref{grla}. Then there is a structure of a $B_\infty$-algebra on a space $\mathscr{Y}(X_L)$, quasi-isomorphic to $X_{1}$, such that the underlying homotopy commutative algebra on $\mathscr{Y}(X_L)$ is quasi-isomorphic (as $A_\infty$ algebra) to the underlying associative algebra structure in $X_{1}\in \Alg(\k)$. The correspondence
\begin{equation}
X_L\rightsquigarrow \mathscr{Y}(X_L)
\end{equation}
gives rise to a functor
\begin{equation}
\mathscr{Y}\colon L_1(\Alg(\k))\to B_\infty(\k)
\end{equation}
This functor maps weakly equivalent Leinster $1$-monoids in $\Alg(\k)$ to quasi-isomorphic $B_\infty$-algebras over $\k$.
\end{theorem}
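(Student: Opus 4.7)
The strategy is to set $\mathscr{Y}(X_L) := \Cobar(\Bar(X_L))$, where $\Bar(X_L)$ is the dg coalgebra associated to the graded Leinster monoid (whose construction, for graded Leinster monoids in $\Vect(\k)$, is presumably recorded earlier in the paper; the grading being exactly the integral refinement that promotes the collection $\{X_n\}$ to a bona fide dg coalgebra, with coproduct coming from deconcatenation of composable strings and differential from the alternating sum of face-like maps). The first main task is to upgrade $\Bar(X_L)$ to a dg bialgebra in $\Vect(\k)$: the dg algebra structures on the components $X_n \in \Alg(\k)$ assemble into a componentwise product on $\Bar(X_L)$, and the content of the definition of a graded Leinster monoid in $\Alg(\k)$ ought to be precisely that all the structural maps of $X_L$ are morphisms of dg algebras, which forces the bar differential and the deconcatenation coproduct to be algebra maps, i.e.\ makes $\Bar(X_L)$ a dg bialgebra. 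It is then classical (Baues, Getzler--Jones, Kadeishvili) that the cobar construction of a connected dg bialgebra $H$ carries a canonical $B_\infty$-algebra structure---equivalently, by bar--cobar duality, a $B_\infty$-structure on $A$ is the same datum as a bialgebra structure on $\Bar(A)$---which supplies the desired $B_\infty$-structure on $\mathscr{Y}(X_L)$.

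To identify the underlying homotopy commutative $A_\infty$-algebra with $X_1$, I would use the inclusion $\Bar(X_1) \hookrightarrow \Bar(X_L)$ of the $X_1$-subcoalgebra, where $\Bar(X_1)$ is the standard bar coalgebra of the dg algebra $X_1$ (with the algebra structure induced by composition via $X_2 \to X_1$). By the Leinster (Segal-type) axiom, the structural maps $X_n \to X_1^{\otimes n}$ are quasi-isomorphisms, so the bar filtration and its associated spectral sequence show this inclusion is a quasi-isomorphism of dg coalgebras. Applying the standard bar--cobar quasi-isomorphism $\Cobar(\Bar(X_1)) \to X_1$ then yields the desired $A_\infty$ quasi-isomorphism $\mathscr{Y}(X_L) \simeq X_1$ for the underlying $A_\infty$-structure. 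Functoriality is manifest on the strict category of graded Leinster monoids in $\Alg(\k)$; a weak equivalence $f: X_L \to Y_L$ (by definition, a quasi-isomorphism on first components) propagates via the Leinster axioms to componentwise quasi-isomorphisms $X_n \to Y_n$, so $\Bar(f)$ is a quasi-isomorphism of dg coalgebras (again by the bar spectral sequence), and $\Cobar$ preserves quasi-isomorphisms of connected dg coalgebras, giving a quasi-isomorphism of $B_\infty$-algebras.

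The principal technical obstacle, I expect, lies in the bialgebra verification: extracting from the axioms of a graded Leinster monoid in $\Alg(\k)$ the precise identity which expresses that the deconcatenation coproduct on $\Bar(X_L)$ is compatible (in the bialgebra sense) with the componentwise product---equivalently, the coherence between the Leinster face/degeneracy-like maps, the grading data, and the algebra structures on the $X_n$. The purpose of the \emph{graded} refinement of a Leinster monoid should be precisely to make this compatibility hold on the nose rather than only up to higher coherent homotopy, which would explain the emphasis in the introduction that a general (ungraded) Leinster monoid in $\Alg(\k)$ cannot be expected to yield such a classical, non-$\infty$-categorical $B_\infty$-algebra.
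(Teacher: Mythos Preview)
Your high-level strategy matches the paper's: set $\mathscr{Y}(X_L)=\Cobar(\Bar(X_L))$, show $\Bar(X_L)$ is a dg bialgebra, apply the Tamarkin--Kadeishvili construction. But two steps are genuine gaps, and the first of them is exactly the heart of the paper.

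\textbf{The product on $\Bar(X_L)$ is not ``componentwise.''} The underlying graded vector space of $\Bar(X_L)$ is $\bigoplus_n X_n[n]$, so multiplying $x\in X_m$ by $y\in X_n$ must land in $X_{m+n}$, not in some $X_k$ where a ready-made product lives. The paper's solution (Section~4, Theorem~\ref{emmnewtheorem}) is a \emph{modified Eilenberg--MacLane map} $\nabla^\Bar\colon \Bar(X_L)\otimes\Bar(X_L)\to\Bar(X_L\otimes X_L)$, built from signed sums over shuffles of degeneracy operators, which is then post-composed with the componentwise multiplication $X_L\otimes X_L\to X_L$ (this last is what uses that each $X_n$ is an algebra). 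Proving that $\nabla^\Bar$ is a chain map with the correct signs, lax-monoidal, symmetric, and a map of dg coalgebras (Theorem~\ref{theoremcoalg}) is the long computational core of the paper (Sections~8--9). Your description ``the structural maps are algebra maps, so the deconcatenation coproduct is an algebra map'' skips precisely this: that fact alone does not give you a product on $\Bar(X_L)$ at all, and once you do have one via $\nabla^\Bar$, the bialgebra compatibility is the content of Theorem~\ref{theoremcoalg}.

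\textbf{There is no inclusion $\Bar(X_1)\hookrightarrow\Bar(X_L)$.} The colax maps $\beta_{a,b}\colon X_{a+b}\to X_a\otimes X_b$ go the wrong way, so $\Bar(X_1)$ (with components $X_1^{\otimes n}$) is not a sub-coalgebra. The paper instead constructs the $A_\infty$ quasi-isomorphism $X_1\to\Cobar(\Bar(X_L))$ by hand (Theorem~\ref{mainyon}): the Taylor component $\phi_n$ sends $x_1\otimes\cdots\otimes x_n$ to $\pm\, w_1(x_1)*\cdots*w_n(x_n)\in X_n\subset\Bar(X_L)[-1]\subset\Cobar(\Bar(X_L))$, where the $w_k\colon X_1\to X_n$ are specific degeneracy maps in $\Delta_0$ and $*$ is the product in the algebra $X_n$. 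The quasi-isomorphism of the underlying complexes is Theorem~\ref{theorem27}, proved via a spectral sequence on $\Cobar(\Bar(X_L))$ (not via comparison with $\Bar(X_1)$).
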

The concept of a $B_\infty$-algebra was introduced in [GJ]. We recall the definition in Section \ref{sectionbraces}. It is well known [MS1,2] that a $B_\infty$-algebra $Y$ is an algebra over $C_\ldot(E_2,\k)$-algebra, $\cchar \k=0$ (we recall it in Section \ref{sms}).

Our next result shows the applicability of Theorem \ref{theorem2intro} for the Leinster monoids in $\Alg(\k)$ constructed in [Sh1].
\begin{theorem}[proven as Theorem \ref{propappf}]\label{theorem33intro}
Let $\mathscr{A}$ be a $\k$-linear abelian monoidal category, for which the weak compatibility is fulfilled, and let $X_L$ be the Leinster monoid in $\Alg(\k)$ from Theorem \ref{theorem1intro}. Then $X_L$ is graded, see Definitions \ref{grl}, \ref{grla}.
\end{theorem}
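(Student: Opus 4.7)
\medskip

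\noindent\textbf{Proof proposal for Theorem \ref{theorem33intro}.}

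The plan is to go back to the explicit construction of the Leinster monoid $X_L$ in [Sh1] and to read off an integer-valued grading directly from the combinatorics of the refined Drinfeld dg quotient $\mathscr{A}_0^\dg/\mathscr{J}$ used there. Recall that the $n$-th component $X_n$ of $X_L$ is built, up to quasi-isomorphism, from $\Hom^\udot_{\mathscr{A}_0^\dg/\mathscr{J}}(e^{\otimes n}, e^{\otimes n})$, and that in the Drinfeld-style model a morphism is a formal sum of zig-zags $f_1\circ h_1\circ f_2\circ h_2\circ\cdots\circ h_{r-1}\circ f_r$ in which the $h_i$ are contracting homotopies of objects of $\mathscr{J}$ and the $f_i$ are honest morphisms in $\mathscr{A}_0^\dg$. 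I would define the grading of such a zig-zag to be the number $r-1$ of inserted contracting homotopies (equivalently, the bar-length of the formal expression). This is the natural ``word length'' filtration on the refined quotient of [Sh1], and it is where the word \emph{graded} comes from: on $X_1 = \RHom^\udot_\mathscr{A}(e,e)$ it reproduces the grading of the standard bar resolution with respect to which $\RHom(e,e)$ is modelled.

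Next I would check, step by step, that all structure maps of the Leinster monoid respect this integer grading. The Leinster monoid structure on $X_L$ assembles (i) the Yoneda multiplications $\Hom(e^{\otimes n},e^{\otimes n})\otimes \Hom(e^{\otimes n},e^{\otimes n})\to \Hom(e^{\otimes n},e^{\otimes n})$, (ii) the tensoring maps $\Hom(e^{\otimes m},e^{\otimes m})\otimes \Hom(e^{\otimes n},e^{\otimes n})\to \Hom(e^{\otimes (m+n)},e^{\otimes (m+n)})$, and (iii) the structure morphisms indexed by maps $[m]\to[n]$ in Leinster's indexing category, which in the construction of [Sh1] are induced by the unit and associativity coherence isomorphisms of the monoidal product. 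For (i) and (ii) the claim is immediate from the definition: concatenation of two zig-zags of lengths $r_1-1$ and $r_2-1$, respectively placed side by side or composed, is a zig-zag of length $(r_1-1)+(r_2-1)$, and the monoidal tensor of a zig-zag of length $r-1$ with an identity arrow is again a zig-zag of the same length. For (iii) one uses that the coherence isomorphisms for $(\otimes, e)$ in $\mathscr{A}_0^\dg$ are honest $\mathscr{A}_0^\dg$-morphisms, not containing any contracting homotopies, so pre- and post-composition with them does not alter the word length.

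The last step is to match this grading with the axiomatic content of Definition \ref{grl} and Definition \ref{grla}, i.e.\ to verify the compatibility of the grading with the dg algebra (resp.\ bar-coalgebra) structure coming from the dg structure on $\mathscr{A}_0^\dg/\mathscr{J}$. Here I would appeal to the fact that the differential on a Drinfeld-style zig-zag is the graded sum of three pieces: the internal differentials of the $f_i$ and $h_i$, the Leibniz terms from adjacent compositions, and the ``activation'' term $d(h_i)=\id$ which replaces a contracting homotopy by the identity on an acyclic object and thereby lowers the word length by exactly one. This is exactly the pattern of a filtered differential whose associated graded behaves like the bar differential, which is the structural input required to promote the filtration by $r-1$ into a grading in the sense of [Sh1, Defn.\ \ref{grl}--\ref{grla}].

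I expect the technical crux to lie in this last step: one must check that the differential and the Yoneda product on $\Hom^\udot_{\mathscr{A}_0^\dg/\mathscr{J}}(e^{\otimes n}, e^{\otimes n})$ interact with the word-length grading exactly in the way the graded Leinster monoid axioms demand, uniformly in $n$ and compatibly with all the Leinster structure maps. The verification that (iii) preserves the grading is clean provided one is careful that the coherence morphisms are chosen in $\mathscr{A}_0^\dg$ and not only in $\mathscr{A}_0^\dg/\mathscr{J}$; this requires revisiting the weak compatibility hypothesis, and is the point where the condition that $\mathscr{J}$ be a two-sided monoidal ideal in $\mathscr{A}_0^\dg$ is used. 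Once these compatibilities are in place, the axioms of Definition \ref{grla} are direct consequences of the bookkeeping above.
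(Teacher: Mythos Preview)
There is a genuine gap: you have misread what ``graded'' means in Definitions \ref{grl} and \ref{grla}. A graded Leinster monoid requires a $\mathbb{Z}^n$-grading on each component $X_n$, not a single integer grading. The components $X_n^{a_1,\dots,a_n}$ must record an $n$-tuple, the colax-maps $\beta_{a,b}\colon X_{a+b}\to X_a\otimes X_b$ must preserve this multi-grading under concatenation $\mathbb{Z}^{a+b}=\mathbb{Z}^a\oplus\mathbb{Z}^b$, and the cohomological degree must equal the \emph{sum} $a_1+\dots+a_n$. Your ``word-length'' proposal only yields a single integer (essentially the total cohomological degree shifted), which is information already present in any dg object and is not what the definition asks for. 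In particular it says nothing about how a morphism in $X_n$ decomposes across the $n$ monoidal slots, which is precisely what the bar-differential signs \eqref{diffbarlb}--\eqref{diffbarbis} need.

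The paper's argument (Section \ref{sectionapp1}--\ref{sectionapp5}) hinges on the \emph{refined} Drinfeld quotient $\Dr((\mathscr{A}_0^\dg)^{\otimes n};\mathscr{L}_1,\dots,\mathscr{L}_n)$, not the ordinary one. The point is that there are $n$ distinct full subcategories $\mathscr{L}_i$, hence $n$ distinct families of added generators $\varepsilon^i_X$ (together with higher $\varepsilon^{i_1\dots i_s}_X$), and the $\mathbb{Z}^n$-grading is defined by declaring $\varepsilon^{i_1\dots i_s}_X$ to have multi-degree $-1$ in each slot $i_1,\dots,i_s$ and $0$ elsewhere (Proposition \ref{propapp1}). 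The compatibility with the colax-maps then comes from the explicit formula $\beta(\varepsilon_{X\times Y}^{i_1,\dots,i_s,j_1,\dots,j_t})=\varepsilon_X^{i_1,\dots,i_s}\otimes\varepsilon_Y^{j_1,\dots,j_t}$ of [Sh1, Prop.~4.4] (Proposition \ref{propapp2}). Your zig-zag description effectively collapses all the $\varepsilon^i$ to a single $\varepsilon$, which is exactly the functor $\Psi$ of \eqref{psi} back to the ordinary quotient; that quotient does not carry the needed multi-grading, and indeed the whole reason the refined quotient was introduced in [Sh1] was to retain this extra bookkeeping.
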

Theorem \ref{theorem33intro} expresses a nice property of the ``refined Drinfeld dg quotient'', the main new construction of [Sh1], we have not considered earlier. It turns out that this property is essential for making everything in this paper work. 

As a consequence of Theorems \ref{theorem1intro}, \ref{theorem2intro}, \ref{theorem33intro}, we get
\begin{coroll}[generalized Deligne conjecture for monoidal abelian categories]
Let $\k$ be a field of characteristic 0, and let $\mathscr{A}$ be essentially small $\k$-linear monoidal abelian category, $e$ its unit object. Suppose the abelian and the monoidal structures are weakly compatible. Then there is a dg vector space $Y=Y(\mathscr{A})$, quasi-isomorphic to $\RHom_{\mathscr{A}}(e,e)$, with a $B_\infty$-algebra structure on it, whose homotopy commutative algebra structure is quasi-isomorphic, as an $A_\infty$ algebra, to the Yoneda dg associative algebra on $\RHom_{\mathscr{A}}(e,e)$.
\end{coroll}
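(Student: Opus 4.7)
The plan is to assemble the Corollary directly from the three main results stated above. First I would apply Theorem \ref{theorem1intro} to the given essentially small, weakly compatible, $\k$-linear abelian monoidal category $\mathscr{A}$. This produces a Leinster $1$-monoid $X_L$ in $\Alg(\k)$, constructed in [Sh1] via the refined Drinfeld dg quotient of the pair $(\mathscr{A}_0^\dg,\mathscr{J})$, and whose first component $X_1$ is the complex $\RHom^\udot_{\mathscr{A}}(e,e)$ equipped with its Yoneda dg algebra structure.

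Next I would upgrade this Leinster monoid to a \emph{graded} Leinster monoid in $\Alg(\k)$ by invoking Theorem \ref{theorem33intro}, which asserts that the specific $X_L$ output by the construction of [Sh1] automatically carries the grading of Definitions \ref{grl}, \ref{grla}. This is the step that keeps the whole argument in the ``honest'' world rather than forcing one to pass to $\infty$-categories, and intuitively the integer grading comes from the length filtration built into the refined Drinfeld dg quotient.

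Finally I would feed the graded Leinster monoid $X_L \in L_1(\Alg(\k))$ into the Main Theorem (Theorem \ref{theorem2intro}). This produces a dg vector space $\mathscr{Y}(X_L)$, quasi-isomorphic to $X_1$, endowed with a $B_\infty$-algebra structure whose underlying homotopy commutative algebra is $A_\infty$ quasi-isomorphic to the associative algebra structure on $X_1 \in \Alg(\k)$. Setting
\[
Y(\mathscr{A}) := \mathscr{Y}(X_L)
\]
then yields the claim: both the quasi-isomorphism of dg vector spaces to $\RHom_{\mathscr{A}}(e,e)$ and the $A_\infty$-compatibility with the Yoneda product are immediate from the conclusion of Theorem \ref{theorem2intro}.

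Because the Corollary is purely an assembly of already-stated results, there is essentially no genuine obstacle at this level; the only thing to check is that the outputs of Theorems \ref{theorem1intro} and \ref{theorem33intro} fit the hypotheses of Theorem \ref{theorem2intro}, which they do by construction. The real difficulty in the overall argument is hidden inside Theorem \ref{theorem2intro}, where one must build $\mathscr{Y}(X_L) \simeq \Cobar(\Bar(X_L))$ out of the graded Leinster monoid and transfer the associative multiplication on $X_1$ through this cobar-bar resolution in a way that simultaneously assembles the brace operations required for the $B_\infty$-structure; this is the main technical bottleneck that the rest of the paper is devoted to overcoming.
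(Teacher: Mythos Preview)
Your proposal is correct and matches the paper's own argument: the Corollary is stated immediately after the sentence ``As a consequence of Theorems \ref{theorem1intro}, \ref{theorem2intro}, \ref{theorem33intro}, we get'' with no further proof, so the assembly you describe is exactly what is intended. Your identification of $Y(\mathscr{A}) = \mathscr{Y}(X_L) = \Cobar(\Bar(X_L))$ and the remark that the substantive work is hidden inside Theorem \ref{theorem2intro} are also consistent with the paper's development.
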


\begin{remark}{\rm
(1). Any Leinster monoid is ``homotopically graded'', in the sense that its cohomology Leinster monoid is graded, by trivial reasons. (2). Technically, the {\it grading} on $X_L$  is necessary for defining the ``correct'' signs in the chain (bar) differential.
It turns out that the (naive) chain differential of a simplicial vector space (or, rather, of a functor $X_L\colon \Delta_0^\opp\to\Vect(\k)$, underlying of a Leinster monoid) does not respect the signs which appear from the inner grading on the components $X_n$. In particular, if we take $X_n=A^{\otimes n}$, for a dg algebra $A$, the ordinary chain complex of the corresponding functor $\Delta_0^\opp\to\Vect(\k)$ has wrong signs, as it does not remember the $\mathbb{Z}^n$-grading of the elements of $A^{\otimes n}$.}
\end{remark}

\subsection{\sc The techniques}\label{goals}
Our main technical tool is the bar-complex $\Bar(X_L)$ of a {\it graded} Leinster monoid in $\Vect(\k)$, which is a dg coalgebra over $\k$. The grading on $X_L$ is necessary to define the correct signs with the terms of the chain differential, which mimic the bar-complex of a dg algebra. Without the grading, we can only define the naive chain complex, which is not the correct concept.

Then we construct an analogue of the Eilenberg-MacLane map (also known as the Eilenberg-Zilber, or the shuffle, map), providing a lax-monoidal structure on $\Bar(-)$, originally defined for the chain complexes of simplicial abelian groups. We do not know whether it is possible to deduce the standard properties for this map from their classical counter-parts (except for the case of zero grading, for which we have an alternative and more conceptual proof, discussed in Appendix). In any case, we prove here everything we need on the modified Eilenberg-MacLane map ``by hand''.
It results in a lot of computations, what shouldn't make the reader surprised, as the proofs of the corresponding classical results, given in [EM], are also rather computational.

For a graded Leinster monoid $X_L$ in $\Alg(\k)$, the Eilenberg-MacLane map endows $\Bar(X_L)$ with a dg algebra structure over $\k$. We show that in this case the dg coalgebra and the dg algebra structures on $\Bar(X_L)$ are compatible, making it a dg bialgebra over $\k$, see Theorem \ref{theoremcoalg}.

A general construction, we refer to as the Tamarkin-Kadeishvili construction
\footnote{The construction was firstly invented by D.Tamarkin in his approach to quantization of Lie bialgebras [T4], and two years later was independently re-discovered by T.Kadeishvili [Kad].}, endows $\Cobar(B_\coalg)$, for a dg bialgebra $B$, with a $B_\infty$ algebra structure. Here $B_\infty$ is the Getzler-Jones ``braces'' operad [GJ]. On the other hand, $\Cobar(\Bar(X_L))$ is quasi-isomorphic to $X_1$, by a generalization of the bar-cobar duality. Finally, the results of McClure-Smith [MS1,2] link $B_\infty$-algebras and $C_\ldot(E_2,\k)$-algebras. Then we link the associative algebra parts, by constructing an explicit $A_\infty$ morphism. It completes the proof of Theorem \ref{theorem2intro}.

The proof of Theorem \ref{theorem33intro} is, in contrary, conceptual. It reveals the fact that the ``refined Drinfeld dg quotient'' introduced in [Sh1] (which is a homotopy refinement on the Drinfeld dg quotient [Dr], having nicer monoidal properties), does not ``mix up'' the multi-gradings.

\subsection{\sc Organization of the paper}
Our general compositional principle was to put the computational proofs off to the end of the paper. Such are Sections \ref{proeff}, \ref{proofcoalg}, and \ref{proefmy}, which contain the proofs of Theorems \ref{emmnewtheorem}, \ref{theoremcoalg}, and  \ref{mainyon}, correspondingly. It should make the main body of the paper more transparent and readable. 

The contents of the individual Sections is as follows.

In Section 1, we recall basic definitions on Leinster monoids in a monoidal category with weak equivalence, following [Le].

In Section 2, we recall some practical consequences of the Quillen bar-cobar duality. We are mainly focused on a quasi-isomorphism $\Cobar(\Bar(A))\to A$, for which we provide a direct proof. The reason for providing a complete proof here (instead of giving a reference to the well-known result in the closed model categories) is that we later generalize this result for graded Leinster monoids, in Section 3.6, and the proof there follows the same line and relies on some results proven in Section 2.

The first two Sections do not contain any new results and have no claims to originality.

In Section 3, we define graded Leinster monoids in $\Vect(\k)$ and in $\Alg(\k)$. We define the bar-complex of a graded Leinster monoid in $\Vect(\k)$, our main tool in ``rectification'' of graded Leinster monoids, in Section 3.3, and prove the quasi-isomorphism $\Cobar(\Bar(X_L))\to X_1$, for a graded Leinster monoid $X_L$ in $\Vect(\k)$, in Section 3.4.

Section 4 develops the theory of the modified Eilenberg-MacLane map for graded Leinster monoids in $\Vect(\k)$. We remind the classical Eilenberg-MacLane map in Section 4.1, and formulate the main result on the modified Eilenberg-MacLane map in Section 4.2, Theorem \ref{emmnewtheorem}. Another very important result proven here, in Section 4.4, is Theorem \ref{theoremcoalg}, saying that the modified Eilenberg-MacLane map $\nabla^\Bar\colon \Bar(X_L)\otimes \Bar(Y_L)\to\Bar(X_L\otimes Y_L)$, for two graded Leinster monoids $X_L,Y_L$ in $\Vect(\k)$, is a map of dg coalgebras. As an immediate consequence we see that $\Bar(X_L)$, for $X_L$ a graded Leinster monoid in $\Alg(\k)$, is a dg bialgebra (Corollary 4.7).

In Section 5 and Section 6 we assemble all parts of the story and prove Theorem 2 above (as Theorem \ref{mtheorem}). 
We recall the Tamarkin-Kadeishvili construction in Section 5.3, which implies that $\Cobar(\Bar(X_L))$, for a graded Leinster monoid $X_L$ in $\Alg(\k)$, is a $B_\infty$ algebra.
The proof of the statement that the underlying dg associative algebra of the $B_\infty$ algebra $\Cobar(\Bar(X_L))$, for a graded Leinster monoid $X_L$ in $\Alg(\k)$, is $A_\infty$ quasi-isomorphic to $X_1$, is proven by a construction of an explicit $A_\infty$ morphism, in Theorem \ref{mainyon}.

In Section 7 we prove Theorem 3 above (as Theorem \ref{propappf}).

Sections 8, 9, 10 contain the computational proofs of Theorems \ref{emmnewtheorem}, \ref{theoremcoalg}, and \ref{mainyon}, correspondingly.

In Appendix we provide a more conceptual proof of Theorem \ref{emmnewtheorem}, for the particular case of zero grading. In this case, we were able to deduce our result from some results proven in [EM]. Note that it is still a non-trivial statement, as the chain differential of a Leinster monoid is not the same as the simplicial chain differential, but is equal to the latter one with the two extreme terms removed.

\subsubsection*{\sc Acknowledgement}
I am grateful to Michael Batanin, Sasha Beilinson, Pavel Etingof, Boris Feigin, Anton Gerasimov, Vladimir Hinich, Dima Kaledin, Bernhard Keller, Alexander Kuznetzov, Wendy Lowen, Stefan Schwede, Dima Tamarkin, Vadim Vologodsky for many inspiring discussions, along the years and on different occasions, which were helpful for this work. \\[2pt]
The work was partially supported by the Flemish Science Foundation (FWO) Research grant {\it Krediet aan Navorser} 19/6525.

\renewcommand{\thetheorem}{\thesection.{\sc \arabic {theorem}}}
\renewcommand{\thelemma}{\thesection.{\sc \arabic {lemma}}}
\renewcommand{\theconjecture}{\thesection.{\sc \arabic {conjecture}}}
\renewcommand{\theprop}{\thesection.{\sc \arabic {prop}}}
\renewcommand{\thecoroll}{\thesection.{\sc \arabic {coroll}}}
\renewcommand{\theklemma}{\thesection.{\sc \arabic {klemma}}}
\renewcommand{\thesublemma}{\thesection.{\sc \arabic {sublemma}}}
\renewcommand{\themtheorem}{\thesection.{\sc \arabic {mtheorem}}}

\section{\sc Monoidal categories and Leinster monoids}
Throughout the paper, $\k$ denotes a field of characteristic zero.

\subsection{\sc }\label{section11}
In this paper, we deal with $\k$-linear categories. 
Here we recall some basic definitions on enriched category theory, in a greater generality of $\mathscr{V}$-enriched categories, for a (symmetric closed) monoidal category $\mathscr{V}$.

An ordinary (= set-enriched) category $\mathscr{V}$ is called a {\it monoidal category} if there is an associative up to coherent isomorphisms monoidal product given by a bi-functor
$\otimes\colon \mathscr{V}\times \mathscr{V}\to\mathscr{V}$ with a unit object $e$. The associativity up to coherent isomorphisms means that there is an isomorphism of functors
$\phi\colon F_1\to F_2$, where $F_1,F_2\colon \mathscr{V}^{\times 3}\to\mathscr{V}$ are defined on objects as
$F_1(X,Y,Z)=X\otimes (Y\otimes Z)$ and $F_2(X,Y,Z)=(X\otimes Y)\otimes Z$ subject to the pentagon diagram, see [ML, Section XI.1] or [AM, Section 1.1.1]. That is, we have functorial isomorphisms $\phi_{XYZ}\colon X\otimes (Y\otimes Z)\to (X\otimes Y)\otimes Z$, for any three objects $X,Y,Z\in\mathscr{V}$ fulfilling the pentagon commutative diagram for any four objects, $X,Y,Z,W\in\mathscr{V}$, see loc.cit.
The isomorphisms $\phi_{XYZ}$ should be implemented to all our formulas. We suppress these associativity isomorphisms for brevity.
They can be uniquely inserted in a canonical way. By abuse of notations we write
$$
X\otimes (Y\otimes Z)=(X\otimes Y)\otimes Z
$$
instead of
$$
\phi_{XYZ}\colon X\otimes(Y\otimes Z)\simeq (X\otimes Y)\otimes Z
$$

An ordinary monoidal category is called {\it braided} if there are bi-functorial maps $\beta_{a,b}\colon a\otimes b\to b\otimes a$ (that is, $\beta$ is a map of bi-functors $F_1\Rightarrow F_2$, where $F_1(a,b)=a\otimes b$, $F_2(a,b)=b\otimes a$), satisfying the following axioms: 

(1) the compositions
$$
a\eqto e\otimes a\xrightarrow{\beta_{e,a}}a\otimes e\eqto a\text{  and  }
a\eqto a\otimes e\xrightarrow{\beta_{a,e}}e\otimes a\eqto a
$$
are equal to $\id_a$, for any $a\in\mathscr{V}$;

(2) the hexagon axiom, see [ML, XI.1].

A braided monoidal category is called {\it symmetric} if $\beta_{b,a}\circ \beta_{a,b}=\id_{a\otimes b}$, for any $a,b\in \mathscr{V}$.

\hspace{1mm}

For a (not necessarily symmetric) monoidal category $\mathscr{V}$, one defines {\it a $\mathscr{V}$-enriched category} $\mathscr{C}$ as a ``category'' whose morphisms $\mathscr{C}(A,B)\in\mathscr{V}$ and whose ``unit morphisms'' are given by maps $e\to \mathscr{C}(A,A)$ in $\mathscr{V}$ (where $e$ is the unit in $\mathscr{V}$). To define the compositions 
$$
\mathscr{C}(B,C)\times \mathscr{C}(A,B)\to\mathscr{C}(A,C)
$$
one uses the monoidal product in $\mathscr{V}$. See [Ke, Ch 1, 1.2] for more detail.

For any $\mathscr{V}$-enriched category $\mathscr{C}$, there is an ordinary (set-enriched) ``underlying'' category $\mathscr{C}_0$, defined as follows. 
The ordinary category $\mathscr{C}_0$ has the same objects as $\mathscr{C}$, and $\mathscr{C}_0(A,B)=\Hom_{\mathscr{V}}(e,\mathscr{C}(A,B))$. The composition in $\mathscr{C}_0$ is defined by the isomorphism $e\to e\otimes e$ in $\mathscr{V}$, see [Ke, Ch 1, 1.3] for more detail. 

Many definitions from the ordinary category theory are extended to the $\mathscr{V}$-enriched case. 
In particular, one easily defines {\it a $\mathscr{V}$-functor} $F\colon \mathscr{C}\to \mathscr{D}$ between $\mathscr{V}$-enriched categories.
A bit less straightforward is the definition of {\it a natural transformation} $\alpha\colon F\Rightarrow G\to \mathscr{C}\to \mathscr{D}$, 
for two $\mathscr{V}$-functors $F$ and $G$, see [Ke, Ch. 1, 1.2]. 

To define {\it a monoidal structure} on the category of $\mathscr{V}$-enriched categories, one needs to assume additionally that the monoidal category $\mathscr{V}$ is {\it symmetric} monoidal. 
Assume that the category $\mathscr{V}$ is symmetric monoidal, and $\mathscr{C},\mathscr{D}$ are two $\mathscr{V}$-enriched categories.

Define their product $\mathscr{C}\otimes \mathscr{D}$ as a $\mathscr{V}$-enriched category whose objects is the product $\Ob\mathscr{C}\times\Ob\mathscr{D}$, and
$$
(\mathscr{C}\otimes\mathscr{D})(A\times A_1,B\times B_1)=\mathscr{C}(A,B)\otimes \mathscr{D}(A_1,B_1)
$$
To define compositions
$$
(\mathscr{C}\otimes\mathscr{D})(B\times B_1,C\times C_1)\otimes(\mathscr{C}\otimes\mathscr{D})(A\times A_1,B\times B_1)\to
(\mathscr{C}\otimes\mathscr{D})(A\times A_1,C\times C_1)
$$
one needs to exchange the two middle factors in the product of four factors in the l.h.s., and then to apply the compositions in $\mathscr{C}$ and $\mathscr{D}$, correspondingly. One uses the symmetric braiding in $\mathscr{V}$ to exchange the two factors in the middle. See [Ke, Ch. 1, 1.4] for more detail.

It follows that for the case $\mathscr{V}$ is a symmetric monoidal category, one can define {\it a monoidal $\mathscr{V}$-enriched category} $\mathscr{M}$ as a monoid in the monoidal category of $\mathscr{V}$-enriched categories.

To extend the Yoneda lemma and the concept of an adjoint pair of functors for the $\mathscr{V}$-enriched setting, one needs to make one more assumption that $\mathscr{V}$ is {\it closed monoidal}. By definition, a monoidal category $\mathscr{V}$ is {\it closed monoidal}, if each functor $-\otimes Y$ (for fixed $Y$) has a right adjoint $[Y,-]$, so that one has functorial isomorphisms
$$
\mathscr{V}(X\otimes Y,Z)\simeq \mathscr{V}(X,[Y,Z])
$$
It gives rise to the adjunction maps
$$
d\colon X\to [Y,X\otimes Y]\text{   and   }ev\colon [Y,Z]\otimes Y\to Z
$$ 
called {\it the counit} and {\it the evaluation}. One deduces from here a natural isomorphism 
$$
Z\simeq [e,Z]
$$
(where $e$ is the unit in $\mathscr{V}$). One also deduces an extended adjunction
$$
[X\otimes Y,Z]\simeq [X,[Y,Z]]
$$
When $\mathscr{V}$ is symmetric monoidal, one defines a closed symmetric monoidal structure, which is a particular case of a biclosed monoidal category, see [Ke, Ch. 1,1.5].

For a symmetric monoidal closed category $\mathscr{V}$, $\mathscr{V}$ itself can be considered as a $\mathscr{V}$-enriched category. In this case, one defines a representable functor, an adjoint pair, and proves a $\mathscr{V}$-enriched Yoneda lemma, see [Ke, Ch. 1, 1.6-1.11]. 

For the category $\mathscr{V}=\Mod(R)$ of modules over a ring $R$, $\mathscr{V}$ is monoidal if $R$ is commutative, with $\otimes=\otimes_R$ and $e=R$. In this case, $\mathscr{V}$ is also symmetric and closed monoidal.

Our main examples are the category of $\k$-vector spaces $\mathscr{V}=\Vect(\k)$ and the category of complexes of $\k$-vector spaces $\mathscr{V}=\Vect^\udot(\k)$. The monoidal products are $M\otimes_\k N$ and the sum-total complex $\mathrm{Tot}(M\otimes_\k N)$, correspondingly. We say that a category $\mathscr{C}$ is {\it $\k$-linear} if it is enriched over $\mathscr{V}=\Vect(\k)$. (It is clear that a category enriched over $\mathscr{V}=\Vect^\udot(\k)$ can be regardered as a category enriched over 
 $\mathscr{V}=\Vect(\k)$, and thus is $\k$-linear).

In this paper, we often consider functors between monoidal categories which are {\it lax-} or {\it colax-monoidal} functors. Let us recall the definitions, for the case of ordinary categories. 

A functor $F\colon \mathscr{M}\to\mathscr{N}$ between two ordinary monoidal categories is called {\it colax-monoidal} if there is a map of bifunctors
$\beta_{X,Y}\colon F(X\otimes Y)\to F(X)\otimes F(Y)$ and a morphism $\alpha\colon F(e_{\mathscr{M}})\to e_{\mathscr{N}}$ such that the diagrams below commute:
\begin{itemize}
\item[(i)]
for any three $X,Y,Z\in\mathscr{M}$:
\begin{equation}\label{colax1}
\xymatrix{
F(X\otimes Y\otimes Z)\ar[rr]^{\beta_{X\otimes Y,Z}}\ar[d]_{\beta_{X,Y\otimes Z}}&&F(X\otimes Y)\otimes F(Z)\ar[d]^{\beta_{X,Y}\otimes \id}\\
F(X)\otimes F(Y\otimes Z)\ar[rr]^{\id\otimes \beta_{Y,Z}}&&F(X)\otimes F(Y)\otimes F(Z)
}
\end{equation}
\item[(ii)]
for any $X\in\Ob\mathscr{M}$ the following two diagrams are commutative
\begin{equation}\label{colax2}
\begin{aligned}
\ &\xymatrix{
F(e_{\mathscr{M}}\otimes X)\ar[d]\ar[r]^{\beta_{e,X}}& F(e_{\mathscr{M}})\otimes F(X)\ar[d]^{\alpha\otimes\id}\\
F(X)&      e_{\mathscr{N}}\otimes F(X)\ar[l]
} &
\xymatrix{
F(X\otimes e_{\mathscr{M}})\ar[d]\ar[r]^{\beta_{X,e}}& F(X)\otimes F(e_{\mathscr{M}})\ar[d]^{\id\otimes \alpha}\\
F(X)&     F(X)\otimes e_{\mathscr{N}}\ar[l]
}
\end{aligned}
\end{equation}
\end{itemize}

A functor $f\colon \mathscr{M}\to\mathscr{N}$ between two ordinary monoidal categories is called {\it lax-monoidal} if there is a map of bifunctors
$\gamma_{a,b}\colon F(a)\otimes F(b)\to F(a\otimes b)$ and a morphism $\kappa\colon e_{\mathscr{N}}\to F(e_{\mathscr{M}})$ such that the diagrams below commute:
\begin{itemize}
\item[(1)] for any three objects $X,Y,Z\in\Ob(\mathscr{M})$, the diagram
\begin{equation}\label{lax1}
\xymatrix{
F(X)\otimes F(Y)\otimes F(Z)\ar[rr]^{\id\otimes\gamma_{Y,Z}}\ar[d]_{\gamma_{X,Y}\otimes\id}&&F(X)\otimes F(Y\otimes Z)\ar[d]^{\gamma_{X,Y\otimes Z}} \\
F(X\otimes Y)\otimes F(Z)\ar[rr]^{\gamma_{X\otimes Y,Z}}&&F(X\otimes Y\otimes Z)
}
\end{equation}
is commutative. The functors $\gamma_{X,Y}$ are called the {\it lax-monoidal maps},
\item[(2)]
for any $X\in\Ob\mathscr{M}$ the following two diagrams are commutative
\begin{equation}\label{lax2}
\begin{aligned}
\ &\xymatrix{
F(1_{\mathscr{M}}\otimes X)\ar[d]& F(1_{\mathscr{M}})\otimes F(X)\ar[l]_{\gamma_{1,X}}\\
F(X)&      1_{\mathscr{N}}\otimes F(X)\ar[u]_{\kappa\otimes\id}\ar[l]
} &
\xymatrix{
F(X\otimes 1_{\mathscr{M}})\ar[d]& F(X)\otimes F(1_{\mathscr{M}})\ar[l]_{\gamma_{X,1}}\\
F(X)&     F(X)\otimes 1_{\mathscr{N}}\ar[u]_{\id\otimes \kappa}\ar[l]
}
\end{aligned}
\end{equation}

\end{itemize}

The corresponding definitions for $\mathscr{V}$-enriched monoidal categories, where $\mathscr{V}$ is a symmetric monoidal category, follow easily from the definitions given above, and are left to the reader.

\subsection{\sc The categories $\Delta$ and $\Delta_0$}\label{deltanot}
Here we recall the simplicial category $\Delta$ and its subcategory $\Delta_0$, and introduce notations which will be used throughout the paper. 

The category $\Delta$ has finite non-empty ordinals $[0],[1],[2],\dots$ as its objects, $$[n]=\{0<1<\dots<n\}$$
and the order-preserving maps as the morphisms. That is, a morphism $f\colon [m]\to [n]$ is a map
$$
f\colon \{0<1<\dots<m\}\to\{0<1<\dots<n\}
$$
such that
$$
f(i)\le f(j)\text{    whenever    }i\le j
$$
Denote by $f_i\colon [n]\to [n+1]$, $0\le i\le n+1$, the {\it elementary face maps}:
$$
f_i(j)=\begin{cases}j&\text{if  }j<i\\ j+1&\text{if  }j\ge i\end{cases}
$$
The elementary face maps $f_0,f_{n+1}\colon [n]\to[n+1]$ are called {\it extreme face maps}.

Denote by $\delta_i\colon [n]\to [n-1]$, $0\le i\le n-1$, the {\it elementary degeneracy maps}:
$$
\delta_i(j)=\begin{cases}j&\text{if  }j\le i\\j-1&\text{if  }j>i+1\end{cases}
$$
They satisfy the relations the reader can find e.g. in [W, Section 8.1]. 

For a functor $X\colon \Delta^\opp\to\mathscr{C}$, we denote
$$
F_i=X(f_i),\ D_i=X(\delta_i)
$$

Recall, for convenience of the reader, the standard simplicial identities among $f_i$ and $\delta_j$, and, dually, among $F_i$ and $D_j$, see e.g. loc.cit.:
\begin{equation}\label{simplid}
\begin{aligned}
\ &F_iF_j=F_{j-1}F_i\ \text{    if   }i<j\\
&D_iD_j=D_{j+1}D_i\ \text{     if   }i\le j\\
&F_iD_j=\begin{cases}
D_{j-1}F_i&\text{   if   }i<j\\
\id&\text{   if   }i=j\text{  or  }i=j+1\\
D_jF_{i-1}&\text{   if   }i>j+1
\end{cases}
\end{aligned}
\end{equation}

The category $\Delta_0$ a sub-category of the simplicial category $\Delta$ having the same objects, and generated by all simplicial maps except the extreme face maps. 

More directly, 
a morphism $[m]\to [n]$ in $\Delta_0$ is a map of ordinals $f\colon \{0<1<\dots<m\}\to \{0<1<\dots <n\}$ such that:
\begin{itemize}
\item[(1)] $f(i)\le f(j)$ whenever $i\le j$ (that is, $f$ is a morphism in $\Delta$),
\item[(2)] $f(0)=0$ and $f(m)=n$.
\end{itemize}

Whenever a confusion is possible, we denote the oobject $[n]$ of $\Delta_0$ by $[n]_*$. 
The category $\Delta_0$ is monoidal (unlike the simplicial category $\Delta$). The monoidal product in $\Delta_0$ is defined on objects as $[a]_*\otimes [b]_*=[a+b]_*$ on objects. It can be thought of as the identification of the rightmost element $a$ in $\{0<1<\dots<a\}$  with the leftmost element 0 in $\{0<1<\dots<b\}$. Due to condition (2) above, the product is extended to morphisms. In particular, this construction doesn't work for the entire category $\Delta$, for which condition (2) is dropped. 

\subsubsection{\sc The category $\Delta_0$ and the Joyal duality}
The category $\Delta_0$ which we have defined turns out to be equal to the category $\Delta_\varnothing^\opp$ where $\Delta_\varnothing$ is the simplicial category $\Delta$ with one more object thought of as the empty ordinal. This empty ordinal $[\varnothing]$ is added as the initial object to $\Delta$, and there are no morphisms in $\Hom_{\Delta_\varnothing}([n],[\varnothing])$ except for the case $n=\varnothing$. This duality appears as the simplest case of a more general Joyal duality, see [J].

Let us construct an equivalence $\Delta_0\simeq \Delta_\varnothing^\opp$. 
Let $I$ be the full subcategory of $\Delta_0$ containing all objects $[n]$ with $n\ge 1$. For a better readability, we re-denote here the objects $[n]$ of $I$ or $\Delta_0$ by $[n]_*$. First of all, we construct an equivalence $I\simeq \Delta^\opp$.

Consider the functor $F\colon \Delta^\opp\to I$, $F([n])=\Hom_\Delta([n],[1])$. The set $F([n])$ has $n+2$ different elements, which we identify with $[n+1]_*=\{0_*<1_*<\dots<(n+1)_*\}$. Here $i_*$ is corresponded to the map $q_i\colon [n]\to [1]$ in $\Delta$, such that $q_i(j)=0$ for $j<i$, and $q_i(j)=1$ for $j\ge i$. In particular, for $i=0$, $q_i(j)=1$ for any $j$, and $q_{n+1}(j)=0$ for any $j$. These two maps $q_0$ and $q_{n+1}$ are thought of as the end-points of the interval $[n+1]_*$. We denote them also $q_-$ and $q_+$, correspondingly. One easily checks that, for any map $\theta\colon [m]\to [n]$ in $\Delta$ one has: $F(\theta)(q_-)=q_-$ and $F(\theta)(q_+)=q_+$. One shows that the functor $F$ is indeed an equivalence $\Delta^\opp\simeq I$.

Turn back to construction of an equivalence $\Delta_\varnothing^\opp\simeq \Delta_0$. A ``drawback'' of the functor $F$ is that its image does not contain $[0]_*$, as the minimal $F([0])=[1]_*$. We can fix the problem by adding the empty ordinal $[\varnothing]$ to $\Delta$, and extending $F$ by setting $F([\varnothing])=[0]_*$. This extension is correct because $\varnothing$ is by definition the initial object in $\Delta_\varnothing$, and $[0]_*$ is the final object of $\Delta_0$. By abuse of notation, we denote this extended equivalence also by $F$. 

As the category $\Delta_0$ is monoidal, one should have a monoidal structure on $\Delta_\varnothing$ such that the corresponding monoidal structure on $\Delta_\varnothing^\opp$ is respected by $F$. One defines a monoidal product on $\Delta_\varnothing$ by $[m]\otimes [n]=[m+n+1]$ when $[\varnothing]$ is considered as $[-1]$. That is, $[\varnothing]$ is the monoidal unit of this structure. The reader will easily check that the functor $F$ is indeed strictly monoidal.

\subsection{\sc Leinster monoids}\label{lmintro}
Here we recall the definition of a {\it Leinster monoid} in a monoidal category $\mathscr{M}$, see [Le], [Sh1, Sect. 2]. We warn the reader that, by abuse of terminology, we use the terms ``Leinster algebras'' and ``Leinster monoids'' as synonymous.

As a motivation, consider what happens with the {\it nerve} of a small category in the $\k$-linear case.
Recall, that for a small set-enriched category $\mathscr{C}$ there is a nerve of $\mathscr{C}$. It is a simplicial set $X$, whose $n$-simplices $X_n$ are the sequences of $n$ composable morphisms. The simplest example is the case when $\mathscr{C}$ is a category with a single object, that is a monoid $M$.
In this case,
$$
X_n=M^{\times n}
$$
The simplicial face maps but the two extreme ones are defined via the monoidal product, and are well defined for categories with any enrichment.
The two extreme face maps $M^{\times n}\to M^{\times (n-1)}$ are defined as the projections along the first (respectively, along the last) factor.

It is instructive to consider the three simplicial face maps $F_0,F_1,F_2\colon M^{\times 2}\to M$. They are defined as follows:
\begin{equation}\label{threef}
\begin{aligned}
\ &F_0(x,y)=y\\
&F_1(x,y)=x*y\\
&F_2(x,y)=x
\end{aligned}
\end{equation}

Suppose now that $M$ is a monoid in a $\k$-linear category, that is, an associative algebra over $\k$. It is natural to replace the cartesian product in the definition of the nerve by the tensor product, and set
$$
X_n=M^{\otimes n}
$$
We refer to this ``nerve'' as a {\it $\k$-linear nerve}.
We claim that $X_\ldot$ fails to be a simplicial vector space.
Indeed, for $n=2$ the there maps  $F_0,F_1,F_2$ in \eqref{threef} should be maps $M\otimes M\to M$.
The maps $F_0$ and $F_2$ are not defined, as there are no projections $V\otimes W\to V$ and $V\otimes W\to W$ of vector spaces.
By the same reason, the two extreme face maps are not defined for any $n$.
As a consequence, the $\k$-linear nerve fails to be a simplicial vector space. The matter is that the tensor product of vector spaces is not their cartesian product.

The concept of a Leinster monoid serves as a suitable replacement of Segal monoids in a {\it non-cartesian-enriched} monoidal category $\mathscr{M}$. The goal of the concept of a Segal monoid is to define a ``weak monoid'', that is, a monoid with not strictly associative product, in an appropriate sense, see [Seg]. The concept of Leinster monoids provides a counter-part of Segal monoids for not necessarily cartesian-enriched case, for instance, for $\k$-linear categories. 

Here is the definition.

\begin{defn}\label{catwe}{\rm
\begin{itemize}
\item[(i)]
A category $\mathscr{C}$ is called {\it a category with weak equivalences} if there is a class $\mathscr{W}$ of morphisms in $\mathscr{C}$ closed under the composition and such that all identity morphisms belong to $\mathscr{W}$. A $\k$-linear category with weak equivalences should obey the following property: for a morphism $f\in\mathscr{W}$ and for $\lambda\in \k^*$, the morphism $\lambda\cdot f$ belongs to $\mathscr{W}$,
\item[(ii)]
a {\it monoidal category with weak equivalences} is a monoidal category with a structure of a category with weak equivalences, with the following additional property: for $f\in \Hom(X,Y)$ and $g\in\Hom(X^\prime,Y^\prime)$ in $\mathscr{W}$, their product $f\otimes g\in\Hom(X\otimes X^\prime,Y\otimes Y^\prime)$ also belongs to $\mathscr{W}$.
\end{itemize}
}
\end{defn}

\begin{defn}\label{lmon}{\rm
Let $\mathscr{M}$ be a symmetric monoidal category with weak equivalences, see Definition \ref{catwe}.
{\it A Leinster monoid} in $\mathscr{M}$ is a colax-monoidal functor $X_L\colon \Delta_0^\opp\to \mathscr{M}$ such that the colax-maps
$$
\beta_{m,n}\colon X_{m+n}\to X_m\otimes X_n
$$
and the map
$$
\alpha\colon X_0\to e_\mathscr{M}
$$
are {\it weak equivalences}. Here we use notation $X_n=X_L([n])$.
}
\end{defn}
If the contrary is not explicitly indicated, we assume that $X_0=e_\mathscr{M}$ and $\alpha\colon X_0\to e_\mathscr{M}$ is the identity isomorphism.

It is clear from the discussion above that a honest monoid $M$ in $\mathscr{M}$ defines a Leinster monoid $X(M)_L$ by
$$
X(M)_n=M^{\otimes n}
$$
with the natural action of $\Delta_0^\opp$, and with $\beta_{m,n}\colon M^{\otimes (m+n)}\to M^{\otimes m}\otimes M^{\otimes n}$ the identity map.

The category of Leinster monoids in $\mathscr{M}$ is monoidal. Indeed, for two Leinster monoids $X_L$ and $Y_L$ in $\mathscr{M}$ their product
$X_L\otimes Y_L$ is a Leinster monoid $Z_L$ with
\begin{equation}
Z_m=X_m\otimes_{\mathscr{M}}Y_m
\end{equation}
with the diagonal action of $\Delta_0^\opp$, and with
\begin{equation}
\beta_{m,n}^Z=\beta_{m,n}^X\otimes \beta_{m,n}^Y
\end{equation}
This monoidal product is symmetric.

The monoidal category of Leinster monoids in $\mathscr{M}$ is denoted by $\mathscr{L}(\mathscr{M})$.
The category $\mathscr{L}(\mathscr{M})$ is again a category with weak equivalences, defined as the component-wise weak equivalences in $\mathscr{M}$.
The latter observation makes it possible to iterate the construction.

\section{\sc The bar-cobar duality for associative dg (co)algebras}
Denote by $\Alg(\k)$ the category of all dg associative algebras over our ground field $\k$, and by $\Alg_u(\k)$ its subcategory of unital dg associative algebras.

As well, denote by $\Coalg(\k)$ the category of dg coalgebras over $\k$.
Let $C\in \Coalg(\k)$. Denote by $F_\ell(C)\subset C$ the subspace of all $x\in C$ such that $\Delta^\ell(x)=0$, where $\Delta^\ell=\Delta\circ\Delta\dots\circ \Delta$ ($\ell$ times). One has an ascending filtration
$$
0\subset F_1(C)\subset F_2(C)\subset F_3(C)\subset \dots
$$
A dg coalgebra $C$ is called {\it conilpotent} if $$
\bigcup_{\ell\ge 0}F_\ell(C)=C
$$

Denote by $\Coalg_\nilp(\k)$ the full subcategory of $\Coalg(\k)$ whose objects are conilpotent dg coalgebras.
Note that if $C\in\Coalg_\nilp(\k)$, $C$ can not have a counit.

Define the functors $\Bar\colon \Alg(\k)\to \Coalg_\nilp(\k)$ and $\Cobar\colon\Coalg_\nilp(\k)\to\Alg(\k)$ (the bar and the cobar complexes) as follows. Note that our signs in the bar and the cobar differentials are different from the standard ones, see Remark \ref{ncsigns} below for the motivation of our choice of signs.

Let $A\in\Alg(\k)$. Its bar-complex $\Bar(A)$ is the total complex of the bicomplex whose rows are $\mathbb{Z}_{\le -1}$-graded and look like (each term is a column-complex if $A$ has several non-zero graded components):
\begin{equation}
\dots\rightarrow \underset{\text{degree -3}}{A^{\otimes 3}}\xrightarrow{d_3} \underset{\text{degree -2}}{A^{\otimes 2}}\xrightarrow{d_2}\underset{\text{degree -1}}A\rightarrow 0
\end{equation}
and the differential $d_n\colon A^{\otimes n}\to A^{\otimes (n-1)}$ is
\begin{equation}\label{diffbarclass}
d_n(a_1\otimes a_2\otimes \dots \otimes a_n)=\sum_{i=1}^{n-1}(-1)^{i+\sum_{j=1}^{i}\deg a_j}a_1\otimes \dots \otimes \ a_i\cdot a_{i+1}\ \otimes \dots \otimes a_n
\end{equation}

The standard fact about bar-complex is:
\begin{lemma}\label{lemmabc1}
Let $A$ be an associative dg algebra over $\k$. Then:
\begin{itemize}
\item[(i)] $\Bar(A)\in \Coalg_\nilp(\k)$ is a (conilpotent) dg coalgebra, whose underlying coalgebra is the cofree non-counital coalgebra cogenerated by $A[1]$,
\item[(ii)] assume that the dg algebra $A$ is unital (that is, $A\in\Alg_u(\k)\subset\Alg(\k)$), then the complex $\Bar(A)$ is acyclic in all degrees.
\end{itemize}
\end{lemma}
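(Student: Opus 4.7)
For part (i), I would split the claim into the underlying coalgebra structure, the coderivation property of the differential, and conilpotency. The underlying graded space $\bigoplus_{n\geq 1}(A[1])^{\otimes n}$ carries the deconcatenation coproduct
$$\Delta(x_1\otimes\cdots\otimes x_n)=\sum_{i=1}^{n-1}(x_1\otimes\cdots\otimes x_i)\otimes(x_{i+1}\otimes\cdots\otimes x_n),$$
which is coassociative by inspection. Since the iterated reduced coproduct $\bar\Delta^{n}$ annihilates $A[1]^{\otimes n}$, one has $A[1]^{\otimes n}\subseteq F_{n+1}(\Bar(A))$, so $\Bar(A)$ is conilpotent. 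The cofree property would be verified by exhibiting, for any conilpotent coalgebra $C$ and any degree-zero map $f\colon C\to A[1]$, the unique coalgebra lift $\tilde f=\sum_{n\geq 1}f^{\otimes n}\circ\bar\Delta_C^{n-1}\colon C\to\Bar(A)$; the sum is pointwise finite by conilpotency of $C$. The substantive step is that the differential of \eqref{diffbarclass} is a coderivation for $\Delta$. Extending $d_A$ diagonally is always a coderivation, so the check reduces to the pure multiplication operators $m_i\colon a_1\otimes\cdots\otimes a_n\mapsto a_1\otimes\cdots\otimes a_ia_{i+1}\otimes\cdots\otimes a_n$. For each $m_i$, every deconcatenation cut away from the $(i,i+1)$ position matches trivially between $\Delta m_i$ and $(m_i\otimes 1+1\otimes m_i)\Delta$, while the two cuts at position $(i,i+1)$ precisely reproduce the diagonal action of $m_i$; the prefactors $(-1)^{i+\sum_{j\leq i}\deg a_j}$ in \eqref{diffbarclass} are exactly what this matching requires.

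For part (ii), assume $A$ is unital with unit $1$, and define a degree $-1$ operator
$$s\colon A^{\otimes n}\to A^{\otimes(n+1)},\qquad s(a_1\otimes\cdots\otimes a_n)=-1\otimes a_1\otimes\cdots\otimes a_n.$$
A direct unfolding of $ds+sd$ on a homogeneous tensor $a_1\otimes\cdots\otimes a_n$ yields two sorts of terms. The $i=1$ term of $d\circ s$, after multiplying $1\cdot a_1=a_1$, contributes $+a_1\otimes\cdots\otimes a_n$. All remaining terms (the $i\geq 2$ contributions of $d\circ s$ together with all terms of $s\circ d$) are of the form $\pm 1\otimes a_1\otimes\cdots\otimes a_ka_{k+1}\otimes\cdots\otimes a_n$, and their signs are $(-1)^{(k+1)+\sum_{j\leq k}\deg a_j}$ and $(-1)^{k+\sum_{j\leq k}\deg a_j}$ respectively, differing by exactly $-1$ due to the index shift caused by prepending $1$; hence they cancel pairwise. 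The internal $d_A$ contributes nothing extra since $d_A(1)=0$. Thus $ds+sd=\id$ and $\Bar(A)$ is acyclic.

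The only real obstacle anywhere is sign bookkeeping. The conventions of \eqref{diffbarclass} differ slightly from the Koszul-standard ones (cf.\ Remark \ref{ncsigns}), so some care is required both to fix the overall sign of $s$ in part (ii) and to ensure in part (i) that the Koszul signs arising from the $[1]$-shift align with those of \eqref{diffbarclass}. Once these conventions are pinned down, both parts reduce to routine direct calculations on tensors and neither requires any conceptual input beyond the standard Sweedler/Koszul formalism.
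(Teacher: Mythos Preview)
Your proposal is correct and follows essentially the same approach as the paper: for (i) the paper simply declares the claim ``fairly standard'' while you spell out the deconcatenation coproduct, conilpotency, cofreeness, and the coderivation check; for (ii) both you and the paper use the same contracting homotopy given by prepending the unit, $a_1\otimes\cdots\otimes a_n\mapsto 1\otimes a_1\otimes\cdots\otimes a_n$ (the paper writes it as $h_n$ without the overall sign and records only $dh\pm hd=\id$). One small slip: in your sign bookkeeping the roles of the $d\circ s$ and $s\circ d$ contributions are interchanged, but since they still differ by $-1$ the cancellation goes through unchanged.
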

\begin{proof}
It is fairly standard. For (ii), recall that the maps $h_n\colon A^{\otimes n}\to A^{\otimes (n+1)}$ defined as
$$
h_n(a_1\otimes\dots\otimes a_n)=1\otimes a_1\otimes\dots \otimes a_n
$$
give a homotopy of $\Bar(A)$ such that $dh\pm hd=\id$.
\end{proof}

Dually, let $C\in\Coalg(\k)$. Its cobar-complex $\Cobar(C)$ is the total complex of the bicomplex, whose rows are $\mathbb{Z}_{\ge 1}$-graded and look like:
\begin{equation}
0\rightarrow\underset{\text{degree 1}}C\xrightarrow{\delta_1}\underset{\text{degree 2}}{C^{\otimes 2}}\xrightarrow{\delta_2}\underset{\text{degree 3}}{C^{\otimes 3}}\rightarrow\dots
\end{equation}
where the differential $\delta_n\colon C^{\otimes n}\to C^{\otimes (n+1)}$ is given as
\begin{equation}\label{diffcobarclass}
\delta_n(c_1\otimes c_2\otimes\dots\otimes c_n)=\sum_{i=1}^n(-1)^{i-1+\deg c_i^\prime+\sum_{j=1}^{i-1}\deg c_j}c_1\otimes\dots\otimes c_i^\prime\otimes c_i^{\pprime}\otimes\dots\otimes c_n
\end{equation}
where $\Delta\colon C\to C^{\otimes 2}$ is the coproduct, and $\Delta(c)=c^\prime\otimes c^\pprime$ is the coproduct written down in ``Sweedler notations'' (which means that there is a sum of such terms, and in our case all $c^\prime$ and $c^\pprime$ are homogeneous, for homogenous $c$).

The standard fact on the cobar-complex, dual to Lemma \ref{lemmabc1}, is
\begin{lemma}\label{lemmabc2}
Let $C$ be a dg coalgebra, $C\in\Coalg(\k)$. Then:
\begin{itemize}
\item[(i)] the cobar-complex $\Cobar(C)\in \Alg(\k)$ is an associative non-unital dg algebra, whose underlying algebra is the free non-unital associative algebra generated by $C[-1]$,
\item[(ii)] if $C\in\Coalg_u(\k)$, the cobar-complex $\Cobar(C)$ is acyclic in all degrees.
\end{itemize}
\end{lemma}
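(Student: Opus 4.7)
The plan is to mirror the proof of Lemma \ref{lemmabc1} using the formal duality between bar and cobar constructions. The proof splits into two independent parts: in (i) a purely formal check that $\Cobar(C)$ is the free non-unital dg algebra on $C[-1]$, and in (ii) the construction of an explicit contracting homotopy built from the counit.

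For part (i), the underlying graded vector space $\bigoplus_{n\ge 1} C^{\otimes n}$ of $\Cobar(C)$ is tautologically the free non-unital graded associative algebra on $C[-1]$, with product given by concatenation $(c_1\otimes\dots\otimes c_m)\cdot(c_{m+1}\otimes\dots\otimes c_{m+n})=c_1\otimes\dots\otimes c_{m+n}$. Two verifications remain: that the external differential $\delta$ in \eqref{diffcobarclass} is a derivation of this concatenation, and that $\delta^2=0$. The Leibniz rule is tautological once the signs are set correctly, since $\delta$ is a sum of local operations (applying $\Delta$ in a single tensor slot) and the signs in \eqref{diffcobarclass} are exactly the Koszul signs for passing the odd operation $\Delta\colon C[-1]\to (C[-1])^{\otimes 2}$ past the preceding shifted factors; concatenation just glues tensors. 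Vanishing of $\delta^2$ reduces to the coassociativity $(\Delta\otimes\id)\Delta=(\id\otimes\Delta)\Delta$: writing $\delta^2(c_1\otimes\dots\otimes c_n)$ as a sum over pairs of positions $i\le j$ at which $\Delta$ has been applied, the ``adjacent'' pairs with $j\in\{i,i+1\}$ cancel by coassociativity, while the ``disjoint'' pairs with $j\ge i+2$ cancel in pairs after swapping the order of application. The internal differential coming from $C$ extends as a derivation of the concatenation product, and the total differential squares to zero because $C$ is a dg coalgebra.

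For part (ii), assume $C$ is counital with counit $\varepsilon\colon C\to\k$ and define
$$
h(c_1\otimes c_2\otimes\dots\otimes c_n)=\varepsilon(c_1)\cdot c_2\otimes\dots\otimes c_n
$$
for $n\ge 2$, with $h=0$ on the degree-$1$ summand $C$. I claim that, after an overall sign $\tilde h_n=\pm h_n$ chosen so that $\tilde h$ anticommutes with the internal differential of $\Cobar(C)$, one has $\delta\tilde h+\tilde h\delta=\id$. The key inputs are the counit axiom $(\varepsilon\otimes\id)\Delta=\id$ and the fact that $\varepsilon$ is nonzero only on degree-zero elements, so that any appearance of $\varepsilon(c_1)$ forces $\deg c_1=0$. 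A direct expansion shows that the $i=1$ term of $h\delta(c_1\otimes\dots\otimes c_n)$ produces $c_1\otimes c_2\otimes\dots\otimes c_n$ by the counit axiom, while each $i\ge 2$ term of $h\delta$ cancels against the corresponding term of $\delta h$: the two signs differ by $(-1)^{1+\deg c_1}=-1$ precisely because $\deg c_1=0$. This is dual, in the obvious way, to the homotopy $h(a_1\otimes\dots\otimes a_n)=1\otimes a_1\otimes\dots\otimes a_n$ used in the proof of Lemma \ref{lemmabc1}(ii).

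The main obstacle I anticipate is consistent bookkeeping of Koszul signs, both in checking $\delta^2=0$ and in fixing the sign adjustment in $\tilde h$ that makes it anticommute with the internal differential of $C$. Since the paper adopts non-standard sign conventions (cf. Remark \ref{ncsigns}), one cannot borrow sign calculations from the literature verbatim. Once the signs dictated by \eqref{diffcobarclass} are fixed, however, both verifications reduce to routine manipulations formally dual to those for the bar complex, with no genuinely new content beyond careful sign tracking.
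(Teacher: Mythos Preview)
Your proposal is correct and is precisely the argument the paper has in mind: the paper gives no proof beyond \qed, presenting the statement as the formal dual of Lemma~\ref{lemmabc1}, and your write-up makes that duality explicit (free algebra on $C[-1]$ with cobar differential, contracting homotopy $h(c_1\otimes\dots\otimes c_n)=\varepsilon(c_1)\,c_2\otimes\dots\otimes c_n$ dual to $a_1\otimes\dots\otimes a_n\mapsto 1\otimes a_1\otimes\dots\otimes a_n$). Your caution about the sign conventions of Remark~\ref{ncsigns} is well placed; note also that the total-complex sign $(-1)^n$ on the internal differential of $C^{\otimes n}$ already makes $h$ anticommute with it, so no further adjustment $\tilde h=\pm h$ is actually needed.
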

\qed

\begin{remark}\label{ncsigns}{\rm
Note that in our definition of the bar and of the cobar complexes, the signs in the differential are slightly (but essentially) different from the standard definitions. In the standard definitions, the sign in \eqref{diffbarclass} would be $i-1+\sum_{j=1}^{i-1}\deg a_j$, and the standard sign in \eqref{diffcobarclass} is $i-1+\sum_{j=1}^{i-1}\deg c_j$. Our choice is for the sign in the bar-complex is motivated by the Eilenberg-MacLane map in Theorem \ref{emmnewtheorem}. More precisely, the claim (I) of this Theorem fails with the standard signs (more specifically, the case (iii) of the proof of (I) in Lemma \ref{lemma44proof} fails). Then the sign in the cobar-complex should also be ``adjusted''.
}
\end{remark}

The first manifestation of the Quillen bar-cobar duality is the following:
\begin{prop}\label{propbc1}
The functors of bar and cobar complexes form an adjoint pair of functors
\begin{equation}\label{adjf}
\Cobar\colon\ \Coalg_\nilp(\k)\rightleftarrows \Alg(\k)\ \colon \Bar
\end{equation}
with $\Cobar$ the left adjoint. That is, one has an isomorphism of bifunctors:
\begin{equation}
\Hom_{\Alg(\k)}(\Cobar(C),A)\simeq \Hom_{\Coalg_\nilp(\k)}(C,\Bar(A))
\end{equation}
\end{prop}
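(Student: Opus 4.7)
The plan is to prove the adjunction through the intermediate notion of a \emph{twisting cochain}, which will serve as the common ``universal'' object classifying morphisms on both sides. Given $C \in \Coalg_\nilp(\k)$ and $A \in \Alg(\k)$, let $\Tw(C,A)$ denote the set of degree $1$ linear maps $\tau \colon C \to A$ satisfying the Maurer--Cartan equation
\begin{equation*}
d_A \circ \tau + \tau \circ d_C + \mu_A \circ (\tau \otimes \tau) \circ \Delta_C = 0,
\end{equation*}
where the sign conventions in the middle term are adjusted to match the bar/cobar sign choices of \eqref{diffbarclass} and \eqref{diffcobarclass}. My strategy is to establish natural bijections $\Hom_{\Alg(\k)}(\Cobar(C),A) \cong \Tw(C,A) \cong \Hom_{\Coalg_\nilp(\k)}(C,\Bar(A))$, each coming from a universal property, and then compose them.

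First I would establish the bijection on the algebra side. By Lemma \ref{lemmabc2}(i), the underlying graded algebra of $\Cobar(C)$ is the free non-unital associative algebra generated by $C[-1]$. Therefore a morphism of graded algebras $\Phi \colon \Cobar(C) \to A$ is uniquely determined by its restriction to generators, which is a degree zero map $C[-1] \to A$, equivalently a degree $1$ map $\tau \colon C \to A$. It remains to check that $\Phi$ commutes with the differentials if and only if $\tau$ satisfies the Maurer--Cartan equation. The cobar differential $\delta$ on the generators $C[-1]$ has two pieces: the internal differential $d_C$ and the term coming from the coproduct $\Delta_C$, with signs dictated by \eqref{diffcobarclass}. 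Evaluating $d_A \circ \Phi - \Phi \circ \delta$ on a generator $c \in C[-1]$ produces exactly the three terms of the MC equation; since both sides are algebra maps and the generators generate freely, vanishing on generators implies vanishing everywhere.

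Next I would establish the symmetric statement on the coalgebra side. By Lemma \ref{lemmabc1}(i), $\Bar(A)$ is the cofree conilpotent coalgebra cogenerated by $A[1]$, with cogeneration map the projection $\pi \colon \Bar(A) \twoheadrightarrow A[1]$. Since $C$ is conilpotent, the cofreeness universal property applies and a morphism of graded coalgebras $\Psi \colon C \to \Bar(A)$ is uniquely determined by the composite $\pi \circ \Psi \colon C \to A[1]$, which is again a degree $1$ map $\tau \colon C \to A$. Explicitly, $\Psi(c) = \sum_{n \ge 1} (\tau \otimes \cdots \otimes \tau) \Delta_C^{n}(c)$, the sum being finite on each $c$ by conilpotency. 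Checking that $\Psi$ is a chain map reduces, by projecting to the cogenerators $A[1]$, to precisely the same Maurer--Cartan equation, with the signs from \eqref{diffbarclass} matching those from \eqref{diffcobarclass} by the author's careful choice (this is the content of Remark \ref{ncsigns}).

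Composing the two bijections yields the claimed natural isomorphism. Naturality in $A$ and $C$ is immediate since the correspondence $\Phi \leftrightarrow \tau \leftrightarrow \Psi$ is defined by restriction to generators on one side and projection to cogenerators on the other, both of which are functorial. The main obstacle is the sign verification: one must check line by line that the signs appearing in $d_A \circ \Phi - \Phi \circ \delta$ coming from \eqref{diffcobarclass} assemble into the same Maurer--Cartan equation as the signs appearing in $d_{\Bar(A)} \circ \Psi - \Psi \circ d_C$ coming from \eqref{diffbarclass}. This is the precise reason the author adopted the modified sign conventions rather than the standard ones, and it is what guarantees that a single notion of twisting cochain mediates the adjunction.
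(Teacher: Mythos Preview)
The paper does not provide a proof of this proposition; it simply states it and marks it with \qed, treating the result as standard. Your argument via twisting cochains is precisely the classical proof (see e.g.\ [LH]), and it is correct.

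One small comment: your final paragraph overstates the role of Remark \ref{ncsigns}. The author's nonstandard signs in \eqref{diffbarclass} and \eqref{diffcobarclass} are adopted so that the modified Eilenberg--MacLane map of Theorem \ref{emmnewtheorem} is a chain map, not for the sake of this adjunction. The bar--cobar adjunction holds for \emph{any} mutually consistent sign convention on the two sides; what matters here is only that the paper's two conventions are indeed compatible (which they are, being a uniform modification of the standard ones). So your sign verification is routine, not the crux.
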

\qed

We essentially use the following result:
\begin{theorem}\label{theorembc1}
For any $A\in \Alg(\k)$ (not necessarily unital) and $C\in\Coalg_\nilp(\k)$ the adjunction morphisms of the adjoint pair \eqref{adjf}
\begin{equation}
\Phi_A\colon \Cobar(\Bar(A))\to A
\end{equation}
and
\begin{equation}
\Psi_C\colon  C\to \Bar(\Cobar(C))
\end{equation}
are quasi-isomorphisms of dg algebras and of dg coalgebras, correspondingly.
\end{theorem}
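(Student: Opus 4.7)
The plan is to prove that $\Phi_A$ is a quasi-isomorphism first; the statement for $\Psi_C$ follows by the formally dual argument, using Lemma \ref{lemmabc2}(ii) in place of Lemma \ref{lemmabc1}(ii). I would avoid any augmented/unital hypothesis on $A$, since the filtration argument below is uniform.

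For $\Phi_A \colon T := \Cobar(\Bar(A)) \to A$, introduce the increasing filtration on $T$ by total bar length. Explicitly, let $F_p T$ be spanned by cobar tensors $[\mathbf{a}^{(1)}] \otimes \cdots \otimes [\mathbf{a}^{(n)}]$ with $k_1 + \cdots + k_n \leq p$, where $k_i$ is the number of bar tensor factors inside the $i$th cobar slot. Inspection of the formulas \eqref{diffbarclass} and \eqref{diffcobarclass} shows that the internal bar differential strictly decreases this filtration (two consecutive entries multiply into one), while the cobar differential preserves it (splitting a bar slot merely redistributes the existing tensor factors into one more cobar slot). Consequently $\gr^F T$ carries only the cobar-type portion of the differential.

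Next I would compute $H^\udot(\gr^F T)$ combinatorially. For a fixed ordered word $a_1, \dots, a_w \in A$, the subcomplex $C(a_1,\dots,a_w) \subset \gr^F T$ spanned by all ways of inserting cobar separators between the $a_i$'s is canonically identified, up to sign, with the normalized cellular cochain complex of the standard $(w-2)$-simplex. For $w \geq 2$ this is contractible; for $w = 1$ it reduces to a single copy of $A$ placed in total degree zero. Summing over $w$ yields $H^\udot(\gr^F T) \cong A$, concentrated in total degree zero. The associated spectral sequence is bounded in each cohomological degree and degenerates at $E_2$, giving $H^\udot(T) \cong A$; a direct check identifies the induced isomorphism with $\Phi_A$.

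The dual argument for $\Psi_C$ proceeds identically, filtering $\Bar(\Cobar(C))$ by total cobar length. The principal technical obstacle is combinatorial: verifying the acyclicity of $C(a_1,\dots,a_w)$ for $w \geq 2$ with the non-standard signs of Remark \ref{ncsigns}. I would isolate this sign bookkeeping as a separate sub-lemma, because the same computation is exactly what the paper needs to reuse in Section 3.6 for the generalized statement $\Cobar(\Bar(X_L)) \to X_1$ in the graded Leinster monoid context. Modulo that combinatorial lemma, everything else is routine spectral sequence bookkeeping.
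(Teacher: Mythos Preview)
Your approach is essentially the paper's: your filtration by total bar length is exactly the paper's decomposition into the complexes $L_n$ (with $n+1$ the total bar length), and your ``cobar separators in a word of length $w$'' computation is the same acyclicity statement the paper phrases as Koszul duality for the cofree coalgebra $T^+(A[1])$.

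Two small points. First, your reference to Lemmas~\ref{lemmabc1}(ii) and~\ref{lemmabc2}(ii) is misplaced: those lemmas concern acyclicity of $\Bar(A)$ (resp.\ $\Cobar(C)$) under (co)unitality hypotheses and play no role in the filtration argument you actually outline, nor in the paper's. Second, the paper is slightly more careful than your ``a direct check identifies the induced isomorphism with $\Phi_A$'': since $\Phi_A$ is not a map of the bicomplex (it is nonzero on the top component of every $L_n$), the paper instead introduces the inclusion $\imath\colon A\to L_0$, which \emph{is} a map of bicomplexes, applies the spectral sequence to $\imath$, and then uses $\Phi_A\circ\imath=\id_A$. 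You should do the same. Finally, the paper does not prove the $\Psi_C$ half; if you want to supply it, note that convergence of the dual spectral sequence genuinely uses the conilpotence of $C$, so ``formally dual'' is a bit optimistic.
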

It is a consequence of the Quillen bar-coabar duality, see e.g. [LH], but can also be proven directly. Below in this paper, we will need a generalization of this result for the bar-complex of Leinster monoids, see Theorem \ref{theorem27}. 
To make our paper more reader-friendly, we provide here a complete proof of the statement for $\Phi_A$ (the case we use here), 
in the less technical setting of associative dg (co)algebras.
\begin{remarks}\label{remarkbc1}{\rm
\begin{itemize}
\item[1.] When $A\in\Alg_u(\k)\subset\Alg(\k)$ is an algebra with 1, the map $\Phi_A$ is a quasi-isomorphism despite of the acyclicity of $\Bar(A)$, see Lemma \ref{lemmabc1}(i).
\item[2.] On the other hand, $\Bar(\Cobar(C))$ is acyclic when $\Cobar(C)$ is acyclic, which is always the case when $C$ is counital.
Recall that a nilpotent coalgebra can never be counital.
\end{itemize}
}
\end{remarks}
\begin{proof}
We prove the statement for $\Phi_A$, as this is the part of Theorem we use in the paper.

The bicomplex whose total complex computes the cohomology of $\Cobar(\Bar(A))$, looks like:
\begin{equation}\label{eqbc5}
\rightarrow\underset{\text{degree -3}}{L_3}\rightarrow\underset{\text{degree -2}}{L_2}\rightarrow\underset{\text{degree -1}}{L_1}\rightarrow\underset{\text{degree 0}}{L_0}\rightarrow 0
\end{equation}
where each particular complex $L_n$ looks like:
\begin{equation}\label{eqjv}
      0\rightarrow\underset{\text{degree 0}}{X_0}(n)\rightarrow\dots \rightarrow \underset{\text{degree $n-2$}}{X_{n-2}}(n)\rightarrow \underset{\text{degree ${n-1}$}}{X_{n-1}}(n)\rightarrow \underset{\text{degree $n$}}{X_n}(n)\rightarrow 0
\end{equation}
where
\begin{equation}
X_p(n)=\bigoplus_{k_1+\dots+k_p=n+1}A^{\otimes k_1}\otimes A^{\otimes k_2}\otimes\dots \otimes A^{\otimes k_p}
\end{equation}
In particular, the rightmost component is $X_n(n)=A\otimes A\otimes\dots \otimes A$ ($n+1$ factors), and the leftmost one is $A^{\otimes (n+1)}$.

We refer to the differential in \eqref{eqjv} as ``vertical'', and to the differential in \eqref{eqbc5} as ``horizontal''.

If $A$ itself has several non-zero grading components, the grading of an element in $X_p(n)$ in the complex $L_n$ is the sum $n-p+\deg_A$, where $\deg_A$ is the total $A$-degree of the factors.

\comment
The ``horizontal'' differential in the complex in \eqref{eqbc5} is the bar-differential extended by the Leibniz rule.
Therefore, the cohomology of this differential are 0 in all degrees. However, the spectral sequence whose differential in the $E_0$-term is this bar-differential, may diverge by dimensional reasons.
\endcomment

We compute the cohomology of the total complex $\Cobar(\Bar(A))$ by the spectral sequence whose differential in $E_0$-term is the ``vertical'' differential \eqref{eqjv}. This spectral sequence converges by reasons of grading, see e.g. [W, Th.5.5.1]. 
(Note that another spectral sequence, computing firstly the cohomology of the ``horizontal'' differential \eqref{eqbc5}, in general diverges).

\begin{lemma}\label{lemmaln}
The complexes $L_n$ are acyclic for $n\ge 1$, while the complex $L_0$ is quasi-isomorphic (and isomorphic) to $A$.
\end{lemma}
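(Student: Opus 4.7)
The plan is to reduce the acyclicity question about $L_n$ to the standard fact that the augmented cochain complex of a simplex is acyclic. The point is that the vertical differential in \eqref{eqjv} is purely ``cobar-type'' and does not use the multiplication in $A$ at all, so that $L_n$ decomposes as a tensor product of $A^{\otimes(n+1)}$ with a small combinatorial complex.

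First I would make the cell structure of $L_n$ explicit. Each direct summand of $X_p(n)$ of the form $A^{\otimes k_1}\otimes\cdots\otimes A^{\otimes k_{p+1}}$ is, as a $\k$-vector space, canonically $A^{\otimes(n+1)}$, and the data of the block sizes $(k_1,\ldots,k_{p+1})$ is equivalent to the data of a subset $S\subseteq\{1,\ldots,n\}$ recording the positions of the ``cuts'' between consecutive factors of $A$; the number of cuts $|S|$ matches the homological degree in $L_n$. Under this identification
\begin{equation*}
L_n\;\simeq\;A^{\otimes(n+1)}\otimes_\k K_n^\bullet,
\end{equation*}
where $K_n^p=\bigoplus_{|S|=p}\k\cdot S$. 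The vertical differential in \eqref{eqjv} is the cobar differential of $\Cobar(\Bar(A))$, which acts via the deconcatenation coproduct of $\Bar(A)$ on a single tensor factor; it inserts exactly one new cut and is completely independent of the multiplication in $A$. Hence it descends to $K_n^\bullet$ as
\begin{equation*}
d(S)\;=\;\sum_{j\notin S}\pm(S\cup\{j\}),
\end{equation*}
with signs prescribed by \eqref{diffcobarclass}, while the factor $A^{\otimes(n+1)}$ passes through as constants.

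Next I would recognise $K_n^\bullet$ (up to a degree shift) as the augmented simplicial cochain complex of the $(n-1)$-simplex $\Delta^{n-1}$ with vertex set $\{1,\ldots,n\}$: a non-empty $S$ is the vertex set of a $(|S|-1)$-face, the element $S=\emptyset$ plays the role of the augmentation, and the differential ``add a vertex'' is exactly the cellular coboundary. Since $\Delta^{n-1}$ is contractible, its augmented cohomology vanishes in every degree for $n\ge 1$, so $K_n^\bullet$ is acyclic and therefore $L_n$ is acyclic. For $n=0$ the only admissible cut configuration on a single factor is the empty one, so $L_0$ is the one-term complex $A$ concentrated in degree $0$, tautologically isomorphic to $A$.

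The main technical obstacle is verifying that the signs coming from \eqref{diffcobarclass} literally match those of the standard cellular coboundary. Rather than chasing signs through the simplicial identification, I would sidestep this by writing down an explicit cone-on-vertex-$1$ contracting homotopy on $K_n^\bullet$, namely $h(S)=S\setminus\{1\}$ when $1\in S$ and $h(S)=0$ otherwise, and checking $dh+hd=\id$ directly for $n\ge 1$. In each case ($1\in S$ and $1\notin S$), the terms of $hd(S)$ and $dh(S)$ of the form $\pm((S\triangle\{1\})\cup\{j\})$ with $j\neq 1$ appear in matching pairs and cancel once the cobar signs from \eqref{diffcobarclass} are worked out, leaving precisely the term that recovers $S$.
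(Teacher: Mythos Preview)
Your argument is correct and takes a genuinely different, more elementary route than the paper's own proof.

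The paper observes that $\bigoplus_{n\ge 0}L_n$, equipped with the ``cut-inserting'' differential $d_S$, is precisely the cobar complex of the cofree conilpotent coalgebra $T^+(A[1])$, and then invokes the Koszul-duality fact that its cohomology is $A$. Your approach instead factors each $L_n$ as $A^{\otimes(n+1)}\otimes_\k K_n^\bullet$, recognises $K_n^\bullet$ as the augmented cochain complex of the $(n{-}1)$-simplex (the ``hypercube of cuts''), and uses its contractibility; no appeal to Koszul duality is needed. What your approach buys is a completely self-contained proof at the level of elementary combinatorics; what the paper's approach buys is a one-line conceptual identification that situates the result within a broader framework.

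One small point worth making explicit: the differential on $L_n$ has both the cobar component $d_S$ and the internal component $d_A$ coming from the differential on $A$. Your tensor decomposition should therefore be read as an isomorphism of \emph{complexes}, with $A^{\otimes(n+1)}$ carrying $d_A$ and $K_n^\bullet$ carrying the combinatorial differential; then either K\"unneth (we are over a field) or the extension $(-1)^{|a|}\,1\otimes h$ of your simplex homotopy shows that acyclicity of $K_n^\bullet$ forces acyclicity of $L_n$. You have this implicitly, but it is worth a sentence, since otherwise a reader might wonder where $d_A$ went. The sign-matching concern you raise is real but harmless: for each fixed multidegree of $(a_1,\dots,a_{n+1})$ the twisted complex on subsets is isomorphic to the standard one by rescaling basis vectors by signs, so the contracting homotopy survives.
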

\begin{proof}
The differentials in $L_n$ have two components $d_A\colon X_p(n)\to X_p(n)$ and $d_S\colon X_p(n)\to X_{p+1}(n)$ of degree +1, where $d_A$ is the differential in $A$, and $d_S$ is defined as follows.

The restriction of $d_S$ to $A^{\otimes k_1}\otimes A^{\otimes k_2}\otimes\dots\otimes A^{\otimes k_p}\subset K_p(n)$ is the sum
$\sum_{j=1}^pd_{S,j}$, and $d_{S,j}$ acts only on the factor $A^{\otimes k_j}$ (and is the identity on the remaining factors).
For any $\alpha\in \{1,\dots k_j-1\}$ denote by $d_{S,j,\alpha}(A^{\otimes k_j})\subset A^{\otimes \alpha}\otimes A^{\otimes k_j-\alpha}$ the same monomial subdivided into two pieces of consecutive elements. We have then $d_{S,j}=\sum_{\alpha=1}^{k_j-1}d_{S,j,\alpha}$. Finally, we set
\begin{equation}
d_S|_{K_p(n)}=\sum_{j=1}^p\pm d_{S,j}
\end{equation}
where the signs $\pm$ are found immediately from those in the bar and cobar differentials.

To prove the claim, note that $\oplus_{n\ge 0}L_n$ with the differential $d_S$ is just the cobar-differential of the cofree coalgebra $T^+(A[1])$ without counit cogenerated by the underlying graded space of $A$. The cohomology of this cobar-complex is known to be equal to the Koszul dual to $T_+(A[1])$ algebra, which is $A$.
\end{proof}
We turn back to the spectral sequence of the bicomplex $\Cobar(\Bar(A))$, with the differential in $E_0$ equal to the one in the complexes $L_n$ (the ``vertical'' one). By Lemma, the first term is
$$
E_1^{ij}=H^j(A) \text{ for $i=0$ and 0 otherwise}
$$

Then the cohomology of the bicomplex is isomorphic to that of $A$.

The map $\Phi_A\colon \Cobar(\Bar(A))\to A$ can be explicitly described as follows. Its restriction to the underlying graded spaces $L_n$ is non-zero only
the rightmost component $A\otimes A\otimes \dots\otimes A$, where it is $a_1\otimes a_2\otimes\dots \otimes a_{n+1}\mapsto a_1a_2\dots a_{n+1}$.
This map is not a map of bicomplexes, so we can not apply our spectral sequence argument immediately.

The map $\Phi_A$ is a map of algebras, and we only needs to prove that it is a quasi-isomorphism of complexes. To this end, consider another map
$\imath\colon A\to \Cobar(\Bar(A))$ which maps $A$ by identity to $L_0$. It is not a map of algebras, but it is a map of bicomplexes, and our spectral sequence argument shows that $\imath$ is a quasi-isomorphism of the total complexes.

On the other hand, the composition $\Phi_A\circ \imath=\id_A$, therefore $\Phi_A$ is also a quasi-isomorphism.
\end{proof}
An important property of the bar-complex is that it preserves the quasi-isomorphisms.
\begin{prop}\label{barquis}
Let $f\colon A\to B$ be a quasi-isomorphism of dg algebras (or, more generally, an $A_\infty$-quasi-isomorphism).
Then the corresponding map $\Bar(f)\colon \Bar(A)\to \Bar(B)$ is a quasi-isomorphism of dg coalgebras.
\end{prop}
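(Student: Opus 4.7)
The plan is to filter both bar complexes by tensor length,
\[
F_p\Bar(A)=\bigoplus_{1\le n\le p} A^{\otimes n}[n],
\]
and reduce the question to the K\"unneth formula over a field. The bar differential $d_n\colon A^{\otimes n}\to A^{\otimes(n-1)}$ strictly decreases the tensor length by one, while the internal differential on each $A^{\otimes n}$ preserves it; hence $F_p\Bar(A)$ is a subcomplex of $\Bar(A)$ for every $p$, the associated graded piece $F_p/F_{p-1}$ is $A^{\otimes p}[p]$ equipped only with the internal differential, and the filtration is exhaustive with $F_0=0$.

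The next step is to observe that an $A_\infty$-quasi-isomorphism $f=\{f_k\colon A^{\otimes k}\to B\}_{k\ge 1}$ induces a coalgebra map $\Bar(f)$ whose value on $a_1\otimes\cdots\otimes a_n$ is the sum over all decompositions of $(a_1,\dots,a_n)$ into $k\le n$ consecutive blocks of sizes $i_1,\dots,i_k$ of the terms $f_{i_1}(a_1,\dots,a_{i_1})\otimes\cdots\otimes f_{i_k}(\dots,a_n)$. In particular $\Bar(f)$ preserves the filtration by tensor length, and modulo $F_{p-1}$ only the splitting into $p$ blocks of size one survives, so the induced map on $\gr_p$ equals $f_1^{\otimes p}[p]$. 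Since $\k$ is a field, the K\"unneth formula gives $H^\udot(A^{\otimes p})=H^\udot(A)^{\otimes p}$, and consequently $f_1^{\otimes p}$ is a quasi-isomorphism whenever $f_1$ is.

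I would then argue by induction on $p$ using the short exact sequences
\[
0\to F_{p-1}\Bar(A)\to F_p\Bar(A)\to A^{\otimes p}[p]\to 0
\]
and their counterparts for $B$. The five lemma applied to the induced morphism of long exact cohomology sequences shows that $\Bar(f)|_{F_p}$ is a quasi-isomorphism, the base case $p=0$ being trivial. Finally, since cohomology commutes with filtered colimits of complexes and $\Bar(A)=\colim_p F_p\Bar(A)$, passing to the colimit in $p$ shows that $\Bar(f)$ itself is a quasi-isomorphism. The compatibility with the dg coalgebra structures is built into the construction of $\Bar(f)$ and requires no further verification.

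The step I expect to need the most care is the preservation of the filtration by $\Bar(f)$ and the precise identification of the induced map on $\gr_p$: one has to track the combinatorics of block decompositions and the sign conventions of our (non-standard) bar differential in Remark \ref{ncsigns} to confirm that the map on the associated graded is literally $f_1^{\otimes p}$ and not a twisted version. Everything else is a routine application of the five lemma, K\"unneth, and compatibility of cohomology with filtered colimits; in particular no boundedness assumption on $A$ or $B$ is needed, because the colimit step bypasses any issue of spectral sequence convergence.
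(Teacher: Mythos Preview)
Your proof is correct and follows essentially the same approach as the paper: both arguments filter by tensor length and use that on the associated graded the induced map reduces to $f_1^{\otimes p}$, which is a quasi-isomorphism by K\"unneth. The paper phrases this via the spectral sequence of the filtered cone $\Cone(\Bar(f))$ (noting convergence ``by dimensional reasons''), whereas you unwind the same argument with the five lemma and a filtered colimit; these are equivalent, and your version has the minor advantage of making the passage to the limit explicit rather than appealing to spectral sequence convergence.
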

\begin{proof}
We give a proof which works in the $A_\infty$ case as well.
At first, the statement is equivalent to the acyclicity of $\Cone(\Bar(f))$ (for the $A_\infty$ case it is an abuse of notations to denote the morphism of the bar-complexes by $\Bar(f)$). When $f$ is a map of dg algebras, $\Cone(\Bar(f))$ is a bicomplex. We can use the spectral sequence computing the cohomology of the (total) complexes $L_k=\Cone(A^{\otimes k}\to B^{\otimes k})$ at first. This spectral sequence converges, and collapses at the term $E_1$, as by the assumption (that $f$ is a quasi-isomorphism) cohomology of the complexes $L_k$ vanish in all degrees.

Consider now the case when $f$ is an $A_\infty$ map. Consider the ascending filtration of $\Cone(\Bar(f))$, setting
$$
F_k=\Cone(\oplus_{i\le k}A^{\otimes i}\to\oplus_{i\le k}B^{\otimes i})
$$
Then $\{F_k\}_{k\ge 1}$ is a filtration of $\Cone(\Bar(f))$ by {\it subcomplexes}.
Consider the spectral sequence corresponding to this filtration. It converges by dimensional reasons, and its term $E_1$ depends only on the linear component of the $A_\infty$ map $f$. That is the cohomology in the term $E_1$ vanish.
\end{proof}

\section{\sc Graded Leinster monoids}
In this Section, we introduce the concepts of a {\it graded Leinster monoid} in $\Vect(\k)$ and in $\Alg(\k)$.
The concept of a graded Leinster monoid in $\Vect(\k)$ provides a weaker and relaxed context for the concept of a dg associative algebra over $\k$. We define a {\it bar-complex} $\Bar(X_L)$ of a graded Leinster monoid $X_L$, and show that it is a dg coalgebra. We prove that $\Cobar(\Bar(X_L))$ is a dg algebra whose underlying dg vector space is quasi-isomorphic to $X_1$, the first component of $X_L$, in Theorem \ref{theorem27}. 
\subsection{\sc }
Here we define the concepts of a graded Leinster monoid in $\Vect(\k)$ and of a graded Leinster monoid in $\Alg(\k)$.
In Section \ref{sectionapp1}, we define graded Leinster monoids in $\Cat^\dg(\k)$.

Let $X_L\colon\Delta_0^\opp\to\Vect(\k)$ be a Leinster monoid, with the colax maps $\beta_{m,n}\colon X_{m+n}\to X_m\otimes X_n$, see Definition \ref{lmon}. Recall that $\beta_{m,n}$ are quasi-isomorphisms of complexes (the weak equivalences in $\Vect(\k)$). We denote by $X_\ell$ the components $X_L([\ell])$.

\begin{defn}\label{grl}{\rm
A Leinster monoid $X_L$ in $\Vect(\k)$ is called {\it graded} if each $X_\ell$ is a $\mathbb{Z}^\ell$-graded vector space, 
with the graded components $X_\ell^{a_1,\dots,a_\ell}$, such that:
\begin{itemize}
\item[(i)] for any $\ell$, the colax-map $\beta_{a,b}\colon X_\ell\to X_a\otimes X_b$, $a+b=\ell$, is $\mathbb{Z}^\ell$-graded, where the right-hand side is endowed with natural $\mathbb{Z}^a\oplus\mathbb{Z}^b=\mathbb{Z}^\ell$-grading, obtained by concatenation of the gradings on the factors;
\item[(ii)] the total grading on $X_\ell^{a_1,\dots,a_\ell}$, defined as $a_1+\dots+a_\ell$, coincides with the homological grading on the complex $X_\ell$.
\end{itemize}
}
\end{defn}
We have:
\begin{equation}
X_\ell^n=\oplus_{a_1+\dots+a_\ell=n}X_\ell^{a_1,\dots,a_\ell}
\end{equation}

The category of graded Leinster monoids in $\Vect(\k)$ is denoted by $\mathscr{L}^\gr(\Vect(\k))$.

\begin{example}{\rm
Our basic example of a graded Leinster monoid in $\Vect(\k)$ is the following. Let $A$ be an associative dg algebra.
The formula $X_n=A^{\otimes n}$ gives a Leinster monoid in $\Vect(\k)$, with $\beta_{a,b}\colon A^{\otimes (a+b)}\to A^{\otimes a}\otimes A^{\otimes b}$ the identity maps. Let 
$$
A=\oplus_{a\in\mathbb{Z}}A^a
$$
be the underlying graded vector space. Define the grading on $X_L$ by setting
$$
X_\ell^{a_1,a_2,\dots,a_\ell}=A^{a_1}\otimes A^{a_2}\otimes\dots\otimes A^{a_\ell}
$$
One easily checks that this definition makes $X_L$ a graded Leinster monoid in $\Vect(\k)$.
}
\end{example}

For Leinster monoids in $\Alg(\k)$, we need one extra condition:
\begin{defn}\label{grla}{\rm
A Leinster monoid $X_L$ in $\Alg(\k)$ is called {\it graded} the underlying Leinster monoid in $\Vect(\k)$ is graded, and for each $\ell$, the product in $X_\ell$ agrees with $\mathbb{Z}^\ell$-grading:
\begin{equation}
X_\ell^{a_1,\dots,a_\ell}*X_\ell^{b_1,\dots,b_\ell}\subset X_\ell^{a_1+b_1,\dots,a_\ell+b_\ell}
\end{equation}
}
\end{defn}
The category of graded Leinster monoids in $\Alg(\k)$ is denoted by $\mathscr{L}^\gr(\Alg(\k))$.

\subsection{\sc The symmetric monoidal category of graded Leinster monoids}
Here we endow the category $\mathscr{L}^\gr(\Vect(\k))$  with a symmetric monoidal structure.
The construction is straightforward, but one should be careful on signs.

Let $X_L$ and $Y_L$ be two graded Leinster monoids in $\Vect(\k)$. Define the multi-grading on $X_n\otimes Y_n$ be setting
\begin{equation}\label{grmon1}
(X_n\otimes Y_n)^{c_1,\dots,c_n}=\bigoplus_{\substack{{a_1,\dots,a_n}\\{b_1,\dots,b_n}\\{a_i+b_i=c_i}}}
X_n^{a_1,\dots,a_n}\otimes Y_n^{b_1,\dots,b_n}
\end{equation}

Endow the collection of spaces $\{Z_n=X_n\otimes Y_n\}$, ${n\ge 0}$, with a structure of a functor $Z_L\colon \Delta_0^\opp\to \Vect(\k)$. 

For $x\in X_n^{a_1,\dots,a_n}, y\in Y_n^{b_1,\dots,b_n}$, we set
\begin{equation}\label{grmon2}
F_i(x\otimes y)=F_i(x)\otimes F_i(y)
\end{equation}
and
\begin{equation}\label{grmon3}
D_j(x\otimes y)=D_j(x)\otimes D_j(y)
\end{equation}
It follows that \eqref{grmon2} and \eqref{grmon3} define a structure of  functor 
$Z_L\colon \Delta^\opp_0\to \Vect(\k)$ with $Z_n=X_n\otimes Y_n$.

Finally, define the colax-monoidal structure $\beta(Z)$ on the functor $Z_L\colon \Delta_0^\opp\to \Vect(\k)$, by
\begin{equation}\label{grmon4}
\begin{aligned}
\ &\beta(Z)(x_n\otimes y_n)=\\
&\sum_{\substack{{a_1^\prime,\dots,a_n^\prime}\\{b_1^\prime,b_n^\prime}}}(-1)^{(a_1^\pprime+\dots+a_n^\pprime)(b_1^\prime+\dots+b_n^\prime)}
\beta(X)^{(1)}_{a_1^\prime,\dots,a_n^\prime}(x_n)\cdot \beta(Y)^{(1)}_{b_1^\prime,\dots,b_n^\prime}(y_n)\ \otimes\ \beta(X)^{(2)}_{a_1^\pprime,\dots,a_n^\pprime}(x_n)\cdot \beta(Y)^{(2)}_{b_1^\pprime,\dots,b_n^\pprime}(y_n)
\end{aligned}
\end{equation}
where
$$
\beta(X)(x_n)=\sum_{\substack{a_i^\prime+a_i^{\pprime}=a_i}}\beta(X)^{(1)}_{a_1^\prime,\dots,a_n^{\prime}}(x_n)\otimes
\beta(X)^{(2)}_{a_1^\pprime,\dots,a_n^{\pprime}}(x_n)
$$
and
$$
\beta(Y)(y_n)=\sum_{\substack{b_i^\prime+b_i^{\pprime}=b_i}}\beta(Y)^{(1)}_{b_1^\prime,\dots,b_n^{\prime}}(y_n)\otimes
\beta(Y)^{(2)}_{b_1^\pprime,\dots,b_n^{\pprime}}(y_n)
$$
It is checked that $\beta(Z)$ indeed defines a colax-monoidal structure.

In this way, $\mathscr{L}^\dg(\Vect(\k))$ is a symmetric monoidal category.

\subsection{\sc The bar-complex of a graded Leinster monoid in $\Vect(\k)$}\label{sectiongraded}
For a graded Leinster monoid $X_L$ in $\Vect(\k)$, we define a dg coalgebra $\Bar(X_L)\in\Coalg_\nilp(\k)$ which we call {\it the bar-complex of $X_L$}. When the Leinster monoid $X_L$ is defined by an associative dg algebra $A\in\Alg(\k)$, by setting $X_\ell=A^{\otimes \ell}$, the dg coalgebra $\Bar(X_L)$ coincides with $\Bar(A)\in\Coalg_\nilp(\k)$.

\begin{remark}{\rm
Consider the bar-complex of a dg associative algebra $A$. The summands in the formula \eqref{diffbarclass} for the bar-differential contain the sign corrections $(-1)^{i-1+\sum_{j=1}^{i-1}\deg a_j}$ depending on degrees of $a_i$'s. It makes impossible
to consider the bar-differential just as the conventional chain differential $F_1-F_2+F_3-\dots +(-1)^nF_{n-1}$ of the Leinster monoid with $X_\ell=A^{\otimes \ell}$; the ``sign corrections'' should be inserted in the chain differential. Our concept of the bar-complex of a {\it graded} Leinster monoid $X_L$ provides a more general setting for the ``bar-differential with the correct signs''. The grading on $X_L$ is used to determine these signs.
}
\end{remark}

The construction is as follows. The underlying complex $\Bar(X_L)$ is the total complex of the bicomplex
\begin{equation}\label{diffbars}
\rightarrow\underset{\text{degree -3}}{X_3}\xrightarrow{d_3^s}\underset{\text{degree -2}}{X_2}\xrightarrow{d_2^s}\underset{\text{degree -1}}{X_1}\rightarrow 0
\end{equation}
whose horizontal differential, the
``signed'' chain differential $d^s$, is defined as follows.

The component of the differential $d_\ell^s\colon X_\ell^{a_1,\dots,a_\ell}\to X_{\ell-1}$ is the alternated sum of the face maps in $\Delta_0$:
\begin{equation}\label{diffbarlb}
d_\ell^s=(-1)^{A_1}F_1+(-1)^{A_2}F_2+\dots+(-1)^{A_{\ell-1}}F_{\ell-1}
\end{equation}
where $F_i$ is corresponded to the face map $f_i\colon [\ell-1]=\{0<\dots <\ell-1\}\to [\ell]=\{0<\dots\ell\}$, see Section \ref{deltanot}, and
\begin{equation}\label{diffbarbis}
A_i=a_1+\dots+a_{i}+i
\end{equation}

\begin{lemma}\label{lemmaqz}
Let $X_L$ be a graded Leinster monoid in $\Vect(\k)$. Then the following statements are true:
\begin{itemize}
\item[(i)] $F_i\colon X_n\to X_{n-1}$ maps $X_n^{a_1,\dots,a_n}$ to $X_{n-1}^{a_1,\dots,a_{i-1},a_i+a_{i+1},a_{i+2},\dots,a_n}$,
$1\le i\le n-1$;
\item[(ii)] $D_j\colon X_n\to X_{n+1}$ maps $X_n^{a_1,\dots,a_n}$ to $X_{n+1}^{a_1,\dots,a_{j-1},0,a_j,\dots,a_n}$, $0\le j\le n$;
\item[(iii)] $(d^s)^2=0$.
\end{itemize}
\end{lemma}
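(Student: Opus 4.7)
The plan is to prove (i) and (ii) by exploiting naturality of the colax maps $\beta_{a,b}$ under monoidal decompositions of morphisms in $\Delta_0$, reducing to elementary base cases on $X_2$ and $X_0$, and then to derive (iii) from (i) by a direct sign computation with the simplicial identities.

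For (i), one uses the monoidal decomposition of the face map
\[
f_i \;=\; \mathrm{id}_{[i-1]}\otimes f_1^{(2)}\otimes \mathrm{id}_{[n-i-1]}
\]
in $\Delta_0$, where $f_1^{(2)}\colon [1]\to [2]$ is the unique non-extreme face. By Definition \ref{grl}(i) applied iteratively, the composite colax map $\beta'\colon X_n\to X_{i-1}\otimes X_2\otimes X_{n-i-1}$ sends $X_n^{a_1,\dots,a_n}$ into $X_{i-1}^{a_1,\dots,a_{i-1}}\otimes X_2^{a_i,a_{i+1}}\otimes X_{n-i-1}^{a_{i+2},\dots,a_n}$, and naturality of $\beta$ yields
\[
\beta''\circ F_i \;=\; \bigl(\mathrm{id}\otimes F_1^{(2)}\otimes \mathrm{id}\bigr)\circ \beta',
\]
with $\beta''\colon X_{n-1}\to X_{i-1}\otimes X_1\otimes X_{n-i-1}$. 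The base case $F_1^{(2)}\colon X_2^{a_i,a_{i+1}}\to X_1^{a_i+a_{i+1}}$ is immediate: $F_1^{(2)}$ is a chain map, and by Definition \ref{grl}(ii) the multigrading on $X_1$ coincides with its homological grading. Combined with the direct-sum decomposition of $X_{n-1}$ by multigraded components and componentwise multigrading-preservation of $\beta''$, this pins down the multigrading of $F_i(x)$ as claimed. Claim (ii) is proved analogously, using the decomposition $\delta_j = \mathrm{id}_{[j]}\otimes \delta_0^{(1)}\otimes \mathrm{id}_{[n-j]}$ in $\Delta_0$, where $\delta_0^{(1)}\colon[1]\to[0]$ is the unit morphism; since $X_0 = \k$ is concentrated in multidegree $0$, the induced $D_0^{(1)}\colon X_0\to X_1$ lands in $X_1^0$, and naturality of $\beta$ forces $D_j$ to insert a $0$ at the $j$-th multigrading slot.

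For (iii), fix $x\in X_n^{a_1,\dots,a_n}$ and expand
\[
(d^s)^2(x) \;=\; \sum_{i=1}^{n-1}\sum_{j=1}^{n-2}(-1)^{A_i+\widetilde{A}_j^{(i)}}\, F_jF_i(x),
\]
where $\widetilde{A}_j^{(i)}$ is the $A$-sign evaluated on the shifted multigrading of $F_i(x)$ dictated by (i). A direct check yields $\widetilde{A}_j^{(i)} = A_j$ for $j<i$ and $\widetilde{A}_j^{(i)} = A_{j+1}-1$ for $j\ge i$. Splitting the double sum at $j=i$, applying the simplicial identity $F_jF_i = F_{i-1}F_j$ for $j<i$, and relabelling $(i',j')=(j,i-1)$, the $j<i$ contribution transforms into $\sum_{1\le i'\le j'\le n-2}(-1)^{A_{j'+1}+A_{i'}}\, F_{j'}F_{i'}(x)$, which cancels exactly the $j\ge i$ contribution (the extra $-1$ arising from $\widetilde{A}_j^{(i)}-A_{j+1}=-1$).

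The hardest step is the multigrading analysis in (i)--(ii): ensuring that $F_i(x)$ itself, rather than merely its iterated $\beta$-image, lies in the specified multigraded component of $X_{n-1}$. Since $\beta''$ is only a quasi-isomorphism, not an injection, this requires combining the componentwise multigrading-preservation of each individual $\beta_{a,b}$ with consistency across all monoidal decompositions of $[n-1]$ in $\Delta_0$, which is ultimately provided by coassociativity of the colax structure.
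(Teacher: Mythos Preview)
Your approach to (i)--(ii) via monoidal decomposition of $f_i$ and $\delta_j$ in $\Delta_0$ together with naturality of $\beta$, and your explicit sign computation for (iii), coincide with the paper's proof (the paper simply calls (iii) ``standard'' without spelling out the relabelling you give).

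You are right to flag the injectivity issue in your final paragraph, but your proposed resolution does not work. Coassociativity of the colax structure only tells you that the various iterated $\beta$'s agree; it gives no control over their common kernel. Concretely: from naturality you deduce that $\beta''\bigl(F_i(x)\bigr)$ lies in the correct multigraded summand of $X_{i-1}\otimes X_1\otimes X_{n-i-1}$, and since $\beta''$ is graded, every multigraded component $y_c$ of $F_i(x)$ with $c\neq(a_1,\dots,a_i+a_{i+1},\dots,a_n)$ must lie in $\ker\beta''$. But nothing in Definition~\ref{grl} forces $\ker\beta''=0$ on the relevant graded pieces, and passing to \emph{all} monoidal decompositions of $[n-1]$ only yields $y_c\in\bigcap\ker\beta$, which need not vanish either. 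So the step ``this pins down the multigrading of $F_i(x)$'' is not justified by the axioms as stated. The paper's proof has exactly the same gap --- it asserts that the multi-degree of an element and of its $\beta$-image agree, and then silently applies this in the reverse direction to $F_i(x)$ without knowing in advance that $F_i(x)$ is homogeneous. A clean fix is to strengthen Definition~\ref{grl} by requiring the simplicial operators $F_i,D_j$ themselves to be multigraded (in the sense of the lemma); this is automatic in both of the paper's examples (the strict case $X_n=A^{\otimes n}$ and the refined dg quotient of Section~\ref{sectionapp5}), so nothing downstream is affected.
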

\begin{proof}
(i): For $n=2$ it follows from the part (ii) of the Definition \ref{grl}, because all morphisms from $\Delta_0^\opp$ preserve the cohomological grading. For general $n$ and some $1\le i\le n-1$, we represent $[n]$ as $[i-1]\otimes [2]\otimes [n-i-1]$, with respect to the monoidal structure in $\Delta_0^\opp$, and $F_i=\id_{[i-1]}\otimes (F_1)_{[2]}\otimes \id_{[n-i-1]}$.
Now we use the part (i) of the Definition \ref{grl}, which implies that for a homogeneous $x\in X_n^{a_1,\dots,a_n}$, 
the multi-degree of an element $x$ is the same that of the element $\beta_{a,b}(x)$.
As the colax-monoidal structure $\beta$ is natural with respect to the morphisms of $\Delta_0^\opp$, one has:
\begin{equation}
\begin{aligned}
\ &\beta_{i-1,2,n-i-1}(F_i(x))=\beta_{i-1,2,n-i-1}(\id_{[i-1]}\otimes (F_1)_{[2]}\otimes \id_{[n-i-1]}(x))=\\
&(\id_{[i-1]}\otimes 
(F_1)\otimes\id_{[n-i-1]})(\beta_{i-1,2,n-i-1}(x))
\end{aligned}
\end{equation}
what gives the result.

(ii): is proven similarly, taking to the account that by our assumption $X_0=\k$, and  has the $\mathbb{Z}^0$-grading 0.

(iii): it follows from (i), in a standard way.
\end{proof}

\hspace{1mm}

As the next step, we endow the bar-complex $\Bar(X_L)$ with a structure of a dg coalgebra.

The restriction of the coproduct to $X_\ell$ is given as the direct sum:
\begin{equation}\label{barcoalg}
X_\ell\xrightarrow{\oplus \beta_{a,b}}\bigoplus_{\substack{{a+b=\ell}\\{a,b>0}}}X_a\otimes X_b\subset \Bar(X_\ell)\otimes\Bar(X_\ell)
\end{equation}
\begin{lemma}\label{lemmabarcoalg}
The coproduct \eqref{barcoalg} is coassociative, and is compatible with the differential, which makes $\Bar(X_L)$, for a graded Leinster monoid $X_L$, a co-nilpotent dg coalgebra. 
\end{lemma}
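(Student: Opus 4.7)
The plan is to verify three properties of the proposed dg coalgebra structure on $\Bar(X_L)$: (a) coassociativity of $\Delta=\bigoplus_{a+b=\ell,\,a,b>0}\beta_{a,b}$; (b) compatibility of $\Delta$ with the signed chain differential $d^s$ of \eqref{diffbarlb}; and (c) conilpotency. I would handle (a) as a direct consequence of the colax-monoidal pentagon \eqref{colax1} applied to $X_L\colon\Delta_0^\opp\to\Vect(\k)$: for any triple $a,b,c\ge 1$, the two compositions $(\beta_{a,b}\otimes\id)\circ\beta_{a+b,c}$ and $(\id\otimes\beta_{b,c})\circ\beta_{a,b+c}$ agree with the canonical triple colax map $\beta_{a,b,c}\colon X_{a+b+c}\to X_a\otimes X_b\otimes X_c$, so summing over triples with $a+b+c=\ell$ yields $(\Delta\otimes\id)\Delta=(\id\otimes\Delta)\Delta$. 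For (c), a length filtration does the job: since $\Delta$ sends $X_\ell$ into $\bigoplus_{a+b=\ell,\,a,b\ge 1}X_a\otimes X_b$, induction gives $\Delta^k(X_\ell)\subset\bigoplus X_{a_1}\otimes\cdots\otimes X_{a_{k+1}}$ with each $a_i\ge 1$ and $\sum a_i=\ell$, and this sum is empty once $k\ge\ell$.

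For the substantive step (b), the key combinatorial input is that the face map $f_i^{(\ell)}\colon[\ell-1]\to[\ell]$ decomposes as a monoidal product in $\Delta_0$ with respect to the splitting $\ell=a+b$ precisely when $i\ne a$: namely $f_i^{(\ell)}=f_i^{(a)}\otimes\id_{[b]}$ for $1\le i<a$ and $f_i^{(\ell)}=\id_{[a]}\otimes f_{i-a}^{(b)}$ for $a<i\le\ell-1$, whereas $f_a^{(\ell)}$ does not factor, because both candidate factors would be extreme face maps, which are excluded from $\Delta_0$. Naturality of $\beta$ with respect to these two types of decompositions then yields, for each split $a+b=\ell$,
\begin{equation*}
\beta_{a-1,b}\circ F_i=(F_i^{(a)}\otimes\id)\circ\beta_{a,b}\quad(1\le i<a),\qquad \beta_{a,b-1}\circ F_i=(\id\otimes F_{i-a}^{(b)})\circ\beta_{a,b}\quad(a<i\le\ell-1).
\end{equation*}
Using these, I would set up a bijective pairing between summands of $d^s(\Delta x)$ (indexed by a split $a$ plus an interior face on the left or right factor) and summands of $\Delta(d^s x)$ (indexed by a face $F_i$ on $x$ plus a subsequent split $a'$). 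The ``central'' faces $F_a$ applied to $x$, which have no direct counterpart on the $d^s(\Delta x)$ side, get absorbed into the two boundary splits $a'=a-1$ and $a'=a$ of $\Delta(F_a x)$, which match the neighbouring $d^s$-splits $(a-1,\ell-a+1)$ and $(a+1,\ell-a-1)$ with a unique boundary face on the adjacent factor. A summand count (already visible in the prototype $X_\ell=A^{\otimes\ell}$) confirms that this is a bijection.

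The hard part will be checking that the signs agree term by term. On the $\Delta(d^s x)$ side the face $F_i$ carries the sign $(-1)^{A_i}=(-1)^{i+a_1+\cdots+a_i}$ dictated by the multigrading of $x$, while on the $d^s(\Delta x)$ side the Koszul sign $(-1)^{\deg(\text{left factor})}$ appears in front of $\id\otimes d^s$ and each $F_j$ on a split factor carries an internal sign computed from the multigrading of that factor. My approach will be to use Lemma \ref{lemmaqz}(i), which identifies the multigradings on the left and right pieces of $\beta_{a,b}(x)$ with the concatenation of the first $a$ and the last $b$ indices of the multigrading of $x$, and then to rewrite the exponents $A_j^{(\text{left})}$ and $A_j^{(\text{right})}$ in terms of the original $a_i$'s and match them against $A_i$ in each of the four subcases (the two naturality subcases $i<a$, $i>a$, and the two boundary absorptions for $F_a$). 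This sign verification is the computational core of the lemma and mirrors the corresponding calculation for the classical bar complex of a dg algebra, which is recovered as the case $X_\ell=A^{\otimes\ell}$.
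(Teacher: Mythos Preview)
Your approach is correct and essentially identical to the paper's: coassociativity from the colax pentagon \eqref{colax1}, conilpotency by length, and compatibility of $\Delta$ with $d^s$ via naturality of $\beta$ followed by a sign check (exactly the paper's \eqref{eqz5}--\eqref{eqz7}). One simplification: the ``central face $F_a$'' discussion is a red herring. If you parameterize $\Delta(F_j x)$ by splits $(a,b)$ of $n$ with $a\ne j$ (as the paper does in \eqref{eqz5}), the two cases $j\le a-1$ and $j\ge a+1$ already cover every split of $n-1$ exactly once, so there is nothing left over to ``absorb''; the bijection is then immediate and the only genuine work is the sign comparison, which goes through as you outlined using Lemma~\ref{lemmaqz}(i).
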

\begin{proof}
The coassociativity follows directly from the colax-monoidal property for $\beta_{m,n}$, see diagram \eqref{colax1}.
The co-nilpotency is clear. We need to check that the differential $d^s$ and the coproduct are compatible, by
\begin{equation}
d^s(\Delta(x))=\sum_id^s(y_i)\otimes z_i+(-1)^{\deg y_i}y_i\otimes d^s(z_i)
\end{equation}
with
$$
\Delta(x)=\sum_iy_i\otimes z_i
$$
and homogeneous $x,y_i,z_i$. We provide a detailed proof, due to a possible confusion explained in Remark \ref{remconf} below.

Let $x\in X_n^{t_1,\dots,t_n}$, then
\begin{equation}\label{eqz1}
d^s(x)=\sum_{j=1}^{n-1}(-1)^{j+t_1+\dots+t_{j}}F_jx
\end{equation}
Note that 
\begin{equation}\label{eqz2}
F_jx\in X_{n-1}^{t_1,\dots,t_{j-1},t_j+t_{j+1},t_{j+2},\dots,t_n}
\end{equation}
by Lemma \ref{lemmaqz}(i).

On the other hand,
\begin{equation}\label{eqz3}
\Delta(x)=\sum_{a+b=n}x_{t_1,\dots,t_a}\otimes x_{t_{a+1},\dots,t_n}
\end{equation}
with
\begin{equation}\label{eqz4}
x_{t_1,\dots,t_a}\in X_a^{t_1,\dots,t_a},\ \ \ x_{t_{a+1},\dots,t_n}\in X_b^{t_{a+1},\dots,t_n}
\end{equation}
by condition (i) of Definition \ref{grl}
(we use so-called ``Sweedler notations'' assuming another summation $\sum_i x_{t_1,\dots,t_a}^i\otimes x_{t_{a+1},\dots,t_n}^i$, but omitting it, to make the formulas shorter and more visible).

Then we have, for $1\le j\le n-1$:
\begin{equation}\label{eqz5}
\Delta(F_j x)=\sum_{\substack{a+b=n\\j\le a-1}}F_jx_{t_1,\dots,t_a}\otimes x_{t_{a+1},\dots,t_n}+\sum_{\substack{{a+b=n}\\j\le a+1}} x_{t_1,\dots,t_a}\otimes F_jx_{t_{a+1},\dots,t_n}  
\end{equation}
because the colax-map $\beta$, used in the definition of the coproduct, is a morphism of bifunctors. 

Then
\begin{equation}\label{eqz6}
\begin{aligned}
\ &\Delta(d^s(x))=\Delta(\sum_{j=1}^{n-1}(-1)^{j+t_1+\dots+t_{j}}F_jx)=\\
&\sum_{j=1}^{n-1}(-1)^{j+t_1+\dots+t_{j}}\Bigl(\sum_{\substack{a+b=n\\j\le a-1}}F_jx_{t_1,\dots,t_a}\otimes x_{t_{a+1},\dots,t_n}+\sum_{\substack{{a+b=n}\\{j\ge a+1}}} x_{t_1,\dots,t_a}\otimes F_jx_{t_{a+1},\dots,t_n}
\Bigr)=\\
&\sum_{a+b=n}\sum_{j=1}^{a-1}(-1)^{j+t_1+\dots+t_{j}}F_jx_{t_1,\dots,t_a}\otimes x_{t_{a+1},\dots,t_n}+\\
&\sum_{a+b=n}(-1)^{a+t_1+\dots+t_a}
\sum_{k=1}^{b-1}(-1)^{k+t_{a+1}+\dots+t_{a+k}} x_{t_1,\dots,t_a}\otimes   F_kx_{t_{a+1},\dots,t_n}
\end{aligned}
\end{equation}
On the other hand,
\begin{equation}\label{eqz7}
\begin{aligned}
\ &d^s(\Delta(x))=d^s(\sum_{a+b=n}x_{t_1,\dots,t_a}\otimes x_{t_{a+1},\dots,t_n})=\\
&\sum_{a+b=n}\Bigl(d^s(x_{t_1,\dots,t_a})\otimes x_{t_{a+1},\dots,t_n}+(-1)^{\deg x_{t_1,\dots,t_a}}x_{t_1,\dots,t_a}\otimes 
d^s(x_{t_{a+1},\dots,t_n})\Bigr)
\end{aligned}
\end{equation}
Here $\deg x_{t_1,\dots,t_a}$ is the total degree in $\Bar(X_L)$, as $x_{t_1,\dots,t_a}\in X_a^{t_a,\dots,t_a}$, it is equal
to $-a+t_1+\dots+t_a$, by condition (ii) in Definition \ref{grl}. We see that the r.h.s. of \eqref{eqz7} is equal to the r.h.s. of \eqref{eqz6}. We are done.
\end{proof}
\begin{remark}\label{remconf}{\rm
Endowed with the ordinary chain differential $d_n=\sum_{i=1}^{n-1}(-1)^{i+1}F_i$, and with the coproduct \eqref{barcoalg}, the chain complex of a Leinster monoid $X_L$ fails to be a dg coalgebra. Indeed, for $x\in X_\ell$, consider the $(a,b)$-component $\Delta_{a,b}(x)=\sum_ix_i^a\otimes x_i^b\in X_a\otimes X_b$, $a+b=\ell$. Then $$d(\Delta_{a,b}(x))=
\sum_i(dx_i^a)\otimes x_i^b+\sum_i(-1)^{\deg x_i^a-a}x_i^a\otimes (dx_i^b)$$
where $\deg x_i^a-a$ is the total degree of $x_i^a$ in $\Bar(X_L)$. On the other hand, the ordinary chain differential does not depend on the cohomological degree of elements in $X_j$. Thus $d(\Delta(x))$ (depending on the cohomological grading in $X_j$'s) has no chances to be equal to $\Delta(dx)$ (which does not depend on the cohomological grading in $X_j$'s),
except for the case when all $X_j$ are complexes concentrated in cohomological degree 0.
}
\end{remark}

\subsection{\sc The bar-cobar duality for graded Leinster monoids}
By so far, we have not made use the assumption that all $\beta_{m,n}$ are quasi-isomorphisms. It is essential in the proof of the following result, which we call, by abuse of terminology, the ``bar-cobar duality for graded Leinster monoids''.
\begin{theorem}\label{theorem27}
Suppose $X_L\colon \Delta_0^\opp\to\Vect(\k)$ is a graded Leinster monoid (in particular, all $\beta_{m,n}\colon X_{m+n}\to X_m\otimes X_n$ are quasi-isomorphisms).
Then the underlying complex of the dg algebra $\Cobar(\Bar(X_L))$ is quasi-isomorphic to the complex $X_1$.
\end{theorem}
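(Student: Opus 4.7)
Plan: I will show that the natural inclusion $\iota\colon X_1 \hookrightarrow \Cobar(\Bar(X_L))$, sending $X_1$ to the summand with $p=1$ and $\ell=1$, is a quasi-isomorphism. That $\iota$ is a chain map is immediate: on $X_1$ the coproduct $\bigoplus_{a+b=1,\,a,b\ge 1}\beta_{a,b}$ is an empty sum, so $d_\mathrm{cobar}=0$ there, and $d_1^s$ in \eqref{diffbarlb} is also an empty sum; only the internal differential on $X_1$ remains, which agrees on both sides.

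Decompose the underlying bigraded space of $\Cobar(\Bar(X_L))$ as $\bigoplus_{p\ge 1,\,\ell_1,\ldots,\ell_p\ge 1}X_{\ell_1}\otimes\cdots\otimes X_{\ell_p}$ and split the total differential as $d_\mathrm{int}+d^s+d_\mathrm{cobar}$, where $d_\mathrm{int}$ (internal differentials of the $X_{\ell_i}$) preserves $p$ and $n:=\ell_1+\cdots+\ell_p$, $d^s$ (signed chain differential on a single factor) preserves $p$ and decreases $n$ by $1$, and $d_\mathrm{cobar}$ (induced by the $\beta_{a,b}$) preserves $n$ and increases $p$ by $1$. Filter by the subcomplexes $F_{\le N}=\bigoplus_{n\le N}$; the resulting spectral sequence converges by the same grading argument used in the proof of Theorem \ref{theorembc1} (cf.~[W, Thm.~5.5.1]), and its $E_0$-differential on each associated-graded piece $L_n=\bigoplus_{\sum\ell_i=n}X_{\ell_1}\otimes\cdots\otimes X_{\ell_p}$ is $d_\mathrm{int}+d_\mathrm{cobar}$. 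The key claim, an analogue of Lemma \ref{lemmaln}, is that $L_n$ is acyclic for $n\ge 2$ and $L_1=X_1$; granted this, the outer spectral sequence collapses and identifies $H^\udot(\Cobar(\Bar(X_L)))$ with $H^\udot(X_1)$ via the map induced by $\iota$.

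To prove acyclicity of $L_n$ for $n\ge 2$, use an inner spectral sequence from the decreasing filtration of $L_n$ by the number of tensor factors $p$; it converges trivially since $1\le p\le n$. Its $E_0$-differential is $d_\mathrm{int}$, so K\"unneth over the field $\k$ gives
\[
E_1^p \;=\; \bigoplus_{\ell_1+\cdots+\ell_p=n}H^\udot(X_{\ell_1})\otimes\cdots\otimes H^\udot(X_{\ell_p}).
\]
Since every colax map $\beta_{a,b}$ is a quasi-isomorphism, iterating $\beta$ and invoking coassociativity identifies $H^\udot(X_\ell)\cong V^{\otimes\ell}$ canonically, where $V:=H^\udot(X_1)$; hence $E_1^p\cong V^{\otimes n}\otimes K^p$, in which $K^p$ is the free $\k$-module on compositions of $n$ into $p$ positive parts, and $d_1=\id_{V^{\otimes n}}\otimes\delta$ with $\delta\colon K^\udot\to K^{\udot+1}$ the combinatorial differential refining a composition by splitting one of its parts.

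Identifying a composition of $n$ into $p$ parts with the $(p-1)$-subset of $\{1,\ldots,n-1\}$ of divider positions, and refinement with the insertion of one extra divider, realises $(K^\udot,\delta)$ (up to signs) as the augmented simplicial cochain complex of the simplex on $n-1$ vertices; for $n\ge 2$ this simplex is nonempty and contractible, so the complex is acyclic. The principal obstacle, exactly as in the classical proof of Theorem \ref{theorembc1}, is sign bookkeeping: one must verify that the signs contributed by the cobar differential \eqref{diffcobarclass}, the signed chain differential \eqref{diffbarlb}, and the multi-grading of Definition \ref{grl} assemble into the standard simplicial signs on $K^\udot$. Once this combinatorial identification is checked, $(K^\udot,\delta)$ is acyclic, $E_2=0$ in the inner spectral sequence, $L_n$ is acyclic for $n\ge 2$, and the theorem follows.
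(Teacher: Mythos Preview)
Your proof is correct and follows essentially the same approach as the paper: filter by the total weight $n=\sum\ell_i$, and on each associated graded piece $L_n$ run an inner spectral sequence that computes $d_{\mathrm{int}}$-cohomology first, thereby reducing to the strict case $H^\udot(X_\ell)\cong V^{\otimes\ell}$ (with $V=H^\udot(X_1)$), where a combinatorial acyclicity argument applies. The only difference is cosmetic: where you identify the $E_1$-page with the augmented cochain complex of a simplex and flag the sign bookkeeping as the remaining obstacle, the paper simply observes that after passing to cohomology one is literally looking at the complex $L_n$ of Lemma~\ref{lemmaln} for the underlying graded vector space $V$, and invokes the Koszul duality of the cofree coalgebra $T^+(V[1])$ --- this absorbs all the signs at once and makes your claimed factorization $d_1=\id_{V^{\otimes n}}\otimes\delta$ (which is not literally true before a sign twist, since the cobar signs in \eqref{diffcobarclass} depend on the internal degrees) unnecessary.
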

\begin{proof}
Consider the spectral sequence of the bicomplex  $\Cobar(\Bar(X_L))$ computing the cohomology of the complexes (analogous to the complexes) $L_i$ in \eqref{eqbc5} at first (the ``vertical'' differential \eqref{eqjv} in the bicomplex). This spectral sequence converges by dimensional reasons. Moreover, the inclusion
\begin{equation}
i\colon X_1\to L_0\subset \Cobar(\Bar(X_L))
\end{equation}
is a map of bicomplexes, and therefore induces a map of the corresponding spectral sequences.

So it is enough to prove that $i$ induces an isomorphism in the term $E_1$. For this end, we have
\begin{lemma}\label{lemmalnbis}
For a Leinster monoid $X_L$ in $\Vect(\k)$, the complexes $L_n$ given in \eqref{eqbc5} are acyclic for $n\ge 1$, whence the complex $L_0$ is isomorphism to $X_1$ (and thus is quasi-isomorphic to it).
\end{lemma}
\begin{proof}
The difference with the case of the bar-complex of a dg algebra, dealt with in Lemma \ref{lemmaln}, is that the maps $\beta_{mn}\colon X_{m+n}\to X_m\otimes X_n$ are not isomorphisms (as in the latter case) but only are quasi-isomorphisms. As we show now, it does not change much in the proof.

Each complex $L_n$ is a bicomplex by its own, where one differential $d_1$ comes from  the colax-maps $\beta_{ab}$s, and another differential $d_2$ comes from the underlying differentials
in $X_i$s. Consider the spectral sequence of the bicomplex, computing the cohomology of $d_2$ at first.
This spectral sequence converges by the reasons of grading, see [W, Th.5.5.1].
In its $E_1$ term, we the complex $L_n(H^\udot(X_L))$.
The Leinster monoid $H^\udot(X_L)$ has components $H^\udot(X_n)=(H^\udot(X_L))^{\otimes n}$, as $\beta_{ab}$s are {\it isomorphisms} on the cohomology level.
Thus, $H^\udot(X_L)$ is isomorphic to the Leinster monoid of a strict dg algebra $H^\udot(X_1)$ with 0 differential.
Then the acyclicity of $L_n(H^\udot(X_L))$ for $n\ge 1$ is be proven by the same argument as in Lemma \ref{lemmaln}, what gives the result.
\end{proof}

\end{proof}

\subsection{\sc The operation $\sharp$}\label{sectionsharp}
Let $X_L$ be a graded Leinster monoid in $\Vect(\k)$. Here we define a new functor $X_\ldot^\sharp\colon \Delta_0^\opp\to\Vect(\k)$, which differs from the underlying functor $X_\ldot\colon \Delta_0^\opp\to\Vect(\k)$ by signs (and coincides with $X_L$ on the objects).
Note that the functor $X^\sharp_\ldot$ can {\it not} be extended to a Leinster monoid structure, that is, this functor is not colax-monoidal.

The aim is to define $X^\sharp_\ldot$ such that the conventional chain differential $d_n^\sharp\colon X^\sharp_n\to X^\sharp_{n-1}$,
\begin{equation}\label{dsharp}
d_n^\sharp=-F_1+F_2+\dots+(-1)^{n-1}F_{n-1}
\end{equation}
coincides with the ``signed'' chain differential \eqref{diffbarlb} for $X_\ldot$.

The components $X_n^\sharp=X_n$. The operators $F_i^\sharp$ and $D_i^\sharp$ are obtained by the corresponding operators $F_i$ and $D_i$, by multiplication with $\pm 1$.

Let $x\in X_n^{a_1,\dots,a_n}$ be a homogeneous element, with respect to the $\mathbb{Z}^n$-grading on $X_n$.
Define $F_i^\sharp\colon X_n\to X_{n-1}$, $1\le i\le n-1$, by
\begin{equation}\label{newf1}
F_i^\sharp(x)=(-1)^{a_1+\dots+a_{i}}F_i(x)
\end{equation}
As well, define $D_j^\sharp\colon X_n\to X_{n+1}$, $0\le j\le n$, by
\begin{equation}\label{newf2}
D_j^\sharp(x)=(-1)^{a_1+\dots +a_j}D_j(x)
\end{equation}
\begin{lemma}
Let $X_L$ be a graded Leinster monoid in $\Vect(\k)$, see Definition \ref{grl}.
\begin{itemize}
\item[(i)] The formulas \eqref{newf1}, \eqref{newf2} define a functor $X^\sharp_\ldot\colon \Delta_0^\opp\to\Vect(\k)$,
\item[(ii)] the conventional chain differential $d^\sharp$ \eqref{dsharp} of $X_\ldot^\sharp$ coincides with the ``signed'' chain differential 
$d^s$ \eqref{diffbarlb} of $X_L$.
\end{itemize}
\end{lemma}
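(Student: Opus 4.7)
Part (ii) is immediate: for a multi-homogeneous $x \in X_n^{a_1,\ldots,a_n}$,
\begin{equation*}
d_n^\sharp(x) = \sum_{i=1}^{n-1}(-1)^i F_i^\sharp(x) = \sum_{i=1}^{n-1}(-1)^{i+a_1+\cdots+a_i} F_i(x),
\end{equation*}
and the exponent $i + a_1 + \cdots + a_i$ is exactly $A_i$ from \eqref{diffbarbis}, so $d^\sharp(x) = d^s(x)$.

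For part (i), the unsharped operators $F_i, D_j$ coming from the Leinster monoid structure already satisfy the simplicial identities \eqref{simplid}, so it suffices to verify that, for each such identity, the total sign twist produced by the sharpening is the same on both sides when applied to an arbitrary multi-homogeneous element. The bookkeeping is governed by Lemma \ref{lemmaqz}: $F_i$ fuses the adjacent entries $a_i, a_{i+1}$ of the multi-degree into the single entry $a_i + a_{i+1}$, while $D_j$ inserts a $0$ into the multi-degree.

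For the representative case $F_i^\sharp F_j^\sharp = F_{j-1}^\sharp F_i^\sharp$ with $i<j$, applied to $x \in X_n^{a_1,\ldots,a_n}$, Lemma \ref{lemmaqz}(i) shows that on both sides the prefix sum controlling the second sign factor evaluates to $a_1+\cdots+a_j$; the total twist is therefore $(-1)^{2(a_1+\cdots+a_i)+a_{i+1}+\cdots+a_j}=(-1)^{a_{i+1}+\cdots+a_j}$ on both sides, and the identity collapses to the unsharped $F_iF_j = F_{j-1}F_i$. The $DD$ identity is handled analogously, with Lemma \ref{lemmaqz}(ii) used to track how the prefix sum shifts under insertion of a zero.

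The three subcases of the $FD$ identity go the same way; the only point that deserves attention is the collapsing case $F_i^\sharp D_j^\sharp = \mathrm{id}$ with $i = j$ or $i = j+1$. Here one must check that the two sign contributions cancel exactly: this works because the $0$ inserted by $D_j$ does not contribute to the prefix sum that controls the subsequent $F_i^\sharp$, so the sign produced by $F_i^\sharp$ coincides with the one produced by $D_j^\sharp$ and the product is trivial. The main obstacle is nothing conceptual but simply the exhaustiveness of this case analysis; in every subcase Lemma \ref{lemmaqz} forces the required equality of signs automatically.
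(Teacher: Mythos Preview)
Your proof is correct and follows the same approach as the paper: the paper's own argument for (i) is simply ``It is a direct check'' of the simplicial identities (minus the extreme ones), and you have carried out a representative portion of that check explicitly, using Lemma \ref{lemmaqz} exactly as intended to track the multi-degrees. Your direct verification of (ii) is also fine; the paper writes ``(ii) follows directly from (i)'' but in fact (ii) is an immediate unwinding of definitions and does not logically depend on (i), so your ordering is harmless.
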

\begin{proof}
(ii) follows directly from (i). For (i), we need to check the standard simplicial identities, see e.g. [W, Section 8.1], for $X_\ldot^\sharp$, except for the extreme ones (that is, except those which contain non-defined $F_0,F_n\colon X_n\to X_{n-1}$). It is a direct check. 
\end{proof}

\section{\sc The Eilenberg-MacLane map for graded Leinster monoids}
Our goal in this Section is to construct a lax-monoidal structure on the functor 
$$
\Bar\colon \mathscr{L}^\gr(\Vect(\k))\to \Vect(\k),\ \ X_L\mapsto\Bar(X_L)
$$
assigning to a {\it graded} Leinster monoid in $\Vect(\k)$ (Definition \ref{grl}) its bar-complex (see \eqref{diffbars}, \eqref{diffbarlb}), which mimics in our situation the classical Eilenberg-MacLane (shuufle) map. We recall this classical counter-part in Section \ref{emremainder}, and construct its  modified version for graded Leinster monoids in Section \ref{sectionemmnew}. 
Then we prove a crucial for our paper result that the modified Eilenberg-MacLane map 
$$
\nabla^\Bar\colon \Bar(X_L)\otimes \Bar(Y_L)\to\Bar(X_L\otimes Y_L)
$$
(for graded Leinster monoids $X_L$ and $Y_L$ in $\Vect(\k)$) is a map of {\it dg coalgebras}, in Theorem \ref{theoremcoalg} (recall that the bar-complex of a graded Leinster monoid is a dg coalgebra, by Lemma \ref{lemmabarcoalg}). Finally, we prove that $\Bar(X_L)$, for a graded Leinster monoid in $\Alg(\k)$ (see Definition \ref{grla}), is a dg bialgebra, in Corollary \ref{corcoalg}.

\subsection{\sc Reminder on the Eilenberg-MacLane map}\label{emremainder}
Let $\mathscr{A}$ be an abelian category. We denote by $\mathscr{A}^{\simpl}$ the category of simplicial abelian groups. For $X_\ldot\in \mathscr{A}^\simpl$ there is the {\it chain complex}
$C(X_\ldot)$ and the {\it normalized chain complex} $N(X_\ldot)$, both are objects of the category $\mathscr{C}^{\le 0}(\mathscr{A})$ of $\mathbb{Z}_{\le 0}$-graded complexes in $\mathscr{A}$. See [W, Section 8.4] for more detail.

There is an equivalence of categories, for any $\mathscr{A}$:
\begin{equation}
N\colon \mathscr{A}^\simpl \rightleftarrows \mathscr{C}^{\le 0}(\mathscr{A})\colon \Gamma
\end{equation}
called the {\it Dold-Kan correspondence}. See e.g. [W, Th.8.4.1].

Below we consider the cases $\mathscr{A}=\Mod(\mathbb{Z})$ and $\mathscr{A}=\Vect(\k)$, $\k$ a field.
We use notation $\mathscr{A}_*$ assuming either of these two cases.
Both categories $\mathscr{A}_*^\simpl$ and $\mathscr{C}^{\le 0}(\mathscr{A}_*)$ are {\it symmetric monoidal}.

The non-normalized chain complex $C(X_\ldot)$ is the complex
\begin{equation}\label{chain1}
\dots\rightarrow \underset{\text{degree -3}}{X_3}\rightarrow\underset{\text{degree -2}}{X_2}\rightarrow\underset{\text{degree -1}}{X_1}\rightarrow\underset{\text{degree 0}}{X_0}\rightarrow 0
\end{equation}
with the differential $d_n\colon X_n\to X_{n-1}$ equal to
\begin{equation}\label{chain2}
d_n=F_0-F_1+F_2-\dots +(-1)^nF_n
\end{equation}

The Eileberg-MacLane map (a.k.a. the Eilenberg-Zilber map) endows both functors $C$ and $N$ with a lax monoidal structure, whose lax-maps are symmetric and are quasi-isomorphisms of complexes. Below we consider only the case of the functor $C$ (of the non-normalized chain complex):
\begin{equation}\label{emmap}
\nabla\colon C(X_\ldot)\otimes C(Y_\ldot)\to C(X_\ldot\otimes Y_\ldot)
\end{equation}
It is defined as
\begin{equation}\label{emform}
\nabla(x_m\otimes y_n)=\sum_{\substack{(m,n)\text{-shuffles}\\ \sigma\in\Sigma_{m+n}}}(-1)^{\sharp(\sigma)}
\Bigl(D_{\sigma({m+n-1})}\circ\dots\circ D_{\sigma(m)}(x_m)\Bigr)\otimes \Bigl(D_{\sigma(m-1)}\circ\dots\circ D_{\sigma(0)}(y_n)\Bigr)
\end{equation}
for $x_m\in X_m, y_n\in Y_n$. Note that the expressions in big parentheses in the r.h.s.
belong to $X_{m+n}$ and to $Y_{m+n}$, correspondingly.
See [EM, Section 5] for more detail.

The main properties of the Eilenberg-MacLane map are summarized in the following Proposition.
\begin{prop}\label{emprop}
The following statements are true:
\begin{itemize}
\item[(i)]
The Eileberg-MacLane map defines a map of chain complexes
$$
\nabla\colon C(X_\ldot)\otimes C(Y_\ldot)\to C(X_\ldot\otimes Y_\ldot)
$$
that is, for $x\in X_m$ and $y\in Y_n$ one has:
\begin{equation}
d\nabla(x,y)=\nabla(dx,y)+(-1)^m\nabla(x,dy)
\end{equation}

\item[(ii)] the Eilenberg-MacLane map is lax monoidal, that is for any three $X_\ldot,Y_\ldot,Z_\ldot\in\mathscr{A}_*^\simpl$ the diagrams \eqref{lax1} and \eqref{lax2} commute,

\item[(iii)] the lax-monoidal structure $\nabla$ is symmetric, that is, for any $X_\ldot,Y_\ldot$ the diagram below commutes:
\begin{equation}
\xymatrix{
C(X_\ldot)\otimes C(Y_\ldot)\ar[d]_{\kappa_{\mathscr{C}^{\le 0}(\mathscr{A}_*)}}\ar[rr]^{\nabla}&&C(X_\ldot\otimes Y_\ldot)\ar[d]^{\kappa_{\mathscr{A}_*^\simpl}}\\
C(Y_\ldot)\otimes C(X_\ldot)\ar[rr]^{\nabla}&&C(Y_\ldot\otimes X_\ldot)
}
\end{equation}
where $\kappa_{\mathscr{C}^{\le 0}(\mathscr{A}_*)}$ and $\kappa_{\mathscr{A}_*^\simpl}$ are the symmetric braidings (satisfying $\kappa^2=\id$) in the corresponding symmetric monoidal categories,
\item[(iv)] the map $\nabla$ is a quasi-isomorphism of complexes for any $X_\ldot,Y_\ldot$.
\end{itemize}
\end{prop}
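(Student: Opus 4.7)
My plan is to carry out the classical Eilenberg-MacLane treatment, essentially as in [EM, Section 5], which is what Proposition \ref{emprop} is asking to reproduce. The four items divide cleanly into one heavy sign computation (i), one combinatorial matching of shuffle decompositions (ii), one symmetry observation (iii), and one appeal to the Eilenberg-Zilber theorem (iv).

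For (i), I would expand $d \circ \nabla(x_m \otimes y_n)$ by applying each face $F_i$, $0 \le i \le m+n$, to every shuffle summand and commuting $F_i$ past the chain of degeneracies $D_{\sigma(m+n-1)} \circ \cdots \circ D_{\sigma(0)}$ using the simplicial identities \eqref{simplid}. The collapses $F_i D_i = F_i D_{i-1} = \id$ force most contributions to cancel in oppositely signed pairs indexed by a pair of $(m,n)$-shuffles that differ by an adjacent transposition, and the survivors, corresponding to the cases where $F_i$ reaches the original $x_m$ or $y_n$ intact, regroup precisely as $\nabla(dx \otimes y) + (-1)^m \nabla(x \otimes dy)$. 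For (ii), the associativity square reduces to the identity that a $(k,m,n)$-shuffle is equivalently a pair of a $(k,m{+}n)$-shuffle together with an $(m,n)$-subshuffle (and also as an $(k{+}m,n)$-shuffle with a $(k,m)$-subshuffle), and the sign $(-1)^{\sharp}$ is additive under this decomposition; the unit axiom follows because the unique shuffle at $m=0$ (respectively $n=0$) is the empty one, so $\nabla$ reduces to the canonical $X_0 \otimes Y_\ldot \simeq Y_\ldot$. For (iii), the block-transposition bijection between $(m,n)$- and $(n,m)$-shuffles shifts $\sharp(\sigma)$ by exactly $mn$, which is the Koszul sign produced by the symmetric braiding on $\mathscr{C}^{\le 0}(\mathscr{A}_*)$.

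For (iv), I would invoke the Eilenberg-Zilber theorem: one constructs the Alexander-Whitney map $AW\colon C(X_\ldot \otimes Y_\ldot) \to C(X_\ldot) \otimes C(Y_\ldot)$ and checks directly that $AW \circ \nabla = \id$, and then produces a chain homotopy $\nabla \circ AW \sim \id$ by the acyclic models method, using that both bifunctors $(X_\ldot,Y_\ldot) \mapsto C(X_\ldot) \otimes C(Y_\ldot)$ and $(X_\ldot,Y_\ldot) \mapsto C(X_\ldot \otimes Y_\ldot)$ are representable by standard simplices, take acyclic values on them, and agree canonically in degree zero. The main obstacle is not conceptual but rather the explicit sign bookkeeping required in (i); items (ii)--(iv) are either purely combinatorial or a direct appeal to acyclic models, and they can be disposed of cleanly once the face-degeneracy calculus of (i) is in place.
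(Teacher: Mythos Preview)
Your sketch is correct and follows the classical Eilenberg--MacLane argument. Note, however, that the paper does not actually prove Proposition~\ref{emprop}: it is stated in Section~\ref{emremainder} purely as a reminder of well-known facts, with a reference to [EM, Section~5], and no proof is given. So there is nothing to compare against beyond the citation itself; your outline is precisely the content of [EM] that the paper is invoking.
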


\subsection{\sc The Eilenberg-MacLane map for graded Leinster monoids}\label{sectionemmnew}
Let $X_L$  and $Y_L$ be {\it graded} Leinster monoids in $\Vect(\k)$, see Definition \ref{grl}. We are going to construct a map
\begin{equation}
\nabla^\Bar\colon \Bar(X_L)\otimes\Bar(Y_L)\to\Bar(X_L\otimes Y_L)
\end{equation}
bi-functorial in $X_L$ and $Y_L$, and which enjoys the analogues of the properties listed in the case of chain complexes in Proposition \eqref{emprop}.

Note that the statement that $\nabla^\Bar$ is a map of complexes means the following:
for $x_m\in X_m^{a_1,\dots,a_m}$ and $y_n\in Y_n^{b_1,\dots,b_n}$ one has:
\begin{equation}\label{emmnew1}
d^s_{X_L\otimes Y_L}(\nabla^\Bar(x_m\otimes y_n))=\nabla^\Bar(d^s_{X_L}(x_m)\otimes y_n)+(-1)^{-m+a_1+\dots+a_m}\nabla^\Bar(x_m\otimes d^s_{Y_L}(y_n))
\end{equation}
where $-m+a_1+\dots+a_m$ is the cohomological degree of $x_m$ considered as an element in $\Bar(X_L)$. Here $d^s$ is the differential in the bar-complex of a graded Leinster monoid, see \eqref{diffbarlb}, \eqref{diffbarbis}.

For $x_m\in X_m^{a_1,\dots,a_m}, y_n\in Y_n^{b_1,\dots,b_n}$ as above, we define

\begin{equation}\label{emmnew2}
\nabla^\Bar(x_m\otimes y_n)=\sum_{\substack{{(m,n)\text{-shuffles}}\\
{\sigma\in\Sigma_{m+n}}}}(-1)^{S_\EM(\sigma,x_m,y_n)+S_\sharp(\sigma,x_m,y_n)}D_{\sigma(m+n-1)}\circ\dots\circ D_{\sigma(m)}(x_m)\ \otimes\ 
D_{\sigma(m-1)}\circ\dots\circ D_{\sigma(0)}(y_n)
\end{equation}
where
\begin{equation}\label{emmnew3}
S_\EM(\sigma,x_m,y_n)=\sum_{\substack{{0\le i\le m-1}\\ {m\le j\le m+n-1}\\{\sigma(i)>\sigma(j)}}}(a_{i+1}+1)(b_{j-m+1}+1)
\end{equation}
and
\begin{equation}\label{emmnewsharpsign}
S_\sharp(\sigma,x_m,y_n)=\sum_{\substack{{0\le i\le m-1}\\{m\le j\le m+n-1}\\{\sigma(i)>\sigma(j)}}}a_{i+1}b_{j-m+1}
\end{equation}

\begin{remark}\label{emremnew}{\rm
1. Consider the case when all $a_i$ and $b_j$ are equal to 0, the sign $(-1)^{S_\EM(\sigma,x_m,y_n)}$, see \eqref{emmnew3}, is the conventional sign of the transposition $\sigma$. 2. A Leinster monoid $X_L$ in $\Vect(\k)$, whose all terms $X_n$ are concentrated in cohomological degree 0, can be regarded as a graded Leinster monoid such that $X_n=X_n^{0,\dots,0}$, $n\ge 0$.
Then \eqref{emmnew2} is applied, and gives a modification of the classical Eilenberg-MacLane map \eqref{emform}, for the simplicial chain differential(s) replaced by the differential(s) \eqref{dsharp} without the two extreme terms. 3. Note that \eqref{emmnew1} is {\it not} the same as the modified Eileberg-MacLane map from (2.) applied to $X_\ldot^\sharp,Y_\ldot^\sharp$, see Section \ref{sectionsharp}, with forgotten cohomological degree.  Indeed, for the latter case the sign corresponded to a shuffle $\sigma$ is just the classical sign of $\sigma$ (as all $a_i=b_j=0$), instead of the ``correct'' sign defined in \eqref{emmnew3}.
}
\end{remark}

\begin{theorem}\label{emmnewtheorem}
Consider the map $\nabla^\Bar$ defined in \eqref{emmnew2}. The following statements are true:
\begin{itemize}
\item[(I)] $\nabla^\Bar\colon \Bar(X_L)\otimes \Bar(Y_L)\to \Bar(X_L\otimes Y_L)$ is a map of complexes, that is \eqref{emmnew1} holds,
\item[(II)] the map $\nabla^\Bar(-,-)$ is a lax-monoidal map of bifunctors, that is, it is bi-functorial, and for any three graded Leinster monoids in $\Vect(\k)$ the diagrams \eqref{lax1} and \eqref{lax2} commute,
\item[(III)] the map $\nabla^\Bar$ is symmetric, that is, for any two graded Leinster monoids $X_L$ and $Y_L$ in $\Vect(\k)$, the diagram below commutes:
\begin{equation}\label{symmkappas}
\xymatrix{
\Bar(X_L)\otimes\Bar(Y_L)\ar[rr]^{\nabla^\Bar}\ar[d]_{\kappa_1}&&\Bar(X_L\otimes Y_L)\ar[d]^{\kappa_2}\\
\Bar(Y_L)\otimes\Bar(X_L)\ar[rr]^{\nabla^\Bar}&&\Bar(Y_L\otimes X_L)
}
\end{equation}
where, for $x_m\in X_m^{a_1,\dots,a_m}$ and $y_n\in Y_n^{b_1,\dots,b_n}$, one has
\begin{equation}
\kappa_1(x_m\otimes y_n)=(-1)^{(m+a_1+\dots+a_m)(n+b_1+\dots+b_n)}y_n\otimes x_m
\end{equation}
where $x_m$ and $y_n$ are considered as elements of $\Bar(X_L)$ and $\Bar(Y_L)$, correspondingly; and
\begin{equation}
\kappa_2(x_m\otimes y_n)=(-1)^{(a_1+\dots+a_m)(b_1+\dots+b_n)}y_n\otimes x_m
\end{equation}
where $x_m$ and $y_n$ are considered as elements of $X_m$ and $Y_n$.
\end{itemize}
\end{theorem}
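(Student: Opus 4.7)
The plan is to prove all three claims by a sign-careful shuffle analysis that parallels the classical Eilenberg--MacLane argument (see Proposition \ref{emprop}), with two modifications: to accommodate the multi-grading of $X_L$ and $Y_L$, and to handle the fact that the differential $d^s$ of \eqref{diffbarlb} omits the extreme face operators $F_0$ and $F_n$.

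For part (I), I would expand both sides of \eqref{emmnew1} as sums over $(m,n)$-shuffles $\sigma$ and face indices $k$. Each term on the left has the form $F_k \circ D_{\sigma(m+n-1)} \circ \cdots \circ D_{\sigma(m)}(x_m)\ \otimes\ F_k \circ D_{\sigma(m-1)}\circ\cdots\circ D_{\sigma(0)}(y_n)$ up to a sign. Using the simplicial identities \eqref{simplid}, each composite $F_k \circ D_{\sigma(j)}$ either collapses (when $k$ matches $\sigma(j)$ or $\sigma(j)+1$) or commutes past the degeneracy with an index shift. The collapsing pairs cancel in pairs via the standard involution that swaps adjacent entries of $\sigma$; the surviving ``propagated'' terms assemble into $\nabla^\Bar(d^s x_m \otimes y_n)$ and $\nabla^\Bar(x_m \otimes d^s y_n)$. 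A feature specific to Leinster monoids is that certain would-be boundary terms (those producing $F_0$ or $F_{m+n}$ at the outermost level) are absent on both sides, consistent precisely because $d^s_X$ and $d^s_Y$ themselves omit their extreme faces.

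The bulk of the work is sign bookkeeping. I would track three independent contributions: (a) the Koszul sign from moving face operators past degeneracies acting on multi-graded elements, which compensates for changes in $S_\sharp$; (b) the ``shifted-shuffle'' part of $S_\EM$ arising from the $(+1)$ in the product $(a_{i+1}+1)(b_{j-m+1}+1)$, which plays the role of the cohomological degree shift built into the bar construction; and (c) the contribution from $d^s$ itself, whose sign $(-1)^{A_i}$ from \eqref{diffbarbis} depends on the partial sum of multi-degrees up to position $i$. The split $S_\EM + S_\sharp$ is designed precisely so that (a) and (c) interact cleanly while the combinatorial structure of (b) remains isomorphic to the classical EM sign; this is the reason for the nonstandard sign convention in \eqref{diffbarclass}, flagged in Remark \ref{ncsigns}.

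For part (II), the lax-monoidal diagram \eqref{lax1} unwinds, under both composites, to a sum over $(m,n,p)$-shuffles obtained by associativity of the shuffle product; the functions $S_\EM$ and $S_\sharp$ are additive in the inversion count, so both sides carry the same total sign. The unit diagrams \eqref{lax2} are immediate since the unit graded Leinster monoid has trivial multi-grading. For part (III), I would use the bijection $\sigma \mapsto \tau$ between $(m,n)$- and $(n,m)$-shuffles given by taking the set-theoretic complement of the image of the first block. The inversions of $\tau$ relative to the swapped pair $(y_n,x_m)$ are exactly the non-inversions of $\sigma$, so summing yields, in $S_\EM$, the total $(m + a_1+\cdots+a_m)(n + b_1+\cdots+b_n)$ matching $\kappa_1$, and, in $S_\sharp$, the total $(a_1+\cdots+a_m)(b_1+\cdots+b_n)$ matching $\kappa_2$. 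The main obstacle is clearly (I): coordinating (a), (b), (c) above through the several simplicial-identity cases according to whether $F_k$ hits a degeneracy from the first or second block of $\sigma$, and whether its propagated image lands in the interior or at an extremity of $x_m$ or $y_n$. This is the computational core that the author defers to Section \ref{proeff}.
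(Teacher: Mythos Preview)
Your proposal is correct and follows essentially the same route as the paper's proof in Section~\ref{proeff}: the three-case split according to whether $D_{\ell}$ and $D_{\ell-1}$ lie in the same block or different blocks of the shuffle is exactly the paper's Lemma~\ref{lemma44proof}, your ``swap adjacent entries'' involution is the paper's $\sigma_1=(\ell-1,\ell)\circ\sigma$ for case~(iii), and your complement bijection for (III) is the paper's $\sigma\mapsto\sigma\circ\tau_{n,m}$. The paper likewise leaves (II) to the reader and carries out the sign bookkeeping for (I) by separately tracking $S_\EM$, $S_\sharp$, and the bar-differential sign $S_1$, matching your contributions (a), (b), (c).
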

The only prove we know relies on some rather long and routine computation, see below. 
For the particular case of Remark \ref{emremnew}, we provide an alternative and more conceptual proof in Appendix.

\hspace{1mm}

The proof is given in Section \ref{proeff}.

\subsection{\sc Application to algebras over operads}
Let $\mathcal{O}$ be a (dg) operad in $\Vect(\k)$, and denote by $\Alg(\mathcal{O})$ the category of (dg) $\mathcal{O}$-algebras.
Denote by $\Alg(\mathcal{O})^{\Delta^\opp_0}$ the category of functors $\Delta_0^\opp\to \Alg(\mathcal{O})$.

Let $X_\ldot\in\Alg(\mathcal{O})^{\Delta^\opp_0}$. By abuse of notations, we denote by the same symbol the underlying functor in $\Fun(\Delta_0^\opp,\Vect(\k))$.
Consider the bar-complex $\Bar(X_\ldot)$ of the underlying $\Delta_0^\opp$-vector space. It is natural to ask whether one can ``extend'' the $\mathcal{O}$-algebra structure from the terms $X_\ell$ to the entire bar-complex $\Bar(X_\ldot)$. The answer is affirmative, and the construction relies on the modified Eilenberg-MacLane map  and on the fact that it is symmetric monoidal and lax-monoidal map of bi-functors, see Theorem \ref{emmnewtheorem}.

\begin{coroll}\label{corollop}
Let $\mathcal{O}$ be an operad in $\Vect(\k)$.
For $X_\ldot\in\Alg(\mathcal{O})^{\Delta^\opp_0}$, there is a natural $\mathcal{O}$-algebra structure on the total complex of the bicomplex $\Bar(X_\ldot)$, functorial in maps in $\Alg(\mathcal{O})^{\Delta_0^\opp}$.
\end{coroll}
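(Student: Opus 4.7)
The plan is to deduce the corollary as a formal consequence of Theorem \ref{emmnewtheorem}, which endows $\Bar$ with the structure of a lax symmetric monoidal functor from $\mathscr{L}^\gr(\Vect(\k))$ (with its symmetric monoidal structure from Section 3.2) to $\Vect(\k)$. The general principle at work is that any lax symmetric monoidal functor $F\colon \mathcal{C}\to\mathcal{D}$ between symmetric monoidal categories transports $\mathcal{O}$-algebra structures from $\mathcal{C}$ to $\mathcal{D}$, for any operad $\mathcal{O}$.

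First, I would reinterpret the input as an $\mathcal{O}$-algebra object in $\mathscr{L}^\gr(\Vect(\k))$: each operation $\mu\in \mathcal{O}(n)$ yields a morphism $\mu_{X_\ldot}\colon X_\ldot^{\otimes n}\to X_\ldot$ of graded Leinster monoids, defined levelwise by the $\mathcal{O}$-algebra structure on each $X_\ell$. Compatibility of $\mu_{X_\ldot}$ with the face, degeneracy, and colax-monoidal structure maps holds precisely because $X_\ldot$ is a functor landing in $\Alg(\mathcal{O})$, so every structure map of the $\Delta_0^\opp$-diagram is itself an $\mathcal{O}$-algebra morphism; the multi-grading compatibility required by Definition \ref{grl} is preserved under the tensor product of graded Leinster monoids by construction \eqref{grmon1}--\eqref{grmon4}.

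Next, for each $\mu\in\mathcal{O}(n)$, I would form the composite
\begin{equation*}
\Bar(X_\ldot)^{\otimes n}\ \xrightarrow{\ (\nabla^\Bar)^{(n)}\ }\ \Bar(X_\ldot^{\otimes n})\ \xrightarrow{\ \Bar(\mu_{X_\ldot})\ }\ \Bar(X_\ldot),
\end{equation*}
where $(\nabla^\Bar)^{(n)}$ is the iterated lax-monoidal comparison; this iterate is well-defined and unambiguous thanks to the coherence axiom \eqref{lax1} verified in Theorem \ref{emmnewtheorem}(II). Verifying that these composites assemble into an $\mathcal{O}$-algebra structure on $\Bar(X_\ldot)$ reduces to three standard coherence checks: compatibility with operad composition (from the associativity diagram \eqref{lax1}), with the operad unit (from \eqref{lax2}), and with the $\Sigma_n$-action (from the symmetry diagram \eqref{symmkappas}). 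Functoriality in maps of $\Alg(\mathcal{O})^{\Delta_0^\opp}$ then follows from the bi-naturality of $\nabla^\Bar$ asserted in Theorem \ref{emmnewtheorem}(II).

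The main point to flag is that there is essentially no new work here: once Theorem \ref{emmnewtheorem} is in hand, the corollary is an instance of the formal fact that lax symmetric monoidal functors preserve algebras over symmetric operads. The only mildly non-trivial input is ensuring that the target $\mathscr{L}^\gr(\Vect(\k))$ really is symmetric monoidal with the signs of \eqref{grmon4}, which was established in Section 3.2. Thus the proof should be short, invoking Theorem \ref{emmnewtheorem}(II), (III) and this general transfer principle, rather than producing any new computation.
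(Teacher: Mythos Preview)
Your proposal is correct and matches the paper's own proof essentially verbatim: both form the composite $\mathcal{O}(n)\otimes\Bar(X_\ldot)^{\otimes n}\xrightarrow{(\nabla^\Bar)^{n-1}}\mathcal{O}(n)\otimes\Bar(X_\ldot^{\otimes n})\to\Bar(X_\ldot)$, then invoke Theorem~\ref{emmnewtheorem}(II) for compatibility with operadic composition and Theorem~\ref{emmnewtheorem}(III) for $\Sigma_n$-equivariance. Your framing via the transfer principle for lax symmetric monoidal functors is a clean way to package exactly the same argument; one small remark is that the colax maps $\beta$ play no role here (only the degeneracies enter $\nabla^\Bar$), so you need not worry about whether $\mu_{X_\ldot}$ respects them.
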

\begin{proof}
To endow $\Bar(X_\ldot)$ with a structure of an $\mathcal{O}$-algebra, we need to define maps
\begin{equation}\label{barop1}
\mathcal{O}(n)\otimes (\Bar(X_\ldot))^{\otimes n}\to \Bar(X_\ldot)
\end{equation}
for $n\ge 1$, which are
\begin{itemize}
\item[(i)] compatible with the compositions,
\item[(ii)] equivariant with respect to the actions of symmetric group $\Sigma_n$.
\end{itemize}

We construct \eqref{barop1} as the composition
\begin{equation}\label{barop2}
\mathcal{O}(n)\otimes (\Bar(X_\ldot))^{\otimes n}\xrightarrow{\id\otimes (\nabla^\Bar)^{n-1}}\mathcal{O}(n)\otimes \Bar(X_\ldot^{\otimes n})\xrightarrow{\mathcal{O}}
\Bar(X_\ldot)
\end{equation}
where the second arrow is the term-wise application of the maps
$$
\mathcal{O}(n)\otimes X_\ell^{\otimes n}\to X_\ell
$$
giving the $\mathcal{O}$-algebra structures on the components $X_\ell$.

Now (i) follows from the lax-monoidal property of $\nabla^\Bar$, and (ii) follows from the commutation of $\nabla^\Bar$ with the symmetric braidings, see Theorem \ref{emmnewtheorem}(iii).

\end{proof}

\subsection{\sc The Eilenberg-MacLane map $\nabla^\Bar$ is a map of dg coalgebras}
Here we prove the following
\begin{theorem}\label{theoremcoalg}
Let $X_L,Y_L$ be graded Leinster monoids in $\Vect(\k)$. Consider the bar-complexes $\Bar(X_L)$ and $\Bar(Y_L)$ as dg coalgebras, see Section \ref{sectiongraded}.
Then the Eilenberg-MacLane map for the bar-complexes (defined in Section \ref{sectionemmnew})
\begin{equation}
\nabla^\Bar\colon \Bar(X_L)\otimes \Bar(Y_L)\to\Bar(X_L\otimes Y_L)
\end{equation}
is a map of dg coalgebras.
\end{theorem}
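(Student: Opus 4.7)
The plan is, for homogeneous $x_m \in X_m^{a_1,\ldots,a_m}$ and $y_n \in Y_n^{b_1,\ldots,b_n}$, to expand both sides of the compatibility relation
\begin{equation*}
\Delta_{\Bar(X_L \otimes Y_L)} \circ \nabla^\Bar \;=\; (\nabla^\Bar \otimes \nabla^\Bar) \circ \Delta_{\Bar(X_L) \otimes \Bar(Y_L)}
\end{equation*}
applied to $x_m \otimes y_n$, and to exhibit a term-by-term bijection between the two indexing sets. On the left, $\nabla^\Bar(x_m \otimes y_n)$ lies in $X_{m+n} \otimes Y_{m+n}$ and is a sum over $(m,n)$-shuffles $\sigma$; further applying $\Delta$ splits each summand via $\beta^{X \otimes Y}_{p, m+n-p}$ for $0 < p < m+n$, which by \eqref{grmon4} equals $(\beta^X_{p, m+n-p} \otimes \beta^Y_{p, m+n-p})$ composed with a ``swap of middle factors'' carrying a Koszul sign. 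On the right, $\Delta(x_m \otimes y_n)$ first decomposes $x_m = x_m^{(1)} \otimes x_m^{(2)}$ at some position $i$ and $y_n = y_n^{(1)} \otimes y_n^{(2)}$ at some position $j$, via $\beta^X_{i,m-i}$ and $\beta^Y_{j,n-j}$; then $\nabla^\Bar \otimes \nabla^\Bar$ is a sum over pairs of shuffles $\sigma_1$ of type $(i,j)$ and $\sigma_2$ of type $(m-i, n-j)$.

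The combinatorial core is the standard ``cut a shuffle'' bijection: to a pair $(\sigma, p)$ one associates $i := |\{0 \le k \le m-1 : \sigma(k) < p\}|$, $j := p - i$, and defines $\sigma_1$, $\sigma_2$ as the sub-shuffles obtained by restricting $\sigma$ to the positions sent strictly below $p$ and to those sent at or above $p$, respectively. Once this bijection is in place, matching the underlying elements on the two sides follows from the colax-compatibility of $\beta^X$ and $\beta^Y$ with the degeneracy maps $D_j$ in $\Delta_0^{\opp}$ (that is, the naturality expressed by $\beta$ being a morphism of bifunctors, used already in the proof of Lemma \ref{lemmabarcoalg}). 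Concretely, applying $\beta^X_{p, m+n-p}$ to $D_{\sigma(m+n-1)} \circ \cdots \circ D_{\sigma(m)}(x_m)$ distributes the degeneracies according to whether each index $\sigma(k)$ with $m \le k \le m+n-1$ lies below $p$ or not, and one checks that the resulting two factors are precisely the degeneracy compositions corresponding to $\sigma_1$ and $\sigma_2$ applied to $x_m^{(1)}$ and $x_m^{(2)}$; the analogous statement holds for $y_n$.

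The main obstacle, and the step that will consume most of the write-up, is the reconciliation of signs. Four sign contributions have to be combined and compared: (i) the shuffle sign $S_\EM(\sigma)$ from \eqref{emmnew3}; (ii) the multi-graded sign $S_\sharp(\sigma)$ from \eqref{emmnewsharpsign}; (iii) the Koszul ``middle swap'' sign built into $\beta^{X \otimes Y}$ via \eqref{grmon4}; and (iv) the Koszul sign incurred when $\nabla^\Bar \otimes \nabla^\Bar$ is applied to a tensor of bar-degree-shifted elements, whose bar degrees involve $-i + a_1 + \cdots + a_i$ on one side and $-j + b_1 + \cdots + b_j$ on the other. The guiding identity is that the set of inversions of an $(m,n)$-shuffle $\sigma$ decomposes uniquely into inversions of $\sigma_1$, inversions of $\sigma_2$, and ``cross-block'' inversions between the two halves; the cross-block inversions are exactly what reproduce the Koszul signs in (iii) and (iv). Separating the $S_\EM$-contribution, which carries factors of $(a_{i+1}+1)(b_{j-m+1}+1)$ coming from bar-shifts, from the $S_\sharp$-contribution, which carries plain $a_{i+1} b_{j-m+1}$, and expanding each against the matching piece of (iii)+(iv), the sign identity falls out. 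This bookkeeping is elementary but long, and is what one expects to be deferred to the computational Section \ref{proofcoalg}.
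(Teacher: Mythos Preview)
Your proposal is correct and follows essentially the same route as the paper's proof in Section~\ref{proofcoalg}: the same ``cut a shuffle'' bijection $(\sigma,p)\leftrightarrow(\sigma_1,\sigma_2,m_1,m_2,n_1,n_2)$, the same use of the naturality of $\beta$ with respect to degeneracies (your distributing step is the paper's \eqref{grootmak}--\eqref{comp9}), and the same sign reconciliation. Your observation that the inversions of $\sigma$ split into those of $\sigma_1$, those of $\sigma_2$, and the cross-block inversions is exactly what the paper records as \eqref{compx4} and \eqref{compx5}, and your four sign sources (i)--(iv) match the paper's $S_\EM$, $S_\sharp$, $S_\beta$, and $S_1$.
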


The proof is given in Section \ref{proofcoalg}.

\subsection{\sc The case of Leinster monoids in $\mathcal{O}$-algebras}
The operads in a symmetric monoidal category $(\mathscr{M},\otimes)$ form, by their own, a symmetric monoidal category.
The product is
$$
(\mathcal{O}\otimes \mathcal{O}^\prime)(n)=\mathcal{O}(n)\otimes\mathcal{O}^\prime(n)
$$
with the natural action of the symmetric group $\Sigma_n$.

We consider the case $(\mathscr{M},\otimes)=(\Vect(\k),\otimes_k)$.
Let $\mathcal{O}$ be {\it a coalgebra object} in the symmetric monoidal category of operads in $\Vect(\k)$. That is, for any $n\ge 0$ there is a map
\begin{equation}
\Delta_n\colon \mathcal{O}(n)\to\mathcal{O}(n)\otimes\mathcal{O}(n)
\end{equation}
compatible with the operadic composition and with the actions of the symmetric group.

In this case, the $\mathcal{O}$-algebras form a monoidal category. Indeed, let $A,B$ be two $\mathcal{O}$-algebras.
To define an $\mathcal{O}$-algebra structure 
\begin{equation}
\mathcal{O}(n)\otimes (A\otimes B)^{\otimes n}\to A\otimes B
\end{equation}
consider the composition
$$
\mathcal{O}(n)\otimes(A\otimes B)^{\otimes n}\xrightarrow{\Delta_n\otimes\id}\mathcal{O}(n)\otimes \mathcal{O}(n)\otimes (A\otimes B)^{\otimes n}\xrightarrow{}(\mathcal{O}(n)\otimes A^{\otimes n})\otimes (\mathcal{O}(n)\otimes B^{\otimes n})\rightarrow A\otimes B
$$
It endows the category $\Alg(\mathcal{O})$ with a monoidal structure. This monoidal structure is {\it symmetric}, if the operadic coproduct
$\Delta\colon \mathcal{O}\to\mathcal{O}\otimes\mathcal{O}$ is {\it cocommutative}.

For operads $\mathcal{O}$, which are coalgebra objects in operads in $\Vect(\k)$, the category $\Alg(\mathcal{O})$ is monoidal, and we can talk about {\it Leinster monoids in $\Alg(\mathcal{O})$}.
As usual, {\it a Leinster monoid} in $\Alg(\mathcal{O})$ is defined as a colax-monoidal functor $X_L\colon \Delta_0^\opp\to \Alg(\mathcal{O})$ such that the components of the colax-monoidal structure $\beta$, $$\beta_{m,n}\colon X_{m+n}\to X_m\otimes X_n$$ 
are quasi-isomorphisms of $\mathscr{O}$-algebras.

\begin{theorem}\label{theorema}
Let $\mathcal{O}$ be a coalgebra object operad in the symmetric monoidal category $\Vect(\k)$, so that $\Alg(\mathcal{O})$ is a monoidal category. Let $X_L$ be a Leinster monoid in $\Alg(\mathcal{O})$, and let $\Bar(X_L)$ be the bar-complex of the underlying Leinster monoid in $\Vect(\k)$. It is an $\mathcal{O}$-algebra, by Corollary \ref{corollop}, and it is as well a dg coalgebra in $\Vect(\k)$, with the coproduct given in \eqref{barcoalg}. Then the bar-complex $\Bar(X_L)$ is a coalgebra object in $\mathcal{O}$-algebras, that is the coproduct in $\Bar(X_L)$ given by in \eqref{barcoalg} is a map of $\mathcal{O}$-algebras.
\end{theorem}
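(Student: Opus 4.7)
The coproduct $\Delta_{\Bar(X_L)}$ from \eqref{barcoalg} restricts on the summand $X_\ell \subset \Bar(X_L)$ to $\oplus_{a+b = \ell} \beta_{a,b}$, and the $\mathcal{O}$-algebra structure $\mu_{\Bar}$ on $\Bar(X_L)$ is defined in \eqref{barop2} as the composite $\mu_{\Bar} = \mu_X \circ (\id \otimes (\nabla^\Bar)^{n-1})$, where $\mu_X : \mathcal{O}(n) \otimes \Bar(X_L^{\otimes n}) \to \Bar(X_L)$ is the termwise $\mathcal{O}$-action coming from the structure on each component $X_\ell$. The theorem asks us to verify
\[
\Delta_{\Bar(X_L)} \circ \mu_{\Bar} \;=\; (\mu_{\Bar} \otimes \mu_{\Bar}) \circ \mathrm{sw} \circ (\Delta_n \otimes \Delta_{\Bar(X_L)}^{\otimes n}),
\]
where $\mathrm{sw}$ is the standard permutation used to build the $\mathcal{O}$-algebra structure on $\Bar(X_L) \otimes \Bar(X_L)$ out of $\Delta_n$.

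The plan is to push $\Delta_{\Bar(X_L)}$ through the two factors of $\mu_{\Bar}$ in turn, using two independent inputs. \emph{First,} because by hypothesis $X_L$ is a Leinster monoid in $\Alg(\mathcal{O})$, every colax map $\beta_{a,b} : X_\ell \to X_a \otimes X_b$ is a morphism of $\mathcal{O}$-algebras, the target carrying the algebra structure induced by $\Delta_n$. Applying this componentwise in $\ell$ yields the intermediate identity
\[
\Delta_{\Bar(X_L)} \circ \mu_X \;=\; (\mu_X \otimes \mu_X) \circ \mathrm{sw} \circ (\Delta_n \otimes \Delta_{\Bar(X_L^{\otimes n})}),
\]
where $\Delta_{\Bar(X_L^{\otimes n})}$ denotes the bar-coproduct of the Leinster monoid $X_L^{\otimes n}$ in $\Vect(\k)$. \emph{Second,} Theorem \ref{theoremcoalg} asserts that $\nabla^\Bar$ is a map of dg coalgebras; iterating and using that a composition of coalgebra morphisms is a coalgebra morphism, the $(n-1)$-fold iterate $(\nabla^\Bar)^{n-1} : \Bar(X_L)^{\otimes n} \to \Bar(X_L^{\otimes n})$ is itself a coalgebra map, giving
\[
\Delta_{\Bar(X_L^{\otimes n})} \circ (\nabla^\Bar)^{n-1} \;=\; \bigl((\nabla^\Bar)^{n-1} \otimes (\nabla^\Bar)^{n-1}\bigr) \circ \mathrm{sw}' \circ \Delta_{\Bar(X_L)}^{\otimes n}.
\]

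Substituting the second identity into the first, applied to the composite $\Delta_{\Bar(X_L)} \circ \mu_X \circ (\id \otimes (\nabla^\Bar)^{n-1})$, reproduces the right-hand side of the target equation. The main potential obstacle is the bookkeeping of the swap permutations $\mathrm{sw}, \mathrm{sw}'$ together with the Koszul signs from \eqref{grmon4} that appear when one identifies the bar-coproduct of $X_L^{\otimes n}$ with $n$ iterated tensor factors of the bar-coproduct of $X_L$; however, Theorem \ref{theoremcoalg} already absorbs the relevant signs for $\nabla^\Bar$, and the componentwise identity follows directly from the definition of the tensor product of $\mathcal{O}$-algebras via $\Delta_n$, so no new sign computation should be needed beyond reconciling conventions.
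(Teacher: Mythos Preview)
Your proposal is correct and follows essentially the same approach as the paper: the paper organizes the argument into the commutativity of two stacked sub-diagrams \eqref{yui3}, where the lower one is exactly your ``First'' step (using that each $\beta_{a,b}$ is a map of $\mathcal{O}$-algebras, recorded as diagram \eqref{yui2}) and the upper one is exactly your ``Second'' step (the iterated $\nabla^\Bar$ is a coalgebra map by Theorem \ref{theoremcoalg}). The only difference is presentational---the paper draws the diagrams explicitly rather than writing the equations---and your remark that no new sign computation is needed beyond what Theorem \ref{theoremcoalg} already absorbs is consistent with the paper's treatment.
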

\begin{coroll}\label{corcoalg}
Consider the operad $\mathcal{O}=\Assoc$ in $\Vect(k)$. It is a coalgebra object in operads in $\Vect(\k)$. Consequently, for any Leinster monoid $X_L\colon\Delta_0^\opp\to\Alg(\k)$, the bar-complex $\Bar(X_L)$ is a dg bialgebra.
\end{coroll}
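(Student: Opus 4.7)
The plan is to verify the two claims in turn: that $\Assoc$ is a cocommutative coalgebra object in the symmetric monoidal category of operads in $\Vect(\k)$, and then, by direct invocation of Theorem \ref{theorema}, to deduce the dg bialgebra structure on $\Bar(X_L)$. For the first claim, recall that $\Assoc$ has components $\Assoc(n)=\k[\Sigma_n]$ with the standard operadic composition, and define
$$
\Delta_n\colon \Assoc(n)\to\Assoc(n)\otimes\Assoc(n)
$$
as the $\k$-linear extension of the group-like diagonal $\sigma\mapsto \sigma\otimes\sigma$ on the basis $\Sigma_n$. Equivariance under $\Sigma_n$ and cocommutativity are immediate from this formula. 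Compatibility of the family $\{\Delta_n\}$ with operadic composition reduces to the observation that the diagonal $\Sigma_n\to\Sigma_n\times\Sigma_n$ is a group homomorphism, and operadic composition on $\Assoc$ is built out of the group multiplication and the standard block embeddings $\Sigma_{k_1}\times\dots\times\Sigma_{k_r}\hookrightarrow\Sigma_{k_1+\dots+k_r}$, both of which intertwine the diagonal. A more conceptual way to see the same statement: $\Delta$ is the unique operadic coproduct whose induced monoidal structure on $\Alg(\Assoc)=\Alg(\k)$ (via the construction preceding Theorem \ref{theorema}) reproduces the classical tensor product of dg associative algebras, which is associative and symmetric.

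Given this, I apply Theorem \ref{theorema} with $\mathcal{O}=\Assoc$ and with the given (graded, in the sense of Definition \ref{grla}) Leinster monoid $X_L\colon \Delta_0^\opp\to\Alg(\k)=\Alg(\Assoc)$. The theorem furnishes two compatible structures on the bar-complex $\Bar(X_L)$: a dg associative algebra structure (via Corollary \ref{corollop} applied to $\Assoc$), and a dg coalgebra structure with coproduct \eqref{barcoalg} (by Lemma \ref{lemmabarcoalg}), together with the crucial compatibility that the coproduct is a morphism of dg algebras. This is exactly the data of a dg bialgebra over $\k$, and the corollary follows.

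Neither step presents a substantive obstacle: the first is a routine verification of well-known properties of $\k[\Sigma_n]$, and the second is a direct citation of Theorem \ref{theorema}. The only point requiring mild attention is the implicit graded-Leinster-monoid assumption on $X_L$ (in the sense of Definition \ref{grla}), which is needed for $\Bar(X_L)$ to carry its signed chain differential and coalgebra structure in the first place; once this is granted, the argument is essentially automatic.
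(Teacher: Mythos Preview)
Your proof is correct and takes essentially the same approach as the paper, which simply writes \qed after stating the corollary: the result is an immediate specialization of Theorem \ref{theorema} to $\mathcal{O}=\Assoc$, once one knows the standard fact that $\Assoc(n)=\k[\Sigma_n]$ carries the group-like coalgebra structure compatible with operadic composition. Your added detail on the diagonal $\sigma\mapsto\sigma\otimes\sigma$ and the remark about the implicit graded hypothesis are both appropriate and accurate.
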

\qed

Now we prove Theorem \ref{theorema}.
\begin{proof}
We need to show that the coproduct
\begin{equation}
\Delta_\Bar\colon \Bar(X_L)\to\Bar(X_L)\otimes \Bar(X_L)
\end{equation}
is a map of $\mathcal{O}$-algebras, that is, the diagram below commutes:
\begin{equation}\label{yui1}
\xymatrix{
\mathcal{O}(n)\otimes (\Bar(X_L))^{\otimes n}\ar[d]\ar[rr]^{\id_\mathcal{O}\otimes\Delta_\Bar^{\otimes n}\hspace{30pt}}&& \mathcal{O}(n)\otimes (\Bar(X_L)\otimes\Bar(X_L))^{\otimes n}\ar[d]\\
\Bar(X_L)\ar[rr]^{\Delta_\Bar\hspace{10pt}}&&\Bar(X_L)\otimes\Bar(X_L)
}
\end{equation}
By so far, we made use only a part of the information encoded in the structure of a Leinster monoid in $X_L$, namely, that the simplicial operators in $X_\ldot$ are maps of $\mathcal{O}$-algebras. Now we use the remaining part of the structure, namely, that all colax-maps $\beta_{a,b}\colon X_{a+b}\to X_a\otimes X_b$ are maps of $\mathcal{O}$-algebras. It results in the commutativity of the following diagram:
\begin{equation}\label{yui2}
\xymatrix{
\mathcal{O}(n)\otimes X_{a+b}^{\otimes n}\ar[d]_{m_n}\ar[rr]^{\Delta_n\otimes \beta_{a,b}^n\hspace{35pt}}&&\mathcal{O}(n)\otimes \mathcal{O}(n)\otimes X_a^{\otimes n}\otimes X_b^{\otimes n}\ar[d]^{m_n\otimes m_n}\\
X_{a+b}\ar[rr]^{\beta_{a,b}\hspace{20pt}}&&X_a\otimes X_b
}
\end{equation}
Consider the following diagram:
\begin{equation}\label{yui3}
\xymatrix{
\mathcal{O}(n)\otimes (\Bar(X_L))^{\otimes n}\ar[rr]^{\Delta_\Bar^{\otimes n}\hspace{40pt}}\ar[d]_{(\nabla^\Bar)^{n-1}}&&\mathcal{O}(n)\otimes (\Bar(X_L))^{\otimes n}\otimes (\Bar(X_L))^{\otimes n}\ar[d]^{\Delta_n\otimes ((\nabla^\Bar)^{n-1})^{\otimes 2}}\\
\mathcal{O}(n)\otimes\Bar(X_L^{\otimes n})\ar[rr]^{\Delta_n\otimes \Delta_{\Bar}\hspace{40pt}}\ar[d]^{\mathcal{O}}&&\mathcal{O}(n)\otimes\mathcal{O}(n)\otimes\Bar(X_L^{\otimes n})\otimes\Bar(X_L^{\otimes n})\ar[d]^{\mathcal{O}\otimes\mathcal{O}}\\
\Bar(X_L)\ar[rr]^{\Delta_\Bar\hspace{30pt}}&&\Bar(X_L)\otimes \Bar(X_L)
}
\end{equation}
It is enough to prove the commutativity of each of two sub-diagrams in \eqref{yui3}, as the the total diagram is \eqref{yui1}. The commutativity of the upper sub-diagram follows from Theorem \ref{theoremcoalg}, whence the commutativity of lower sub-diagram follows from \eqref{yui2}.
In fact, for the lower sub-diagram, $X_L^{\otimes L}$ is regarded as a Leinster monoid in $\Vect(\k)$, and we interpret \eqref{yui2} as the statement that $X_L$ is an $\mathcal{O}$-algebra in $\Alg(\mathcal{O})$.
\end{proof}

\section{\sc Theorem \ref{theorem2intro}}
\subsection{\sc Formulation}
One of the main results of the paper is:
\begin{theorem}\label{mtheorem}
Let $X_L$ be a graded Leinster monoid in the category $\Alg(\k)$ of dg algebras over $\k$. Then there is a  $C_\ldot(E_2,\k)$-algebra structure on a space $\mathscr{Y}(X_L)$ quasi-isomorphic to the first component $X_{1}$ of $X_L$ such that the underlying $C_\ldot(E_1,\k)$-algebra on $\mathscr{Y}(X_L)$ is $A_\infty$-quasi-isomorphic to the associative algebra $X_{1}\in \Alg(\k)$. The correspondence
\begin{equation}
X_L\rightsquigarrow \mathscr{Y}(X_L)
\end{equation}
gives rise to a functor
\begin{equation}
\mathscr{Y}\colon \mathscr{L}^\gr(\Alg(\k))\to \Alg(C_\ldot(E_2,\k))
\end{equation}
This functor maps weakly equivalent graded Leinster monoids in $\Alg(\k)$ to quasi-isomorphic $C_\ldot(E_2,\k)$-algebras.
\end{theorem}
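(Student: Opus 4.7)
The plan is to set $\mathscr{Y}(X_L) := \Cobar(\Bar(X_L))$ and assemble the required structure by chaining together the results already established. First, since $X_L$ is a graded Leinster monoid in $\Alg(\k)$, its bar-complex $\Bar(X_L)$ is a dg coalgebra by Lemma \ref{lemmabarcoalg}, and by Corollary \ref{corcoalg} the modified Eilenberg-MacLane map $\nabla^{\Bar}$ endows $\Bar(X_L)$ with a compatible dg algebra structure, making it a dg bialgebra over $\k$. Next, I would invoke the Tamarkin-Kadeishvili construction (to be recalled in Section 5.3 as referenced in the introduction): for any dg bialgebra $B$, the cobar-complex $\Cobar(B_{\coalg})$ of the underlying dg coalgebra inherits a canonical $B_\infty$-algebra structure, where the braces operations are built from the multiplication in $B$. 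Applied to $B = \Bar(X_L)$, this gives a $B_\infty$-algebra structure on $\Cobar(\Bar(X_L))$. Then the results of McClure-Smith [MS1,2], recalled in Section \ref{sms}, translate any $B_\infty$-algebra into a genuine algebra over an operad quasi-isomorphic to $C_\ldot(E_2,\k)$, which yields the desired $C_\ldot(E_2,\k)$-algebra structure on $\mathscr{Y}(X_L)$.

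The second part concerns the underlying complex and the underlying $A_\infty$-algebra. As a complex, $\Cobar(\Bar(X_L))$ is quasi-isomorphic to $X_1$ by Theorem \ref{theorem27}, which gives the generalization of the bar-cobar duality to the graded Leinster setting. The nontrivial point is that the underlying dg associative algebra of the $B_\infty$-algebra $\Cobar(\Bar(X_L))$, which is the standard cobar dg algebra structure, should be $A_\infty$-quasi-isomorphic to the dg algebra $X_1$ coming from the first component of $X_L$. For the case where $X_L$ comes from a strict dg algebra $A$ (so $X_\ell = A^{\otimes \ell}$), this is the classical bar-cobar quasi-isomorphism $\Phi_A\colon \Cobar(\Bar(A)) \to A$, which is a strict map of dg algebras (see Theorem \ref{theorembc1}). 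In the general graded Leinster case, the colax maps $\beta_{m,n}$ are only quasi-isomorphisms, so $\Phi$ cannot be constructed as a strict morphism; instead I would construct an explicit $A_\infty$ morphism
\begin{equation}
\Phi_\infty\colon \Cobar(\Bar(X_L)) \rightsquigarrow X_1
\end{equation}
whose linear Taylor component is the restriction to $X_1 = L_0 \subset \Cobar(\Bar(X_L))$, and whose higher Taylor components are assembled from compositions of the colax maps $\beta_{m,n}$, the simplicial face operators $F_i$, and the bar-cobar contracting maps. This is the content of Theorem \ref{mainyon}, and I would check the $A_\infty$ relations by the same spectral sequence argument used in the proof of Theorem \ref{theorem27}: filter by the internal degree in $\Bar$, use that the colax maps are quasi-isomorphisms so that on cohomology $H^\udot(X_L)$ becomes a strict algebra, and reduce to the classical bar-cobar case on the $E_1$-page.

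For functoriality, the assignment $X_L \mapsto \mathscr{Y}(X_L)$ is manifestly functorial: the bar-complex construction is functorial in graded Leinster monoids, the Eilenberg-MacLane map $\nabla^{\Bar}$ is bi-functorial by Theorem \ref{emmnewtheorem}(II), the cobar-complex is functorial in dg coalgebras, and the Tamarkin-Kadeishvili and McClure-Smith constructions are themselves functorial. This yields the functor $\mathscr{Y}\colon \mathscr{L}^\gr(\Alg(\k)) \to \Alg(C_\ldot(E_2,\k))$. For preservation of weak equivalences: a weak equivalence $X_L \to Y_L$ of graded Leinster monoids is by definition a component-wise quasi-isomorphism, hence yields a quasi-isomorphism $\Bar(X_L) \to \Bar(Y_L)$ of dg coalgebras by an argument analogous to Proposition \ref{barquis} (filtering by simplicial degree and using convergence of the resulting spectral sequence), and then a quasi-isomorphism $\Cobar(\Bar(X_L)) \to \Cobar(\Bar(Y_L))$ by the dual version; both constructions are compatible with the $B_\infty$ and hence $C_\ldot(E_2,\k)$-algebra structures by naturality of Tamarkin-Kadeishvili and McClure-Smith.

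The hard part will be the construction and verification of the $A_\infty$-morphism $\Phi_\infty$ of Theorem \ref{mainyon}: writing down its higher Taylor components explicitly in terms of the colax structure maps of a graded Leinster monoid, and checking the infinite family of $A_\infty$ relations, requires a careful combinatorial analysis, and this is what will be deferred to the computational Section \ref{proefmy}. The remaining ingredients, once assembled, give the theorem modulo this explicit $A_\infty$-comparison. \qed
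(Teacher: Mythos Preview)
Your overall architecture is correct and matches the paper: set $\mathscr{Y}(X_L)=\Cobar(\Bar(X_L))$, use Corollary~\ref{corcoalg} to get a dg bialgebra, apply Tamarkin--Kadeishvili to get a $B_\infty$-structure, invoke McClure--Smith to pass to $C_\ldot(E_2,\k)$, and use Theorem~\ref{theorem27} for the underlying quasi-isomorphism with $X_1$. The functoriality and weak-equivalence-preservation paragraph is also fine.

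The discrepancy is in your treatment of Theorem~\ref{mainyon}. You propose an $A_\infty$ morphism $\Phi_\infty\colon \Cobar(\Bar(X_L))\rightsquigarrow X_1$, with higher components built from colax maps $\beta_{m,n}$ and face operators $F_i$, and you propose to verify the $A_\infty$ relations by a spectral sequence reduction to the strict case. The paper does something quite different: it constructs the morphism in the \emph{opposite} direction, $\phi\colon X_1\to\Cobar(\Bar(X_L))$, with explicit Taylor components
\[
\phi_n(x_1\otimes\dots\otimes x_n)=\pm\, w_1(x_1)*_{X_n}\dots*_{X_n} w_n(x_n)\in X_n\subset\Bar(X_L)\subset\Cobar(\Bar(X_L)),
\]
using the \emph{degeneracy-type} maps $w_k\colon X_1\to X_n$ (dual to $[n]\to[1]$ in $\Delta_0$) and the algebra product in $X_n$, not the face maps or the colax maps. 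The verification in Section~\ref{proefmy} is a direct computation of the $A_\infty$ identities (splitting the cobar differential on $\phi_n$ into a bar piece and a cobar piece and matching each against the corresponding side of the $A_\infty$ relation), not a spectral sequence argument.

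Your proposed verification method is the genuine gap: $A_\infty$ relations are exact algebraic identities among the $\phi_n$, and a spectral sequence can tell you a map is a quasi-isomorphism but cannot certify that a given family $\{\phi_n\}$ satisfies $\sum \pm\phi(\dots,m_2(\dots),\dots)=\sum\pm m_2(\phi(\dots),\phi(\dots))+d\phi(\dots)$. Reducing ``to the classical bar-cobar case on the $E_1$-page'' would at best give the relations up to lower filtration, not on the nose; to upgrade that you would need a homological perturbation or obstruction-theory argument you have not supplied. The paper sidesteps this entirely by writing down $\phi_n$ in closed form and checking the identities by hand, which is why it lands in the computational Section~\ref{proefmy}.
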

Some explanations are in order. There is a standard map of topological operads $i\colon E_1\to E_2$ inducing the map of chain operads $i_*\colon \Alg(C_\ldot(E_2,\k))\to\Alg(C_\ldot(E_1,\k))$. Now the Stasheff operad $A_\infty$ is quasi-isomorphic to the operad $C_\ldot(E_1,\k)$, which makes possible to consider $\mathscr{Y}(X_L)$ as an $A_\infty$-algebra. 

The proof of this Theorem occupies Section \ref{sectionoccupy1} below.

The strategy is as follows. We set
$$
\mathscr{Y}(X_L)=\Cobar(\Bar(X_L))
$$
By Theorem \ref{theorem27}, $\mathscr{Y}(X_L)$ is quasi-isomorphic to $X_1$.
On the other hand, $\Bar(X_L)$ is a dg coalgebra, which is also a dg bialgebra, by Theorem \ref{theorema}.
For any bialgebra $B$, the cobar-complex $\Cobar(B)$ of $B$ with the underlying dg coalgebra structure is an algebra over the Getzler-Jones $B_\infty$ operad.
The construction is referred to as the Tamarkin-Kadeishvili construction [T4],[Kad]. We recall the basic facts on the Getzler-Jones operad in Section \ref{sectionbraces} and recall the Tamarkin-Kadeishvili construction in Section \ref{kadconst}.

There is a map of operads $C_\ldot(E_2,\k)\to B_\infty(\k)$, constructed by McClure-Smith [MS1,2], what makes possible to consider any $B_\infty$-algebra as a $C_\ldot(E_2,\k)$-algebra. Then it will remain to check the statement on the underlying $A_\infty$-algebras, which is done in Section \ref{occupy1.2}.

\comment
For the case $n>1$ we make use of the monoidal structure on $\mathsf{e}_n$-algebras and some ideas of [T3], see Section \ref{sectionoccupy2}.
Recall, that a homotopy $n$-algebra $Y$ is given by a differential in the cofree $n$-coalgebra $\mathsf{e}_n(Y)$ cogenerated by $Y[n]$.
Recall from [T3, Section 3.1] that the strict $n$-algebras form a symmetric monoidal category.
Finally, for a homotopy $n$-algebra $Y$ (that is, for a cofree dg $n$-coalgebra $\mathsf{e}_n(Y)$) there is the dual $n$-algebraic cobar-construction $\mathsf{e}_n^\vee(\mathsf{e}_n(Y))$ which is a strict dg $n$-algebra. It is a dg $\mathsf{e}_n$-algebra, whose underlying $\mathsf{e}_n$-algebra is
the free $n$-algebra generated by $(\mathsf{e}_n(X))[-n]$. To perform the induction step from $n$ to $n+1$ in Theorem \ref{mtheorem}, we prove simultaneously with it the following claim.
\begin{theorem}\label{mtheorem2}
Let $N\ge 1$. In the notations of Theorem \ref{mtheorem}, the functor $\mathscr{Y}$ from Leinster $N$-monoids in $\Alg(\k)$ to homotopy $(N+1)$-algebras in $\k$ enjoys additionally the following colax-monoidal property: for any two Leinster $N$-monoids $X_L,X_L^\prime$ in $\Alg(\k)$ there is a map of strict $(N+1)$-algebras
\begin{equation}\label{mt2eq}
B_{X,Y}\colon   \mathsf{e}_{N+1}^\vee\mathsf{e}_{N+1}(\mathscr{Y}(X_L\otimes X_L^\prime))\to \mathsf{e}_{N+1}^\vee\mathsf{e}_{N+1}(\mathscr{Y}(X_L))\otimes \mathsf{e}_{N+1}^\vee\mathsf{e}_{N+1}(\mathscr{Y}(X_L^\prime))
\end{equation}
which satisfy the colax-properties \eqref{colax1}, \eqref{colax2} and which are quasi-isomorphisms of strict $(N+1)$-algebras.
In other words, the functor $$\mathsf{e}_{N+1}^\vee\circ \mathsf{e}_{N+1}\circ\mathscr{Y}\colon X_L\rightsquigarrow \mathsf{e}_{N+1}^\vee\mathsf{e}_{N+1}(\mathscr{Y}(X_L))$$
from Leinster $N$-monoids in $\Alg(\k)$ to strict dg $(N+1)$-algebras is colax-monoidal, with quasi-isomorphisms as the colax-maps.
\end{theorem}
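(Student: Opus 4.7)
The plan is to build the colax comparison maps $B_{X,X'}$ in three stages, exploiting the lax-monoidal structure on $\Bar$ developed in Theorems \ref{emmnewtheorem} and \ref{theoremcoalg}, together with the Hopf-operad structure on $\mathsf{e}_{N+1}$ that underlies the rectification functor $\mathsf{e}_{N+1}^\vee \mathsf{e}_{N+1}$. First, for graded Leinster monoids $X_L, X_L'$ in $\Alg(\k)$, the Eilenberg--MacLane map
\begin{equation}
\nabla^\Bar \colon \Bar(X_L)\otimes \Bar(X_L')\to \Bar(X_L\otimes X_L')
\end{equation}
is a quasi-isomorphism of dg coalgebras (by Theorem \ref{theoremcoalg}) which is simultaneously lax-monoidal for the algebra structures on the bar complexes (by Theorem \ref{emmnewtheorem}(II)); combined with Corollary \ref{corcoalg}, this upgrades $\nabla^\Bar$ to a quasi-isomorphism of dg bialgebras, functorial in both arguments.

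Next, I would apply the Tamarkin--Kadeishvili construction functorially to $\nabla^\Bar$. Since that construction produces a homotopy $(N+1)$-algebra from a dg bialgebra naturally in the bialgebra, one obtains a quasi-isomorphism
\begin{equation}
\Cobar(\nabla^\Bar)\colon \Cobar(\Bar(X_L)\otimes \Bar(X_L'))\to \mathscr{Y}(X_L\otimes X_L')
\end{equation}
of homotopy $(N+1)$-algebras (a $B_\infty$-morphism when $N=1$). Independently, the adjunction $\Cobar\dashv \Bar$ together with the lax-monoidality of $\Bar$ equips $\Cobar$ with an oplax-monoidal structure, giving a canonical map $\Cobar(\Bar(X_L)\otimes \Bar(X_L'))\to \mathscr{Y}(X_L)\otimes \mathscr{Y}(X_L')$; a direct check against the Tamarkin--Kadeishvili formulae, using that the $B_\infty$-operations on $\Cobar(B)$ are built from the bialgebra product of $B$, shows that this oplax map is itself a morphism of homotopy $(N+1)$-algebras.

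The desired colax map $B_{X,X'}$ is then obtained by inverting $\Cobar(\nabla^\Bar)$ after passing through the rectification $\mathsf{e}_{N+1}^\vee \mathsf{e}_{N+1}$. Because $\mathsf{e}_n$ is a Hopf operad with cocommutative coproduct, strict dg $\mathsf{e}_{N+1}$-algebras form a symmetric monoidal category, and each object of the form $\mathsf{e}_{N+1}^\vee \mathsf{e}_{N+1}(Y)$ is quasi-free and in particular cofibrant in the natural model structure; consequently the rectification of a quasi-isomorphism of homotopy $(N+1)$-algebras admits a strict inverse up to strict homotopy. Applying this to $\mathsf{e}_{N+1}^\vee \mathsf{e}_{N+1}(\Cobar(\nabla^\Bar))$, and composing with the rectification of the oplax map of the previous paragraph, defines $B_{X,X'}$ as a strict morphism of $(N+1)$-algebras.

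It remains to verify the colax axioms \eqref{colax1}--\eqref{colax2} and the quasi-isomorphism property. The axioms are formal: they descend from the corresponding lax axioms for $\nabla^\Bar$ (Theorem \ref{emmnewtheorem}(II)), using naturality of the Tamarkin--Kadeishvili construction and of the $\Cobar\dashv\Bar$ adjunction, plus functoriality of $\mathsf{e}_{N+1}^\vee\mathsf{e}_{N+1}$. The quasi-isomorphism property is automatic, since each building block is a quasi-isomorphism by Theorem \ref{emmnewtheorem}(IV) and Theorem \ref{theorem27}, and the rectification preserves and reflects quasi-isomorphisms. The main obstacle I anticipate is precisely the strict-inversion step: showing that the zig-zag in the homotopy category of $(N+1)$-algebras can be realized by an honest morphism $B_{X,X'}$ between the rectifications, in a way compatible with the coherence of the colax axioms. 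Technically this requires simultaneous cofibrant choices across all tuples $(X_L^{(1)},\ldots,X_L^{(k)})$, and constitutes the computational heart of the proof.
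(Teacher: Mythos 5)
You cannot be measured against the paper's own proof, because there is none: this theorem sits inside a commented-out block of the LaTeX source (the \texttt{comment} macro defined in the preamble excises it), its promised proof sections are dangling references to nowhere, and the sentence announcing where the case $N>1$ is proven breaks off mid-clause. The published text retreats to $N=1$ and proves the Deligne statement via $B_\infty$ and McClure--Smith, never via the rectification $\mathsf{e}_{N+1}^\vee\mathsf{e}_{N+1}$. The excised draft indicates the intended route was a simultaneous induction on $N$ with Theorem \ref{mtheorem}, using Tamarkin's symmetric monoidal structure on strict $n$-algebras; the fact that the author cut the claim should itself make you wary of any argument that declares the inversion step ``formal.''

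On the merits, your skeleton (lax structure $\nabla^\Bar$ on $\Bar$, doctrinal adjunction making $\Cobar$ oplax, rectify, invert) is reasonable for $N=1$, but it has genuine gaps. Most seriously, the strict-inversion step is the theorem, not a footnote: cofibrancy of $\mathsf{e}_{N+1}^\vee\mathsf{e}_{N+1}(-)$ yields an inverse of the rectified $\Cobar(\nabla^\Bar)$ that is well defined only up to homotopy, so the colax diagrams \eqref{colax1}, \eqref{colax2} commute only up to unspecified higher homotopies; promoting them to strictly commuting diagrams, coherently and simultaneously across all tuples, is an obstruction problem (a rectification of a weak colax functor) which you explicitly defer rather than solve --- and nothing in the paper supplies it. Second, you cite Theorem \ref{emmnewtheorem}(IV) for $\nabla^\Bar$ being a quasi-isomorphism; no such part exists --- the theorem proves only (I)--(III) (chain map, lax-monoidality, symmetry), and the paper nowhere proves the modified Eilenberg--MacLane map is a quasi-isomorphism. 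Even granting it, $\Cobar$ does not preserve arbitrary quasi-isomorphisms of conilpotent dg coalgebras, so the claim that $\Cobar(\nabla^\Bar)$ is a quasi-isomorphism requires a filtered/spectral-sequence argument in the style of Theorem \ref{theorem27}, not a citation; likewise your ``direct check'' that the oplax map $\Cobar(\Bar(X_L)\otimes\Bar(X_L^\prime))\to\mathscr{Y}(X_L)\otimes\mathscr{Y}(X_L^\prime)$ is a morphism of homotopy $(N+1)$-algebras compatible with the Tamarkin--Kadeishvili structure is asserted, not performed. Third, for $N>1$ your ingredients are simply unavailable: the bar complex of a graded Leinster monoid, Theorems \ref{emmnewtheorem} and \ref{theoremcoalg}, and the Tamarkin--Kadeishvili construction are developed in this paper only at the level of $1$-monoids and $B_\infty$, and the functor $\mathscr{Y}$ into homotopy $(N+1)$-algebras is never constructed here; so your argument, as written, addresses at best $N=1$ and does not engage the induction on $N$ that the statement actually demands.
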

Here in the l.h.s. of \eqref{mt2eq} there stands the monoidal product of strict dg $(N+1)$-algebras.

The case $N=1$ of this Theorem is proven in Section \ref{mth2n=1}, and the case $N>1$ in.

\endcomment

In the rest of this Section, we recall the operad $B_\infty$ (Section \ref{sectionbraces}) and the Tamarkin-Kadeishvili construction (Section \ref{kadconst}).

\subsection{\sc The Getzler-Jones $B_\infty$ operad}\label{sectionbraces}
\begin{defn}{\rm
Let $V\in\Vect(\k)$. We say that $V$ is a $B_\infty$ algebra if there is a structure of a {\it dg bialgebra} on the cofree coalgebra $T^\vee(V[1])$ cogenerated by $V[1]$, whose underlying coalgebra is the cofree one.
}
\end{defn}
By this definition, the data we need to endow $V$ with a $B_\infty$ structure is (i) a differential on $T^\vee(V[1])$ (which endows $V$ with an $A_\infty$ structure), (ii) a structure of algebra on $T^\vee(V[1])$ (which encodes the ``higher structure'' like a Lie bracket of degree -1 on $V$) such that (A) the differential and the algebra structure on $T^\vee(V[1])$ are compatible and define a dg algebra structure on $T^\vee(V[1])$, and (B) the compatibility axiom $\Delta(a*b)=\Delta(a)*\Delta(b)$ is fulfilled.

There are three notable examples of $B_\infty$ algebras, which are:
\begin{itemize}
\item[(i)] $V=C^\udot_{\mathrm{sing}}(X,A)$ is a singular cochain complex of a simply-connected topological space $X$ with coefficients in any ring $A$, it was constructed by Baues [B]. When $A=\mathbb{Z}$
it refers to the famous ``problem of commutative cochains" in topology and the Steenrod operations,
\item[(ii)] $V=\Hoch^\udot(A)$, the cohomological complex of a dg algebra $A$, the construction firstly appeared in [GJ], and was used in Tamarkin's proof of the Kontsevich formality [T1], [H]; see also [T3, Introduction] for a formula-free more conceptual explanation,
\item[(iii)] $V=\Cobar(B_\coalg)$ where $B$ is an associative bialgebra, $B_\coalg$ is its underlying coalgebra. Then $V$ is a dg algebra, and the differential part of the dg bialgebra structure on $T^\vee(V[1])$ is the bar-differential of this dg algebra. Tamarkin [T4] and Kadeishvili [Kad] have shown by an explicit construction that $T^\vee(V[1])$ has an associative algebra structure, which makes $V$ a $B_\infty$ algebra.
\end{itemize}
We recall the construction of the $B_\infty$ algebra of Example (iii) (which we use in the paper) in Section \ref{kadconst} below.

Explicitly, to define a structure of a $B_\infty$ algebra on a complex of $\k$-vector spaces $X$, one needs to specify the differential and the product on $T^\vee(X[1])$.
As the underlying coalgebra is cofree, they are defined by the projections to the cogenerators, which are subject to some identities. These projections are:
\begin{equation}
m_n\colon X[1]^{\otimes n}\to X[1],\ \ n\ge 2
\end{equation}
and
\begin{equation}
m_{k,\ell}\colon X[1]^{\otimes k}\otimes X[1]^{\otimes \ell}\to X[1],\ \ k,\ell\ge 0
\end{equation}

\comment
\begin{defn}{\rm
Let $V\in\Vect(\k)$. We say that $V$ is a $B_\Lie$ algebra if there is a dg Lie bialgebra structure on the cofree Lie coalgebra $\Lie^\vee(V[1])$ cogenerated by $V[1]$ whose underlying Lie coalgebra structure is the cofree one.
}
\end{defn}

Unlike the operad $B_\infty$, the operad $B_\Lie$ is very simple, due to the following Lemma:
\begin{lemma}\label{ll2}
Let $V\in\Vect(\k)$ be an algebra over the operad $B_\Lie$. Then $V$ is endowed with a homotopy 2-algebra structure. Moreover, there is a map of operads $\hoe_2\to B_\Lie$ which is a quasi-isomorphism of dg operads.
\end{lemma}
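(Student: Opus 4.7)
My plan is to address the two halves of the claim in sequence: first, extracting a $\hoe_2$-algebra structure on any $B_\Lie$-algebra $V$, and then promoting this to a quasi-isomorphism of dg operads $\hoe_2 \to B_\Lie$.

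For the first half, I would decode, by cofreeness, the two structure maps on $\Lie^\vee(V[1])$. The differential $Q$ is a coderivation of the Lie coproduct, hence determined by its composition with the projection $\pi\colon \Lie^\vee(V[1]) \to V[1]$ to the cogenerators; this yields an $L_\infty$-structure on $V[1]$, equivalently a homotopy Lie bracket $[-,-]$ on $V$ of degree $-1$, which is the Lie half of the sought Gerstenhaber structure. The Lie bracket $\{-,-\}$ on $\Lie^\vee(V[1])$ is a biderivation of the coproduct, so again by cofreeness is determined by its projection to $V[1]$, giving operations $V[1]^{\wedge k}\otimes V[1]^{\wedge l}\to V[1]$ for $k,l\ge 1$. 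The binary $(k,l)=(1,1)$ component is symmetric on $V[1]$ (by antisymmetry of $\{-,-\}$ combined with the odd shift), hence unshifts to a commutative product $\wedge$ on $V$ of degree $0$. The Jacobi identity for $\{-,-\}$, projected along $\pi$, gives associativity of $\wedge$ up to a homotopy supplied by the $(k,l)=(2,1)$ component; the dg Lie bialgebra compatibility (the $1$-cocycle condition between $Q$ and $\{-,-\}$) unfolds into the odd Leibniz rule \eqref{lr} between $\wedge$ and $[-,-]$, again up to higher homotopies drawn from the remaining components. This exhibits $V$ as a $\hoe_2$-algebra.

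For the second half, I would use the cobar presentation $\hoe_2 \simeq \Omega(\mathsf{e}_2^{\vee})$ from Koszul duality for the quadratic operad $\mathsf{e}_2$. Sending each cogenerator of $\mathsf{e}_2^{\vee}$ to the corresponding operation identified above yields a map of dg operads $\hoe_2 \to B_\Lie$; the required relations on the $\hoe_2$ side are exactly the Jacobi and cocycle identities in the dg Lie bialgebra $\Lie^\vee(V[1])$, transported through $\pi$. To see the map is a quasi-isomorphism, I would compute the cohomology of $B_\Lie$ by a spectral sequence associated to the weight filtration counting the polynomial degree of the operations: the $E_1$-page collapses onto the operad generated by a degree-$0$ commutative product and a degree $-1$ Lie bracket modulo \eqref{lr}, which is precisely $\mathsf{e}_2$, and the induced map at $E_1$ is the identity.

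The main obstacle will be the $B_\Lie$-cohomology calculation. The cofree Lie coalgebra $\Lie^\vee(V[1])$ has a subtle combinatorial description (Lyndon words, or equivalently as a quotient of the tensor coalgebra), and tracking the signs from the odd shift together with the biderivation condition on $\{-,-\}$ is delicate. A practical route is to verify the quasi-isomorphism in arities $\le 4$ by hand and extend inductively using the weight-grading on both operads; alternatively, one can appeal to the Koszul duality of the properad of Lie bialgebras, whose operadic shadow identifies the dual with $\mathsf{e}_2$, so that $\hoe_2 \to B_\Lie$ is the induced cobar-level comparison map.
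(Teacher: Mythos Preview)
You have the two halves of the Gerstenhaber structure swapped, and this is a genuine error, not just a labelling issue. A square-zero coderivation $Q$ on the cofree \emph{Lie} coalgebra $\Lie^\vee(V[1])$ encodes, by Koszul duality ($\Lie^{!}=\Comm$), a $C_\infty$-structure on $V$, i.e.\ the homotopy \emph{commutative product} of degree $0$; it does not give an $L_\infty$-structure on $V[1]$. (That would be a coderivation on $S^\vee(V[1])$.) Conversely, the Lie bracket $\{-,-\}$ on $\Lie^\vee(V[1])$ has degree $0$ and is antisymmetric \emph{on $V[1]$}; its $(1,1)$-component $V[1]\otimes V[1]\to V[1]$ is therefore a Lie bracket on $V[1]$, which is exactly the degree $-1$ Gerstenhaber bracket on $V$, not a degree-$0$ commutative product. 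Your claim that ``antisymmetry of $\{-,-\}$ combined with the odd shift'' produces symmetry on $V[1]$ is incorrect: the operation already lives on $V[1]$, so its antisymmetry there is the final answer. The degree count also fails: a degree-$0$ map $V[1]^{\otimes 2}\to V[1]$ unshifts to a degree $-1$ map $V^{\otimes 2}\to V$.

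With the roles corrected your outline can be rescued, but the paper's argument is shorter and avoids all component-by-component bookkeeping. For $\mathfrak{g}=\Lie^\vee(V[1])$ a dg Lie bialgebra, one passes directly to the Chevalley--Eilenberg chain complex $Y=C_{\mathrm{CE}}(\mathfrak{g}_{\Lie};\k)=S^\vee\bigl(\Lie^\vee(V[1])[1]\bigr)$. As a graded coalgebra this is the cofree $\mathsf{e}_2$-coalgebra cogenerated by $V[2]$; the coderivation $Q$ on $\mathfrak{g}$ and the Chevalley--Eilenberg differential coming from the Lie bracket on $\mathfrak{g}$ assemble into a single square-zero coderivation on $Y$ compatible with the $\mathsf{e}_2$-coalgebra structure. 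That is precisely the definition of a $\hoe_2$-algebra structure on $V$, and the induced map $\hoe_2\to B_{\Lie}$ is visibly a quasi-isomorphism from this description, without recourse to a weight spectral sequence or arity-by-arity verification.
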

\begin{proof}
Assume that $\frak{g}=\Lie^\vee(X[1])$ is a Lie bialgebra. Consider the underlying Lie algebra, and take the Chevalley-Eilenberg chain complex $$Y=C_{\mathrm{CE}}(\frak{g}_\Lie;\k)=S^\vee(\Lie^\vee(X[1])[1])$$
Then $Y$ is a cofree $\mathsf{e}_2$-coalgebra cogenerated by $X[2]$; the structue of $B_\Lie$-algebra on $X$ is transformed to the total differential on $Y$, compatible with the $\mathsf{e}_2$-coalgebra structure. By definition, it is a homotopy 2-algebra structure on $X$. It is well-known (and can be easily seen) that it is a quasi-isomorphism of dg operads.
\end{proof}

The crucial is the following fact, see [H, Theorem 7.3]. It uses the Etingof-Kazhdan (de)quantization which by its own relies on the theory of Drinfeld associators.
\begin{theorem}\label{theoremek}
There exists an isomorphism dg operads
\begin{equation}
D\colon B_\Lie\to B_\infty
\end{equation}
which enjoys the following property.
For any $B_\infty$-algebra $X$, the underlying complex of the cofree dg bialgebra on $T^\vee(X[1])$ is isomorphic to the symmetric algebra of the underlying complex of the dg Lie bialgebra on $\Lie^\vee(D(X)[1])$.
The isomorphism $D$ is canonically constructed by a chosen Drinfeld associator.
\end{theorem}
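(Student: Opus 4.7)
The plan is to obtain $D$ by operadically repackaging the Etingof--Kazhdan (de)quantization of Lie bialgebras, following [H, Thm.~7.3]. First I would unpack what the two operads encode. A $B_\infty$-algebra on a complex $V$ is, by definition, a dg bialgebra structure whose underlying coassociative coalgebra is the cofree one $T^\vee(V[1])$; a $B_\Lie$-algebra on $V$ is a dg Lie bialgebra structure whose underlying Lie coalgebra is the cofree one $\Lie^\vee(V[1])$. In both cases the (Lie-)coalgebra is fixed, so each algebra is determined by collections of universal operations, and each operad is controlled by the corresponding PROP of (Lie) bialgebras restricted to cofree generators. The task is therefore to transfer a dg Lie bialgebra structure on $\Lie^\vee(V[1])$ to a dg bialgebra structure on $T^\vee(V[1])$ by universal formulas.

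Next I would invoke the Etingof--Kazhdan quantization functor $\EK_\Phi$ attached to a chosen Drinfeld associator $\Phi$, together with its inverse dequantization. On a dg Lie bialgebra $\mathfrak{g}$, $\EK_\Phi(\mathfrak{g})$ is a dg bialgebra whose underlying vector space is the symmetric algebra $S(\mathfrak{g})$ and whose product and coproduct are built from universal formulas polynomial in the bracket, the cobracket, and the coefficients of $\Phi$. Applied to $\mathfrak{g}=\Lie^\vee(V[1])$, this yields a dg bialgebra on $S(\Lie^\vee(V[1]))$, which in characteristic $0$ is canonically isomorphic, as a coassociative coalgebra, to the cofree coassociative coalgebra $T^\vee(V[1])$ (the PBW/symmetrization identification). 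Transferring across this identification gives a $B_\infty$-structure on $V$. Because the entire construction is given by universal polynomial formulas in the Lie bialgebra operations and in $\Phi$, it descends to a morphism of dg operads $D\colon B_\Lie\to B_\infty$; bijectivity of $D$ is the functorial inverse supplied by Etingof--Kazhdan dequantization, and the dependence of $D$ on $\Phi$ is the claimed canonicity in a chosen associator.

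The compatibility property of the theorem is then essentially tautological once the construction is in place: for a $B_\infty$-algebra $X$, the dg bialgebra on $T^\vee(X[1])$ is, by definition of $D$, the image of the dg Lie bialgebra on $\Lie^\vee(D(X)[1])$ under $\EK_\Phi$ and the PBW isomorphism $T^\vee(X[1])\simeq S(\Lie^\vee(D(X)[1]))$. In particular the two underlying complexes are identified on the nose, which is the statement to be proven.

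The main obstacle is of course the Etingof--Kazhdan construction itself, which is highly nontrivial and rests on the theory of Drinfeld associators: producing $\EK_\Phi$ requires universal propic formulas that are polynomial in $\Phi$ and respect the dg and arity filtrations, and verifying that $\EK_\Phi$ and its inverse assemble into mutually inverse morphisms of the relevant PROPs is the technical heart of the argument. I would invoke these facts as external input from [EK] and [H, Thm.~7.3]; my own contribution would be the operadic repackaging in the $B_\Lie/B_\infty$ language of cofree (Lie-)coalgebras and the verification that the PBW identification $S(\Lie^\vee(V[1]))\simeq T^\vee(V[1])$ is precisely the one through which $\EK_\Phi$ intertwines the two structures, which is exactly the compatibility asserted in the theorem.
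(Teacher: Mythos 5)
Your proposal is correct and takes essentially the same route as the paper: the paper's own proof is precisely a citation of the Etingof--Kazhdan (de)quantization through [H, Theorem 7.3] (with [EG, Theorem 4.2] noted as a more direct alternative), the statement about underlying complexes being attributed to the corresponding property of the Etingof--Kazhdan construction. Your additional detail --- universal formulas polynomial in the associator assembling into the operad morphism, dequantization supplying the inverse, and the PBW identification $S(\Lie^\vee(V[1]))\simeq T^\vee(V[1])$ yielding the compatibility --- is exactly the content the paper delegates to [H, Section 7] and [EK].
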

\begin{proof}
The claim that the dg operads $B_\Lie$ and $B_\infty$ are isomorphic, is proven in [H, Theorem 7.3], by an application of the Etingof-Kazhdan (de)quantization. The mentioned property on the underlying complexes follows from the corresponding properties of the Etingof-Kazhdan (de)quantization. See also [EG, Theorem 4.2] for a more direct than [H, Section 7] approach.
\end{proof}
\endcomment

Let $X$ be a $B_\infty$-algebra. When we forget the product on $T^\vee(X[1])$, we get a differential on $T^\vee(X[1])$ which is coderivation of the cofree coalgebra structure. The latter structure is an $A_\infty$ algebra structure on $X$. We call it {\it the underlying $A_\infty$ structure of the $B_\infty$ structure}. 

\begin{remark}{\rm
It is true that the underlying $A_\infty$ algebra of a $B_\infty$ algebra $X$ is $A_\infty$ quasi-isomorphic to a homotopy commutative algebra. However, it does not imply that the differential on $T^\vee(X[1])$ is given by a Harrison cochain, cf. [GJ], Prop. 5.3 and Prop. 5.5.
}
\end{remark}

First of all, we prove the following $B_\infty$ version of Theorem \ref{mtheorem}:
\begin{theorem}\label{mtheorembis}
Let $X_L$ be a graded Leinster monoid in the category $\Alg(\k)$ of dg algebras over $\k$. Then there is a  $B_\infty$-algebra structure on the space $\Cobar(\Bar((X_L)))$, quasi-isomorphic to the first component $X_{1}$ of $X_L$, such that the underlying $A_\infty$-algebra on $\Cobar(\Bar((X_L)))$ is $A_\infty$-quasi-isomorphic to the dg associative $X_{1}\in \Alg(\k)$. The correspondence
\begin{equation}
X_L\rightsquigarrow \Cobar(\Bar((X_L)))
\end{equation}
gives rise to a functor
\begin{equation}
\mathscr{Y}\colon \mathscr{L}^\gr(\Alg(\k))\to B_\infty(\k)
\end{equation}
This functor maps weakly equivalent graded Leinster monoids in $\Alg(\k)$ to quasi-isomorphic $B_\infty$-algebras.
\end{theorem}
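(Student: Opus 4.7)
The plan is to assemble the $B_\infty$ structure from three already-established pieces. First, by Corollary \ref{corcoalg} the bar-complex $\Bar(X_L)$ of a graded Leinster monoid $X_L\in\mathscr{L}^\gr(\Alg(\k))$ is a dg bialgebra over $\k$: its coalgebra structure comes from the colax-maps $\beta_{a,b}$ (Lemma \ref{lemmabarcoalg}), its algebra structure from the modified Eilenberg-MacLane map $\nabla^\Bar$ applied to the $\Assoc$-algebra structure carried by each component $X_\ell$ (Corollary \ref{corollop} for $\mathcal{O}=\Assoc$), and the compatibility of the two is exactly Theorem \ref{theoremcoalg}. Next, I would apply the Tamarkin-Kadeishvili construction recalled in Section \ref{kadconst}: for any dg bialgebra $B$, the cobar-complex $\Cobar(B_\coalg)$ of its underlying dg coalgebra inherits a canonical $B_\infty$-algebra structure whose underlying dg algebra is the ordinary cobar dg algebra. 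Taking $B = \Bar(X_L)$ defines $\mathscr{Y}(X_L) := \Cobar(\Bar(X_L))$ together with the required $B_\infty$ structure.

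The quasi-isomorphism of underlying complexes $\mathscr{Y}(X_L)\simeq X_1$ is supplied directly by Theorem \ref{theorem27}. The delicate point, and what I expect to be the main obstacle, is promoting this to an $A_\infty$-quasi-isomorphism between the underlying $A_\infty$-algebra of the $B_\infty$ structure on $\mathscr{Y}(X_L)$ and the strict dg algebra $X_1$. For an honest dg algebra $A$ (the degenerate case $X_\ell=A^{\otimes \ell}$) the adjunction map $\Phi_A\colon \Cobar(\Bar(A))\to A$ of Theorem \ref{theorembc1} is a map of dg algebras, but for a general graded Leinster monoid there is no reason for the analogous map $\Phi$ to be strictly multiplicative, because the products on $\mathscr{Y}(X_L)$ are assembled out of the colax-maps $\beta$ and the degeneracies $D_j$, not from a single internal product. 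Only the weaker $A_\infty$-statement can hold. I would prove it by writing down the higher Taylor components $\Phi_n\colon \mathscr{Y}(X_L)^{\otimes n}\to X_1$ explicitly, built from the multiplications on the $X_\ell$, the colax-maps $\beta_{a,b}$, and the simplicial operators $D_j$, and then verifying the $A_\infty$-morphism equations by a direct combinatorial check. This is the content of Theorem \ref{mainyon}, whose proof is deferred to Section \ref{proefmy}, and which is genuinely the technical heart of the result.

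Finally, for functoriality and preservation of weak equivalences: the functor $X_L\mapsto \Bar(X_L)$ preserves quasi-isomorphisms between graded Leinster monoids by a spectral sequence argument on the tensor-degree filtration of $\Bar(X_L)$, analogous to the proof of Proposition \ref{barquis}. Dually, the cobar functor preserves quasi-isomorphisms of conilpotent dg coalgebras by the same spectral sequence machinery already used in Lemmas \ref{lemmaln} and \ref{lemmalnbis}: the filtration of $\Cobar$ by tensor degree converges, and its $E_1$-term only sees the cohomology of the input coalgebra. Consequently a weak equivalence $X_L\to X_L'$ in $\mathscr{L}^\gr(\Alg(\k))$ induces first a quasi-isomorphism of dg bialgebras $\Bar(X_L)\to\Bar(X_L')$, and then a quasi-isomorphism $\mathscr{Y}(X_L)\to\mathscr{Y}(X_L')$ compatible with the Tamarkin-Kadeishvili $B_\infty$-structures, giving the desired functor $\mathscr{Y}\colon \mathscr{L}^\gr(\Alg(\k))\to B_\infty(\k)$.
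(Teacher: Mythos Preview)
Your overall plan matches the paper's: establish that $\Bar(X_L)$ is a dg bialgebra via Corollary \ref{corcoalg}, apply the Tamarkin-Kadeishvili construction (Theorem \ref{theoremkad}), invoke Theorem \ref{theorem27} for the underlying complex, and appeal to Theorem \ref{mainyon} for the $A_\infty$ comparison. Your argument for preservation of weak equivalences is more detailed than the paper's terse ``it follows from the corresponding preservation for each step of our construction'', and is fine as stated.

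There is, however, one genuine discrepancy. You propose to build the $A_\infty$-morphism in the direction $\mathscr{Y}(X_L)\to X_1$, extending the adjunction map $\Phi_A\colon\Cobar(\Bar(A))\to A$ by higher components $\Phi_n\colon \mathscr{Y}(X_L)^{\otimes n}\to X_1$. The paper's Theorem \ref{mainyon} goes the \emph{opposite} way: it defines $\phi_n\colon X_1^{\otimes n}\to \Cobar(\Bar(X_L))$ by the closed formula
\[
\phi_n(x_1,\dots,x_n)=\pm\, w_1(x_1)*_{X_n}\cdots*_{X_n} w_n(x_n)\in X_n\subset\Bar(X_L)[-1]\subset\Cobar(\Bar(X_L)),
\]
where $w_k\colon X_1\to X_n$ comes from the map $[n]\to[1]$ in $\Delta_0$ sending $j\mapsto 0$ for $j<k$ and $j\mapsto 1$ for $j\ge k$. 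This choice of direction is not cosmetic: the linear part $\phi_1$ is simply the inclusion $X_1\hookrightarrow\Cobar(\Bar(X_L))$ already used in the proof of Theorem \ref{theorem27}, and each $\phi_n$ lands in the single summand $X_n$, so that $d_\Bar\phi_n$ and $d_\Cobar\phi_n$ can be computed directly via the simplicial identities for $F_i\circ w_k$ and the colax-maps $\beta_{a,b}$ applied to $w_k$ (this is Proposition \ref{propainfty} and Lemma \ref{lemmaay}). In your proposed direction the linear component would have to project all of $\Cobar(\Bar(X_L))$ down to $X_1$, requiring maps $X_n\to X_1$ built from iterated face operators, and there is no comparably clean formula for the higher $\Phi_n$. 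So your citation of Theorem \ref{mainyon} as supplying maps $\mathscr{Y}(X_L)^{\otimes n}\to X_1$ is a misreading; the workable construction runs from $X_1$ into $\mathscr{Y}(X_L)$.
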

After that, we show how the results of [MS1,2] help to deduce Theorem \ref{mtheorem} from Theorem \ref{mtheorembis}.

\subsection{\sc The Tamarkin-Kadeishvili construction}\label{kadconst}
Here we follow [Kad].

Let $B$ be a dg (co)associative bialgebra. Here we endow the cobar-complex $\Cobar(B_\coalg)$ of the underlying coalgebra $B$ with a $B_\infty$ algebra structure.
The cobar-complex $\Cobar(C)$ of any dg coalgebra $C$ has a structure of a dg associative algebra, see Lemma \ref{lemmabc2}.
Then $T^\vee(\Cobar(C)[1])\simeq \Bar(\Cobar(C))$ inherits the bar-differential of the dg algebra $\Cobar(C)$.

One has:
\begin{theorem}\label{theoremkad}
Let $B$ be a dg (co)associative bialgebra. Then there exists a $B_\infty$ algebra structure on $Y=\Cobar(B)$ such that the differential on the cofree coalgebra $T^\vee(Y[1])\simeq\Bar(Y)$ is the bar-differential.
\end{theorem}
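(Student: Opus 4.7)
The plan is to equip the cofree coalgebra $\Bar(Y) = T^\vee(Y[1])$, where $Y = \Cobar(B)$, with an associative product making it a dg bialgebra that extends the dg coalgebra structure on $\Bar(Y)$ coming from Lemma \ref{lemmabc1} applied to the dg algebra $Y = \Cobar(B)$. Since $\Bar(Y)$ is cofree as a conilpotent coalgebra, any coalgebra morphism
$$
\tilde M \colon \Bar(Y) \otimes \Bar(Y) \to \Bar(Y)
$$
is uniquely determined by its composition with the projection $\Bar(Y) \twoheadrightarrow Y[1]$ onto cogenerators. Thus the data to specify is a collection of $\k$-linear maps
$$
M_{p,q} \colon Y[1]^{\otimes p} \otimes Y[1]^{\otimes q} \to Y[1], \qquad p,q \geq 0,
$$
with $M_{1,0} = M_{0,1} = \id$ and $M_{0,0} = 0$, subject to the conditions that the induced $\tilde M$ be associative and a chain map with respect to the bar differential.

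The construction of the $M_{p,q}$ uses the full bialgebra structure of $B$, combining $\mu$ and $\Delta$. Represent elements of $Y = \Cobar(B)$ as cobar monomials $[b_1 | \cdots | b_k]$. Given arguments $x_1, \ldots, x_p$ in the first slot and $y_1, \ldots, y_q$ in the second, the value of $M_{p,q}(x_1 \otimes \cdots \otimes x_p \,;\, y_1 \otimes \cdots \otimes y_q)$ is a signed sum over all ways to thread the $y_j$-monomials through the $x_i$-monomials; at each crossing one applies $\Delta$ to split off letters of the inserted $y_j$ and $\mu$ to merge the resulting pieces with the adjacent letters of the ambient monomial. The case $p=1$ recovers Gerstenhaber-style braces $x_1\{y_1, \ldots, y_q\}$ on $\Cobar(B)$, and the general $M_{p,q}$ symmetrizes these between the two slots. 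The explicit formula is the one written down in [Kad]; see also [T4].

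It remains to verify (i) associativity of $\tilde M$ and (ii) compatibility with the bar differential. Both reduce, via the cofreeness of the codomain, to identities between maps $\Bar(Y)^{\otimes 3} \to Y[1]$ and $\Bar(Y)^{\otimes 2} \to Y[1]$ respectively. Associativity follows from the associativity of $\mu$, the coassociativity of $\Delta$, and the commutation of insertions at distinct positions; it does not use the bialgebra compatibility axiom. The main obstacle is (ii): the failure of $\tilde M$ to commute with the bar differential receives contributions from the cobar differentials inside each $x_i$ and $y_j$ (which use $\Delta$), from the bar differential among the outer tensor slots (which uses the dg algebra structure of $Y$, i.e., concatenation and cobar differential), and from the interaction of the insertion multiplications $\mu$ with neighboring applications of $\Delta$. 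The bialgebra identity
$$
\Delta \circ \mu = (\mu \otimes \mu) \circ (\id \otimes \sigma \otimes \id) \circ (\Delta \otimes \Delta)
$$
is precisely what allows, at each collision of a $\mu$ from an insertion with a $\Delta$ from a cobar differential, the interchange that produces the cancelling terms from the other sources.

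The combinatorial and sign bookkeeping required to show that all these contributions cancel simultaneously, carried out inductively on the total bar length $p+q$, is the computational heart of the argument. With the sign conventions for $\Bar$ and $\Cobar$ fixed throughout the present paper, the resulting maps $M_{p,q}$ define a $B_\infty$-structure on $Y = \Cobar(B)$, and, by construction, the induced differential on $T^\vee(Y[1])$ coincides with the bar differential of the dg algebra $\Cobar(B)$, as required.
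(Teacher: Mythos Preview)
Your overall framework is right---define the product on $T^\vee(Y[1])$ through its Taylor components $M_{p,q}$ and verify associativity and compatibility with the bar differential---but two points in your sketch are off compared to the actual Tamarkin--Kadeishvili construction the paper records.

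First, the construction does \emph{not} produce nonzero $M_{p,q}$ for general $p,q$, and there is no ``symmetrizing between the two slots.'' The paper sets $m_{k,\ell}=0$ for all $k>1$, so $\Cobar(B)$ is a \emph{brace algebra}; only the components $m_{1,\ell}$ are nonzero, and they are given explicitly by the $\lozenge$-insertion formula \eqref{kad1}. Your description of threading the $y_j$ through the $x_i$ and vice versa suggests a symmetric construction that is not what is being used (and would not obviously yield an associative product).

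Second, your claim that associativity ``does not use the bialgebra compatibility axiom'' is incorrect. Verifying the brace relations---e.g.\ comparing $(x\{y\})\{z\}$ with $x\{y\{z\}\}+x\{y,z\}+x\{z,y\}$---forces one to compute $\Delta^{k-1}\bigl((a_i)_{(s)}\, b_s\bigr)$ for letters $(a_i)_{(s)} b_s$ appearing in $a_i\lozenge y$, and this requires precisely that $\Delta$ be a map of algebras. The bialgebra axiom is needed for associativity just as much as for compatibility with the differential; your dichotomy does not hold.

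The paper's own treatment simply states the brace formulas \eqref{kad0}--\eqref{kad1} and cites [T4] and [Kad, Section 5] for the verification that they define a dg bialgebra structure on $T^\vee(\Cobar(B)[1])$.
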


As the differential on $T^\vee(Y[1])$ is already fixed by the assumption, one only needs to construct the associative product Taylor components
\begin{equation}\label{kad0}
m_{k,\ell}\colon Y[1]^{\otimes k}\otimes Y[1]^{\otimes \ell}\to Y[1]
\end{equation}
Recall the construction. The maps $m_{k,\ell}=0$ for $k>1$ (that is, $\Cobar(B)$ is a brace algebra).

For $a\in B$ and $y\in B^{\otimes n}$ introduce the operation $a\lozenge y\in B^{\otimes n}$ as
$$
a\lozenge y=(\Delta^{n-1}a)*y
$$
where in the r.h.s. there is the factor-wise product in $B$ of two elements in $B^{\otimes n}$.

The operation $a\lozenge y$ is well-defined for homogeneous $y$ of any degree $n$, then the power of the coproduct is defined accordingly to $n$.

Denote by $[a_1\otimes\dots\otimes a_n]$ an element in $\Cobar(B)=T^\udot(B[-1])$, $a_i\in B$, and let $y_1,\dots,y_\ell\in\Cobar(B)$
be homogeneous elements.

The formula for $m_{1,\ell}$ reads:
\begin{equation}\label{kad1}
\begin{aligned}
\ &m_{1,\ell}([a_1\otimes\dots\otimes a_n], y_1\otimes\dots \otimes y_\ell)=\\
&\sum_{\substack{1\le i_1<i_2<\dots<i_\ell\le n}}\pm[a_1\otimes\dots\otimes a_{i_1-1}\otimes (a_{i_1}\lozenge y_1)\otimes a_{i_1+1}\otimes\dots \otimes a_{i_\ell-1}\otimes (a_{i_\ell}\lozenge y_\ell)\otimes a_{i_\ell+1}\otimes\dots\otimes a_n]
\end{aligned}
\end{equation}
for $\ell\le n$, and is set to be zero for $\ell>n$.

Tamarkin [T4] and Kadeishvili [Kad, Section 5] proved that these operations define a dg (co)associative bialgebra structure on $T^\vee(\Cobar(B)[1])$.

\qed

\section{\sc A proof of Theorem \ref{mtheorem}}\label{sectionoccupy1}
Let $X_L$ be a graded Leinster monoid in $\Alg(\k)$.
Set $$Y(X_L)=\Cobar(\Bar(X_L))$$
First of all, we prove Theorem \ref{mtheorembis}. After that, we deduce Theorem \ref{mtheorem} from Theorem \ref{mtheorembis}, and McClure-Smith results [MS1,2].

We know that $Y(X_L)=\Cobar(\Bar(X_L))$ is a $B_\infty$ algebra, as $\Bar(X_L)$ is a dg bialgebra by Theorem \ref{theorema}, and then the Tamarkin-Kadeishvili construction from Section \ref{kadconst} endows $\Cobar(\Bar(X_L))$ with a $B_\infty$-algebra structure.

\subsection{\sc The underlying $A_\infty$-algebra of the $B_\infty$-algebra $Y(X_L)$ is $A_\infty$
quasi-isomorphic to $X_1$}\label{occupy1.2}

The underlying $A_\infty$ algebra of a $B_\infty$ algebra $X$ is given by the underlying dg coalgebra of the dg bialgebra 
$T^\vee(X[1])$. When $X=\Cobar(B_\coalg)$, for a dg bialgebra $B$, and the $B_\infty$ structure on $X$ is given by the Tamarkin-Kadeishvili construction (see Section \ref{kadconst}), the differential of the underlying dg coalgebra $T^\vee(X[1])$ is 
the bar-differential of the dg algebra $\Cobar(B_\coalg)$. 

We need to construct an $A_\infty$ quasi-isomorphism 
$$
\phi\colon X_1\to (\Cobar(\Bar(X_L)_\coalg)_\alg
$$

It is constructed in Theorem \ref{mainyon} below.

Introduce some notations.

Denote by $w_1^{(n)},\dots, w_n^{(n)}$ the morphisms $[0,1,\dots,n]\to [0,1]$ in $\Delta_0$, defined as
\begin{equation}\label{yon0}
w_k^{(n)}(j)=\begin{cases}0& \text{if }j\le k-1\\ 1& \text{if }j\ge k
\end{cases}
\end{equation}
($1\le k\le n$). We drop the upper index out from the notation $w_i^{(j)}$ whenever it is clear from the context.

By abuse of notations,  we denote by the same symbols the corresponding maps $w_k\colon X_1\to X_n$, for a Leinster monoid $X_L$.

We have 
\begin{theorem}\label{mainyon}
Let $X_L$ be a Leinster monoid in $\Alg(\k)$. Consider its component $A_1=X_1$ with its underlying associative dg algebra structure, and $A_2=\Cobar(\Bar(X_L))$ with its associative dg algebra structure as on a cobar-complex of a dg coalgebra. Define maps
$$
\phi_n\colon (A_1[1])^{\otimes n}\to A_2[1]
$$
as
\begin{equation}\label{yonbasic}
\begin{aligned}
\ &\phi_n(x_1\otimes\dots\otimes x_n)=(-1)^{(n-1)\deg x_1+(n-2)\deg x_2+\dots+\deg x_{n-1}}\cdot w_1(x_1)*\dots*w_n(x_n)\\
&\in X_n[-n+1]\subset\Bar(X_L)[-1]\subset\Cobar(\Bar(X_L))
\end{aligned}
\end{equation}
as the product in $X_n$ of the corresponding elements.
Then $\{\phi_n\}_{n\ge 1}$ are the Taylor components of an $A_\infty$ morphism $\Phi\colon A_1\to A_2$, which is also an $A_\infty$ quasi-isomorphism.
\end{theorem}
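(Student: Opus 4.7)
My proof plan is to verify the $A_\infty$-morphism equations for the family $\{\phi_n\}$ by direct computation, exploiting the compatibility between the simplicial structure of $X_L$, the multiplicative structure on each $X_n$, and the colax-monoidal maps $\beta_{k,\ell}$. Since the target $A_2$ is a strict dg algebra, the morphism equations reduce to the assertion that the Taylor components $\phi_n$ intertwine the pair (differential, multiplication) on $A_1$ with that on $A_2$, modulo the standard Koszul sign prescription. When one applies the full differential $d_{A_2} = d_{\mathrm{internal}} + d^s + d_{\mathrm{cobar}}$ to $\phi_n(x_1, \ldots, x_n) \in \Cobar(\Bar(X_L))$, one obtains three types of terms which I match one by one against the right-hand side of the $A_\infty$-equation.

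First, a direct inspection in $\Delta_0$ shows that $F_i \circ w_j^{(n)} = w_j^{(n-1)}$ when $i \geq j$ and $F_i \circ w_j^{(n)} = w_{j-1}^{(n-1)}$ when $i \leq j-1$. Since each $F_i \colon X_n \to X_{n-1}$ is an algebra homomorphism (by virtue of $X_L$ being a Leinster monoid in $\Alg(\k)$), this yields
\begin{equation*}
F_i\bigl(w_1(x_1) * \cdots * w_n(x_n)\bigr) = w_1(x_1) * \cdots * w_{i-1}(x_{i-1}) * w_i(x_i \cdot x_{i+1}) * w_{i+1}(x_{i+2}) * \cdots * w_{n-1}(x_n),
\end{equation*}
so the bar differential $d^s$ applied to $\phi_n$ generates precisely the terms $\pm \phi_{n-1}(x_1, \ldots, x_i \cdot x_{i+1}, \ldots, x_n)$ required on the $m_2^{A_1}$ side of the $A_\infty$-equation. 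Second, in the monoidal category $\Delta_0$ the morphism $w_j^{(k+\ell)}$ factors across the decomposition $[k+\ell] = [k] \otimes [\ell]$ as $w_j^{(k)} \otimes u^{(\ell)}$ for $j \leq k$ and as $u^{(k)} \otimes w_{j-k}^{(\ell)}$ for $j > k$, where $u^{(m)} \colon [m] \to [0]$ is the canonical morphism. Naturality of $\beta_{k,\ell}$ together with its algebra-morphism property and the identification of $u^{(m)}(1)$ with the unit element of $X_m$ yields
\begin{equation*}
\beta_{k,\ell}\bigl(w_1(x_1) * \cdots * w_n(x_n)\bigr) = \bigl(w_1^{(k)}(x_1) * \cdots * w_k^{(k)}(x_k)\bigr) \otimes \bigl(w_1^{(\ell)}(x_{k+1}) * \cdots * w_\ell^{(\ell)}(x_n)\bigr),
\end{equation*}
so that the cobar differential contribution to $d_{A_2}(\phi_n)$ produces exactly the concatenation $\pm \phi_k(x_1,\ldots,x_k) * \phi_{n-k}(x_{k+1},\ldots,x_n)$ in $\Cobar(\Bar(X_L))$. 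Third, since every $w_j$ is a chain map, the internal differential on $X_n$ distributes via the Leibniz rule and generates the terms $\sum_j \pm \phi_n(x_1, \ldots, dx_j, \ldots, x_n)$.

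The remaining and most delicate step is sign bookkeeping: this is the main obstacle and the reason for postponing the detailed verification to Section \ref{proefmy}. The prefactor $(-1)^{(n-1)\deg x_1 + (n-2)\deg x_2 + \cdots + \deg x_{n-1}}$ built into $\phi_n$ is chosen precisely to reconcile simultaneously (a) the Koszul signs arising from the shift by $[1]$ on both $A_1$ and $A_2$, (b) the grading signs $(-1)^{A_i}$ from the bar differential \eqref{diffbarlb}, (c) the sign pattern in the cobar differential \eqref{diffcobarclass}, and (d) the internal Leibniz signs on $X_n$. Verifying that these four sources of signs align term-by-term is mechanical but lengthy. Finally, to conclude that $\Phi$ is an $A_\infty$-quasi-isomorphism it suffices to check that its linear component $\phi_1 \colon X_1 \to \Cobar(\Bar(X_L))$ is a quasi-isomorphism of complexes; but $\phi_1$ is, by construction, the canonical inclusion $X_1 = L_0 \hookrightarrow \Cobar(\Bar(X_L))$ shown to be a quasi-isomorphism in the proof of Theorem \ref{theorem27}.
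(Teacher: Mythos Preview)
Your proposal is correct and follows essentially the same route as the paper's proof in Section~\ref{proefmy}: the decomposition of $d_{A_2}$ into internal, bar, and cobar pieces, the identity $F_i\circ w_j^{(n)}$ and the factorisation $\beta_{k,\ell}\circ w_j^{(n)}$ via the monoidal structure of $\Delta_0$ (your second step is the paper's Lemma~\ref{lemmaay}), and the appeal to Theorem~\ref{theorem27} for the quasi-isomorphism are all exactly what the paper does. The only part you defer---the sign bookkeeping---is precisely what the paper carries out in detail via the auxiliary $\theta_n$ and the reformulated identities \eqref{yon1bis}, \eqref{yon2bis}.
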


We prove Theorem \ref{mainyon} in Section \ref{proefmy}.

To complete the proof of Main Theorem \ref{mtheorembis}, it remains to show that the assignment $X_L\rightsquigarrow Y(X_L)$ preserves quasi-isomorphisms. It follows from the corresponding preservation for each step of our construction.

Theorem \ref{mtheorembis} is proven.

\qed

\subsection{\sc McClure-Smith theory [MS1,2] and Theorem \ref{mtheorem}}\label{sms}
Here we briefly show how the results of [MS1,2] make possible to deduce Theorem \ref{mtheorem} from Theorem \ref{mtheorembis}.

In [MS2], the authors construct a filtered operads $S_*$ in abelian groups,
$$
S_1\subset S_2\subset \dots 
$$
$$
\cup_{i\ge 1} S_i=S_*
$$
such that $S_*$ is a model for $E_\infty$ operad, and $S_n$ is quasi-isomorphic (as a dg operad) to the normalized chain operad $C_*(E_n,\mathbb{Z})$ of the topological little $n$-disk operad.

The topological operad $E_n$ is naturally filtered,
$$
E_1\subset E_2\subset\dots \subset E_n
$$
and this filtration gives rise to a filtration on $C_\ldot(E_n,\mathbb{Z})$.

For $n=2$ it is shown [MS2, Section 4] that there is a natural map of operads $S_2\to B_\infty$, which is also a map of filtered operads.
Here the operad $B_\infty$ is considered with its two-term filtration:
$$
A_\infty \subset B_\infty
$$
(the $A_\infty$ structure on a $B_\infty$ algebra $X$ is given by the underlying dg coalgebra structure on the dg bialgebra $T^\vee(X[1])$; that is, is given by the {\it differential} in the dg bialgebra $T^\vee(X[1])$).

The zig-zag of quasi-isomorphisms 
$$
C_\ldot(E_2,\mathbb{Z})\rightarrow \dots \leftarrow S_2
$$
constructed in [MS2], end of Section 5, is formed by filtered operads and filtered maps between them. 

By Theorem \ref{mtheorembis}, there is an action of the operad $B_\infty$ on $Y(X_L)=\Cobar(\Bar(X_L))$, for a Leinster monoid $X_L$ in $\Alg(\k)$, such that the restriction to the sub-operad $A_\infty\subset B_\infty$ is equivalent to the Yoneda product on $X_1\simeq Y(X_L)$. Then the above zig-zag of filtered quasi-isomorphisms of operads gives Theorem \ref{mtheorem}.

\qed

\section{\sc The Leinster monoids constructed in [Sh1] are graded}
In this Section, we show that the Leinster $n$-monoids that appear in our approach to Deligne conjecture [Sh1] are {\it graded}, see Definition \eqref{grl}. To make the exposition shorter, we consider only the case $n=1$ (which is the only case relevant for this paper), though the case of general $n$ is treated without any essential changes.

We start with definition of a Leinster monoid in $\Cat^\dg(\k)$, the category of dg categories. Then we show that the ``refined Drinfeld dg quotient" [Sh1, Section 4] preserves the graded Leinster monoids in $\Cat^\dg(\k)$. It is the key-point, and then we deduce the result directly.

\subsection{\sc Graded Leinster monoids in $\Cat^\dg(\k)$}\label{sectionapp1}
The category $\Cat^\dg(\k)$ is (symmetric) monoidal, with $\mathscr{C}\otimes\mathscr{D}$ as the monoidal product. We consider $\Cat^\dg(\k)$ as a category with weak equivalences, taking for weak equivalences the quasi-equivalences of dg categories.
Therefore, we can speak on Leinster monoids in $\Cat^\dg(\k)$, see Definition \ref{lmon}.
\begin{defn}\label{grldg}{\rm
A Leinster monoid $X_L$ in $\Cat^\dg(\k)$ is called {\it graded} if, for each $n\ge 0$, and for any two objects $x,y\in X_n$, the vector space $\Hom_{X_n}(x,y)$ is $\mathbb{Z}^n$-graded, such that
\begin{itemize}
\item[(i)] the composition $\Hom_{X_n}(x,y)\otimes \Hom_{X_n}(y,z)\to \Hom_{X_n}(x,z)$ sends 
$\Hom_{X_n}(x,y)^{a_1,\dots,a_n}\otimes \Hom_{X_n}(y,z)^{b_1,\dots,b_n}$ to $\Hom_{X_n}(x,z)^{a_1+b_1,\dots,a_n+b_n}$;
\item[(ii)] the colax-maps $\beta_{a,b}\colon X_n\to X_a\otimes X_b$ are $\mathbb{Z}^n$-graded on morphisms, where 
the $\mathbb{Z}^n$-grading on the r.h.s. is obtained by concatenation of the $\mathbb{Z}^a$ and $\mathbb{Z}^b$-gradings on the factors;
\item[(iii)] the total grading $$\Hom_{X_n}(x,y)^A:=\bigoplus_{a_1+\dots+a_n=A}\Hom_{X_n}(x,y)^{a_1,\dots,a_n}$$
coincides with the cohomological $\mathbb{Z}$-grading on the complex $\Hom_{X_n}(x,y)^\udot$.
\end{itemize}

}
\end{defn}

We have directly:
\begin{lemma}\label{lemmaapp}
Let $X_L$ be a graded Leinster monoid in $\Cat^\dg(\k)$, such that each dg category $X_n$ has a distinguished object $*_n$, and the sequence $*_0,*_1,*_2,\dots$ is invariant by the action of all morphisms in $\Delta_0^\opp$, and $\beta_{a,b}(*_{a+b})=*_a\times *_b$. Then the formula
$$
Y_n:=\Hom_{X_n}(*_n,*_n)
$$
defines a graded Leinster monoid in $\Alg(\k)$, see Definition \ref{grla}.
\end{lemma}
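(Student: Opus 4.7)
The proof plan is essentially bookkeeping: transport each piece of the graded Leinster monoid structure on $X_L\colon \Delta_0^{\opp}\to \Cat^{\dg}(\k)$ to $Y_\bullet$ by evaluating at the distinguished sequence $(*_n)$. There are three pieces to verify in turn: the functoriality $Y_\bullet\colon \Delta_0^{\opp}\to \Alg(\k)$, the colax-monoidal structure with components quasi-isomorphisms of dg algebras, and the grading conditions of Definitions \ref{grl} and \ref{grla}. Throughout we will use the elementary fact that, for dg categories $\mathscr{C},\mathscr{D}$ and objects $c\in\mathscr{C}, d\in\mathscr{D}$, one has a canonical identification of dg algebras
\begin{equation*}
\Hom_{\mathscr{C}\otimes\mathscr{D}}((c,d),(c,d))=\Hom_{\mathscr{C}}(c,c)\otimes \Hom_{\mathscr{D}}(d,d),
\end{equation*}
with componentwise composition.

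First I would set up the functoriality. For a morphism $f\colon [m]\to [n]$ in $\Delta_0$, the dg functor $X_L(f)\colon X_n\to X_m$ sends $*_n$ to $*_m$ by the invariance hypothesis, hence induces a morphism of dg algebras $Y_L(f)\colon Y_n\to Y_m$ on endomorphism complexes. Functoriality in $f$ is inherited from that of $X_L$. For the colax-monoidal structure, applying the dg functor $\beta^X_{a,b}\colon X_{a+b}\to X_a\otimes X_b$ to the endomorphism dg algebra of $*_{a+b}$ and using $\beta^X_{a,b}(*_{a+b})=*_a\times *_b$ together with the identification above, we obtain a dg algebra map
\begin{equation*}
\beta^Y_{a,b}\colon Y_{a+b}\longrightarrow \Hom_{X_a}(*_a,*_a)\otimes \Hom_{X_b}(*_b,*_b)=Y_a\otimes Y_b.
\end{equation*}
The colax coherence diagrams \eqref{colax1}, \eqref{colax2} for $\beta^Y$ follow from the corresponding diagrams for $\beta^X$ by restriction to the basepoints, and the assumption $X_0$ has $*_0$ with $\Hom_{X_0}(*_0,*_0)=\k$ (which the coherence with $\alpha$ forces) yields $Y_0=\k$.

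Next I would verify the weak-equivalence requirement of Definition \ref{lmon}: the map $\beta^Y_{a,b}$ is a quasi-isomorphism because the colax map $\beta^X_{a,b}$ is a quasi-equivalence of dg categories, and a quasi-equivalence restricts to a quasi-isomorphism of Hom complexes between matched objects. Finally, for the grading, the $\mathbb{Z}^n$-grading on $Y_n=\Hom_{X_n}(*_n,*_n)$ is the one provided by Definition \ref{grldg}. Condition (i) of Definition \ref{grldg} says this grading is respected by composition, which is precisely the condition that makes $Y_n$ a $\mathbb{Z}^n$-graded dg algebra in the sense of Definition \ref{grla}. Condition (ii) of Definition \ref{grldg} says $\beta^X_{a,b}$ respects the $\mathbb{Z}^{a+b}=\mathbb{Z}^a\oplus\mathbb{Z}^b$-grading on morphisms, which gives condition (i) of Definition \ref{grl} for $\beta^Y_{a,b}$. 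Condition (iii) of Definition \ref{grldg} gives condition (ii) of Definition \ref{grl}.

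No step is really an obstacle; the one place where care is needed is checking that the identification of $\Hom_{X_a\otimes X_b}((*_a,*_b),(*_a,*_b))$ with $Y_a\otimes Y_b$ is compatible with the $\mathbb{Z}^{a+b}$-grading under the concatenation convention, but this is immediate from the definition of the tensor product of dg categories once one unwinds signs.
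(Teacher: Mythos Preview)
Your proposal is correct and is exactly the routine verification the paper has in mind: the paper states the lemma as ``direct'' and gives no explicit proof beyond \qed, so you have simply spelled out the bookkeeping it omits.
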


\qed

Note that Definition \ref{grla} is a particular case of Definition \ref{grldg}, for dg categories with a single object.

\subsection{\sc Refined Drinfeld dg quotient}\label{sectionapp2}
Recall our definition of the {\it refined Drinfeld dg quotient}, [Sh1, Sections 4.3, 4.4].

Consider the category of tuples $(\mathscr{C};\ \mathscr{C}_1,\dots,\mathscr{C}_n)$ where $\mathscr{C}$ is a dg category over $\k$, and $\mathscr{C}_1,\dots,\mathscr{C}_n$ its small full dg subcategories. A morphism 
$$
f\colon (\mathscr{C};\ \mathscr{C}_1,\dots,\mathscr{C}_n)\to (\mathscr{D};\ \mathscr{D}_1,\dots,\mathscr{D}_n)
$$
is given by a dg functor $f\colon\mathscr{C}\to\mathscr{D}$ such that $f(\mathscr{C}_i)\subset\mathscr{D}_i$.

Denote this category, for fixed $n$, by $\mathscr{P}_n\Cat^\dg(\k)$.

The refined Drinfeld dg quotient is a functor 
$$
\mathscr{D}r\colon \mathscr{P}_n\Cat^\dg(\k)\to\Cat^\dg(\k)
$$
The motivation for introducing this refinement was that the ordinary Drinfeld dg quotient does not have any good monoidal properties, whence the refined dg quotient is colax-monoidal functor, in a suitable sense, see Section \ref{sectionapp4} below.

The construction goes as follows.

For any $X=X_i\in \mathscr{C}_i$, $i=1,\dots,k$, we add formally an element $\varepsilon^i_X$ which is a morphism from $X$ to $X$ of degree -1, with the differential $d\varepsilon^i_X=\id_X$.

For any $X=X_{ij}\in\mathscr{C}_i\cap\mathscr{C}_j$, $i<j$, we introduce formally a morphism $\varepsilon^{ij}_X$ from $X$ to itself of degree -2, with $d\varepsilon^{ij}_X=\varepsilon^i_X-\varepsilon^j_X$.

For any $X=X^{ijk}\in\mathscr{C}_i\cap\mathscr{C}_j\cap\mathscr{C}_k$, $i<j<k$, we introduce formally a morphism $\varepsilon_X^{ijk}$ from $X$ to itself of degree -3, with $d\varepsilon^{ijk}_X=\varepsilon^{ij}_X-\varepsilon^{ik}_X+\varepsilon^{jk}_X$, and so on.

The sign rule is as in the \v{C}ech cohomology theory, which implies $d^2=0$.

Now the dg category $\mathscr{C}/(\mathscr{C}_1,\dots,\mathscr{C}_k)$ has the same objects as the dg category $\mathscr{C}$, and the morphisms are freely generated by the morphisms in $\mathscr{C}$ with the given composition among them, and by the newly added morphisms $\varepsilon_X^{i_1i_2\dots i_s}$ of degree $-s$, with the differentials of $\varepsilon_X^{i_1,\dots,i_s}$ defined as above and extended to the whole morphisms by the Leibniz rule. 

Denote by $\mathscr{C}_\Sigma$ the full dg subcategory of $\mathscr{C}$ having the objects
$$
\Ob\mathscr{C}_\Sigma=\bigcup_{i=1}^k\Ob\mathscr{C}_i
$$

Consider the ordinary Drinfeld dg quotient $\mathscr{C}/\mathscr{C}_\Sigma$.
There is a natural dg functor
\begin{equation}\label{psi}
\Psi\colon\mathscr{C}/(\mathscr{C}_1,\dots,\mathscr{C}_k)\to\mathscr{C}/\mathscr{C}_\Sigma
\end{equation}
sending all $\varepsilon_X^i$ to $\varepsilon_X$, for $X\in\Ob\mathscr{C}_\Sigma$, and sending all $\varepsilon_X^{i_1\dots i_s}$ to 0, for $s>1$.

\begin{prop}
The functor $\Psi$ defined above is a quasi-equivalence of dg categories.
\end{prop}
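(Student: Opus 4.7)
The plan is as follows. Since $\Psi$ is the identity on objects, being a quasi-equivalence amounts to quasi-fully-faithfulness, i.e., to checking that each
\[
\Psi_{X,Y}\colon \Hom_{\mathscr{C}/(\mathscr{C}_1,\dots,\mathscr{C}_k)}(X,Y) \to \Hom_{\mathscr{C}/\mathscr{C}_\Sigma}(X,Y)
\]
is a quasi-isomorphism. Unwinding the constructions, both Hom-complexes decompose as direct sums, indexed by chains $X \rightsquigarrow Z_1 \rightsquigarrow \dots \rightsquigarrow Z_n \rightsquigarrow Y$ with $Z_i \in \mathscr{C}_\Sigma$ and $n \geq 0$, of tensor products of $\Hom_{\mathscr{C}}$-factors alternating with ``$\varepsilon$-inserts'' at each $Z_i$. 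In the refined quotient the insert at $Z_i$ is the complex $E^\udot(Z_i) := \bigoplus_{\varnothing \neq J \subset I(Z_i)} \k\langle\varepsilon^J_{Z_i}\rangle$, where $I(Z) := \{j : Z \in \mathscr{C}_j\}$, $\deg \varepsilon^J_{Z_i} = -|J|$, with the \v{C}ech differential $d\varepsilon^{j_1 < \dots < j_s}_{Z_i} = \sum_l (-1)^{l+1} \varepsilon^{j_1 < \dots < \widehat{j}_l < \dots < j_s}_{Z_i}$ for $s \geq 2$; in the ordinary quotient the insert is simply $\k\langle\varepsilon_{Z_i}\rangle$ in degree $-1$. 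The functor $\Psi$ acts factor-wise by $\varepsilon^j_Z \mapsto \varepsilon_Z$ and $\varepsilon^J_Z \mapsto 0$ for $|J| \geq 2$.

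I would then filter both Hom-complexes by $F_n :=$ the subcomplex of terms involving at most $n$ $\varepsilon$-inserts. This filtration is exhaustive, bounded below by $F_0 = \Hom_{\mathscr{C}}(X,Y)$, and preserved by $\Psi$. The only pieces of the differential that strictly lower the filtration are the augmentations $d\varepsilon^j_Z = \id_Z$ and $d\varepsilon_Z = \id_Z$, each of which eliminates an $\varepsilon$-insert by composing the two adjacent morphisms in $\mathscr{C}$. Consequently, the associated graded $F_n/F_{n-1}$ retains only the internal $\mathscr{C}$-differentials on the $\Hom_\mathscr{C}$-factors together with the pure \v{C}ech differentials on each $E^\udot(Z_i)$-factor, and factors as a direct sum of tensor products of these two sorts of factors.

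The key computation is that the factor-wise map $E^\udot(Z) \to \k\langle\varepsilon_Z\rangle$ induced by $\Psi$ is a quasi-isomorphism. Under the identification $\varepsilon^J_Z \leftrightarrow [J]$, the complex $E^\udot(Z)$ is, up to cohomological regrading, the non-augmented simplicial chain complex of the simplex with vertex set $I(Z)$. Contractibility of this simplex — equivalently, the acyclicity of the augmented Koszul complex on $\k^{|I(Z)|}$ along the regular element $(1,\dots,1)$ — forces $H^{-s}(E^\udot(Z)) = 0$ for $s \geq 2$ and $H^{-1}(E^\udot(Z))$ to be one-dimensional, spanned by the common class $[\varepsilon^j_Z]$, which is well-defined because $\varepsilon^i_Z - \varepsilon^j_Z = d\varepsilon^{ij}_Z$. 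This class is sent by $\Psi$ to $\varepsilon_Z$, giving the factor-wise quasi-isomorphism. Working over a field, tensor products preserve quasi-isomorphisms, so $\Psi$ is a quasi-isomorphism on every associated graded piece $F_n/F_{n-1}$.

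Finally, an induction on $n$ using the long exact sequences of the short exact sequences $0 \to F_{n-1} \to F_n \to F_n/F_{n-1} \to 0$ (on both sides, joined by $\Psi$) and the five-lemma shows that $\Psi\colon F_n \to F'_n$ is a quasi-isomorphism for every $n$; passing to the filtered colimit and using that filtered colimits commute with taking cohomology then yields that $\Psi_{X,Y}$ is itself a quasi-isomorphism. The main obstacle I anticipate is the careful sign bookkeeping — both in confirming that the \v{C}ech differential on $E^\udot(Z)$ exactly realizes the reduced simplicial boundary, and in verifying that the augmentation terms really are the only filtration-lowering pieces of the full differential. Should these sign checks become cumbersome, a more conceptual route is to identify $E^\udot(Z)$ directly with the normalized Moore complex of the nerve of the covering $\{\,j : Z \in \mathscr{C}_j\,\}$ of a point and appeal to standard nerve-theoretic acyclicity.
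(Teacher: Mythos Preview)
Your argument is correct and is essentially the standard proof of this statement. The paper itself does not give a proof here: it simply refers to [Sh1, Lemma 4.3]. Your filtration by the number of $\varepsilon$-inserts, together with the identification of the associated graded factor $E^\udot(Z)$ with the (shifted) unaugmented simplicial chain complex of the simplex on $I(Z)$, is exactly the natural approach, and the cohomology computation $H^{-1}(E^\udot(Z))\simeq\k$, $H^{-s}(E^\udot(Z))=0$ for $s\ge 2$, is right.

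Two minor presentational points. First, your opening phrase ``both Hom-complexes decompose as direct sums indexed by chains'' is a decomposition of the underlying graded vector space only, not of the complex---the augmentation $d\varepsilon^j_Z=\id_Z$ mixes chains of different length. You immediately correct this by passing to the filtration, so there is no gap, but it would read more cleanly to introduce the filtration from the start. Second, your concluding step (induction via the five-lemma and passage to the colimit) is fine, but it is equivalent---and slightly quicker---to observe that the filtration is bounded below and exhaustive, so the associated spectral sequence converges, and a map inducing an isomorphism on $E_1$-pages induces a quasi-isomorphism on the total complexes.
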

See [Sh1], Lemma 4.3 for a proof.

\qed

\subsection{\sc The refined Drinfeld dg quotient and the grading}\label{sectionapp3}
\begin{defn}\label{dgg}{\rm
Let $\mathscr{C}$ be a dg category over $\k$. We say that $\mathscr{C}$ is $\mathbb{Z}^\ell$-graded if there is a $\mathbb{Z}^\ell$-grading on each vector space $\Hom_\mathscr{C}(x,y)$ such that:
\begin{itemize}
\item[(i)] the compositions maps $\Hom_\mathscr{C}(y,z)^{b_1,\dots,b_\ell}\otimes \Hom_\mathscr{C}(x,y)^{a_1,\dots,a_\ell}$ to
$\Hom_\mathscr{C}(x,z)^{a_1+b_1,\dots,a_\ell+b_\ell}$, for any $x,y,z\in\Ob\mathscr{C}$;
\item[(ii)] the total grading 
$$
\Hom_\mathscr{C}(x,y)^{[n]}:=\bigoplus_{a_1+\dots+a_\ell=n}\Hom_\mathscr{C}(x,y)^{a_1,\dots,a_\ell}
$$
coincides with the cohomological grading, for any $x,y\in\Ob\mathscr{C}$.
\end{itemize}
}
\end{defn}

\begin{lemma}
Let $\mathscr{C}$ be a $\mathbb{Z}^\ell$-graded dg category, $\mathscr{C}_1$ its full subcategory. Then $\mathscr{C}_1$ is also $\mathbb{Z}^\ell$-graded.
\end{lemma}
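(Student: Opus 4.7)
The plan is to observe that fullness immediately transports the structure from $\mathscr{C}$ to $\mathscr{C}_1$, so essentially nothing needs to be constructed: the required grading on $\mathscr{C}_1$ \emph{is} the restriction of the given grading on $\mathscr{C}$.

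Concretely, I would proceed as follows. Since $\mathscr{C}_1\subset\mathscr{C}$ is a full dg subcategory, for every pair of objects $x,y\in\Ob\mathscr{C}_1$ one has an equality (not merely inclusion) of complexes
\begin{equation*}
\Hom_{\mathscr{C}_1}(x,y)=\Hom_{\mathscr{C}}(x,y).
\end{equation*}
I would then \emph{define} the $\mathbb{Z}^\ell$-graded components of $\Hom_{\mathscr{C}_1}(x,y)$ to be the graded components $\Hom_{\mathscr{C}}(x,y)^{a_1,\dots,a_\ell}$ inherited from $\mathscr{C}$. It remains to verify conditions (i) and (ii) of Definition \ref{dgg} for $\mathscr{C}_1$.

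For (i), the composition in $\mathscr{C}_1$ coincides with the composition in $\mathscr{C}$ restricted to triples of objects of $\mathscr{C}_1$; hence the multiplicativity of the $\mathbb{Z}^\ell$-grading under composition in $\mathscr{C}$ gives the same property in $\mathscr{C}_1$. For (ii), the cohomological $\mathbb{Z}$-grading on $\Hom_{\mathscr{C}_1}(x,y)^\udot$ is by definition the one inherited from $\Hom_{\mathscr{C}}(x,y)^\udot$, and the identity
\begin{equation*}
\Hom_{\mathscr{C}_1}(x,y)^{[n]}=\bigoplus_{a_1+\dots+a_\ell=n}\Hom_{\mathscr{C}_1}(x,y)^{a_1,\dots,a_\ell}
\end{equation*}
holds because it holds in $\mathscr{C}$ for these same objects.

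There is no real obstacle here: fullness ensures that both the hom-complexes and the compositions in $\mathscr{C}_1$ literally agree with those in $\mathscr{C}$, so both axioms are automatic. (The statement would require a genuine argument only for \emph{non-full} subcategories, which would need one to check that the image of the inclusion on morphisms is a graded subspace; that issue does not arise in our setting.) I would therefore present the proof in essentially one line, flagging only that fullness is the hypothesis being used.
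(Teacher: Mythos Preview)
Your proposal is correct and matches the paper's approach: the paper records no proof at all (just a \qed), treating the statement as an immediate consequence of fullness, which is exactly what you unwind.
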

\qed

\begin{prop}\label{propapp1}
Let $\mathscr{C}$ be $\mathbb{Z}^\ell$-graded dg category, $\mathscr{C}_1,\dots,\mathscr{C}_\ell$ its $\ell$ full small dg subcategories. Then the refined Drinfeld dg quotient $\mathscr{C}\text{/}(\mathscr{C}_1,\dots,\mathscr{C}_\ell)$ is naturally $\mathbb{Z}^\ell$-graded dg category.
\end{prop}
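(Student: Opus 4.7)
The plan is to extend the $\mathbb{Z}^\ell$-grading from $\mathscr{C}$ to the refined Drinfeld quotient $\mathscr{C}/(\mathscr{C}_1,\dots,\mathscr{C}_\ell)$ by specifying multi-degrees on the newly added generators and extending by additivity along compositions. I would keep, for each morphism of $\mathscr{C}$, its original multi-degree, and declare the new generator $\varepsilon_X^{i_1\dots i_s}$ (for $X\in\mathscr{C}_{i_1}\cap\dots\cap\mathscr{C}_{i_s}$ with $i_1<\dots<i_s$) to carry the multi-degree $(d_1,\dots,d_\ell)$ with $d_j=-1$ whenever $j\in\{i_1,\dots,i_s\}$ and $d_j=0$ otherwise. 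The total multi-degree is then $-s$, which matches the cohomological degree of $\varepsilon_X^{i_1\dots i_s}$ as defined in Section \ref{sectionapp2}.

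By the construction recalled in Section \ref{sectionapp2}, the morphism spaces of the refined Drinfeld quotient are, as graded vector spaces, freely generated by composable words alternating morphisms of $\mathscr{C}$ and new $\varepsilon$-generators, subject only to the composition relations inherited from $\mathscr{C}$. The multi-grading therefore extends uniquely by additivity along word concatenation, and condition (i) of Definition \ref{dgg} becomes tautological. Condition (ii) follows by induction on word length, using that both classes of generators individually satisfy that the total multi-degree equals the cohomological degree, together with the fact that composition in $\mathscr{C}$ is already known to respect this property.

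It remains to check that the differential is compatible with the total $\mathbb{Z}^\ell$-grading. On morphisms inherited from $\mathscr{C}$ this holds by hypothesis. On the new generators, the explicit \v{C}ech-type formula
\[
d\varepsilon_X^{i_1\dots i_s} \;=\; \sum_{k=1}^{s} \pm\, \varepsilon_X^{i_1\dots\widehat{i_k}\dots i_s}
\]
exhibits each summand as carrying the multi-degree of $\varepsilon_X^{i_1\dots i_s}$ with the $-1$ at position $i_k$ replaced by $0$, so the total multi-degree is raised by exactly $+1$, as required. Extending by the Leibniz rule to arbitrary composable words preserves this property term by term.

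The only genuinely nontrivial bookkeeping will be to verify the coherence between the multi-grading and the \v{C}ech-type sign conventions used to define the $\varepsilon$-differentials; but this reduces to the observation that the multi-grading depends only on the unordered subset $\{i_1,\dots,i_s\}\subseteq\{1,\dots,\ell\}$, which is precisely the combinatorial datum on which the \v{C}ech differential operates. Once this is noted, the verification of the identity $d^2=0$ in the graded setting reduces to the ungraded setting already treated in [Sh1], and the proposition follows.
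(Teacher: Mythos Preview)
Your proposal is correct and follows essentially the same approach as the paper: assign to $\varepsilon_X^{i_1\dots i_s}$ the multi-degree with $-1$ in the slots $i_1,\dots,i_s$ and $0$ elsewhere, extend to all morphisms by additivity under composition (condition (i) of Definition \ref{dgg}), and observe that the total multi-degree $-s$ matches the cohomological degree (condition (ii)). The paper's proof stops there, in three lines.

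Your additional paragraphs on compatibility of the differential with the multi-grading, and on coherence with the \v{C}ech signs and $d^2=0$, are not needed for the statement as formulated: Definition \ref{dgg} asks only for conditions (i) and (ii), and the dg category structure on the refined quotient (including $d^2=0$) is already established in [Sh1] independently of any grading. So those checks, while harmless, are extraneous rather than ``the only genuinely nontrivial bookkeeping''.
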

\begin{proof}
We define $\mathbb{Z}^\ell$-grading of the new morphisms $\varepsilon_X^{i_1,\dots,i_s}$, $s\le \ell$, by
\begin{equation}
\deg\varepsilon_X^{i_1,\dots,i_s}=(a_1,\dots,a_\ell)
\end{equation}
where
\begin{equation}
a_i=\begin{cases}-1&\text{if $i=i_j$ for some $1\le j\le s$}\\
0&\text{otherwise}
\end{cases}
\end{equation}
Then we use condition (i) of Definition \ref{dgg} the $\mathbb{Z}^\ell$-grading to all morphisms (in a unique way).
Condition (ii) is also fulfilled, because the cohomological degree of $\varepsilon_X^{i_1,\dots,i_s}$ is equal to $-s$.
\end{proof}

\subsection{\sc Monoidal structure}\label{sectionapp4}
There is a monoidal structure
$$
\mathscr{P}_m\Cat^\dg(\k)\times \mathscr{P}_n\Cat^\dg(\k)\to \mathscr{P}_{m+n}\Cat^\dg(\k)
$$
defined as
$$
(\mathscr{C};\ \mathscr{C}_1,\dots,\mathscr{C}_m)\times (\mathscr{D};\ \mathscr{D}_1,\dots,\mathscr{D}_n)\mapsto (\mathscr{C}\otimes\mathscr{D};\ \mathscr{C}_1\otimes\mathscr{D},\dots,\mathscr{C}_m\otimes\mathscr{D},\mathscr{C}\otimes\mathscr{D}_1,\dots,\mathscr{C}\otimes\mathscr{D}_n)
$$
To simplify notations, we use the notation $(\mathscr{C};\underline{C})$ for a tuple 
$(\mathscr{C};\ \mathscr{C}_1,\dots,\mathscr{C}_m)$. Then we rewrite our product as
$$
(\mathscr{C};\underline{\mathscr{C}})\otimes (\mathscr{D};\underline{\mathscr{D}})=(\mathscr{C}\otimes\mathscr{D}; \underline{\mathscr{C}}\otimes\mathscr{D},\mathscr{C}\otimes\underline{\mathscr{D}})
$$
As well, we use the notation $\underline{\underline{\mathscr{C}}}$ for $(\mathscr{C},\underline{\mathscr{C}})$.

The refined Drinfeld dg quotient is colax-monoidal with respect to this structure. More precisely, there is a colax-monoidal map
$$
\beta_{\underline{\underline{\mathscr{C}}},\underline{\underline{\mathscr{D}}}}\colon \Dr(\mathscr{C}\otimes\mathscr{D}; \underline{\mathscr{C}}\otimes\mathscr{D},\mathscr{C}\otimes\underline{\mathscr{D}})\to\Dr(\mathscr{C},\underline{\mathscr{C}})\otimes \Dr(\mathscr{D},\underline{\mathscr{D}})
$$
constructed in [Sh1], Prop. 4.4. For any particular $\underline{\underline{\mathscr{C}}}$ and $\underline{\underline{\mathscr{D}}}$ it is a map of dg categories. Thus, it is uniquely defined by the images of the ``new'' morphisms $\varepsilon_{X\times Y}^{i_1,\dots,i_s,j_1,\dots, j_t}$, where the notation for the upper index assumes that 
$$
X\in \cap_{a=1}^s\mathscr{C}_{i_a},\ \ Y\in\cap_{b=1}^t\mathscr{D}_{j_b}
$$
By our construction in loc.cit., the definition reads
\begin{equation}\label{apgraded}
\beta_{\underline{\underline{\mathscr{C}}},\underline{\underline{\mathscr{D}}}}(\varepsilon_{X\times Y}^{i_1,\dots,i_s,j_1,\dots,j_t})=\varepsilon_X^{i_1,\dots,i_s}\otimes\varepsilon_Y^{j_1,\dots,j_t}
\end{equation}

It implies a crucial for the paper consequence:
\begin{prop}\label{propapp2}
Let $\mathscr{C}$ be a $\mathbb{Z}^{\ell_1}$-graded dg category, $\mathscr{D}$ be a $\mathbb{Z}^{\ell_2}$-graded category, and let $\underline{\mathscr{C}}=(\mathscr{C}_1,\dots,\mathscr{C}_{\ell_1})$, $\underline{\mathscr{D}}=(\mathscr{D}_1,\dots,\mathscr{D}_{\ell_2})$ be tuples with $\ell_1$ and $\ell_2$ small dg subcategories, correspondingly.
Then the dg category $\Dr(\mathscr{C}\otimes\mathscr{D};\ \underline{\mathscr{C}}\otimes\mathscr{D},\mathscr{C}\otimes \underline{\mathscr{D}})$ is $\mathbb{Z}^{\ell_1+\ell_2}$-graded, and
the dg functor $\beta_{\underline{\underline{\mathscr{C}}},\underline{\underline{\mathscr{D}}}}$ agrees with the grading in the following sense: for any $I\in\mathbb{Z}^{\ell_1}$, $J\in \mathbb{Z}^{\ell_2}$, it sends 
$$
\Hom_{\Dr(\mathscr{C}\otimes\mathscr{D};\ \underline{\mathscr{C}}\otimes\mathscr{D},\mathscr{C}\otimes \underline{\mathscr{D}})}(x\times y,x^\prime\times y^\prime)^{(I,J)}
$$
to
$$
\Hom_{\Dr(\mathscr{C},\underline{\mathscr{C}})}(x,x^\prime)^{I}\otimes \Hom_{\Dr(\mathscr{D},\underline{\mathscr{D}})}(y,y^\prime)^{J}
$$
where $(I,J)\in\mathbb{Z}^{\ell_1+\ell_2}$ is obtained by the concatenation of $I$ and $J$.
\end{prop}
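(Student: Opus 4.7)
The plan is to reduce the proposition to a direct application of Proposition \ref{propapp1} together with a grading check on a system of generators. First, I would equip $\mathscr{C}\otimes\mathscr{D}$ with the $\mathbb{Z}^{\ell_1+\ell_2}$-grading obtained by concatenating the factor gradings, so that $\Hom_{\mathscr{C}\otimes\mathscr{D}}(x\times y, x'\times y')^{(I,J)} = \Hom_\mathscr{C}(x,x')^I \otimes \Hom_\mathscr{D}(y,y')^J$ for $I\in\mathbb{Z}^{\ell_1}$, $J\in\mathbb{Z}^{\ell_2}$. Axioms (i) and (ii) of Definition \ref{dgg} then hold trivially: composition in a tensor product of dg categories respects the bi-grading factor-wise, and the sum of components of $(I,J)$ equals the sum of components of $I$ plus that of $J$, which agrees with the cohomological degree of the corresponding tensor product of morphisms. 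With this grading in place, the tuple $(\mathscr{C}_1\otimes\mathscr{D},\dots,\mathscr{C}_{\ell_1}\otimes\mathscr{D},\mathscr{C}\otimes\mathscr{D}_1,\dots,\mathscr{C}\otimes\mathscr{D}_{\ell_2})$ furnishes $\ell_1+\ell_2$ full small dg subcategories of $\mathscr{C}\otimes\mathscr{D}$, and Proposition \ref{propapp1} produces the desired $\mathbb{Z}^{\ell_1+\ell_2}$-grading on $\Dr(\mathscr{C}\otimes\mathscr{D};\underline{\mathscr{C}}\otimes\mathscr{D},\mathscr{C}\otimes\underline{\mathscr{D}})$.

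Next, I would verify that $\beta_{\underline{\underline{\mathscr{C}}},\underline{\underline{\mathscr{D}}}}$ preserves the $\mathbb{Z}^{\ell_1+\ell_2}$-grading. Since $\beta$ is a dg functor and since the morphisms of the source dg category are generated (by composition and the Leibniz rule) by (a) the ``old'' morphisms coming from $\mathscr{C}\otimes\mathscr{D}$, and (b) the newly added symbols $\varepsilon_{X\times Y}^{i_1,\dots,i_s,j_1,\dots,j_t}$, it is enough to check the claim on these generators. For (a) the statement is tautological: $\beta$ restricts to the canonical identification $\Hom_{\mathscr{C}\otimes\mathscr{D}}(x\times y, x'\times y') = \Hom_\mathscr{C}(x,x')\otimes \Hom_\mathscr{D}(y,y')$, and the two $\mathbb{Z}^{\ell_1+\ell_2}$-gradings on source and target are defined by exactly the same concatenation rule.

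For (b), I would invoke the explicit grading recipe from the proof of Proposition \ref{propapp1}. In the source, the element $\varepsilon_{X\times Y}^{i_1,\dots,i_s,j_1,\dots,j_t}$ has multi-degree $(a_1,\dots,a_{\ell_1+\ell_2})$ with $a_k=-1$ precisely when $k\in\{i_1,\dots,i_s\}$ or $k-\ell_1\in\{j_1,\dots,j_t\}$, and $a_k=0$ otherwise. By formula \eqref{apgraded}, its image under $\beta$ is $\varepsilon_X^{i_1,\dots,i_s}\otimes\varepsilon_Y^{j_1,\dots,j_t}$, whose multi-degree in the tensor product $\Dr(\mathscr{C},\underline{\mathscr{C}})\otimes \Dr(\mathscr{D},\underline{\mathscr{D}})$ is the concatenation of the multi-degrees of the two factors, computed by Proposition \ref{propapp1} applied separately to $(\mathscr{C};\underline{\mathscr{C}})$ and $(\mathscr{D};\underline{\mathscr{D}})$. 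These two concatenated degrees are exactly $(a_1,\dots,a_{\ell_1})$ and $(a_{\ell_1+1},\dots,a_{\ell_1+\ell_2})$, so source and target multi-degrees agree. There is no serious obstacle: the whole argument is a careful unpacking of definitions, and the only bookkeeping point is the shift of the second block of indices by $\ell_1$, which is built consistently into both sides of \eqref{apgraded}.
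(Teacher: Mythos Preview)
Your proposal is correct and follows essentially the same approach as the paper: first observe that $\mathscr{C}\otimes\mathscr{D}$ is $\mathbb{Z}^{\ell_1+\ell_2}$-graded by concatenation and apply Proposition~\ref{propapp1}, then reduce the compatibility of $\beta_{\underline{\underline{\mathscr{C}}},\underline{\underline{\mathscr{D}}}}$ with the grading to a check on the generators $\varepsilon_{X\times Y}^{i_1,\dots,i_s,j_1,\dots,j_t}$ via formula~\eqref{apgraded}. Your write-up is somewhat more explicit than the paper's (you spell out the case of old morphisms and the index bookkeeping), but the argument is the same.
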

\begin{proof}
At first, by the assumption that $\mathscr{C}$ is $\mathbb{Z}^{\ell_1}$-graded and $\mathscr{D}$ is $\mathbb{Z}^{\ell_2}$-graded, we conclude that the tensor product $\mathscr{C}\otimes\mathscr{D}$ is $\mathbb{Z}^{\ell_1+\ell_2}$-graded. Then the dg category 
$\Dr(\mathscr{C}\otimes\mathscr{D};\ \underline{\mathscr{C}}\otimes\mathscr{D},\mathscr{C}\otimes \underline{\mathscr{D}})$ is $\mathbb{Z}^{\ell_1+\ell_2}$-graded, by Proposition \ref{propapp1}. It proves the first claim. For the second claim, it is enough to check it for
$$
\varepsilon_{x\times y}^{i_1,\dots,i_s,j_1,\dots,j_t}\in \Hom_{\Dr(\mathscr{C}\otimes\mathscr{D};\ \underline{\mathscr{C}}\otimes\mathscr{D},\mathscr{C}\otimes \underline{\mathscr{D}})}(x\times y,x\times y)
$$
(as $\beta_{\underline{\underline{\mathscr{C}}},\underline{\underline{\mathscr{D}}}}$ is a map of dg categories), in which case it follows from \eqref{apgraded}.
\end{proof}

\subsection{\sc Applications to Deligne conjecture}\label{sectionapp5}
Let $\mathscr{A}$ be an abelian $\k$-linear monoidal category, such that the monoidal and the abelian structures are {\it weakly compatible}, see [Sh1, Def.5.1]. Denote by $\otimes$ the monoidal product in $\mathscr{A}$ and by $e$ the unit.  Denote by $\mathscr{A}^\dg$ the dg category of bounded from above complexes in $\mathscr{A}$, and by $\mathscr{I}\subset\mathscr{A}^\dg$ the full dg subcategory formed by the acyclic complexes. We say that the exact and the monoidal structures in $\mathscr{A}$ are {\it weakly compatible} if there is a $\k$-linear additive subcategory $\mathscr{A}_0\subset\mathscr{A}$ such that the following properties hold:
\begin{itemize}
\item[(i)] $\mathscr{A}_0$ is monoidal subcategory of $\mathscr{A}$, $e\in\mathscr{A}$ belongs to $\mathscr{A}_0$ and is a monoidal unit in $\mathscr{A}_0$,
\item[(ii)] denote $\mathscr{J}=\mathscr{A}_0^\dg\cap\mathscr{I}$, then $\mathscr{J}$ is a 2-sided ideal in $\mathscr{A}_0^\dg$ with respect to the monoidal product; that is, for an acyclic $I\in\mathscr{J}$ and any $X\in\mathscr{A}_0^\dg$ both complexes $X\otimes I$ and $I\otimes X$ are acyclic (here $\mathscr{A}_0^\dg$ is the full dg subcategory of $\mathscr{A}^\dg$, whose objects are bounded from above complexes in $\mathscr{A}_0$),
\item[(iii)] the natural embedding of pairs of dg categories
$$
(\mathscr{A}_0^\dg,\mathscr{J})\hookrightarrow (\mathscr{A}^\dg,\mathscr{I})
$$
induces a quasi-equivalence of the dg quotients 
$$
\mathscr{A}_0^\dg/\mathscr{J}\tilde{\rightarrow} \mathscr{A}^\dg/\mathscr{I}
$$
\end{itemize}

\hspace{1mm}

We pass to the proof of the Deligne conjecture.

At the first step, we define a Leinster monoid $X_\ldot$ in $\mathscr{P}_*\Cat^\dg(\k)$, with $X_n\in\mathscr{P}_n\Cat^\dg(\k)$, by setting
\begin{equation}
X_n=((\mathscr{A}_0^\dg)^{\otimes n};\ \mathscr{L}_1,\dots,\mathscr{L}_n)
\end{equation}
where 
\begin{equation}
\mathscr{L}_i=(\mathscr{A}_0^\dg)^{\otimes (i-1)}\otimes \mathscr{J}\otimes (\mathscr{A}_0^\dg)^{\otimes (n-i)}
\end{equation}
The face maps are well-defined because $\mathscr{J}$ is an ideal in $\mathscr{A}_0^\dg$. The colax-monoidal maps 
$$
\beta_{a,b}\colon X_{a+b}\to X_a\otimes X_b
$$
are tautological.
\begin{remark}\label{rmapp}
{\rm Note that $X_\ldot$ is a {\it graded} Leinster monoid in $\mathscr{P}_*\Cat^\dg(\k)$.}
\end{remark}

As the next step, we apply the refined Drinfeld dg quotient term-wise, and set 
\begin{equation}
Y_n=\Dr((\mathscr{A}_0^\dg)^{\otimes n};\ \mathscr{L}_1,\dots,\mathscr{L}_n)
\end{equation}
Then $Y_\ldot$ is a Leinster monoid in $\Cat^\dg(\k)$, by the monoidal property of the refined Drinfeld dg quotient, see Section \ref{sectionapp4}.
\begin{remark}{\rm
To avoid set-theoretical troubles, we assumed in [Sh1] that $\mathscr{A}$ is small. (We need the categories $\mathscr{L}_i$ to be small). A more reasonable assumption is that $\mathscr{A}$ is locally-presentable, and then we work with a small model of $\mathscr{A}^\dg$, formed by $\lambda$-perfect objects, for some small ordinal $\lambda$. 
}
\end{remark}
We have the following
\begin{prop}\label{propapp3}
In the notations as above, $Y_\ldot$ is a graded Leinster monoid in $\Cat^\dg(\k)$. 
\end{prop}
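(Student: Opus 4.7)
The plan is to assemble the claim directly from the structural results of Sections \ref{sectionapp3} and \ref{sectionapp4}. First I would equip each $Y_n$ with a $\mathbb{Z}^n$-grading as follows. The dg category $\mathscr{A}_0^\dg$ is $\mathbb{Z}$-graded by its cohomological grading, so the tensor product $(\mathscr{A}_0^\dg)^{\otimes n}$ carries a natural $\mathbb{Z}^n$-grading in the sense of Definition \ref{dgg}, where the $i$-th coordinate records the cohomological degree in the $i$-th tensor factor, and condition (ii) of that definition (total grading $=$ cohomological grading) holds by construction. Since $(\mathscr{A}_0^\dg)^{\otimes n}$ is $\mathbb{Z}^n$-graded and $\mathscr{L}_1,\dots,\mathscr{L}_n$ are $n$ full dg subcategories, Proposition \ref{propapp1} applies and endows $Y_n=\Dr((\mathscr{A}_0^\dg)^{\otimes n};\mathscr{L}_1,\dots,\mathscr{L}_n)$ with a canonical $\mathbb{Z}^n$-graded dg category structure; this verifies items (i) and (iii) of Definition \ref{grldg} in one stroke.

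Next I would verify condition (ii) of Definition \ref{grldg}, i.e.\ that the colax-monoidal maps $\beta_{a,b}\colon Y_{a+b}\to Y_a\otimes Y_b$ respect the multi-grading. The Leinster monoid structure on $Y_\ldot$ is obtained, term by term, by applying the refined Drinfeld dg quotient to the Leinster monoid $X_\ldot$ in $\mathscr{P}_*\Cat^\dg(\k)$, and the colax maps are precisely the functors $\beta_{\underline{\underline{\mathscr{C}}},\underline{\underline{\mathscr{D}}}}$ constructed in Section \ref{sectionapp4}. Setting $\mathscr{C}=(\mathscr{A}_0^\dg)^{\otimes a}$, $\mathscr{D}=(\mathscr{A}_0^\dg)^{\otimes b}$, and taking the tuples $\underline{\mathscr{C}},\underline{\mathscr{D}}$ to be the appropriate $\mathscr{L}_i$'s, Proposition \ref{propapp2} says exactly that $\beta_{a,b}$ maps the $\mathbb{Z}^{a+b}$-homogeneous component of a morphism space in $Y_{a+b}$ into the concatenated $\mathbb{Z}^a\oplus\mathbb{Z}^b$-homogeneous component of the product. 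This is condition (ii).

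Finally, the action of $\Delta_0^\opp$ on $Y_\ldot$ respects the grading: the inner face maps $F_i$ factor, up to the colax maps already analysed, through the monoidal product $\mathscr{A}_0^\dg\otimes\mathscr{A}_0^\dg\to\mathscr{A}_0^\dg$, which by construction sums the cohomological degrees of the two factors; the degeneracies $D_j$ insert the unit $e\in\mathscr{A}_0^\dg$ (which has $\mathbb{Z}^0$-degree $0$). Equivalently, the compatibility of face and degeneracy maps with the multi-grading follows formally from items (i) and (ii) of Definition \ref{grldg} by the same reasoning as in Lemma \ref{lemmaqz}. Hence $Y_\ldot$ is a graded Leinster monoid in $\Cat^\dg(\k)$ in the sense of Definition \ref{grldg}.

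The heavy lifting is already done by Propositions \ref{propapp1} and \ref{propapp2}; the main point of the argument is recognising that the $\mathbb{Z}^n$-grading on $Y_n$ afforded by Proposition \ref{propapp1}, built from cohomological gradings on each tensor factor of $(\mathscr{A}_0^\dg)^{\otimes n}$, is the one with respect to which Proposition \ref{propapp2} expresses the colax-monoidal compatibility. No serious obstacle is expected beyond careful bookkeeping of which coordinates of the $\mathbb{Z}^{a+b}$-grading on $Y_{a+b}$ are identified with coordinates of the $\mathbb{Z}^a$- and $\mathbb{Z}^b$-gradings on $Y_a$ and $Y_b$.
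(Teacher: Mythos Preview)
Your proof is correct and follows exactly the paper's approach: the paper's own proof reads simply ``It follows from Propositions \ref{propapp1} and \ref{propapp2},'' and your argument unpacks precisely how those two propositions supply conditions (i)--(iii) of Definition \ref{grldg}. Your third paragraph on the $\Delta_0^\opp$-action is extra bookkeeping not strictly demanded by Definition \ref{grldg} as stated (the compatibility of face and degeneracy maps with the multi-grading is, as you note, a consequence \`a la Lemma \ref{lemmaqz} rather than an axiom), but it does no harm.
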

\begin{proof}
It follows from Propositions \ref{propapp1} and \ref{propapp2}.
\end{proof}
Finally, we pass from the Leinster monoid $Y_\ldot$ in $\Cat^\dg(\k)$ to a Leinster monoid $Z_\ldot$ in $\Alg(\k)$, as follows.

Each category $X_n$ contains a distinguished object $e_n=e^{\otimes n}$, where $e$ is the monoidal unit.
The objects $\{e_n\}$ are preserved by the action of morphisms in $\Delta_0^\opp$, and by the colax-maps. 
The subsequent application of the refined Drinfeld dg quotient sends $\{e_n\}$ to $\{\bar{e}_n\}$, which are also preserved by the action of morphisms in $\Delta_0^\opp$, and by the colax-maps. Set
$$
Z_n=\Hom_{Y_n}(\bar{e}_n,\bar{e}_n)
$$
Then $Z_\ldot$ is a Leinster monoid in $\Alg(\k)$, see also [Sh1, Section 5.2].
\begin{theorem}\label{propappf}
The Leinster monoid $Z_\ldot$ in $\Alg(\k)$ is graded, see Definition \ref{grla}.
\end{theorem}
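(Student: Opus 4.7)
The plan is to deduce Theorem \ref{propappf} as an essentially immediate corollary of Proposition \ref{propapp3} together with Lemma \ref{lemmaapp}: all the real work has already been localized in the statements about the refined Drinfeld dg quotient and its compatibility with $\mathbb{Z}^\ell$-gradings (Propositions \ref{propapp1}, \ref{propapp2}). The only thing left to do is to check that passing from the graded Leinster monoid $Y_\ldot$ in $\Cat^\dg(\k)$ to the endomorphism algebras $Z_n = \Hom_{Y_n}(\bar{e}_n,\bar{e}_n)$ preserves the grading, so that the hypotheses of Lemma \ref{lemmaapp} are satisfied.

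First I would recall that by Proposition \ref{propapp3} the Leinster monoid $Y_\ldot$ is graded as a Leinster monoid in $\Cat^\dg(\k)$, meaning that each $\Hom_{Y_n}(x,y)$ carries a canonical $\mathbb{Z}^n$-grading as prescribed by Definition \ref{grldg}: compositions are additive in the multi-degree, the colax maps $\beta_{a,b}\colon Y_{a+b}\to Y_a\otimes Y_b$ are $\mathbb{Z}^n$-graded (where the right-hand side carries the concatenated $\mathbb{Z}^a\oplus\mathbb{Z}^b$-grading), and the total degree agrees with the cohomological grading. Next I would verify the selection of distinguished objects required by Lemma \ref{lemmaapp}: in $X_n=((\mathscr{A}_0^\dg)^{\otimes n};\mathscr{L}_1,\dots,\mathscr{L}_n)$ the objects $e_n=e^{\otimes n}$ are fixed by every face and degeneracy map of $\Delta_0^\opp$ (the face maps apply the monoidal product to adjacent copies of $e$, and $e\otimes e=e$; degeneracies insert an extra factor $e$) and clearly $\beta_{a,b}(e_{a+b})=e_a\times e_b$ holds strictly. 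These properties pass to the images $\bar{e}_n$ in $Y_n$ because the refined Drinfeld dg quotient is the identity on objects and the dg functors induced by the morphisms in $\Delta_0^\opp$ and by the colax-maps $\beta_{a,b}$ act on $Y_\ldot$ by extension of the corresponding functors on $X_\ldot$, which carry $e_n$ to $e_m$.

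With these two ingredients in hand, I would apply Lemma \ref{lemmaapp} directly: setting $Z_n=\Hom_{Y_n}(\bar{e}_n,\bar{e}_n)$, the $\mathbb{Z}^n$-grading on $\Hom_{Y_n}(\bar{e}_n,\bar{e}_n)$ inherited from the grading of $Y_n$ makes $Z_n$ a $\mathbb{Z}^n$-graded dg algebra (condition (i) of Definition \ref{grldg} specializes to condition of Definition \ref{grla}), the colax-maps $Z_{a+b}\to Z_a\otimes Z_b$ are $\mathbb{Z}^{a+b}$-graded because $\beta_{a,b}$ is graded on $Y_{a+b}$ and sends $\bar{e}_{a+b}$ to $\bar{e}_a\times\bar{e}_b$, and the total $\mathbb{Z}^n$-degree matches the cohomological degree on $Z_n$. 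This is exactly Definition \ref{grla}, so $Z_\ldot$ is a graded Leinster monoid in $\Alg(\k)$.

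I do not anticipate any serious obstacle, since all the nontrivial content has been isolated in Proposition \ref{propapp2} (the compatibility of $\mathbb{Z}^\ell$-gradings with the colax-monoidal maps of $\Dr$). If anything needs extra care, it is checking that the distinguished objects $\bar{e}_n$ really are invariant under all morphisms in $\Delta_0^\opp$ after passing through the refined Drinfeld dg quotient; but this is immediate because $\Dr$ acts as the identity on objects and the dg functors on $Y_n$ lifting the structural morphisms of $X_n$ are constructed so as to extend the original ones, which already fix the $e_n$.
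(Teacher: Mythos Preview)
Your proposal is correct and follows exactly the same route as the paper: the paper's proof consists of the single sentence ``It follows from Proposition \ref{propapp3} and Lemma \ref{lemmaapp},'' and the invariance of the distinguished objects $\bar{e}_n$ that you carefully verify is already stated in the paragraph immediately preceding the theorem. Your write-up simply unpacks these two citations with more detail than the paper provides, which is entirely appropriate.
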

\begin{proof}
It follows from Proposition \ref{propapp3} and Lemma \ref{lemmaapp}.
\end{proof}

\section{\sc A proof of Theorem \ref{emmnewtheorem}}\label{proeff}
Below we prove the statements (I) and (III) of the Theorem; the statement (II) is proven directly and is left to the reader. Note that due to Remark 
\ref{emremnew}(3.) we can not reduce the computation, in the case of non-trivial multi-gradings, to the known fact that the classical Eilenberg-MacLane map \eqref{emform} obeys the Leibniz rule. 

{\it Proof of (I):}

We need to prove \eqref{emmnew1}. Let us rewrite \eqref{emmnew1}, implementing the explicit expressions for the differentials:

\begin{equation}\label{emmnewbig}
\begin{aligned}
\ & \sum_{\substack{{(m,n)-}\\{\text{shuffles}\ \sigma}}}\sum_{\ell=1}^{m+n-1}(-1)^{S_\EM(\sigma,x_m,y_n)+S_\sharp(\sigma,x_m,y_n)+S_1(\sigma,\ell)}
F_\ell D_{\sigma(m+n-1)}\dots D_{\sigma(m)}(x_m)\otimes
F_\ell D_{\sigma(m-1)}\dots D_{\sigma(0)}(y_n)=\\
&\sum_{\substack{{(m-1,n)-}\\{\text{shuffles}\ \sigma_1}}}\sum_{q=1}^{m-1}(-1)^{S_\EM(\sigma_1,F_q x_m,y_n)+S_\sharp(\sigma_1,F_qx_m,y_n)+\sum_{u=1}^{q}a_u+q}\\
&D_{\sigma_1(m+n-2)}\dots D_{\sigma_1(m-1)} F_q(x_m)\otimes
 D_{\sigma_1(m-2)}\dots D_{\sigma_1(0)}(y_n)+\\
&\sum_{\substack{{(m,n-1)-}\\{\text{shuffles}\ \sigma_2}}}\sum_{p=1}^{n-1}(-1)^{S_\Sigma(x_m)+S_\EM(\sigma_2,x_m,F_p y_n)+S_\sharp(\sigma_2,x_m,F_py_n)+\sum_{u=1}^{p}b_u+p}\\
&D_{\sigma_2(m+n-2)}\dots D_{\sigma_2(m)}(x_m)\otimes
D_{\sigma_2(m-1)}\dots D_{\sigma_2(0)} F_p(y_n)
\end{aligned}
\end{equation}

where
\begin{equation}\label{emmnews1}
S_1(\sigma,\ell)=\sum_{\substack{{0\le i\le n-1}\\{\sigma(i)<\ell}}}b_{i+1}+\sum_{\substack{{n\le j\le m+n-1}\\{\sigma(j)<\ell}}}a_{j-n+1}+\ell
\end{equation}
and
\begin{equation}\label{emmnew3.2}
S_\Sigma(x_m)=a_1+\dots+a_m+m
\end{equation}
(the expression \eqref{emmnews1} for $S_1(\sigma,\ell)$ follows from the formulas for the bar-differential \eqref{diffbarlb}, \eqref{diffbarbis}, and from Lemma \ref{lemmaqz}(ii).

Fix an $(n,m)$-shuffle $\sigma\in\Sigma_{m+n}$ and $1\le \ell\le m+n-1$ in the l.h.s. of \eqref{emmnewbig}; we want to ``commute'' $F_\ell$ with the compositions of the degeneracy operators, for both factors. There are 3 possible cases:
\begin{itemize}
\item[(case 1):] both degeneracy operators $D_\ell$ and $D_{\ell-1}$ appear in the composition  
$D_{\sigma(m-1)}\circ\dots\circ D_{\sigma(0)}$;
\item[(case 2):] both degeneracy operators $D_\ell$ and $D_{\ell-1}$ appear in the composition  
$D_{\sigma(m+n-1)}\circ\dots\circ D_{\sigma(m)}$;
\item[(case 3):] the degeneracy operators $D_\ell$ and $D_{\ell-1}$ appear in different factors, say $D_\ell$ in the left-hand factor and $D_{\ell-1}$ in the right-hand factor, or vice versa. 
\end{itemize}
We use the third group of relations in \eqref{simplid} to perform the commutation.
The computation is provided below, and is rather long and routine. For convenience of the reader, we sum it up in the following lemma:
\begin{lemma}\label{lemma44proof}
In the notations as above, one has:
\begin{itemize}
\item[(i)] the sum of the summands of type (case 1), for all $(m,n)$-shuffles $\sigma$ and $1\le \ell\le m+n-1$, is equal to the second-line summand in the r.h.s. of \eqref{emmnewbig};
\item[(ii)] the sum of the summands of type (case 2), for all $(m,n)$-shuffles $\sigma$ and $1\le\ell\le m+n-1$, is equal to the first-line summand in the r.h.s.of \eqref{emmnewbig};
\item[(iii)] the summands of type (case 3), for all $(m,n)$-shuffles $\sigma$ and $1\le\ell\le m+n-1$, are mutually cancelled.
\end{itemize}
\end{lemma}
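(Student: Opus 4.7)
My plan is to prove the three assertions by using the simplicial identities \eqref{simplid} to push $F_\ell$ inward through each of the two degeneracy compositions on the l.h.s.\ of \eqref{emmnewbig}. For each passage of $F_\ell$ past a degeneracy $D_{\sigma(k)}$, exactly one of three rules applies depending on whether $\sigma(k)>\ell$ (shift $D_{\sigma(k)}\to D_{\sigma(k)-1}$ and keep $F_\ell$), $\sigma(k)\in\{\ell-1,\ell\}$ (absorption $F_\ell D_{\sigma(k)}=\id$), or $\sigma(k)<\ell-1$ (decrement $F_\ell\to F_{\ell-1}$ and keep $D_{\sigma(k)}$). The three cases of the lemma correspond to the three possible distributions of $\{\ell-1,\ell\}$ across the two compositions.

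For cases (i) and (ii), in the factor containing both $D_\ell$ and $D_{\ell-1}$, by monotonicity of the shuffle these occupy two consecutive positions, and pushing $F_\ell$ inward through the outer of the two produces the absorption $F_\ell D_\ell=\id$, simultaneously eliminating $F_\ell$ and one degeneracy. In the other factor $F_\ell$ encounters no $D_\ell$ or $D_{\ell-1}$ and traverses the entire composition, emerging as some $F_q$ (on $x_m$) or $F_p$ (on $y_n$). A short combinatorial check, using the constraint imposed by the case hypothesis together with the monotonicity of the shuffle halves, shows that $q\in\{1,\ldots,m-1\}$ (resp.\ $p\in\{1,\ldots,n-1\}$), so the resulting face map is non-extreme and lies inside the Leinster setting. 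The data $(\sigma,\ell)$ maps bijectively to the corresponding indexing data $(\sigma_1,q)$ or $(\sigma_2,p)$ on the r.h.s.\ (the reduced shuffle being obtained from $\sigma$ by deleting the value $\ell$ from the appropriate half and decrementing all surviving values greater than $\ell$), and a direct check compares the combined sign $(-1)^{S_\EM(\sigma)+S_\sharp(\sigma)+S_1(\sigma,\ell)}$ with the r.h.s.\ sign, involving $S_\EM$ and $S_\sharp$ of the reduced shuffle together with the bar-sign of $F_q$ or $F_p$ (and the extra $S_\Sigma(x_m)$ for the $F_p(y_n)$ branch).

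For case (iii), the pair $\{\ell-1,\ell\}$ is split between the two halves of $\sigma$; without loss of generality $\sigma(i_0)=\ell-1$ with $0\le i_0\le m-1$ and $\sigma(j_0)=\ell$ with $m\le j_0\le m+n-1$. I pair $\sigma$ with the shuffle $\sigma'$ obtained by transposing the values $\ell-1$ and $\ell$; the shuffle property is preserved because these values are consecutive integers, and $\sigma'$ is again in case (iii), making the pairing an involution on case (iii) data. Because $F_\ell D_\ell=F_\ell D_{\ell-1}=\id$, the absorptions on both factors proceed identically for $\sigma$ and $\sigma'$, producing the \emph{same} tensor product in $X_{m+n-1}\otimes Y_{m+n-1}$. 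The swap changes the inversion set only at the pair $(i_0,j_0)$, so
\[
S_\EM(\sigma')-S_\EM(\sigma)=(a_{i_0+1}+1)(b_{j_0-m+1}+1),\qquad S_\sharp(\sigma')-S_\sharp(\sigma)=a_{i_0+1}b_{j_0-m+1},
\]
whose parities sum to $a_{i_0+1}+b_{j_0-m+1}+1\pmod 2$. Combined with the shift of $S_1$ coming from the relocation of the unique entry equal to $\ell-1$ (resp.\ $\ell$) between the two sums of \eqref{emmnews1}, the total sign change is $-1$, yielding the claimed cancellation in pairs.

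\textbf{Main obstacle.} The conceptual structure above is straightforward; the real difficulty is the sign bookkeeping. Three independent sources of signs ($S_\EM$, $S_\sharp$, and $S_1$, the last depending nontrivially on the multi-grading inserted by each degeneracy in accordance with Lemma \ref{lemmaqz}(ii)) must be compared against their counterparts on the r.h.s.\ of \eqref{emmnewbig}. In particular, tracking how the decrement rule $F_\ell D_j=D_j F_{\ell-1}$ for $j<\ell-1$ interacts with the shift of the multi-grading by each inserted zero is the one place where genuine care is required; the remainder is tedious but mechanical.
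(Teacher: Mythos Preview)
Your plan is correct and follows essentially the same route as the paper: push $F_\ell$ through the degeneracy words using the simplicial identities, observe that in cases (i) and (ii) absorption $F_\ell D_\ell=\id$ occurs in one factor while $F_\ell$ traverses the other factor to become a non-extreme $F_q$ (resp.\ $F_p$), set up the bijection $(\sigma,\ell)\leftrightarrow(\sigma_{1\text{ or }2},\ q\text{ or }p)$, and then match signs; for case (iii) pair $\sigma$ with $(\ell-1,\ell)\circ\sigma$ and show the total sign flips. The paper's proof is identical in structure, with the sign verification for case (ii) isolated into a separate sub-lemma computing $S_\EM(\sigma,x_m,y_n)-S_\EM(\sigma_2,x_m,F_p y_n)$ and the analogous $S_\sharp$ difference; your ``direct check'' would amount to exactly this sub-lemma, and your case (iii) sign count (the $(a_{i_0+1}{+}1)(b_{j_0-m+1}{+}1)$, $a_{i_0+1}b_{j_0-m+1}$, and the $S_1$ shift) reproduces the paper's computation verbatim.
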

\begin{proof}
We prove (ii) and (iii), the proof of (i) is analogous to (ii) and is left to the reader.

{\it Proof of (ii)}: a summand of type (case2) looks like:
\begin{equation}\label{equk1}
(-1)^S \Bigl[F_\ell\circ D_{\sigma(m+n-1)}\circ\dots\circ D_{\sigma(t+1)}\circ D_\ell\circ D_{\ell-1}\circ D_{\sigma(t-2)}\circ\dots D_{\sigma(m)}(x_m)\Bigr]\ \otimes \ \Bigl[F_\ell\circ D_{\sigma(m-1)}\circ\dots\circ D_{\sigma(0)}(y_n)\Bigr]
\end{equation}
where 
\begin{equation}
S=S_\EM(\sigma,x_m,y_n)+S_\sharp(\sigma,x_m,y_n)+S_1(\sigma,\ell)
\end{equation}
Let
\begin{equation}\label{equk1bis}
J=\min_{\substack{{0\le i\le m-1}\\
{\sigma(i)>\ell}}}i
\end{equation}
Then \eqref{equk1} is equal to
\begin{equation}\label{equk2}
\begin{aligned}
\ &(-1)^S \Bigl[D_{\sigma(m+n-1)-1}\circ\dots\circ D_{\sigma(t+1)-1}\circ D_{\ell-1}\circ D_{\sigma(t-2)}\circ\dots D_{\sigma(m)}(x_m)\Bigr]\ \otimes \\ &\Bigl[D_{\sigma(m-1)-1}\circ\dots\circ D_{\sigma(J)-1}\circ D_{\sigma(J-1)}\circ \dots\circ D_{\sigma(0)}\circ F_{\ell-J}(y_n)\Bigr]
\end{aligned}
\end{equation}
It easily follows from \eqref{simplid}.

We see that the summand \eqref{equk1} is equal, up to a sign, to the summand in the second line of the r.h.s. of \eqref{emmnewbig}, corresponded to $p=\ell-J$ and to $\sigma_2$ defined as
\begin{equation}\label{sigma2}
\sigma_2(i)=\begin{cases}\sigma(i)&0\le i\le J-1\\
\sigma(i)-1&J\le i\le m-1\\
\sigma(i)&m\le i\le t-1\\
\sigma(i+1)-1&t\le i\le m+n-2
\end{cases}
\end{equation}
When $\sigma$ is an $(m,n)$-shuffle, $\sigma_2$ is a $(m,n-1)$-shuffle. By definition, $\sigma(t-1)=\ell-1$, and $J$ depends on $\ell$ by \eqref{equk1bis}. Note also that
\begin{equation}\label{xyzproef1}
t-m+J=\ell
\end{equation}

We need to check that the signs also are equal. 

\hspace{1mm}

The sign for the left-hand side summand is
\begin{equation}\label{lhssign}
\begin{aligned}
\ &S_\lhs=S_\EM(\sigma,x_m,y_n)+S_\sharp(\sigma,x_m,y_n)+S_1(\sigma,\ell)=\\
&\sum_{\substack{{0\le i\le m-1}\\{m\le j\le m+n-1}\\{\sigma(i)>\sigma(j)}}}(a_{i+1}+1)(b_{j-m+1}+1)+
\sum_{\substack{{0\le i\le m-1}\\{m\le j\le m+n-1}\\{\sigma(i)>\sigma(j)}}}a_{i+1}b_{j-m+1}+\\
&\sum_{\substack{{0\le i\le m-1}\\{\sigma(i)<\ell}}}a_{i+1}+\sum_{\substack{{m\le j\le m+n-1}\\{\sigma(j)<\ell}}}b_{j-m+1}+\ell
\end{aligned}
\end{equation}

The sign for the right-hand side 
summand
\begin{equation}\label{emmextra}
\begin{aligned}
\ &(-1)^{S_\Sigma(x_m)+S_\EM(\sigma_2,x_m,F_p y_n)+S_\sharp(\sigma_2,x_m,F_py_n)+\sum_{u=1}^{p}b_u+p}\\
&D_{\sigma_2(m+n-2)}\dots D_{\sigma_2(m)}(x_m)\ \otimes\ 
D_{\sigma_2(m-1)}\dots D_{\sigma_2(0)}\circ F_p(y_n)
\end{aligned}
\end{equation}
is 
\begin{equation}\label{rhssign}
S_\rhs=S_\EM(\sigma_2,x_m,F_py_n)+S_\sharp(\sigma_2,x_m,F_py_n)+a_1+\dots+a_m+m+b_1+\dots+b_{p}+p
\end{equation}
where $\sigma_2$ is given by \eqref{sigma2}, and 
\begin{equation}\label{xyzproef2}
p=\ell-J=t-m
\end{equation}
by \eqref{xyzproef1}.

One has:
$$
F_py_n=F_{\ell-J}y_n\in Y_{n-1}^{b_1,\dots,b_{\ell-J-1},b_{\ell-J}+b_{\ell-J+1},b_{\ell-J+2},\dots,b_n}
$$

Therefore,
\begin{equation}\label{emmvie2}
F_py_n=F_{t-m}y_n\in Y_{n-1}^{b_1,\dots, b_{t-m-1},b_{t-m}+b_{t-m+1},\dots,b_n}
\end{equation}
\begin{lemma}\label{lemmalc}
In the notations as above, the following statements are true:
\begin{itemize}
\item[(A)]
\begin{equation}\label{fc1}
S_\EM(\sigma,x_m,y_n)-S_\EM(\sigma_2,x_m,F_py_n)=a_{J+1}+\dots+a_{m}+m-J
\end{equation}
\item[(B)]
\begin{equation}
S_\sharp(\sigma_2,x_m,F_py_n)=S_\sharp(\sigma,x_m,y_n)
\end{equation}
\end{itemize}
\end{lemma}
\begin{proof}
(A): 
For $x_m^\prime\in X_m^{a_1^\prime,\dots,a_m^\prime}$ and $y_{n-1}^\prime\in Y_{n-1}^{b_1^\prime,\dots,b_{n-1}^\prime}$, one has:
\begin{equation}\label{eqgo1}
\begin{aligned}
\ &S_\EM(\sigma_2,x_m^\prime,y_{n-1}^\prime)=
\sum_{\substack{{0\le i\le J-1}\\{m\le j\le t-1}\\{\sigma(i)>\sigma(j)}}}(a_{i+1}^\prime+1)(b_{j-m+1}^\prime+1)+\\
&\sum_{\substack{{J\le i\le m-1}\\{t+1\le j\le m+n-1}\\{\sigma(i)>\sigma(j)}}}(a_{i+1}^\prime+1)(b_{j-m}^\prime+1)+
\sum_{\substack{{J\le i\le m-1}\\{m\le j\le t-1}}}(a_{i+1}^\prime+1)(b_{j-m+1}^\prime+1)
\end{aligned}
\end{equation}
We can ignore the terms of the first summand corresponded to $j=t-1$, as for any $0\le i\le J-1$ one has $\sigma(i)<\sigma(t-1)=\ell-1$. Then
\begin{equation}\label{eqgo1bis}
\begin{aligned}
\ &S_\EM(\sigma_2,x_m^\prime,y_{n-1}^\prime)=
\sum_{\substack{{0\le i\le J-1}\\{m\le j\le t-2}\\{\sigma(i)>\sigma(j)}}}(a_{i+1}^\prime+1)(b_{j-m+1}^\prime+1)+\\
&\sum_{\substack{{J\le i\le m-1}\\{t+1\le j\le m+n-1}\\{\sigma(i)>\sigma(j)}}}(a_{i+1}^\prime+1)(b_{j-m}^\prime+1)+
\sum_{\substack{{J\le i\le m-1}\\{m\le j\le t-1}}}(a_{i+1}^\prime+1)(b_{j-m+1}^\prime+1)
\end{aligned}
\end{equation}
so that
\begin{equation}\label{eqgo2}
\begin{aligned}
\ &S_\EM(\sigma_2,x_m,F_py_n)=
\sum_{\substack{{0\le i\le J-1}\\{m\le j\le t-2}\\{\sigma(i)>\sigma(j)}}}(a_{i+1}+1)(b_{j-m+1}+1)+
\sum_{\substack{{J\le i\le m-1}\\{t+1\le j\le m+n-1}\\{\sigma(i)>\sigma(j)}}}(a_{i+1}+1)(b_{j-m+1}+1)+\\
&\sum_{\substack{{J\le i\le m-1}\\{m\le j\le t-2}}}(a_{i+1}+1)(b_{j-m+1}+1)+
\sum_{\substack{{J\le i\le m-1}}}(a_{i+1}+1)\underline{(b_{t-m}+b_{t-m+1}+1)}
\end{aligned}
\end{equation}
On the other hand, we can rewrite
\begin{equation}\label{eqgo3}
\begin{aligned}
\ &S_\EM(\sigma,x_m,y_n)=\sum_{\substack{{0\le i\le J-1}\\{m\le j\le t}\\{\sigma(i)>\sigma(j)}}}(a_{i+1}+1)(b_{j-m+1}+1)+\\
&\sum_{\substack{{J\le i\le m-1}\\{t+1\le j\le m+n-1}\\{\sigma(i)>\sigma(j)}}}(a_{i+1}+1)(b_{j-m+1}+1)+
\sum_{\substack{{J\le i\le m-1}\\{m\le j\le t}}}(a_{i+1}+1)(b_{j-m+1}+1)
\end{aligned}
\end{equation}
We can ignore the terms of the first summand, corresponded to $j=t-1$ and to $j=t$, as $\sigma(t-1)=\ell-1$, $\sigma(t)=\ell$, and the inequality $\sigma(i)>\sigma(j)$ is impossible for $0\le i\le J-1$. Then we see that \eqref{eqgo3} is equal to 
\begin{equation}\label{eqgo3}
\begin{aligned}
\ &S_\EM(\sigma,x_m,y_n)=\sum_{\substack{{0\le i\le J-1}\\{m\le j\le t-2}\\{\sigma(i)>\sigma(j)}}}(a_{i+1}+1)(b_{j-m+1}+1)+
\sum_{\substack{{J\le i\le m-1}\\{t+1\le j\le m+n-1}\\{\sigma(i)>\sigma(j)}}}(a_{i+1}+1)(b_{j-m+1}+1)+\\
&\sum_{\substack{{J\le i\le m-1}\\{m\le j\le t-2}}}(a_{i+1}+1)(b_{j-m+1}+1)+
\sum_{\substack{J\le i\le m-1}}(a_{i+1}+1)\underline{(b_{t-m}+b_{t-m+1}+2)}
\end{aligned}
\end{equation}

The difference between the r.h.s. \eqref{eqgo2} and \eqref{eqgo3} comes from the underlined terms, what gives the result.

(B): is analogous.
\end{proof}

By \eqref{lhssign}, \eqref{rhssign}, and Lemma \ref{lemmalc}, we get (all equalities below till the end of the proof should be understood as equalities $\mod 2$):
\begin{equation}\label{fc2}
\begin{aligned}
\ &S_\lhs-S_\rhs=\\
&\Bigl(a_{J+1}+\dots+a_{m}+m-J+
\sum_{\substack{{0\le i\le m-1}\\{\sigma(i)<\ell}}}a_{i+1}+\sum_{\substack{{m\le j\le m+n-1}\\{\sigma(j)<\ell}}}b_{j-m+1}+\ell\Bigr)-\\
&\Bigl(a_1+\dots+a_m+m+b_1+\dots+b_{t-m}+t-m\Bigr)
\end{aligned}
\end{equation}

One can rewrite \eqref{fc2} as
\begin{equation}\label{fc3}
\begin{aligned}
\ &S_\lhs-S_\rhs=\\
&\Bigl((a_{J+1}+\dots+a_{m})+(a_1+\dots+a_{J})+(b_1+\dots+b_{t-m})\Bigr)+\\
& \Bigl(a_1+\dots+a_m+m+b_1+\dots+b_{t-m}\Bigr)\\
&(m-J+\ell+t-m)
\end{aligned}
\end{equation}

As $\ell-J=t-m$ by \eqref{xyzproef1}, we see that
$$S_\lhs-S_\rhs=0\mod 2$$

We have proved that a summand \eqref{equk1} of type (case2) and the corresponding summand 
\eqref{emmextra}
in the third line of \eqref{emmnewbig} {\it have equal signs}. It remains to show that there is a 1-to-1 correspondence between the summands themselves.

The first type data depends on an $(m,n)$-shuffle $\sigma$ and $1\le \ell\le m+n-1$; the second type data 
depends on an $(m,n-1)$-shuffle $\sigma_2$, and $1\le p\le n$. In the computation above, the pair $(\sigma_2,p)$ is constructed by the pair $(\sigma,\ell)$; we need to reconstruct $(\sigma,\ell)$ by $(\sigma_2,p)$. But $\sigma$ can be reconstructed by $\sigma_2$, if we know $t$, see \eqref{equk2}, \eqref{sigma2}. Then $t-m=p$, see \eqref{xyzproef1}, and finally $\ell=\sigma(t)$.

It completes the proof of the claim (ii) of Lemma \ref{lemma44proof}.

\hspace{5mm}

{\it Proof of (iii)}:

Let $\sigma$ be an $(m,n)$-shuffle, such that for some $0\le \ell\le m+n-1$ the pair $(\sigma,\ell)$ is of type (case3), see above.
Define a permutation $\sigma_1\in \Sigma_{m+n}$ as the composition 
$$
\sigma_1=(\ell-1,\ell)\circ \sigma
$$
(we denote by $(a,b)$ the transposition of $a$ and $b$). Then $\sigma_1$ is again an $(m,n)$-shuffle, and the pair $(\sigma_1,\ell)$ is again of type (case3). 

That is, the terms of type (case3) appear in pairs.

Denote by $S_\mathrm{tot}(\sigma,\ell,x_m,y_n)$ the exponent of -1 in the first line of \eqref{emmnewbig}. We claim that
\begin{equation}\label{caseiii1}
S_\mathrm{tot}(\sigma,\ell,x_m,y_n)+S_\mathrm{tot}(\sigma_1,\ell,x_m,y_n)=1 \mod 2
\end{equation}
As the corresponding to $(\sigma,\ell)$ and $(\sigma_1,\ell)$ summands corresponded to the l.h.s. of \eqref{emmnewbig} differ only by a sign, \eqref{caseiii1} implies Lemma \ref{lemma44proof}(iii).

We have:
\begin{equation}
S_\mathrm{tot}(\sigma,\ell,x_m,y_n)=S(\sigma,x_m,y_n)+S_1(\sigma,\ell)+S_\sharp(\sigma,x_m,y_n)
\end{equation}
So it is enough to prove that
\begin{equation}
[S(\sigma,x_m,y_n)+S_1(\sigma,\ell)+S_\sharp(\sigma,x_m,y_n)]+[S(\sigma_1,x_m,y_n)+S_1(\sigma_1,\ell)+S_\sharp(\sigma_1,x_m,y_n)]=1\mod 2
\end{equation}
Let $\ell-1=\sigma(s)$, $\ell=\sigma(t)$; we may assume that $0\le s\le m-1$, $m\le t\le m+n-1$ (otherwise, replace $\sigma$ by $\sigma_1$).

We have:
\begin{equation}
S(\sigma,x_m,y_n)+S(\sigma_1,x_m,y_n)=(a_{s+1}+1)(b_{t+1}+1)\mod 2
\end{equation}
Also
\begin{equation}
S_\sharp(\sigma,x_m,y_n)+S_\sharp(\sigma_1,x_m,y_n)=a_{s+1}b_{t+1}\mod 2
\end{equation}
and
\begin{equation}
S_1(\sigma,\ell)+S_1(\sigma_1,\ell)=a_{s+1}+b_{t+1}\mod 2
\end{equation}
We are done.
\end{proof}

{\it Proof of Theorem \ref{emmnewtheorem}(III):}
Denote by $\tau_{m,n}\in\Sigma_{m+n}$ the permutation exchanging the first $m$ with the last $n$ entries,
$$
\tau(i)=\begin{cases}i+n&0\le i\le m-1\\
i-m&m\le i\le m+n-1
\end{cases}
$$
For $\sigma$ is an $(m,n)$-shuffle, the composition $\sigma\circ \tau_{n,m}$ is an $(n,m)$-shuffle.
Let $x_m\in X_m^{a_1,\dots,a_m}, y_n\in Y_n^{b_1,\dots,b_n}$. The correspondence $\sigma\mapsto \sigma\circ \tau_{n,m}$ provides a 1-to-1 correspondence between the terms of $\nabla^\Bar(x_m,y_n)$ and the terms of $\nabla^\Bar(y_n,x_m)$. However, the signs with the corresponding terms are different. We compare, for a fixed $(m,n)$-shuffle $\sigma$ and for $0\le\ell\le m+n-1$, 
$$
S_\tot(\sigma,\ell,x_m,y_n)=S_\EM(\sigma,x_m,y_n)+S_\sharp(\sigma,x_m,y_n)+S_1(\sigma,\ell, x_m,y_n)
$$
with
$$
S_\tot(\sigma\circ \tau_{n,m},\ell,y_n,x_m)=S_\EM(\sigma\circ\tau,y_n,x_m)+S_\sharp(\sigma\circ\tau,y_n,x_m)+S_1(\sigma\circ\tau,\ell,y_n,x_m)
$$
One needs to prove that the difference between these integer numbers is $\mod 2$ equal to the ``sign correction'' $\kappa_1-\kappa_2$ in the diagram \eqref{symmkappas}:
\begin{equation}\label{eqkappas1}
S_\tot(\sigma,\ell)-S_\tot(\sigma\circ\tau_{n,m},\ell)=(a_1+\dots+a_m+m)(b_1+\dots+b_n+n)-(a_1+\dots+a_m)(b_1+\dots+b_n) \mod 2
\end{equation}

One has:
$$
S_1(\sigma,\ell,x_m,y_n)=S_1(\sigma\circ\tau_{n,m},\ell,y_n,x_m)
$$
and thus \eqref{eqkappas1} follows from the two equalities below, which are straightforward:
\begin{equation}\label{eqkappas2}
S_\EM(\sigma,x_m,y_n)-S_\EM(\sigma\circ\tau_{n,m},y_n,x_m)=(a_1+\dots+a_m+m)(b_1+\dots+b_n+n)\mod 2
\end{equation}
and
\begin{equation}\label{eqkappas3}
S_\sharp(\sigma,x_m,y_n)-S_\sharp(\sigma\circ\tau,y_n,x_m)=(a_1+\dots+a_m)(b_1+\dots+b_n)\mod 2
\end{equation}

\hspace{1mm}

Theorem \ref{emmnewtheorem} is proven.

\qed

\section{\sc Proof of Theorem \ref{theoremcoalg}}\label{proofcoalg}
We already know from Theorem \ref{emmnewtheorem}(I) that $\nabla^\Bar$ is a map of complexes. Thus we only need to prove that is agrees with the coproduct \eqref{barcoalg}.

We need to prove that for any $x_m\in X_m^{a_1,\dots,a_m}$ and $y_n\in Y_n^{b_1,\dots,b_n}$ one has
\begin{equation}\label{comp1}
(\nabla^\Bar\otimes\nabla^\Bar)(\Delta(x_m),\Delta(y_n))=\Delta(\nabla^\Bar(x_m,y_n))
\end{equation}
where, as in \eqref{barcoalg},
\begin{equation}\label{comp2}
\Delta(x_m)=\oplus_{m_1+m_2=m}\beta_{m_1,m_2}(x_m)
\end{equation}
and
\begin{equation}\label{comp3}
\Delta(y_n)=\oplus_{n_1+n_2=n}\beta_{n_1,n_2}(y_n)
\end{equation}
Note that
\begin{equation}\label{comp2bis}
\beta_{m_1,m_2}(x_m)\in X_{m_1}^{a_1,\dots,a_{m_1}}\otimes X_{m_2}^{a_{m_1+1},\dots,a_m}
\end{equation}
and similarly
\begin{equation}\label{comp3bis}
\beta_{n_1,n_2}(y_n)\in Y_{n_1}^{b_1,\dots,b_{n_1}}\otimes Y_{n_2}^{b_{n_1+1},\dots,b_n}
\end{equation}
For the right-hand side of \eqref{comp1}, one has:
\begin{equation}\label{comp4}
\Delta(\nabla^\Bar(x_m,y_n))=\sum_{a+b=m+n}(-1)^{S_\beta(\sigma,a,b,x_m,y_n)}\beta_{a,b}(\nabla_1^\Bar(x_m))\otimes \beta_{a,b}(\nabla_2^\Bar(y_n))
\end{equation}
with the ``Sweedler-like notation'' $\nabla^\Bar(x_m,y_n)= \nabla_1^\Bar(x_m)\otimes \nabla_2^\Bar(y_n)\in\Bar(X_L\otimes Y_L)$ (which assumes a sum of such monomials), where $S_\beta(\sigma,a,b,x_m,y_n)$ comes from \eqref{grmon4}.

Take any $a,b$ such that $a+b=m+n$ and any particular $(n,m)$-shuffle $\sigma$ (which gives a summand in $\nabla(x,y)$, see \eqref{emform}).
The corresponding summand in the r.h.s. of \eqref{comp4} is:
\begin{equation}\label{comp5}
(-1)^{S}\beta_{a,b}\Bigl(D_{\sigma({m+n-1})}\circ\dots\circ D_{\sigma(m)}(x_m)\Bigr)\otimes\beta_{a,b}\Bigl(D_{\sigma(m-1)}\circ\dots\circ D_{\sigma(0)}(y_n)\Bigr)
\end{equation}
where 
\begin{equation}\label{znaks1}
S=S_\EM(\sigma,x_m,y_n)+S_\sharp(\sigma,x_m,y_n)+S_\beta(\sigma,a,b,x_m,y_n)
\end{equation}
where the first two summands come from \eqref{emmnew2}, and $S_\beta(\sigma,a,b,x_m,y_n)$ comes from \eqref{grmon4}.

Now we make use the fact that $\beta$ {\it is a map of bifunctors} $\Delta_0^\opp\times \Delta_0^\opp\to \Vect(\k)$, to move  commute the simplicial degeneracy operators  $D_i$ with $\beta_{a,b}$. It follows from this functoriality that
\begin{equation}\label{grootmak}
\beta_{a,b}(D_sz)=\begin{cases}
(D_{s}\otimes\id)\beta_{a-1,b}(z)&0\le s\le a-1\\
(\id\otimes D_{s-a})\beta_{a,b-1}(z)&a\le s\le a+b-1
\end{cases}
\end{equation}
for $z\in Z_{a+b-1}$.

Denote
\begin{equation}\label{comp6}
\begin{aligned}
\ &m_1=\sharp\{\ell\in[0,1,\dots,m-1]|\sigma(\ell)\le a-1\}\\       
&n_1=\sharp\{\ell\in[m,m+1,\dots,m+n-1]|\sigma(\ell)\le a-1\}\\
\end{aligned}
\end{equation}
then
\begin{equation}\label{comp7}
m_1+n_1=a,\ m_2=m-m_1,\ n_2=n-n_1,\ m_2+n_2=b
\end{equation}

Then \eqref{grootmak} gives:
\begin{equation}\label{comp8}
\begin{aligned}
\ &\beta_{a,b}\Bigl(D_{\sigma({m-1})}\dots D_{\sigma(0)}(x_m)\Bigr)=\\
&\Bigl( ((D_{\sigma(m_1-1)} \dots D_{\sigma(0)})\otimes\id)(\beta_{m_2,m_1}(x_m)_1)\Bigr)\otimes \Bigl(\id\otimes(D_{\sigma(m-1)-a}\dots D_{\sigma(m_1)-a})(\beta_{m_2,m_1}(x_m)_2)\Bigr)
\end{aligned}
\end{equation}
where we use the ``Sweedler notation'' $\beta_{m_2,m_1}(x_m)=(\beta_{m_2,m_1}(x_m))_1\otimes (\beta_{m_2,m_1}(x_m))_2$, with $$\beta_{m_2,m_1}(x_m)_1\in X_{m_2}^{a_1,\dots,a_{m_2}}\text{   and   } \beta_{m_2,m_1}(x_m)_2\in X_{m_1}^{a_{m_2+1},\dots,a_m}$$ 

Similarly
\begin{equation}\label{comp9}
\begin{aligned}
\ &\beta_{a,b}\Bigl(D_{\sigma(m+n-1)}\circ\dots\circ D_{\sigma(m)}(y_n)\Bigr)=\\
&\Bigl(((D_{\sigma(m+n_1)}\circ  \dots\circ D_{\sigma(m)})\otimes\id)(\beta_{n_2,n_1}(y_n)_1) \Bigr)\otimes \Bigl((\id\otimes (D_{\sigma(m+n-1)-a}\circ  \dots\circ D_{\sigma(m+n_1)-a}))(\beta_{n_2,n_1}(y_n)_2) \Bigr)
\end{aligned}
\end{equation}
with
$$
\beta_{n_2,n_1}(y_n)_1\in Y_{n_2}^{b_1,\dots,b_{n_2}}\text{   and   }\beta_{n_2,n_1}(y_n)_2\in Y_{n_2}^{b_{n_2+1},\dots,b_n}
$$
Then \eqref{grmon4} gives for the third summand in \eqref{znaks1}:
\begin{equation}
S_\beta(\sigma,a,b,x_m,y_n)=(a_{m_2+1}+\dots+a_{m})(b_{1}+\dots+b_{n_1})
\end{equation}

The summand in the l.h.s. of \eqref{comp1}, corresponded to  $\sigma,m,n,a,b$ for our summand in the r.h.s. of \eqref{comp1}, is
\begin{equation}
(\nabla^\Bar_{\sigma_2}\otimes \nabla^\Bar_{\sigma_1})(\beta_{m_2,m_1}(x_m),\beta_{n_2,n_1}(y_n))
\end{equation}
(we denote by $\nabla^\Bar_\sigma(x_m,y_n)$ the summand corresponded to a $(m,n)$-shuffle $\sigma$), 
where
\begin{equation}\label{ver1}
\sigma_1(i)=\begin{cases}
\sigma(i)&0\le i\le m_1-1\\
\sigma(i+m_2)&m_1\le i\le m_1+n_1-1
\end{cases}
\end{equation}
\begin{equation}\label{ver2}
\sigma_2(j)= \begin{cases}\sigma(j+m_1)-a  &0\le j\le m_2-1\\
\sigma(j+m_1+n_1)-a       &m_2\le j\le m_2+n_2-1
\end{cases}
\end{equation}
It is clear that $\sigma_1$ is an $(m_1,n_1)$-shuffle, and $\sigma_2$ is an $(m_2,n_2)$-shuffle.

We get a correspondence, assigning to a tuple 
$$(\sigma,m,n,a,b)$$ 
where $\sigma$ is an $(n,m)$-shuffle, and $a,b$ are non-negative integers such that $a+b=m+n$, a tuple
$$
(\sigma_1,\sigma_2,m_1,m_2,n_1,n_2)
$$
where $m_1+m_2=m$, $n_1+n_2=n$, $m_1+n_1=a$, $m_2+n_2=b$, $\sigma_1$ is $(n_1,m_1)$-shuffle, and $\sigma_2$ is $(n_2,m_2)$-shuffle. 

It is clear that this correspondence is 1-to-1, for fixed $m$ and $n$.

It remains to prove that the signs with the corresponding terms in the l.h.s. and the r.h.s. of \eqref{comp1} are equal.

In Sweedler notations, one has:
\begin{equation}\label{compx1}
\begin{aligned}
\ &(-1)^{S_1(\sigma,m_1,m_2,n_1,n_2,x_m,y_n)}(\nabla^\Bar_{\sigma_2}\otimes \nabla^\Bar_{\sigma_1})(\beta_{m_2,m_1}(x_m),\beta_{n_2,n_1}(y_n))=\\
&\nabla^{\Bar}_{\sigma_2}(\beta_{m_2,m_1}(x_m)_1,\beta_{n_2,n_1}(y_n)_1)\otimes
\nabla^{\Bar}_{\sigma_2}(\beta_{m_2,m_1}(x_m)_2,\beta_{n_2,n_1}(y_n)_2)
\end{aligned}
\end{equation}
where $S_1(\sigma,m_1,m_2,n_1,n_2,x_m,y_n)$ gives the Koszul sign of commutation of $\beta_{n_2,n_1}(y_n)_1$ and $\beta_{m_2,m_1}(x_m)_2$ in $\Bar(X_L\otimes Y_L)$. That is,
\begin{equation}
S_1(\sigma,m_1,m_2,n_1,n_2,x_m,y_n)=(a_{m_2+1}+\dots+a_m+m_1)(b_1+\dots+b_{n_2}+n_2)
\end{equation}
The sign $(-1)^{S_1(\sigma,m_1,m_2,n_1,n_2,x_m,y_n)}$ appears when we pass from the l.h.s. of \eqref{comp1} to the suitable for computations form of the r.h.s. of \eqref{compx1}.

Finally, Theorem \ref{theoremcoalg} is reduced to the following identity $\mod 2$:
\begin{equation}\label{compx2}
\begin{aligned}
\ &S_\EM(\sigma,x_m,y_n)+S_\sharp(\sigma,x_m,y_n)+S_\beta(\sigma,a,b,x_m,y_n)+S_1(\sigma,m_1,m_2,n_1,n_2,x_m,y_n)=\\
&S_\EM(\sigma_2,\beta_{m_2,m_1}(x_m)_1,\beta_{m_2,m_1}(y_n)_1)+S_\sharp(\sigma_2,\beta_{m_2,m_1}(x_m)_1,\beta_{m_2,m_1}(y_n)_1)+\\
&S_\EM(\sigma_1,\beta_{m_2,m_1}(x_m)_2,\beta_{m_2,m_1}(y_n)_2)+S_\sharp(\sigma_1,\beta_{m_2,m_1}(x_m)_2,\beta_{m_2,m_1}(y_n)_2)
\mod 2
\end{aligned}
\end{equation}
More directly, \eqref{compx2} is equivalent to:
\begin{equation}\label{compx3}
\begin{aligned}
\ &\sum_{\substack{{0\le i\le m-1}\\{m\le j\le m+n-1}\\{\sigma(i)>\sigma(j)}}}(a_{i+1}+1)(b_{j-m+1}+1)+
\sum_{\substack{{0\le i\le m-1}\\{m\le j\le m+n-1}\\{\sigma(i)>\sigma(j)}}}a_{i+1}b_{j-m+1}+\\
&(a_{m_2+1}+\dots+a_{m})(b_{1}+\dots+b_{n_2})+(a_{m_2+1}+\dots+a_m+m_1)(b_1+\dots+b_{n_2}+n_2)+\\
&\sum_{\substack{{0\le i\le m_1-1}\\{0\le j\le n_1-1}\\{\sigma_1(i)>\sigma_1(j+m)}}}(a_{i+1}+1)(b_{j+1}+1)+
\sum_{\substack{{0\le i\le m_1-1}\\{0\le j\le n_1-1}\\{\sigma_1(i)>\sigma_1(j+m)}}}a_{i+1}b_{j+1}+\\
&\sum_{\substack{{0\le i\le m_2-1}\\{0\le j\le n_2-1}\\{\sigma_2(i+m_1)>\sigma_2(j+m+n_1)}}}(a_{i+m_1+1}+1)(b_{j+n_1+1}+1)+
\sum_{\substack{{0\le i\le m_2-1}\\{0\le j\le n_2-1}\\{\sigma_2(i+m_1)>\sigma_2(j+m+n_1)}}}a_{i+m_1+1}b_{j+n_1+1}=0\mod 2
\end{aligned}
\end{equation}
It follows from \eqref{comp6} and \eqref{ver1}, \eqref{ver2} that
\begin{equation}\label{compx4}
\begin{aligned}
\ &\sum_{\substack{{0\le i\le m-1}\\{m\le j\le m+n-1}\\{\sigma(i)>\sigma(j)}}}(a_{i+1}+1)(b_{j-m+1}+1)=\\
&\sum_{\substack{{0\le i\le m_1-1}\\{0\le j\le n_1-1}\\{\sigma_1(i)>\sigma_1(j+m)}}}(a_{i+1}+1)(b_{j+1}+1)+
\sum_{\substack{{0\le i\le m_2-1}\\{0\le j\le n_2-1}\\{\sigma_2(i+m_1)>\sigma_2(j+m+n_1)}}}(a_{i+m_1+1}+1)(b_{j+n_1+1}+1)+\\
&(a_{m_2+1}+\dots+a_m+m_1)(b_1+\dots+b_{n_2}+n_2)
\end{aligned}
\end{equation}
and
\begin{equation}\label{compx5}
\begin{aligned}
\ &\sum_{\substack{{0\le i\le m-1}\\{m\le j\le m+n-1}\\{\sigma(i)>\sigma(j)}}}a_{i+1}b_{j-m+1}=
\sum_{\substack{{0\le i\le m_1-1}\\{0\le j\le n_1-1}\\{\sigma_1(i)>\sigma_1(j+m)}}}a_{i+1}b_{j+1}+
\sum_{\substack{{0\le i\le m_2-1}\\{0\le j\le n_2-1}\\{\sigma_2(i+m_1)>\sigma_2(j+m+n_1)}}}a_{i+m_1+1}b_{j+n_1+1}+\\
&(a_{m_2+1}+\dots+a_m)(b_1+\dots+b_{n_2})
\end{aligned}
\end{equation}
Now we see that \eqref{compx3} is $\mod 2$ sum of \eqref{compx4} and \eqref{compx5}, and \eqref{compx3} follows.

\qed

\section{\sc A proof of Theorem \ref{mainyon}}\label{proefmy}
\subsection{\sc The signs in $A_\infty$ algebras and $A_\infty$ morphisms}
To make the discussion on signs below more transparent, we start with recalling some basic principles on signs in $A_\infty$-algebra like formulas.
It is well-known that an $A_\infty$ algebra structure on a graded vector space $V$ can be defined as a coderivation $D$ of degree +1 of the cofree coalgebra $T^\vee(V[1])$ cogenerated by the shifted by 1 vector space $V$, such that $D^2=0$.

Note that $V$ is considered as the object of the category of $\mathbb{Z}$-graded {\it complexes}, we denote by $\Vect^\mb_d(\k)$. The Koszul sign braiding is given by 
$$
\beta(v_k\otimes w_\ell)=(-1)^{k\ell}w_\ell\otimes v_k
$$
for $v_k\in V^k, w_\ell\in W^\ell$. The differential on $V\otimes W$ is given by the Leibniz rule:
\begin{equation}\label{leibniz}
d(v_k\otimes w_\ell)=(dv_k)\otimes w_\ell+(-1)^kv_k\otimes (dw_\ell)
\end{equation}

Note also that the functor
$$
T\colon V\mapsto V[1]
$$
is {\it not} monoidal. As a result, it affects the Leibniz rule for computations with differentials, when we deal with multi-linear cochains.

More precisely, consider a cochain 
$$
\Psi\colon V_1\otimes\dots\otimes V_k\to W
$$
It defines $\Psi^T$ by
$$
(\Psi^T)(v_1,\dots,v_k)=T((\Psi)(T^{-1}v_1,\dots,T^{-1}(v_k)))
$$
with $v_k\in V_k$.

We want the commutativity with the differential hold:
\begin{equation}\label{groot1}
d(\Psi^T)\overset{?}{=}(d\Psi)^T
\end{equation}
which {\it fails}.

Note that 
\begin{equation}\label{groot2}
(d\Psi)(v_1,\dots,v_k):=-\Psi(dv_1,v_2,\dots,v_k)-(-1)^{\deg v_1}\Psi(v_1,dv_2,\dots)-\dots+(-1)^{\deg \Psi}(d(\Psi(v_1,\dots,v_k)
\end{equation}

Note that in \eqref{groot1}, the degrees $\deg v_i$ are different for the l.h.s. and the r.h.s., due to the shift functor $T$. Namely, the original degrees $\deg v_i$ are used in the computation $d\Psi$ in the r.h.s. of \eqref{groot1}, whence they are replaced by $\deg v_i+1$ for the computation of $d(\Psi^T)$ in the l.h.s. of \eqref{groot1}.

It is possible, however, to re-scale $\Psi^T$ by a sign (depending on the degrees of the arguments) to make \eqref{groot1} hold.
The solution can be found, e.g., in [GJ, Section 5.1].

Define 
\begin{equation}\label{groot3}
\Psi^{T,d}(v_1,\dots,v_k):=(-1)^{(k-1)\deg v_1+(k-2)\deg v_2+\dots+\deg v_{k-1}}\Psi^T(v_1,\dots,v_k)
\end{equation}
Then one has
\begin{lemma}
$d(\Psi^{T,d})=(d\Psi)^{T,d}$
\end{lemma}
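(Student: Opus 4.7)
The plan is to expand both sides of $d(\Psi^{T,d})=(d\Psi)^{T,d}$ using the definitions, and match signs term by term. Both sides decompose as a sum of $k+1$ contributions: for each $i=1,\dots,k$ a ``position-$i$'' term in which the inner differential acts on the argument $v_i$, plus one ``outer'' term in which $d$ acts on the value $\Psi(v_1,\dots,v_k)$. So the verification splits into $k+1$ independent sign identities, and it suffices to carry out a single generic one.

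First I would record the two elementary sign identities that underlie everything. The shift $T$ has degree $-1$, so $d\circ T^{-1}=-T^{-1}\circ d$ on each $V_i$; and the degree of $\Psi^T$ as a cochain on $V_1[1]\otimes\cdots\otimes V_k[1]$ with values in $W[1]$ equals $|\Psi|+k-1$ (hence $|\Psi^{T,d}|\equiv|\Psi|+k-1\pmod 2$). With these in hand, write $\epsilon_k(v_1,\dots,v_k):=(k-1)|v_1|+(k-2)|v_2|+\dots+|v_{k-1}|$ for the exponent appearing in \eqref{groot3}. The key observation is that replacing $v_i$ by $dv_i$ (which raises $|v_i|$ by $1$) changes $\epsilon_k$ by $k-i$ modulo $2$.

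Next I would compute the position-$i$ contribution on each side using the sign convention of \eqref{groot2}. On the left-hand side, expanding $d(\Psi^{T,d})$ directly gives the sign $-(-1)^{|v_1|+\dots+|v_{i-1}|}\cdot(-1)^{\epsilon_k(v_1,\dots,dv_i,\dots,v_k)}$, which by the previous paragraph is $-(-1)^{|v_1|+\dots+|v_{i-1}|+(k-i)}\cdot(-1)^{\epsilon_k(v_1,\dots,v_k)}$. On the right-hand side, $(d\Psi)^{T,d}$ carries the single overall factor $(-1)^{\epsilon_k(v_1,\dots,v_k)}$ and the $i$-th term of $d\Psi$ evaluated at unshifted degrees $|v_j|+1$ contributes $-(-1)^{(|v_1|+1)+\dots+(|v_{i-1}|+1)}=-(-1)^{i-1}(-1)^{|v_1|+\dots+|v_{i-1}|}$; moving the inner $d$ past $T^{-1}$ produces one more sign from $d(T^{-1}v_i)=-T^{-1}(dv_i)$, giving a total extra $(-1)^i$. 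The two expressions collapse to the same value, so the position-$i$ terms agree. The outer term is handled in one line: both sides reduce to $(-1)^{\epsilon_k(v_1,\dots,v_k)}\cdot(-1)^{|\Psi^{T,d}|}d\bigl(T\Psi(T^{-1}v_1,\dots,T^{-1}v_k)\bigr)$, which are literally equal.

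The only real obstacle is bookkeeping: one must keep straight which degrees are taken in $V_i$ versus in $V_i[1]$, and account correctly for the $i-1$ anti-commutations of $d$ with $T^{-1}$'s needed to isolate the differential in position $i$, together with the single sign from \eqref{leibniz} applied in the shifted grading. Once the position-$i$ identity above is written out as in the previous paragraph, the verification is purely mechanical; this is, in essence, the computation carried out in [GJ, Section 5.1], to which \eqref{groot3} was designed to conform.
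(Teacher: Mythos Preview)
Your approach---expand both sides via \eqref{groot2} and \eqref{groot3} and match the $k+1$ terms---is exactly the natural one, and it is what the paper implicitly relies on: the paper gives no argument at all, marking the lemma with a bare \qed and treating it as a routine consequence of the sign convention \eqref{groot3} (borrowed from [GJ, Section 5.1]). So at the level of strategy there is nothing to compare; you are filling in what the paper omits.

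That said, your sign bookkeeping in two places is too quick and, as written, does not close. For the outer term you assert that both sides ``are literally equal'' and reduce to $(-1)^{\epsilon_k}(-1)^{|\Psi^{T,d}|}\,d\bigl(T\Psi(T^{-1}v_1,\dots,T^{-1}v_k)\bigr)$. On the left this is immediate, but on the right the sign in \eqref{groot2} is $(-1)^{|\Psi|}$, not $(-1)^{|\Psi^{T,d}|}=(-1)^{|\Psi|+k-1}$, and you must still commute the outer $T$ past $d$; these two contributions together leave a residual factor that you have not accounted for. The same issue shows up in your position-$i$ comparison: you obtain $(-1)^{k-i}$ on the left and $(-1)^{i}$ on the right and declare them equal, but $(-1)^{k-i}=(-1)^{i}$ only when $k$ is even. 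In other words, your computation as it stands yields $d(\Psi^{T,d})=(-1)^{k}\,(d\Psi)^{T,d}$ (or $(-1)^{k-1}$, depending on whether $T$ anticommutes or commutes with $d$), not the identity claimed.

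The fix is purely a matter of pinning down the conventions: which degree $\deg v_i$ is meant in \eqref{groot3} (the $V_i$-degree or the $V_i[1]$-degree), and the precise sign rule for $T$ versus $d$. Once those are fixed consistently, the verification is indeed mechanical---but the two passages above need to be redone with all signs written out, not asserted.
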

\qed

We turn back to 	a coderivation $D$ of degree +1 defined on $T^\vee(V[1])$, such that $D^2=0$, giving an $A_\infty$ structure on $V$. It is given by its Taylor components
\begin{equation}\label{groot4}
\begin{aligned}
\ &\overline{m}_1\colon V[1]\to V[1][1]\\
&\overline{m}_2\colon (V[1])^{\otimes 2}\to V[1][1]\\
&\overline{m}_3\colon (V[1])^{\otimes 3}\to V[1][1]\\
&\dots
\end{aligned}
\end{equation}
For a cochain $c\in\Hom(V^{\otimes n},V)$, we define cochain $\overline{c}\in\Hom(V[1]^{\otimes n},V[1])[n-1]$ by
\begin{equation}\label{groot4demi}
\overline{c}=c^{T,d}
\end{equation}
That is, we start with cochains $\{m_i\colon V^{\otimes i}\to V[2-i]\}$, and define $\{\overline{m}_i\colon V[1]^{\otimes i}\to V[2]\}$, and consider them as the components of $D$, see \eqref{groot4}.

Recall the basic formula for the $\circ$-operation on the cochains in $\Hom(V[1]^{\otimes m},V[1])$.  Here $d_1$ and $d_2$ are the degrees of $\overline{c}_1\in\Hom(V[1]^{\otimes n},V[1])$ and $\overline{c}_2\in\Hom(V[1]^{\otimes n},V[1])$, for a graded vector space $V[1]$:
\begin{equation}\label{groot5}
\begin{aligned}
\ &\overline{c}_1\circ \overline{c}_2(x_1,\dots,x_{m+n-1})=\\
&\sum_{0\le i\le m-1}(-1)^{d_2(\deg x_1+\dots+\deg x_i+i)}
\overline{c}_1(x_1,\dots,x_i,\overline{c}_2(x_{i+1},\dots,x_{i+n}),x_{i+n+1},\dots,x_{m+n-1})
\end{aligned}
\end{equation}
Note that  $\deg x_i$ mean their degrees as elements of $V$ (not of $V[1]$), but $d_i$ mean the degrees of $\overline{c}_i$ as cochains in $\Hom(TV[1],V[1])$.

It is well-known that the equation $D^2=0$ is translated as 
$$
(\overline{m}_1+\overline{m}_2+\overline{m}_3+\dots)\circ (\overline{m}_1+\overline{m}_2+\overline{m}_3+\dots)=0
$$
which gives rise to the following identities:

For any fixed $n$, one has:
\begin{equation}\label{groot6}
\sum_{a+b=n}\sum_{0\le i\le a-1}(-1)^{\deg x_1+\dots+\deg x_i+i}\overline{m}_a(x_1,\dots,x_i,\overline{m}_b(x_{i+1},\dots,x_{i+b}),x_{i+b+1},\dots,x_{a+b-1})=0
\end{equation}

As the last step, we rewrite \eqref{groot6} in terms of $\{m_i\}$ instead of $\{\overline{m}_i\}$, where $\overline{m}_i=m_i^{T,d}$, see \eqref{groot3}:

\begin{equation}\label{groot7}
\sum_{a+b=n}\sum_{0\le i\le a-1}(-1)^{\deg x_1+\dots+\deg x_i+i+S}{m}_a(x_1,\dots,x_i,{m}_b(x_{i+1},\dots,x_{i+b}),x_{i+b+1},\dots,x_{a+b-1})=0
\end{equation}
where
\begin{equation}
\begin{aligned}
\ &S=((b-1)\deg x_{i+1}+(b-2)\deg x_{i+2}+\dots+\deg x_{i+b-1})+\\
&((a-1)\deg x_1+\dots+(a-i)\deg x_i+\\
&(a-i-1)\deg m_b(x_{i+1},\dots,x_{i+b})+\\
&(a-i-2)\deg x_{i+b+1}+\dots+\deg x_{a+b-2})
\end{aligned}
\end{equation}
Here we just plug in the sign \eqref{groot3} for both $m_1$ and $m_2$. We can finally plug in
$$
\deg m_b(x_{i+1},\dots,x_{i+b})=\deg x_{i+1}+\dots+\deg x_{i+b}+2-b
$$

\begin{remark}{\rm
The matter why the sign \eqref{groot3} should be always successively plugged in in the formulas of this type is the following. On the level of $V$, we want $m_1$ to be the differential on $V$, given as a part of structure of an object of the category $\Vect_d(\k)$. In particular, we want to use \eqref{groot2} in all computations with multi-linear forms like $m_i$ (and it is used, indeed, e.g. in the Leibniz rule for a dg associative algebra). More generally, we want to have an isomorphism of {\it dg Lie algebras} $\mathrm{Coder}(T^\vee(V[1]))$ (with the {\it natural} Lie bracket of coderivations) and $\oplus_{n\ge 0}\Hom(V^{\otimes n},V)$ with {\it some} Lie bracket on the r.h.s. (called the Gerstenhaber bracket), but this bracket is already fixed for $[\Psi_1,\Psi_2]$ where $\Psi_1\in \Hom(V,V)[1]$ is the underlying differential $d$ of $V$. It makes necessary to implement the sign \eqref{groot3} everywhere.
}
\end{remark}

\begin{equation}\label{groot8}
\begin{aligned}
\ &\sum_{r+s+t=n}(-1)^{\deg x_1+\dots+\deg x_r+r}\overline{f}_{r+t+1}(x_1,\dots, x_r,\overline{m}_s(x_{r+1},\dots,x_{r+s}),x_{r+s+1},\dots,x_n)=\\
&\sum_{i_1+\dots+i_r=n}\overline{m}_r(\overline{f}_{i_1}(x_1,\dots,x_{i_r}),\overline{f}_{i_2}(x_{i_1+1},\dots,x_{i_1+i_2}),\dots,\overline{f}_{i_r}(x_{i_1+\dots+i_{r-1}+1},\dots,x_n))
\end{aligned}
\end{equation}
(recall that the bar-degrees $\deg \overline{f}_i=0$ and $\deg \overline{m}_j=1$, for any $i,j$).

In the case of our interest, $m_n=0$ for $n\ge 3$. We rewrite \eqref{groot8} as the corresponding identities on ${f}_n$ only in this case:
\begin{equation}\label{groot9}
\begin{aligned}
\ &\sum_{1\le i\le n}(-1)^{\deg x_1+\dots+\deg x_{i-1}+S_1}f_n(x_1,\dots,x_{i-1},dx_i,x_{i+1},\dots,x_n)+\\
&\sum_{1\le i\le n-1}(-1)^{\deg x_1+\dots+\deg x_{i-1}+i-1+S_2}f_{n-1}(x_1,\dots,x_{i-1},m_2(x_i,x_{i+1}),x_{i+2},\dots,x_n)=\\
&(-1)^{S_3}df_n(x_1,\dots,x_n)+
\sum_{a+b=n}(-1)^{S_4}m_2(f_a(x_1,\dots,x_a),f_b(x_{a+1},\dots,x_n))
\end{aligned}
\end{equation}
where
\begin{equation}\label{groots1}
S_1=S_3=(n-1)\deg x_1+(n-2)\deg x_2+\dots+\deg x_{n-1}
\end{equation}
\begin{equation}\label{groots2}
S_2=(n-2)\deg x_1+\dots+(n-i)\deg x_{i-1}+(n-i)\deg x_i+(n-i-1)\deg x_{i+1}+(n-i-2)\deg x_{n-i-2}+\dots+\deg x_{n-1}
\end{equation}
\begin{equation}\label{groots4}
\begin{aligned}
\ &S_4=(\deg x_1+\dots+\deg x_a-a+1)+\\
&((a-1)\deg x_1+(a-2)\deg x_2+\dots+\deg x_{a-1})+((b-1)\deg x_{a+1}+(b-2)\deg x_{a+2}+\dots+\deg x_{n-1})
\end{aligned}
\end{equation}

\subsection{\sc A proof of Theorem \ref{mainyon}}
We need to prove \eqref{groot9} for general $n$, where 
$$
f_n(x_1,\dots,x_n)=(-1)^{(n-1)\deg x_1+(n-2)\deg x_2+\dots+\deg x_{n-1}}w_1(x_1)*w_2(x_2)*\dots*w_n(x_n)
$$
with the notations as in Section \ref{occupy1.2}.

Note that the total differential $d$ in $df_n(x_1,\dots,x_n)$ is the sum of three differentials
$$
d=d_\Cobar+d_\Bar^\sim+d_0
$$
where $d_0$ comes from the underlying differential in each component $X_s$, and $$d_\Bar^\sim|_{\Bar(X_L)[-1]^{\otimes k}}=(-1)^k d_\Bar$$ The differential $d$ in the first summand of \eqref{groot9} is nothing but the restriction of $d_0$ to $X_1$. Each $f_n$ is $d_0$-equivariant, so the result will follow from the following Proposition:

\begin{prop}\label{propainfty}
In the notations as above, the following identities hold:
\begin{itemize}
\item[(i)]
\begin{equation}\label{yon1}
(-1)^{S_3}(-d_\Bar)(f_n(x_1,\dots,x_n))=\sum_{i=1}^{n-1}(-1)^{\deg x_1+\dots+\deg x_{i-1}+i-1+S_2}f_{n-1}(x_1,\dots, m_2(x_i,x_{i+1}),\dots, x_n)
\end{equation}
\item[(ii)]
\begin{equation}\label{yon2}
(-1)^{S_3}d_\Cobar(f_n(x_1,\dots,x_n))=\sum_{a+b=n}(-1)^{S_4+1}m_2(f_a(x_1,\dots, x_a),f_b(x_{a+1},\dots, x_n))
\end{equation}
\end{itemize}
where the signs $S_i$ are as in \eqref{groots1},\eqref{groots2},\eqref{groots4}.
\end{prop}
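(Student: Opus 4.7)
The proof reduces to two explicit combinatorial calculations, one for each identity, both of which rely on understanding the effect of the simplicial face operators $F_i$ and the colax-maps $\beta_{a,b}$ on the maps $w_k^{(n)}\colon X_1\to X_n$. I will carry out the following steps.

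First, I compute $F_i\circ w_k^{(n)}$ in $\Delta_0$. A direct check of the definitions gives $F_i\circ w_k^{(n)} = w_k^{(n-1)}$ whenever $k\le i$, and $F_i\circ w_k^{(n)} = w_{k-1}^{(n-1)}$ whenever $k>i$. In particular, $F_i$ collapses exactly the adjacent pair $(w_i,w_{i+1})$ to a common image $w_i^{(n-1)}$. Since $X_L$ is a Leinster monoid in $\Alg(\k)$, each $F_i\colon X_n\to X_{n-1}$ is an algebra homomorphism, so
\[
F_i\bigl(w_1(x_1)*\cdots*w_n(x_n)\bigr)
= w_1(x_1)*\cdots*w_{i-1}(x_{i-1})*w_i(x_i)*w_i(x_{i+1})*w_{i+1}(x_{i+2})*\cdots*w_{n-1}(x_n).
\]
Applying $w_i$'s algebra property then yields $w_i(x_i\cdot x_{i+1})$ at position $i$. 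This is (up to the sign arising from the normalization in the definition of $f_n$ and $f_{n-1}$) exactly $f_{n-1}(x_1,\ldots,x_i\cdot x_{i+1},\ldots,x_n)$. Combining with the sign $(-1)^{A_i}$ from the signed bar-differential \eqref{diffbarlb}–\eqref{diffbarbis} and the sign $(-1)^{S_3}$ of $f_n$ establishes (i), after a routine reduction modulo $2$ of the sign exponents against $S_2$ defined in \eqref{groots2}.

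Second, for (ii), I analyze $\beta_{a,b}\circ w_k^{(n)}$ using naturality of the colax-monoidal structure with respect to the monoidal structure on $\Delta_0$ (Section \ref{deltanot}). Concretely, for $k\le a$ the morphism $w_k^{(n)}\colon [n]=[a]\otimes[b]\to[1]=[1]\otimes[0]$ factors in $\Delta_0$ as $w_k^{(a)}\otimes !_{b,0}$, where $!_{b,0}\colon [b]\to[0]$ is the unique $\Delta_0$-morphism to the monoidal unit. Naturality of $\beta$ then forces
\[
\beta_{a,b}(w_k^{(n)}(x)) \;=\; w_k^{(a)}(x)\otimes e_b \qquad (k\le a),
\]
and symmetrically $\beta_{a,b}(w_k^{(n)}(x)) = e_a\otimes w_{k-a}^{(b)}(x)$ for $k>a$, where $e_a,e_b$ denote the algebra units of $X_a,X_b$ (the images of $1\in X_0$ under iterated degeneracies). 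Since $\beta_{a,b}$ is an algebra map and the $e_a,e_b$ are neutral on both sides, a direct computation in $X_a\otimes X_b$ shows
\[
\beta_{a,b}\bigl(w_1(x_1)*\cdots*w_n(x_n)\bigr) \;=\; \bigl(w_1(x_1)*\cdots*w_a(x_a)\bigr)\;\otimes\;\bigl(w_1(x_{a+1})*\cdots*w_b(x_n)\bigr),
\]
that is $\beta_{a,b}(f_n(\underline{x})) \;=\; \pm\, f_a(x_1,\ldots,x_a)\otimes f_b(x_{a+1},\ldots,x_n)$ for a sign that I compute from the definition of $f_n$, $f_a$, $f_b$ and compare to the cobar sign in \eqref{diffcobarclass} together with the Koszul sign $(-1)^{S_4}$ of \eqref{groots4}.

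The conceptual content is now complete; the main obstacle is the sign bookkeeping. The triangle of signs in (ii)—coming from (a) the prefactor $(-1)^{(n-1)\deg x_1+\cdots+\deg x_{n-1}}$ in $f_n$, (b) the cobar sign $(-1)^{i-1+\deg c'_i+\sum_{j<i}\deg c_j}$ in \eqref{diffcobarclass} applied to $f_n\in X_n[-n+1]\subset\Bar(X_L)[-1]$, (c) the prefactors in $f_a$ and $f_b$, and (d) the Koszul sign for permuting $f_a$ past $f_b$—must be shown to agree with $(-1)^{S_4+1}$ modulo $2$. Expanding each contribution in terms of $\deg x_j$ and summing reduces to an algebraic identity $\bmod\, 2$ that holds for all choices of $a+b=n$ and all degree vectors; this is precisely the kind of routine (if lengthy) check that has already appeared in Sections \ref{proeff} and \ref{proofcoalg}, and I expect it to be carried out by the same pattern.
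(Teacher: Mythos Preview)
Your proposal is correct and follows essentially the same route as the paper: the two key lemmas you isolate --- $F_i\circ w_k^{(n)}=w_k^{(n-1)}$ or $w_{k-1}^{(n-1)}$ according as $k\le i$ or $k>i$, and $\beta_{a,b}(w_k^{(n)}(x))=w_k^{(a)}(x)\otimes 1^{(b)}$ or $1^{(a)}\otimes w_{k-a}^{(b)}(x)$ according as $k\le a$ or $k>a$ --- are exactly the lemmas the paper proves and uses, and the reduction via the algebra-map property of $F_i$, $\beta_{a,b}$, and $w_k$ is the same. The only cosmetic difference is that the paper first strips off the sign prefactor by setting $\theta_n(x_1,\dots,x_n)=w_1(x_1)*\cdots*w_n(x_n)$ and rewrites (i) and (ii) as identities for $\theta_n$ with simpler signs, which makes the ``routine reduction modulo $2$'' you allude to almost invisible rather than a lengthy check.
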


\begin{proof}
Introduce
$$
\theta_n(x_1,\dots,x_n)=w_1(x_1)*w_2(x_2)*\dots*w_n(x_n)
$$
so that
$$
f_n(x_1,\dots,x_n)=(-1)^{(n-1)\deg x_1+(n-2)\deg x_2+\dots+\deg x_{n-1}}\theta_n(x_1,\dots,x_n)
$$
Then \eqref{yon1} and \eqref{yon2} are rewritten, in terms of $\theta$'s, as
\begin{equation}\label{yon1bis}
d_\Bar\theta_n(x_1,\dots,x_n)=\sum_{i=1}^{n-1}(-1)^{\deg x_1+\dots+\deg x_i+i}\theta_{n-1}(x_1,\dots,x_{i-1},m_2(x_i,x_{i+1}),\dots,x_n)
\end{equation}

\begin{equation}\label{yon2bis}
d_\Cobar\theta_n(x_1,\dots,x_n)=\sum_{a+b=n}(-1)^{\deg x_1+\dots+\deg x_a+a}m_2(\theta_a(x_1,\dots,x_a),\theta_b(x_{a+1},\dots,x_n))
\end{equation}
correspondingly.

Prove (i), in the form \eqref{yon1bis}. The bar-differential  is equal to
\begin{equation}\label{yon3}
d_\Bar(\theta_n(x_1\otimes\dots\otimes x_n))=\sum_{i=1}^{n-1}(-1)^{\deg x_1+\dots+\deg x_i+i} F_i(\theta_n(x_1\otimes\dots\otimes x_n))
\end{equation}
by \eqref{diffbarlb}. Then what need to compute 
\begin{equation}\label{yon4}
F_i(w_1(x_1)*_{X_n}w_2(x_2)*_{X_n}\dots*_{X_n}w_n(x_n))
\end{equation}

It is equal to
\begin{equation}\label{yon5}
F_i(w_1(x_1))*_{X_{n-1}}F_i(w_2(x_2))*_{X_{n-1}}\dots*_{X_{n-1}}F_i(w_n(x_n))
\end{equation}
Then we have a Lemma:
\begin{lemma}
Let $w_i^{(n)}$ be the maps $[n]\to [1]$ in $\Delta_0$ given by \eqref{yon0}, and let $F_1,\dots,F_{n-1}\colon [n-1]\to [n]$ be the face maps in $\Delta_0$.
Then one has in $\Delta_0^\opp$:
\begin{equation}\label{yon6}
F_i\circ w_k^{(n)}=\begin{cases}w_k^{(n-1)}&\text{ if  }k\le i\\
w_{k-1}^{(n-1)}&\text{ if  }k\ge i+1
\end{cases}
\end{equation}
\end{lemma}
The proof is a direct computation with the simplicial identities \eqref{simplid}.

\qed

Note that $w_k^{(n-1)}$ occurs twice (for a fixed $n$, and for a fixed $F_k$): as $F_k\circ w_k^{(n)}$ and as $F_{k}\circ w_{k+1}^{(n)}$.

We can now rewrite \eqref{yon5} as
\begin{equation}
w_1^{(n-1)}(x_1)*w_2^{(n-1)}(x_2)*\dots*[w_i^{(n-1)}(x_i)*w_i^{(n-1)}(x_{i+1})]*\dots*w_{n-1}^{(n-1)}(x_n)
\end{equation}
Now making use that all $w_i^{(n)}$ are maps of algebras (as $X_L$ is a Leinster monoid in $\Alg(k)$), we can rewrite
$$
w_i^{(n-1)}(x_i)*_{X_{n-1}}w_i^{(n-1)}(x_{i+1})=w_i^{(n-1)}(x_i*_{X_1}x_{i+1})
$$

Now \eqref{yon1bis} follows directly from our formula for the bar-differential, see \eqref{diffbarlb}, \eqref{diffbarbis}. It proves the statement (i) of Proposition.

\hspace{1mm}

Prove statement (ii) of the Proposition, in the form \eqref{yon2bis}. Recall that the coproduct $\Delta$ in $\Bar(X_L)$ at $X_n$ is
\begin{equation}\label{yon7}
\Delta=\beta_{1,n-1}+\beta_{2,n-2}+\dots +\beta_{n-1,1}
\end{equation}
Use the ``Sweedler notations'' 
$$
\beta_{i,j}(x)=\beta_{i,j}^\prime(x)\otimes \beta_{i,j}^\pprime(x)
$$
where, for a homogeneous $x$, both factors are homogeneous.

The cobar-differential in $\Cobar(\Bar(X_L))$ applied to $x\in X_n\subset \Bar(X_L)$ is 
\begin{equation}\label{yon7demi}
d_\Cobar(x)=\sum_{a+b=n}(-1)^{\deg\beta_{a,b}^\prime(x)}\beta^\prime_{a,b}(x)\otimes\beta_{a,b}^\pprime(x)\in X_a\otimes X_b\subset \Bar(X_L)^{\otimes 2}
\end{equation}
(see \eqref{diffcobarclass} for the sign).

Let $a+b=n$, $a,b\ge 1$. We claim that
\begin{equation}\label{yon8}
\beta_{a,b}\theta_n(x_1,\dots, x_n)=\theta_a(x_1,\dots, x_a)\otimes \theta_b(x_{a+1},\dots,x_n)
\end{equation}
where the two factors in the r.h.s. are $\beta_{a,b}^\prime$ and $\beta_{a,b}^\pprime$, correspondingly.

The l.h.s. of \eqref{yon8} is
\begin{equation}\label{yon9}
\beta_{a,b}\Bigl(w_1(x_1)*_{X_n}\dots*_{X_n}w_n(x_n)\Bigr)=\beta_{a,b}(w_1(x_1))*_{X_a\otimes X_b}\dots*_{X_a\otimes X_b}\beta_{a,b}(w_n(x_n))
\end{equation}
as all $\beta_{m,n}$ are maps in $\Alg(\k)$, for a Leinster monoid in $\Alg(\k)$.

\begin{lemma}\label{lemmaay}
Denote by $1^{(c)}\in X_c$ the element $D^{(c)*}(1)$, where $D^{(c)}$ the (only) degeneracy map $D^{(c)}\colon [c]\to [0]$.
One has:
\begin{equation}\label{yon10}
\beta_{a,b}(w_i^{(n)}(x))=\begin{cases}
w_i^{(a)}(x)\otimes 1^{(b)}&\text{ for  }i\le a\\
1^{(a)}\otimes w_{i-a}^{(b)}(x)&\text{ for  }i\ge a+1
\end{cases}
\end{equation}
\end{lemma}
\begin{proof}
We use that $\beta$ is a morphism of bi-functors $\Delta^\opp_0\times\Delta^\opp_0\to\Alg(\k)$.
After we have identified $[a]\otimes [b]=[a+b]=[n]$ in $\Delta_0$, we have for morphisms:
\begin{equation}\label{yon11}
D^{(a)}\otimes w_j^{(b)}=w_{a+j}^{(n)}\text{ for $1\le j\le b$,  and  }w_i^{(a)}\otimes D^{(b)}=w_{a}^{(n)}\text{ for $1\le i\le a$}
\end{equation}
where $w_i^{(a)}\colon [a]\to [1]$, $w_i^{(b)}\colon [b]\to [1]$ are the corresponding morphisms in $\Delta_0$.

The identities \eqref{yon11} follow directly from the definition of the monoidal product (of morphisms) in $\Delta_0$ given just above the Definition \ref{catwe}.

Now the identities \eqref{yon11} give rise to the commutative diagrams (in the corresponding cases):
\begin{equation}
\begin{aligned}
\ &\xymatrix{
X_1\ar[rr]^{\beta_{0,1}}\ar[d]_{w_j^{(n)}}&&X_0\otimes X_1\ar[d]^{D^{(a)}\otimes w_j^{(b)}}\\
X_n\ar[rr]^{\beta_{a,b}}&&X_a\otimes X_b
}
&
\xymatrix{
X_1\ar[rr]^{\beta_{1,0}}\ar[d]_{w_i^{(n)}}&&X_1\otimes X_0\ar[d]^{w_i^{(a)}\otimes D^{(b)}}\\
X_n\ar[rr]^{\beta_{a,b}}&&X_a\otimes X_b
}
\end{aligned}
\end{equation}
Recall our assumption that $X_0=\k$, which implies by \eqref{colax2} that the upper vertical arrows of both diagrams are the identity maps. 

\end{proof}

It follows from \eqref{yon7demi} and \eqref{yon8} that 
\begin{equation}\label{yon100}
d_\Cobar(\theta_n(x_1,\dots,x_n))=\sum_{a+b=n}(-1)^{\deg x_1+\dots+\deg x_a+a-1}\theta_a(x_1,\dots,x_a)\otimes \theta_b(x_{a+1},\dots,x_n)
\end{equation}
as $\deg \theta_a(x_1,\dots,x_a)$, as of an element of $\Cobar(\Bar(X_L))$, is $\deg x_1+\dots+\deg x_a-a+1$.

The statement (ii) of Proposition \ref{propainfty} is proven.
\end{proof}

We have proved the assertion of Theorem \ref{mainyon} that the maps $\phi_n$ given in \eqref{yonbasic} are the Taylor components of an $A_\infty$ morphism
$X_1\to \Cobar(\Bar(X_L))$. It remains to show that it is an $A_\infty$ quasi-isomorphism.

For the latter claim we need to show that the map $\phi_1\colon X_1\to\Cobar(\Bar(X_L))$ is a quasi-isomorphism of complexes.
It it the claim of Theorem \ref{theorem27} (which holds in a greater generality of Leinster monoids in $\Vect(\k)$), in the proof of which the map $\phi_1$ appears under the name $i$.

\qed

\appendix

\section{\sc A more conceptual derivation of the Leibniz rule for the Eilenberg-MacLane map for graded Leinster monoids in the case of 0-grading}\label{sectionapp}
In Appendix, we provide an alternative and a more conceptual proof of Theorem \ref{emmnewtheorem}(i), replacing the previous long routine computation, for the particular case when each component $X_n$ of a graded Leinster monoid $X_L$ has the only zero graded components:
$$
X_n=X_n^{0,\dots,0}
$$
$(n\ge 1)$.

In fact, in this Appendix we assume $X_L$ to be just a functor $X_L:\Delta_0^\opp\to\Vect^0(\k)$, where 
$$
\Vect^0(\k)\subset\Vect(\k)
$$
is the full sub-category of complexes concentrated in homological degree 0. We do not assume any colax-monoidal map $\beta$, so we do not assume $X_L$ is a Leinster monoid. Our treatment here is entirely independent on Section \ref{sectionemmnew}; the idea is to deduce the statement from the classical Eilenberg-MacLane map result [EM], saying that the Eilenberg-MacLane map \eqref{emform} is a map of complexes, that is, that the Leibniz rule holds.

For a functor $X_\ldot\colon \Delta_0^\opp\to\Vect^0(\k)$, we denote by $c(X_\ldot)$ its {\it chain complex}.
It is defined as the complex
$$
\dots\rightarrow \underset{\text{degree -3}}{X_3}\xrightarrow{d_3}\underset{\text{degree -2}}{X_2}\xrightarrow{d_2}\underset{\text{degree -1}}{X_1}\to 0
$$
where $d_n\colon X_n\to X_{n-1}$ is defined as
\begin{equation}\label{chainc}
d_n=F_1-F_2+F_3-\dots+(-1)^{n}F_{n-1}
\end{equation}
It is clear that 
\begin{equation}
d^2=0
\end{equation}
Our notation $c(X_\ldot)$ for this chain complex, with lower-case {\it c}, indicates that it is a chain complex of a $\Delta_0^\opp$-vector space. The chain complex of a $\Delta^\opp$-vector space $Y_\ldot$ we denote $C(Y_\ldot)$, as in before.

Note that for a graded Leinster monoid $X_L$ all whose components $X_n$ are equal to $X_n^{à,\dots,0}$, the chain complex $c(X_L)$ is the same that the bar-complex $\Bar(X_L)$, defined in Section \ref{sectiongraded}.

For a functor $X_\ldot\colon \Delta^\opp\to\Vect^0(\k)$, we denote by $\Res(X_\ldot)\colon \Delta_0^\opp\to\Vect^0(\k)$ the restriction of $X_\ldot$, associated with natural embedding 
$$
i\colon \Delta_0\to\Delta
$$

For $X_\ldot,Y_\ldot\colon \Delta_0^\opp\to\Vect(\k)$, define the map
$$
\nabla^c\colon c(X_\ldot)\otimes c(Y_\ldot)\to c(X_\ldot\otimes Y_\ldot) 
$$
by \eqref{emform}. One needs to prove that $\nabla^c$ is a map of complexes.

\begin{theorem}\label{theorememwxxx}
\begin{itemize}
\item[(i)] For $X_\ldot,Y_\ldot\colon \Delta^\opp\to \Vect^0(\k)$, the map
\begin{equation}
\nabla^c\colon c(\Res(X_\ldot))\otimes c(\Res(Y_\ldot))\to c(\Res(X_\ldot)\otimes\Res(Y_\ldot))
\end{equation}
is a map of complexes;
\item[(ii)] for any $X_\ldot,Y_\ldot\colon \Delta_0^\opp\to \Vect^0(\k)$, the map
\begin{equation}
\nabla^c\colon c(X_\ldot)\otimes c(Y_\ldot)\to c(X_\ldot\otimes Y_\ldot)
\end{equation}
is a map of complexes.
\end{itemize}
\end{theorem}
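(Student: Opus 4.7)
The plan is to establish part (i) first, by reducing it to the classical Eilenberg--MacLane Leibniz rule (Proposition \ref{emprop}(i)), and then to deduce part (ii) from (i) by a Yoneda-style naturality argument.

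For (i), the starting point is the observation that, at level $n$, the modified differential $d^c_n$ and the classical chain differential $d^C_n=\sum_{k=0}^{n}(-1)^k F_k$ are related by
\[
d^c_n \;=\; -\,d^C_n \;+\; F_0 \;+\; (-1)^n F_n,
\]
so they differ only by the two extreme face maps (a direct comparison of sign conventions). Substituting this relation into both sides of the desired identity for $d^c$ and using the classical Leibniz rule $d^C\nabla=\nabla(d^C\otimes 1)+(-1)^m\nabla(1\otimes d^C)$ to cancel the $d^C$-contributions, the statement (i) reduces to the two ``extreme Leibniz identities''
\[
F_0\,\nabla(x_m,y_n)\;=\;\nabla(F_0 x_m,y_n)\;+\;(-1)^m\,\nabla(x_m,F_0 y_n),
\]
\[
F_{m+n}\,\nabla(x_m,y_n)\;=\;(-1)^n\,\nabla(F_m x_m,y_n)\;+\;\nabla(x_m,F_n y_n).
\]
To prove the first of these I would apply $F_0$ term-by-term to the shuffle formula \eqref{emform} and use the simplicial relations $F_0 D_0=\id$ and $F_0 D_k=D_{k-1}F_0$ for $k\ge 1$. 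For each $(m,n)$-shuffle $\sigma$ exactly one of the alternatives $\sigma(0)=0$ or $\sigma(m)=0$ holds. In the first case the innermost $D_0$ in the right tensor factor is annihilated, while $F_0$ passes cleanly through the left tensor factor to produce $F_0(x_m)$; after reindexing, the surviving degeneracies assemble into an $(m-1,n)$-shuffle applied to $(F_0 x_m,y_n)$, and a direct sign check recovers $\nabla(F_0 x_m,y_n)$. In the symmetric case $\sigma(m)=0$ one obtains $(-1)^m\nabla(x_m,F_0 y_n)$, the sign $(-1)^m$ coming from the $m$ inversions that the value $0$ makes at position $m$. The second extreme identity is proved by the mirror-image argument using $F_{m+n}D_{m+n-1}=\id$ and $F_{m+n}D_k=D_k F_{m+n-1}$ for $k\le m+n-2$.

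For (ii), by bilinearity and naturality of $\nabla^c$ and $d^c$ in $X_\ldot,Y_\ldot$, it suffices to verify the identity in the universal case where $X_\ldot$ is the free $\Delta_0^\opp$-vector space on a single generator $x_0$ at level $m$ and $Y_\ldot$ similarly on $y_0$ at level $n$, i.e.\ $X_\ldot=\k[\Hom_{\Delta_0}(-,[m]_*)]$ and $Y_\ldot=\k[\Hom_{\Delta_0}(-,[n]_*)]$. The inclusion $\iota\colon\Delta_0\hookrightarrow\Delta$ induces inclusions of hom-sets $\Hom_{\Delta_0}([k]_*,[\ell]_*)\hookrightarrow\Hom_\Delta([k],[\ell])$ and hence an injection of $\Delta_0^\opp$-vector spaces
\[
X_\ldot\otimes Y_\ldot \;\hookrightarrow\; \iota^*\bigl(\k[\Hom_\Delta(-,[m])]\otimes \k[\Hom_\Delta(-,[n])]\bigr),
\]
whose target is the restriction along $\iota$ of a tensor product of two genuine $\Delta^\opp$-representables. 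Part (i) applies to the two representable factors and gives the $d^c$-Leibniz identity in the ambient space; since both sides of the identity are built purely from $\Delta_0^\opp$-operations applied to $x_0,y_0$, they lie in the sub-$\Delta_0^\opp$-vector space $X_\ldot\otimes Y_\ldot$, so the identity restricts. This completes the reduction of (ii) to (i).

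The main obstacle is the combinatorial and sign verification of the two extreme Leibniz identities in (i): although shorter than a full direct attack on the $d^c$-Leibniz rule, the bijective passage between $(m,n)$-shuffles with $\sigma(0)=0$ (respectively $\sigma(m)=0$) and $(m-1,n)$-shuffles (respectively $(m,n-1)$-shuffles), together with the matching of Koszul signs, still requires careful bookkeeping. The conceptual payoff is that all of the ``interior'' combinatorics is outsourced to the classical Eilenberg--MacLane theorem, and only this small residue survives.
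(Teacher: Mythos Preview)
Your proposal is correct and follows essentially the same strategy as the paper for both parts. For (ii), your Yoneda reduction and embedding into $\iota^*$ of $\Delta^\opp$-representables is exactly the paper's argument (phrased there via the adjunction $\mathrm{Ind}\dashv\mathrm{Res}$ and the observation that the unit $h_{[n]}\to\mathrm{Res}\,\mathrm{Ind}\,h_{[n]}$ is a pointwise embedding).

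For (i), the overall decomposition $d^c_n=-d^C_n+F_0+(-1)^nF_n$ and the reduction to the two extreme Leibniz identities is the same. The one place the paper is more economical: rather than verifying the $F_0$- and $F_{m+n}$-Leibniz identities by direct shuffle combinatorics as you propose, it simply \emph{cites} [EM, Theorem~5.2], where Eilenberg and MacLane already prove the $F_0$-Leibniz identity separately (their proof of the full Leibniz rule proceeds by first establishing it for $F_0$ alone, then for the tail $d^*=d-F_0$). For the top face, the paper invokes the order-reversing automorphism $\tau\colon\Delta\to\Delta$, $\tau(f)(a)=n-f(m-a)$: pulling back along $\tau$ swaps $F_0$ with $F_{\mathrm{top}}$ while leaving $\nabla$ and the chain complexes essentially unchanged, so the $F_{m+n}$-identity follows from the $F_0$-identity for free. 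This avoids the second round of sign bookkeeping you flag as the main obstacle.
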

\begin{proof}
It is clear that (i) is a particular case of (ii). We prove (i) first, and then deduce the general case (ii).

\hspace{1mm}

{\it Proof of (i):}

A key point of the proof is found in [EM], in their proof that the Eilenberg-MacLane map $\nabla$  \eqref{emform} defines {\it a map of complexes} \eqref{emmap}.
In loc.cit. Theorem 5.2, Eilenberg and MacLane divide their proof that $\nabla$ is a map of complexes with the chain differential
\eqref{chain2}, into two parts. First of all, they prove that $\nabla$ is compatible by the Leibniz rule with $F_0$ alone, and after that, they prove the Leibniz rule for the ``tail'' $d^*=d-F_0=-F_1+F_2-\dots +(-1)^nF_n$.

Our claim (i) follows from this Eilenber-MacLane result, as follows. We deduce from the Leibniz rule for $F_0$ the Leibniz rule with the ``rightmost'' extreme face operator $F_n\colon X_n\to X_{n-1}$. 

To this end, consider the functor $\tau\colon\Delta\to\Delta$ which is identity on the objects, and for a morphism $f\colon [m]\to [n]$ the morphism $\tau(f)$ is defined as $\tau(f)[a]=n-\tau(m-a)$, where $a\in \{0<1<\dots<m\}$. Then consider the simplicial vector space $X_\ldot^\tau$, $Y_\ldot^\tau$, they have the same chain complexes (the differential may change the total sign), and $F_0$ for $X_\ldot^\tau$ becomes $F_{\mathrm{top}}$. Thus, the Eilenberg-MacLane claim in loc.cit. is applied to $F_{\mathrm{\top}}$ instead of $F_0$ as well. It then implies that $\nabla$ is compatible by the Leibniz rule with $d-F_0-F_{\mathrm{top}}$, which is our restricted chain differential \eqref{chainc}.

\hspace{1mm}

{\it Proof of (ii):}

Recall that for any $\k$-linear small categories $\mathfrak{a}$ and $\mathfrak{b}$, and for a $\k$-linear functor $f\colon \mathfrak{a}\to\mathfrak{b}$, there is a pair of adjoint functors
\begin{equation}
\mathrm{Ind}\colon \Mod(\mathfrak{a})\rightleftarrows \Mod(\mathfrak{b})\colon \mathrm{Res}
\end{equation}
with $\mathrm{Ind}$ the left adjoint.

Here $\mathrm{Res}$ is the restriction functor, sending a functor $M\colon \mathfrak{b}\to\Vect(\k)$ to the composition
\begin{equation}
\mathfrak{a}\xrightarrow{f}\mathfrak{b}\xrightarrow{M}\Vect(\k)
\end{equation}
and $\mathrm{Ind}$ is the induction functor, sending $N\in \Mod(\mathfrak{a})$ to the $\mathfrak{b}$-module
\begin{equation}
\mathrm{Ind}(N)=\mathfrak{b}\otimes_\mathfrak{a}N
\end{equation}
In particular, there is the adjunction map
\begin{equation}\label{eqemphi}
\Phi\colon N\to \mathrm{Res}\circ \mathrm{Ind} (N)
\end{equation}
for any $N\in \Mod(\mathfrak{a})$. The following claims are well-know:
\begin{lemma}\label{lemmaembis1}
In the above notations, consider for $X\in\mathfrak{a}$ the Yoneda module $h_X$ defined as $h_X(Y)=\Hom_{\mathfrak{a}}(X,Y)$.
Then for any $X\in\mathfrak{a}$ one has:
\begin{equation}
\mathrm{Ind}(h_X)=h_{f(X)}
\end{equation}
where in the right-hand side $h_{f(X)}$ is a Yoneda module in $\mathfrak{b}$.
\end{lemma}
\qed

\begin{lemma}\label{lemmaembis2}
Let $\mathfrak{a}$ be a small $\k$-linear category. Then the set $\{h_X\}$ of Yoneda modules for all $X\in\mathfrak{a}$ is a set of generators for $\Mod(\mathfrak{a})$.
\end{lemma}
\qed

If categories $\mathfrak{a}$ and $\mathfrak{b}$ are set-enriched, we can produce out of them $\k$-linear categories, defining the $\Hom$-spaces as $\k$-spans of the corresponding $\Hom$-sets. Then the above constructions are applicable to the small set-enriched categories as well. \\[4pt]

We consider the set-enriched categories $\Delta_0$ and $\Delta$ as $\k$-linear categories.

The claim that $\nabla^c$ is a map of complexes means that for any $X_\ldot,Y_\ldot\colon \Delta_0^\opp\to\Vect(\k)$ we have some identity, expressing the commutation relation with the differential \eqref{chainc}. To prove this identity for all $X_\ldot,Y_\ldot$, it is enough to prove it set of generators. By Lemma \ref{lemmaembis2},
one can choose the set of all Yoneda modules $h_{[n]}=\Hom_{\Delta_0}(-,[n])$ as the set of generators.

\begin{lemma}\label{lemmaembis3}
For any Yoneda module $h_{[n]}=\Hom_{\Delta_0}(-,[n])$, the adjunction \eqref{eqemphi}
\begin{equation}
\Phi_{[n]}\colon h_{[n]}(-)\to \mathrm{Res}\circ\mathrm{Ind}(h_{[n]})(-)
\end{equation}
is a point-wise embedding.
\end{lemma}
\begin{proof}
By Lemma \ref{lemmaembis1}, $\mathrm{Ind}(h_{[n]})$ is the simplicial vector space $H_{[n]}(-)=\Hom_{\Delta}(-,[n])$.
Then $\Phi_{[n]}([m])$ sends $\Hom_{\Delta_0}([m],[n])$ to $\Hom_{\Delta}([m],[n])$. It is easy to check that $\Phi_{[n]}([m])$ 
coincides with the natural embedding.
\end{proof}

\qed

Now take $X_\ldot=h_{[m]}$ and $Y_\ldot=h_{[n]}$. It is enough to prove the commutation relation of $\nabla^c$ with the differential \eqref{chainc} for these generating modules.
The claim (ii) of the Theorem the results holds for $\mathrm{Res}\circ\mathrm{Ind}(h_{[m]})$ and $\mathrm{Res}\circ\mathrm{Ind}(h_{[n]})$, by the claim (i) proven above. Then it holds for $h_{[m]}$ and $h_{[n]}$, because $\Phi_{[n]}$ and $\Phi_{[m]}$ are point-wise embeddings.

\end{proof}

\bigskip

{\small
\noindent {\sc Universiteit Antwerpen, Campus Middelheim, Wiskunde en Informatica, Gebouw G\\
Middelheimlaan 1, 2020 Antwerpen, Belgi\"{e}}}

\bigskip

\noindent{{\it e-mail}: {\tt Boris.Shoikhet@uantwerpen.be}}

\end{document}